\newcommand{\C}{{\mathbb C} }
\newcommand{\R}{{\mathbb R} }
\newcommand{\cA}{{\mathcal A} }
\newcommand{\cE}{{\mathcal E} }
\newcommand{\cL}{{\mathcal L} }
\newcommand{\cM}{{\mathcal M} }
\newcommand{\cO}{{\mathcal O} }
\newcommand{\cT}{{\mathcal T} }
\newcommand{\cX}{{\mathcal X} }
\newcommand{\cZ}{{\mathcal Z} }
\newcommand{\cH}{{\mathcal H} }
\newcommand{\cK}{{\mathcal K} }
\newcommand{\wh}{\widehat}
\newcommand{\wt}{\widetilde}
\newcommand{\pt}{\partial}
\def\ol#1{{\overline{#1}}}
\def\funo#1{\footnotesize${#1}$\normalsize}
\newtheorem*{Maintheorem*}{Main Theorem}
\newtheorem*{theorem*}{Theorem}
\newtheorem{maintheorem}{Theorem}
\newtheorem{theorem}{Theorem}
\newtheorem{definition}{Definition}
\newtheorem{lemma}{Lemma}
\newtheorem{remark}{Remark}
\newtheorem{proposition}{Proposition}
\newtheorem{corollary}{Corollary}
\def\lsc#1#2{\tensor[^{#1}]{#2}{}}
\def\ke{K{\"a}h\-ler-Ein\-stein }
\def\ks{Ko\-dai\-ra-Spen\-cer }
\def\ka{K{\"a}h\-ler }
\def\wp{Weil-Pe\-ters\-son }
\def\tei{Teich\-mül\-ler }
\def\fs{Fubini-Study }
\def\ii{\sqrt{-1}}
\def\ddb{\sqrt{-1}\partial\overline{\partial}}
\def\C{\mathbb{C}}
\def\cinf{C^\infty}
\def\gab{{g_{\alpha\ol\beta}}}
\def\psh{plurisubharmonic}
\def\RP{$R^{n-p}f_*\Omega^p_{\cX/S}(\cK_{\cX/S}^{\otimes m})$\ }
\def\we{\wedge}
\begin{document}

\title[Positivity of relative canonical bundles]{Positivity of relative canonical
bundles and applications}

\dedicatory{Dedicated to the memory of Eckart Viehweg}

\author{Georg Schumacher}
\address{Fachbereich Mathematik und Informatik,
Philipps-Universit\"at Marburg, Lahnberge, Hans-Meerwein-Straße, D-35032
Marburg,Germany}
\email{schumac@mathematik.uni-marburg.de}
\date{}

\keywords{Relative canonical bundle; Families of Kähler-Einstein
manifolds; Curvature of direct image sheaves; Moduli spaces}

\subjclass[2010]{32L10, 14D20, 32Q20}

\maketitle

\begin{abstract}
Given a family $f:\cX \to S$ of canonically polarized manifolds, the
unique \ke metrics on the fibers induce a hermitian metric on the relative
canonical bundle $\cK_{\cX/S}$. We use a global elliptic equation to show
that this metric is strictly positive on $\cX$, unless the family is
infinitesimally trivial.

For degenerating families we show that the curvature form on the total
space can be extended as a (semi-)positive closed current. By fiber
integration it follows that the generalized \wp form on the base possesses
an extension as a positive current. We prove an extension theorem for
hermitian line bundles, whose curvature forms have this property. This
theorem can be applied to a determinant line bundle associated to the
relative canonical bundle on the total space. As an application the
quasi-projectivity of the moduli space $\cM_{\text{can}}$ of canonically
polarized varieties follows.

The direct images $R^{n-p}f_*\Omega^p_{\cX/S}(\cK_{\cX/S}^{\otimes m})$,
$m > 0$, carry natural hermitian metrics. We prove an explicit formula for
the curvature tensor of these direct images.  We apply it to the morphisms
$S^p\cT_S \to R^pf_*\Lambda^p\cT_{\cX/S}$ that are induced by the \ks map
and obtain a differential geometric proof for hyperbolicity properties of
$\cM_{\text{can}}$.
\end{abstract}

\tableofcontents

\section{Introduction}
For any holomorphic family $f: \cX \to S$ of canonically polarized,
complex manifolds, the unique \ke metrics on the fibers define an
intrinsic metric on the relative canonical bundle $\cK_{\cX/S}$, whose
curvature form has at least as many positive eigenvalues as the dimension
of the fibers indicates. The construction is functorial in the sense of
compatibility with base changes.

\begin{Maintheorem*}\label{th:main1}
Let $\cX \to S$ be a holomorphic family of canonically polarized, compact,
complex manifolds, which is nowhere infinitesimally trivial. Then the
curvature of the hermitian metric on $\cK_{\cX/S}$ that is induced by the
\ke metrics on the fibers is strictly positive.
\end{Maintheorem*}

Actually the first variation of the metric tensor in a family of compact
\ke manifolds contains the information about the corresponding
deformation, more precisely, it contains the harmonic representatives of
the \ks classes. The positivity of the hermitian metric will be measured
in terms of a certain global function. Essential is an elliptic equation
on the fibers, which relates this function to the pointwise norm of the
harmonic \ks forms. The strict positivity of the corresponding (fiberwise)
operator $(\Box + id)^{-1}$, where $\Box$ is the complex Laplacian, can be
seen in a direct way (cf. \cite{sch-preprint}).  For families of compact
Riemann surfaces this operator had been considered in the context of
automorphic forms by Wolpert \cite{wo}. Later in higher dimensions a
similar equation arose in the work of Siu \cite{siu:canlift} for families
of canonical polarized manifolds.

Here we reduce estimates for the positivity of the curvature of
$\cK_{\cX/S}$ on $\cX$ to estimates of the resolvent kernel of the above
integral operator, whose positivity was already shown by Yosida in
\cite{yos}. Finally estimates for the resolvent kernel follow from the
estimates for the heat kernel, which were achieved by Cheeger and Yau in
\cite{c-y}.

The positivity of the relative canonical bundle and the methods involved
are closely related to different questions. We will treat the construction
of a positive line bundle on the moduli space of canonically polarized
manifolds proving its quasi-projectivity, and the question about its
hyperbolicity.

\begin{maintheorem}\label{th:main2.0}
Let $(\cK_{\cX/\cH},h)$ be the relative canonical bundle on the total
space over the Hilbert scheme, equipped with the hermitian metric that is
induced by the \ke metrics on the fibers. Then the curvature form
$\omega_\cX$ extends to the total space $\ol\cX$ over the compact Hilbert
scheme $\ol\cH$ as a positive, closed current $\omega_{\ol \cX}$ with at
most analytic singularities.
\end{maintheorem}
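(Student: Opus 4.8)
The plan is to extend the positive current $\omega_\cX$ across the boundary $\ol\cH\setminus\cH$ by combining a local algebraic construction of an extending current with a comparison estimate between the \ke metric on the smooth fibers and a fixed background metric coming from the ambient projective space of the Hilbert scheme.

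First I would work locally on the base: since the statement is local on $\ol\cH$, I may replace $\ol\cH$ by a polydisk $\Delta^k$ with $\cH = \Delta^k\setminus D$ for a normal crossing divisor $D$, after a desingularization that does not affect the conclusion. On $\cX|_\cH$ the form $\omega_\cX = -\ddb\log h$ is the curvature of the fiberwise \ke metric, which by the Main Theorem is semipositive (strictly positive in fiber directions, and closed). The key point is to produce a \emph{bounded from above} comparison: because the Hilbert scheme comes with a tautological very ample line bundle $\cO(1)$ restricting to each fiber as a fixed multiple of $\cK$, the relative canonical bundle $\cK_{\cX/\cH}$ differs from an $f$-positive multiple of $\cO_\cX(1)$ by a line bundle pulled back from the base. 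Fix the \fs metric on $\cO_\cX(1)$; its curvature $\omega_{FS}$ is a smooth, globally defined closed form on $\ol\cX$. I would then show that the potential $\varphi = \log(h/h_{FS}^{1/m})$, a function on $\cX|_\cH$, is locally bounded \emph{above} near the boundary. This reduces to an a priori upper bound on the \ke volume form of the fibers in terms of the restriction of a fixed smooth volume form, which in turn follows from the fact that the fibers over $D$ are stable (canonically polarized with at worst the degenerations allowed in $\ol\cH$, hence semi-log-canonical), so that the relative canonical sheaf stays $f$-big and its metric degenerates at most logarithmically. Concretely, $\varphi$ is $\omega_{FS}$-\psh on $\cX|_\cH$, bounded above, so it extends as a $\omega_{FS}$-\psh function $\ol\varphi$ on all of $\ol\cX$ by the standard extension across analytic sets (after taking the upper semicontinuous regularization of $\limsup$), and $\omega_{\ol\cX} := \frac1m\omega_{FS} + \ddb\ol\varphi$ is the desired positive closed current extending $\omega_\cX$.

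The remaining claims are then automatic: $\omega_{\ol\cX}$ is closed and positive because it is the curvature current of the singular hermitian metric $h_{FS}^{1/m}\cdot e^{\ol\varphi}$ whose local weight is $\omega_{FS}$-\psh; and it has at most analytic singularities because the degeneration of the \ke metric near $D$ is governed by the log-canonical structure of the central fibers — the potential behaves like a combination of $\log|s_j|$ and $\log\log(1/|s_j|)$ type terms along the components $s_j = 0$ of $D$, whose $\ddb$ is supported on, and has Lelong numbers along, the analytic set $D$ and its strata. To make ``at most analytic singularities'' precise I would invoke the algebraicity of the Hilbert scheme together with a resolution: after a log resolution the multiplier ideal $\cJ(\ol\varphi)$ is a well-defined coherent ideal sheaf cosupported on an analytic subset, which is the statement wanted.

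The main obstacle I expect is the boundary behavior of the \ke metric, i.e. proving the upper bound on $\varphi$ and identifying its singularity type precisely enough to get ``analytic'' rather than merely ``pluripolar'' singularities. Over a one-parameter degeneration one has rather fine asymptotics (work of Tsuji, and of Tian and Yau on incomplete \ke metrics near divisors), but in the relative setting over a higher-dimensional base one needs these estimates to be \emph{uniform} in the transverse directions. I would handle this by a two-step argument: first establish the crude upper bound using a barrier/comparison argument with the \ke metrics of nearby smooth fibers together with Yau's Schwarz lemma, which already suffices for the existence of the extension as a positive closed current; then, for the ``analytic singularities'' refinement, pull back to a semistable reduction and use the known logarithmic estimates fiberwise, patched via the $L^2$-extension theorem to control dependence on the base. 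The positivity in the transverse (base) directions away from $D$ follows from the plurisubharmonicity of the Weil–Petersson potential, which is already implicit in the Main Theorem, so no new input is needed there.
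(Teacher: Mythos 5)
Your overall framework---reduce to the local situation over a polydisk after desingularizing $\ol\cH$, use the $m$-canonical embedding and the Fubini--Study metric on $\cO(1)$ as a smooth reference on $\ol\cX$, and extend the relative potential $\varphi=\log(h/h_{FS}^{1/m})$ as a quasi-plurisubharmonic function across the boundary---is the same as the paper's. The gap is in the one step that carries all the weight: your claimed local upper bound for $\varphi$ near $\ol\cH\setminus\cH$. You derive it from the assertion that the boundary fibers are stable/semi-log-canonical, so that the \ke volume degenerates at most logarithmically. But $\ol\cH$ is the full compactified Hilbert scheme: the boundary fibers are arbitrary flat limits with the given Hilbert polynomial, possibly non-reduced and in no way slc, and the theorem is stated (and needed for the moduli application) without any such hypothesis. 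Consequently neither the slc input, nor the Tian--Yau/Tsuji-type asymptotics, nor their uniformity obtained via semistable reduction and $L^2$-extension, is available; and in fact local boundedness from above of $\varphi$ itself is not what one should expect to prove.

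What the paper does at this point is elementary (Proposition~\ref{pr:extomloc}). Writing $\omega^n_{\cX/S}=e^u\Omega^0_{\cX/S}$ with $\Omega^0_{\cX/S}=h_{FS}^{-1/m}|\cX$ and $F=\log\bigl(\Omega^0_{\cX/S}/(\omega^0_{\cX/S})^n\bigr)$, it observes that both volume forms are built polynomially from the embedding, so $\sup_{\cX_s}e^{-F}\le c\,|\sigma(s)|^{-2k}$ for some integer $k$, where $\sigma=s^1\cdots s^r$ cuts out the boundary; then the fiberwise $C^0$-estimate of Yau/Cheng--Yau, $u+F\le-\Box^0(u)$, hence $u\le\sup(-F)$, gives $|\sigma(s)|^{2k}e^u\le c$. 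Thus it is the twisted potential $\varphi+2k\log|\sigma(s)|$, not $\varphi$, that is bounded above; since $2k\log|\sigma|$ is pluriharmonic over $\cH$, the curvature current over $\cH$ is unchanged, the twisted potential extends plurisubharmonically across the boundary (Grauert--Remmert), and this twist is precisely what yields ``at most analytic singularities.'' Finally the local extensions are globalized through Proposition~\ref{pr:extcurr} (uniqueness of the null extension via Siu's decomposition, using that $\omega_\cX$ has vanishing Lelong numbers on $\cX$) and pushed forward from the desingularization of $\ol\cH$---a gluing point your local argument would also have to address. To repair your proposal, replace the slc/asymptotics step by this algebraic comparison of the two Fubini--Study-derived volume forms together with the $C^0$-estimate; as written, the central estimate is unsupported and its stated form (boundedness of $\varphi$) is too strong.
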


The proof depends on Yau's $C^0$-estimates \cite{calyau} that he used for
the construction of \ke metrics.

We consider a certain determinant line bundle of the relative canonical
bundle, which is defined in a functorial way on the base of any family of
canonically polarized varieties. In particular, it exists on the open part
of the Hilbert scheme. It carries a Quillen metric, and its curvature form
is the generalized \wp form. The latter is equal to the fiber integral
$\int_{\cX/\cH} c_1(\cK_{\cX/\cH},h)^{n+1}$, where $n$ is the fiber
dimension. These quantities descend to the moduli space of canonically
polarized manifolds. Again we see that the curvature form extends as a
closed, (semi-)positive current to a compactification of the moduli space,
which is known to exist as a Moishezon space according to Artin's theorem.

Based upon Siu's decomposition theorem for positive closed currents we
prove the following theorem.

\begin{maintheorem}\label{th:main2}
Let $Y$ be a normal space and $Y' \subset Y$ the complement of a closed
analytic, nowhere dense subset. Let $L'$ be a holomorphic line bundle on
$Y'$ together with a hermitian metric $h'$ of semi-positive curvature,
which may be singular. Assume that the curvature current can be extended
to $Y$ as a positive, closed current $\omega$. Then there exists a
holomorphic line bundle $(L,h)$ with a singular hermitian metric of
semi-positive curvature, whose restriction to $Y'$ is isomorphic to
$(L',h')$. The metric $h$ can be chosen with at most analytic
singularities, if $\omega$ has this property.
\end{maintheorem}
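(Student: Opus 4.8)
The plan is to build $(L,h)$ locally over a neighbourhood of the exceptional set $A:=Y\setminus Y'$ in a way that is manifestly intrinsic -- depending only on $\omega$ and $(L',h')$ -- so that the local pieces agree on overlaps and patch together, and to keep $(L,h)=(L',h')$ over $Y'$. Fix $p\in A$ and a small Stein (and contractible) neighbourhood $U$ of $p$. By the curvature hypothesis $c_1(L'|_{U\cap Y'})$ is the restriction of a class in $H^2(U,\R)$, which vanishes for $U$ small; so, after shrinking, $L'$ is trivial over $U\cap Y'$ and the local task reduces to extending a singular plurisubharmonic weight $\varphi'$ with $\ddb\varphi'=\omega|_{U\cap Y'}$ to a plurisubharmonic weight on a holomorphic line bundle over $U$.

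The first step is a local potential of $\omega$ on the normal space $U$: on $U_{\mathrm{reg}}$ this is the Dolbeault--Grothendieck lemma for currents, and since $U_{\mathrm{sing}}$ has codimension $\ge2$ the plurisubharmonic potential extends across it, so $\omega=\ddb\psi$ on $U$ for some plurisubharmonic $\psi$ (of the form $c\log\sum_i|g_i|^2+\cinf$ when $\omega$ has analytic singularities). The second step is Siu's decomposition $\omega=\sum_j\lambda_j[H_j]+R$, the $H_j$ being the locally finitely many hypersurfaces of $U$ with generic Lelong number $\lambda_j>0$ and $R$ carrying no divisorial Lelong part. Away from the components $H_j\subset A$, the pluriharmonic function $\varphi'-\psi$ extends across $A$, hence $\varphi'$ extends plurisubharmonically and the trivial bundle with weight $e^{-\varphi'}$ already works. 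Along a component $H_j\subset A$, $\varphi'-\psi$ has a well-defined residue $\rho_j\in\R$ (independent of the choice of $\psi$), and one repairs the extension by passing, over the open set where $H_j$ is Cartier with local equation $t_j$, to $L_U:=\cO(\sum_{H_j\subset A}m_jH_j)$ with weight $\varphi'+\sum 2m_j\log|t_j|$, where $m_j$ is the least integer making $\varphi'+2m_j\log|t_j|$ plurisubharmonic across $H_j$; since $\rho_j$, $\lambda_j$ and the divisors $H_j$ are intrinsic, so is $(L_U,h_U)$.

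It remains to cross the residual analytic set $Z\subset A\cap U$ of codimension $\ge2$, namely $U_{\mathrm{sing}}$ together with the non-Cartier loci of the $H_j$. There $H^2(U\setminus Z,\mathbb{Z})=0$, so the bundle produced so far is topologically trivial over $U\setminus Z$, and since its weight differs from the global plurisubharmonic function $\psi$ by a pluriharmonic function, which extends across $Z$ (pluriharmonic functions, being local real parts of holomorphic ones, extend over codimension-$\ge2$ sets in a normal space), it is even holomorphically trivial there and extends as $(\cO_U,e^{-\psi})$. Gluing the pieces gives $(L_U,h_U)$ on $U$; assembling over all $p$ yields $(L,h)$ on $Y$, with singular metric of semi-positive curvature, restricting to $(L',h')$ on $Y'$. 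When $\omega$ has analytic singularities every step -- the potentials $\psi$, the twisting divisors, hence the weight of $h$ -- has analytic singularities, which gives the last assertion.

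The main obstacle is the middle step: one must read off from Siu's decomposition the correct multiplicities $m_j$ along the codimension-one components of $A$ so that the extended sheaf is simultaneously locally free and carries a plurisubharmonic weight, and verify that these are genuinely intrinsic so that the charts glue. The local potential, the codimension-$\ge2$ extension and the globalisation are then comparatively routine, the one remaining delicacy being the points of $Y_{\mathrm{sing}}$ that lie on the $H_j$ -- where an $H_j$ need not be Cartier -- which is handled by the two-stage extension (first over the Cartier locus, then across the residual codimension-$\ge2$ set) indicated above.
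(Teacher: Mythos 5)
Your divisorial step is essentially the paper's Proposition~\ref{pr:extlin} in different clothing: write $\omega=\ddb\psi$ locally, observe that $-\log h'-\psi$ is pluriharmonic on the complement of the divisor, extract a residue $\beta_j$ along each codimension-one component, split $\beta_j=\gamma_j+2m_j$ with $0\le\gamma_j<2$, absorb the integer part into a twist by $\cO(\sum m_jH_j)$ and leave the fractional part $\gamma_j\log|t_j|$ as an analytic singularity of the weight. That part is sound (modulo the gluing of charts, which you assert is ``intrinsic'' but which the paper actually verifies by computing the transformed cocycle $\wt g'_{ij}$ and the function $\kappa=z_1^{m_1}z_2^{m_2}e^{\chi}$ at crossings). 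The genuine gap is in the two topological vanishing claims on which everything else rests. First, the vanishing of $c_1(L'|_{U\cap Y'})$ in $H^2(U\cap Y',\R)$ does not make $L'$ trivial on $U\cap Y'$: when $A\cap U$ has codimension $\ge 2$ the set $U\cap Y'$ is not Stein, and $H^2(U\cap Y',\mathbb Z)$ can carry torsion. Second, and for the same reason, ``$H^2(U\setminus Z,\mathbb Z)=0$'' for $Z$ of codimension $\ge 2$ containing $U_{\mathrm{sing}}$ is false on a normal singular space: for a cyclic quotient surface singularity the punctured neighbourhood retracts onto a lens space with $H^2=\mathbb Z/m$, and there exist flat (hence semi-positively curved, with curvature current extending by zero) line bundles on $U\setminus\{0\}$ that are topologically nontrivial there and are not restrictions of line bundles on $U$. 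So the step you label ``comparatively routine'' --- crossing the residual codimension-$\ge 2$ locus, in particular the points of $Y_{\mathrm{sing}}$ --- is exactly where your argument breaks, and it is exactly the non-routine content of the theorem (the paper itself points to Shiffman's extension theorem at this juncture).

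The paper's mechanism for this hard part has no counterpart in your proposal: it first reduces to the case $\operatorname{codim}A\ge 2$ by the divisorial argument, then passes to a desingularization $\tau:\wt Y\to Y$ followed by an embedded resolution $\mu:Z\to\wt Y$ of $\tau^{-1}(A)$, so that the bad locus becomes a simple normal crossings divisor in a smooth space --- the only situation in which the local triviality you want (``line bundles on a polydisk times a punctured disk are trivial'') is actually available --- extends the bundle and metric there, and finally descends by taking the determinant line bundle $\det\bigl((\tau\circ\mu)_{!}\wt\cL\bigr)$, using normality of $Y$ only for the uniqueness of the codimension-$\ge 2$ extension, not for its existence. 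If you want to salvage your direct approach you would need, at minimum, to justify why the specific $L'$ in the theorem (as opposed to an arbitrary line bundle on $U\cap Y'$) has no torsion obstruction at points of $A$ of codimension $\ge 2$ and at singular points of $Y$; as written, that is assumed rather than proved, and it is the theorem.
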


This proves the extension of the determinant line bundle and the Quillen
metric to a compactification of the normalization. Finally the complex
structure at points of the compactifying divisor has to be changed.

\begin{maintheorem}
The generalized \wp form on the moduli stack of canonically polarized
varieties is strictly positive. A multiple is the Chern form of a
determinant line bundle, equipped with a Quillen metric. This line bundle
extends to a compactification of the moduli space, and the Quillen metric
extends as a (semi-)positive singular hermitian metric with at most
analytic singularities (in the orbifold sense).
\end{maintheorem}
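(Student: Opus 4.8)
The statement will be derived by combining the Main Theorem with Theorems~\ref{th:main2.0} and~\ref{th:main2}, the only genuinely new point being the passage to an orbifold compactification of the moduli space.

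\emph{Strict positivity.} Over the moduli stack of canonically polarized varieties the universal family is nowhere infinitesimally trivial, so the Main Theorem applies and $c_1(\cK_{\cX/S},h)=\omega_\cX$ is strictly positive on the total space $\cX$. Let $v\neq 0$ be a tangent vector of the base at a point with fiber $X$, and let $A_v$ be the harmonic representative of its \ks class; by the very definition of the moduli stack $A_v\neq 0$. Choosing the horizontal lift of $v$ determined by $\omega_\cX$, the pairing of $\omega_{WP}=\int_{\cX/S}c_1(\cK_{\cX/S},h)^{n+1}$ with $v\we\ol v$ reduces to the fiber integral of the geodesic curvature $\varphi_{v\ol v}$ against $\omega_\cX^{\,n}/n!$, and the fiberwise elliptic equation $(\Box+\mathrm{id})\varphi_{v\ol v}=|A_v|^2$ together with the strict positivity of $(\Box+\mathrm{id})^{-1}$ forces $\omega_{WP}(v,\ol v)>0$. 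Hence $\omega_{WP}$ is strictly positive.

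\emph{The determinant line bundle.} On the base of any family of canonically polarized varieties one has the determinant line bundle $\lambda=\det Rf_*(\cK_{\cX/S}^{\otimes\ell})$, or a fixed $\mathbb Z$-linear combination of such chosen so as to kill the lower-order contributions, endowed with the Quillen metric $h^Q$. By the curvature formula of Bismut--Gillet--Soul\'e its first Chern form is the degree-two component of $\int_{\cX/S}\mathrm{Td}(\cX/S)\,\mathrm{ch}(\cK_{\cX/S}^{\otimes\ell})$, whose leading term is $\tfrac{\ell^{n+1}}{(n+1)!}\int_{\cX/S}c_1(\cK_{\cX/S},h)^{n+1}$, and after the above normalization this equals a positive rational multiple of $\omega_{WP}$. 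The construction is functorial, hence descends to the moduli stack, and in particular is defined over the open part $\cH$ of the Hilbert scheme.

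\emph{Extension.} By Theorem~\ref{th:main2.0} the form $\omega_\cX$ extends to $\ol\cX$ over the compact Hilbert scheme $\ol\cH$ as a positive, closed current $\omega_{\ol\cX}$ with at most analytic singularities. Since $\omega_{\ol\cX}$ has analytic singularities, the powers $\omega_{\ol\cX}^{\,n+1}$ are well defined as positive closed currents, and fiber integration along the proper map $\ol\cX\to\ol\cH$ preserves positivity, closedness and, the singular locus being analytic and the map proper, the property of having at most analytic singularities; thus $\int_{\ol\cX/\ol\cH}\omega_{\ol\cX}^{\,n+1}$ extends the above multiple of $\omega_{WP}$ to $\ol\cH$ as a positive closed current with at most analytic singularities. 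Applying Theorem~\ref{th:main2} with $Y=\ol\cH$, $Y'=\cH$, $L'=\lambda$ and $h'=h^Q$ then produces a holomorphic line bundle on $\ol\cH$ carrying a singular, semi-positive hermitian metric with at most analytic singularities whose restriction to $\cH$ is $(\lambda,h^Q)$. To descend to the moduli space one uses that $\cM_{\text{can}}$ is the quotient of $\cH$ by $\mathrm{PGL}$, that the determinant line bundle and the current are invariant and descend, and that by Artin's theorem a compactification exists as a Moishezon space, on whose normalization Theorem~\ref{th:main2} is invoked $\mathrm{PGL}$- resp.\ orbifold-equivariantly, yielding the asserted extension in the orbifold sense.

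\emph{Main difficulty.} The delicate point is this last step: matching the stack/orbifold structure of $\cM_{\text{can}}$ and a chosen compactification with the Hilbert-scheme picture. One must modify the complex structure along the compactifying divisor so that the $\mathrm{PGL}$-quotient and the descent of $(\lambda,h^Q)$ are well defined, and then verify that the clause ``at most analytic singularities'' survives both this modification and the fiber integration. All the remaining assertions are formal consequences of the Main Theorem and of Theorems~\ref{th:main2.0} and~\ref{th:main2}.
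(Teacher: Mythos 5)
The first two steps (strict positivity via $(\Box+\mathrm{id})\varphi=\|A_v\|^2$ and the Bismut--Gillet--Soul\'e determinant construction) are correct and coincide with the paper's Sections~\ref{se:posi} and~\ref{se:bgs}. The genuine gap is in your extension step, at the sentence ``Since $\omega_{\ol\cX}$ has analytic singularities, the powers $\omega_{\ol\cX}^{\,n+1}$ are well defined as positive closed currents, and fiber integration \dots preserves \dots the property of having at most analytic singularities.'' Neither claim is justified. The wedge power of a closed positive $(1,1)$-current is only defined (Bedford--Taylor/Demailly) when the local potentials are locally bounded outside an analytic set of codimension at least equal to the number of factors; here the unbounded locus of $\omega_{\ol\cX}$ contains the divisor $\ol\cX\setminus\cX$, which has codimension one, while you need an $(n+1)$-fold product. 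The non-pluripolar product of \cite{begz} does always exist, but it need not be closed, loses mass in general, and there is no reason its push-forward should restrict to $\omega^{WP}$ on $\cH$. The paper is explicit about this obstruction: the proof of Theorem~\ref{pr:wp_pot} begins ``As we want to avoid wedge products of currents,'' and Section~\ref{se:comp} notes that ``the maximum exterior product of this current need not exist as a current.'' Likewise, ``fiber integration preserves analytic singularities'' is not a formal fact about push-forwards of currents; it has to be extracted from an explicit potential.

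What the paper does instead is work entirely at the level of potentials on the open part: writing $\omega_\cX=\omega^0_\cX+\ddb\wt u$ with $\wt u=u+2k\log|\sigma(s)|$ bounded from above by Yau's $C^0$-estimate, it expands $(\omega^0_\cX+\ddb\wt u)^{n+1}$ over $\cX$ (where everything is smooth), handles $(\omega^0_\cX)^{n+1}$ by Varouchas's fiber-integral theorem, and observes that the remaining terms have the $\pt\ol\pt$-potentials $\int_{\cX_s}\wt u\,(\omega^0_\cX)^j(\omega^0_\cX+\ddb\wt u)^{n-j}$, which are bounded from above because $\wt u$ is and because $\int_{\cX_s}(\omega^0_\cX)^j(\omega^0_\cX+\ddb\wt u)^{n-j}$ is constant in $s$; then Proposition~\ref{pr:extcurr} (null-extension, uniqueness via vanishing Lelong numbers) produces the global extension over $\ol\cH$, and the push-forward to $\ol\cM$ goes through a modification $\wt{\ol\cH}\to\ol\cM$ resolving the indeterminacy of $\cH\dashrightarrow\cM$, with Proposition~\ref{pr:nonnormal} handling non-normality. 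Your concluding paragraph correctly identifies the descent/orbifold matching as delicate, but the step that actually fails as written is the earlier one: you cannot take $\omega_{\ol\cX}^{\,n+1}$ and integrate it over the fibers; you must extend the fiber integral's potential directly.
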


These facts imply a short proof for the quasi-projectivity of the moduli
space of canonically polarized manifolds (cf.\ Section~\ref{se:comp}). It
was pointed out by Eckart Viehweg in \cite{v2} that the extension of the
\wp current is not automatic. This issue is closely related to the
extendability of the determinant line bundle, a question emphasized by
Kollar \cite{ko1} -- these problems are addressed in our manuscript.

We note that our approach uses the functoriality of the construction of a
determinant line bundle, equipped with a Quillen metric, which was
generalized in \cite{f-s:extremal} to smooth families over singular base
spaces. Once this generalization is established, when dealing with
singular fibers, we can use desingularizations of  the base spaces,
allowing for non-reduced fibers, and prove extension theorems for line
bundles and curvature currents.

Responding to a remark by Robert Berman we mention that the theory of
non-pluripolar products of globally defined currents of Boucksom,
Eyssidieux, Guedj, and Zeriahi from \cite{begz} applies to the \wp volume
of a component  $\cM$ of the moduli space of canonically polarized
manifolds:
\begin{corollary}\label{co:finwp}
Let $\omega^{WP}$ be the \wp form on the (open) moduli space $\cM$, which
is of class $\cinf$ in the orbifold sense. Then
$$
\int_\cM (\omega^{WP})^{\dim \cM} < \infty.
$$
\end{corollary}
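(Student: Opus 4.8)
\emph{Proof plan.} The idea is to exhibit $(\omega^{WP})^{\dim\cM}$ as the restriction to $\cM$ of a non-pluripolar Monge--Amp\`ere measure living on a compact manifold, and then to quote the finiteness of total mass of such measures from \cite{begz}.

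First I would invoke the preceding Theorems. By Theorem~\ref{th:main2.0} the curvature of $(\cK_{\cX/\cH},h)$ extends over the compact Hilbert scheme as a positive, closed current with at most analytic singularities; fiber integration of its $(n+1)$-st power produces the generalized \wp form, and — passing to the moduli stack as in Theorem~\ref{th:main2} and the subsequent extension result for the \wp form — a positive multiple $c\,\omega^{WP}$ ($c>0$) is the Chern form of a determinant line bundle with Quillen metric which extends to a compactification $\ol\cM$ of the moduli space as a line bundle with a singular hermitian metric of semi-positive curvature having at most analytic singularities (in the orbifold sense). Since $\ol\cM$ is a Moishezon space (Artin's theorem), there is a smooth projective variety $Z$ together with a proper modification $\pi\colon Z\to\ol\cM$. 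Pulling the extended metric back by $\pi$ and taking curvature yields a closed positive $(1,1)$-current $\omega$ on the compact K\"ahler manifold $Z$ with at most analytic singularities — pulling a quasi-psh function with analytic singularities back by a modification preserves that property — whose restriction to the dense Zariski-open set $Z^\circ\subset Z$ on which $\pi$ is an isomorphism onto the manifold locus of $\cM$ equals $c\,\omega^{WP}$. The set $Z\setminus Z^\circ$ — the $\pi$-exceptional locus together with the preimages of the boundary $\ol\cM\setminus\cM$, of the orbifold locus, and of the singular locus of $\cM$ — is a proper analytic, hence pluripolar, subset.

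Next I would apply the non-pluripolar calculus of \cite{begz}. Put $d=\dim\cM=\dim Z$. On the compact K\"ahler manifold $Z$ the non-pluripolar product $\langle\omega^d\rangle$ is a well-defined positive measure of \emph{finite} total mass — one of the main results of \cite{begz} — bounded for instance by the volume $\mathrm{vol}(\{\omega\})$ of the cohomology class $\{\omega\}\in H^{1,1}(Z,\R)$, which is defined since $\omega$ has analytic singularities, and which is finite. A non-pluripolar product charges no pluripolar set, so $\langle\omega^d\rangle$ is carried by $Z^\circ$; there $\omega$ is a $\cinf$ orbifold form, so $\langle\omega^d\rangle|_{Z^\circ}$ is the ordinary $d$-th exterior power, i.e. $c^d(\omega^{WP})^d$. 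Since $\cM$ differs from $Z^\circ$ only by a proper analytic, hence measure-zero, subset, we conclude
\[
\int_\cM(\omega^{WP})^{\dim\cM}=c^{-d}\int_Z\langle\omega^d\rangle<\infty .
\]

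The only substantive input beyond the preceding Theorems is the compact-manifold finiteness of the non-pluripolar Monge--Amp\`ere mass from \cite{begz}; the remaining steps are bookkeeping. The point deserving the most care is the passage to the projective resolution $Z$: one must check that the pulled-back \wp current genuinely keeps \emph{at most analytic} singularities, so that it has a bona fide cohomology class to which the mass bound applies — this is where one uses that $\omega^{WP}$ already has analytic singularities along the boundary by the earlier Theorems, together with the stability of analytic singularities under modifications. A second, minor point is to observe that restricting $\langle\omega^d\rangle$ to $\cM$ neither loses mass nor adds a boundary term: this is automatic because $\omega^{WP}$ is smooth on $\cM$, so the non-pluripolar product reduces there to the classical volume form, which is precisely what $\int_\cM(\omega^{WP})^{\dim\cM}$ means.
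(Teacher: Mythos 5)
Your argument is essentially the paper's own proof: the paper likewise extends $\omega^{WP}$ to the compactification as a closed positive current defining the Chern class of the extended determinant bundle and then quotes the finiteness of the non-pluripolar Monge--Amp\`ere mass from \cite{begz}, noting that the non-pluripolar product charges no pluripolar set and coincides with $(\omega^{WP})^{\dim\cM}$ on the smooth (orbifold) interior. Your extra step of passing to a K\"ahler modification of the Moishezon compactification merely makes explicit what the paper's remark that ``the orbifold structure and the singularities do not present any problem'' leaves implicit, so the two proofs coincide in substance.
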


\medskip

The {\em second application} concerns the direct image sheaves
$$
R^{n-p}f_*\Omega^p_{\cX/S}(\cK_{\cX/S}^{\otimes m}).
$$
These are equipped with a natural hermitian metric that is induced by the
$L^2$-inner product of harmonic tensors on the fibers of $f$. In the
second part of this article we will give an explicit formula for the
curvature tensor. Estimates will be related to the above discussion of the
resolvent kernel.

A main motivation of our approach is the study of the curvature of the
classical \wp metric, in particular the computation of the curvature
tensor by Wolpert \cite{wo} and Tromba \cite{tr}. It immediately implies
the hyperbolicity of the classical \tei space. For families of higher
dimensional manifolds with metrics of constant Ricci curvature the
generalized \wp metric  was explicitly  computed by Siu in
\cite{siu:canlift}. In \cite{sch:curv} a formula in terms of elliptic
operators and harmonic \ks tensors was derived. However, the curvature of
the generalized \wp metric seems not to satisfy any negativity condition.
This difficulty was overcome in the work of Viehweg and Zuo in \cite{v-z}.
Their approach to hyperbolicity makes use of a period map.

On the other hand our results are motivated by Berndtsson's result on the
Nakano-positivity for certain direct images. In \cite{sch-preprint} we
showed that the positivity of $f_*\cK_{\cX/S}^{\otimes 2}$ together with
the curvature of the generalized \wp metric is sufficient to imply a
hyperbolicity result for moduli of canonically polarized complex manifolds
in dimension two.

For ample $K_X$, and a fixed number $m\geq 1$, e.g.\ $m=1$, the cohomology
groups $H^{n-p}(X,\Omega^p_X(K^{\otimes m}_X))$ are critical with respect
to the Kodaira-Nakano vanishing theorem. The understanding of this
situation is the other main motivation. We will consider the relative
case. Let $A_{i \ol\beta}^\alpha(z,s) \pt_\alpha dz^{\ol\beta}$ be a
harmonic \ks form. Then for  $s \in S$ the cup product together with the
contraction defines a mapping
\begin{eqnarray*}
A_{i \ol\beta}^\alpha \pt_\alpha dz^{\ol\beta}\cup \textvisiblespace :
\cA^{0,n-p}(\cX_s,\Omega^p_{\cX_s}(\cK_{\cX_s}^{\otimes m})) &\to&
\cA^{0,n-p+1}(\cX_s,\Omega^{p-1}_{\cX_s}(\cK_{\cX_s}^{\otimes m}))\\ A_{\ol \jmath
\alpha}^\ol\beta \pt_\ol\beta dz^{\alpha}\cup \textvisiblespace :
\cA^{0,n-p}(\cX_s,\Omega^p_{\cX_s}(\cK_{\cX_s}^{\otimes m})) &\to&
\cA^{0,n-p-1}(\cX_s,\Omega^{p+1}_{\cX_s}(\cK_{\cX_s}^{\otimes m})).
\end{eqnarray*}
\enlargethispage{1.5cm} We will apply the above products to harmonic
$(0,n-p)$-forms. In general the results are not harmonic. We denote the
pointwise $L^2$ inner product by a dot.

The computation of the curvature tensor yields the following result. For
necessary assumptions cf.\ Section~\ref{se:statement}.

\vfill

\break

\begin{maintheorem}\label{th:curvIV}
The curvature tensor for $R^{n-p}f_*\Omega^p_{\cX/S}(\cK_{\cX/S}^{\otimes
m})$ is given by
\vspace{-5mm}
\begin{eqnarray}
R_{i\ol\jmath}^{\phantom{{i\ol\jmath}}\ol\ell k}(s)&=& m \int_{\cX_s}
\left( \Box + 1 \right)^{-1}(A_i\cdot A_\ol\jmath) \cdot(\psi^k \cdot
\psi^\ol\ell) g\/ dV\nonumber\\
&& \quad + m \int_{\cX_s} \left( \Box + m \right)^{-1} (A_i\cup\psi^k)
\cdot (A_\ol\jmath \cup \psi^\ol\ell) g\/ dV \\
&& \quad + m \int_{\cX_s} \left( \Box - m \right)^{-1}
(A_i\cup\psi^\ol\ell)\cdot (A_\ol\jmath \cup \psi^k) g\/ dV.
\nonumber
\end{eqnarray}
\end{maintheorem}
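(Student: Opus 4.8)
The plan is to compute the curvature of the Chern connection on the direct image sheaf $R^{n-p}f_*\Omega^p_{\cX/S}(\cK_{\cX/S}^{\otimes m})$ directly from the definition, using the $L^2$-metric on harmonic representatives. First I would fix local coordinates $(z^1,\dots,z^n,s^1,\dots,s^k)$ on $\cX$ adapted to the fibration, and take a local frame of relative harmonic $(0,n-p)$-forms $\psi^k_{(s)}$ with values in $\Omega^p_{\cX_s}(\cK_{\cX_s}^{\otimes m})$, normalized so that $\partial_s$-derivatives at a reference point are again harmonic (a Kodaira-Spencer/canonical-lift type normalization). The metric coefficients are $h^{k\ol\ell}(s) = \int_{\cX_s} (\psi^k\cdot\psi^\ol\ell)\, g\, dV$, and the curvature is $R_{i\ol\jmath}^{\phantom{i\ol\jmath}\ol\ell k} = -\partial_i\partial_\ol\jmath \log\det h + \dots$; more usefully, $R_{i\ol\jmath} = -\partial_\ol\jmath\partial_i h + (\partial_i h)h^{-1}(\partial_\ol\jmath h)$ schematically, so the whole problem reduces to differentiating the integral $\int_{\cX_s}(\psi^k\cdot\psi^\ol\ell)g\,dV$ twice in the base directions and controlling the non-harmonic pieces that appear.

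The key technical input is the Lie-derivative formalism along the Kodaira-Spencer vector fields. Lifting $\partial_{s^i}$ to a vector field on $\cX$ whose $(0,1)$-part represents the harmonic \ks form $A_i = A_{i\ol\beta}^\alpha\partial_\alpha\, dz^\ol\beta$, I would compute $\partial_i\psi^\ell$ as a sum of the cup product $A_i\cup\psi^\ell$ (which lands in $\cA^{0,n-p+1}(\Omega^{p-1}(\cK^{\otimes m}))$ and is generally not harmonic) plus a term involving the variation of the metric and the \ke potential. The variation of the volume form $g\,dV$ and of the fiberwise metric contributes exactly the scalar function that satisfies the global elliptic equation mentioned in the introduction: the first variation of the \ke metric in the family is governed by $(\Box+1)^{-1}$ applied to $A_i\cdot A_\ol\jmath$, because differentiating the \ke equation $\mathrm{Ric} = -\omega$ along the family produces precisely this resolvent. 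This is where the three resolvents $(\Box+1)^{-1}$, $(\Box+m)^{-1}$, $(\Box-m)^{-1}$ enter: the first from the metric/volume variation, the second and third from projecting the non-harmonic cup-product terms $A_i\cup\psi^k$ back via the Green operator, with the shift by $\pm m$ coming from the Bochner-Kodaira identity on $\Omega^p(\cK^{\otimes m})$-valued forms, where the twisting by $\cK^{\otimes m}$ shifts the relevant Laplacian eigenvalue by $m$ times the curvature of the \ke metric (which is $-m\,\omega$, hence $\mp m$ depending on the bidegree of the target after cup product).

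Concretely, after expanding $\partial_i\partial_\ol\jmath\int(\psi^k\cdot\psi^\ol\ell)g\,dV$ I would organize the result into: (a) a term where both derivatives hit the metric/volume factor, yielding the $\int(\Box+1)^{-1}(A_i\cdot A_\ol\jmath)(\psi^k\cdot\psi^\ol\ell)g\,dV$ contribution; (b) cross terms where one derivative hits $\psi$ and one hits the metric, which combine with (c) the terms where $\partial_i$ hits $\psi^k$ and $\partial_\ol\jmath$ hits $\psi^\ol\ell$ — these produce $\int(A_i\cup\psi^k)\cdot(A_\ol\jmath\cup\psi^\ol\ell)$ integrated against the Green operator $(\Box+m)^{-1}$ after using that the harmonic projection of the derivative must be subtracted and the orthogonal complement is inverted by the Green operator, shifted by $+m$; and (d) the "wrong-order" term $\int(A_i\cup\psi^\ol\ell)\cdot(A_\ol\jmath\cup\psi^k)(\Box-m)^{-1}$, which arises from the adjoint cup product $A_\ol\jmath\cup$ going the other direction in the Hodge diamond and from commuting $\partial$ and $\ol\partial$ type operators (a Bochner-Kodaira commutator), the sign of the shift flipping because the cup product now decreases $p$ in the conjugate slot. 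I expect the main obstacle to be the bookkeeping in step (d): showing that the commutator terms and the $\delta''$-exact remainders assemble exactly into the single clean resolvent expression $(\Box-m)^{-1}(A_i\cup\psi^\ol\ell)\cdot(A_\ol\jmath\cup\psi^k)$ with no leftover boundary or curvature terms, which requires careful use of the \ke equation, integration by parts on the compact fibers, and the precise identification of which Laplacian ($\Box$ on which bundle) appears — the shift by $\pm m$ being the subtle point, tied to the fact that $c_1(\cK_{\cX_s}^{\otimes m}, h^{\otimes m}) = m\,\omega_{\cX_s}$ with $\omega_{\cX_s}$ the \ke form normalized to $\mathrm{Ric} = -\omega$.
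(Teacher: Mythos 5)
Your plan follows essentially the same route as the paper's proof in Section~\ref{se:compcurv}: differentiate the $L^2$-inner product of the fiberwise harmonic representatives twice using Lie derivatives along the canonical (horizontal) lifts, identify the non-harmonic pieces as the cup products $A_i\cup\psi$, and invert them with Green operators whose eigenvalue shifts $\pm m$ come from the twisted Bochner--Kodaira identity $\Box_\pt=\Box_{\ol\pt}+m(n-p-q)\,\mathrm{id}$, with the $(\Box+1)^{-1}$ term encoding the variation of the K\"ahler--Einstein structure. The only difference is bookkeeping: in the paper the fiber volume is Lie-constant along the horizontal lifts, and the $(\Box+1)^{-1}(A_i\cdot A_{\ol\jmath})$ contribution arises from the commutator $L_{[\ol v,v]}$ acting on $\cK^{\otimes m}_{\cX/S}$-valued forms (the function $\varphi$ of Proposition~\ref{pr:elleq}) rather than from derivatives hitting the volume factor.
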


The potentially negative third term is identically zero for $p=n$, i.e.\
for $f_*\cK_{\cX/S}^{\otimes(m+1)}$. From our Main Theorem, with
Theorem~\ref{th:curvIV} we immediately get a fact which also follows from
the Main Theorem with \cite[Theorem~1.2]{berndtsson}\footnote{Very
recently Berndtsson considered the curvature of $f_* (\cK_{\cX/S} \otimes
\cL)$ in \cite{berndtsson-pre}.}:
\begin{corollary}
The locally free sheaf $f_*\cK_{\cX/S}^{\otimes(m+1)}$ is Nakano-positive.
\end{corollary}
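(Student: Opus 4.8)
The plan is to read the statement off from Theorem~\ref{th:curvIV} specialized to $p=n$. In that case $\Omega^n_{\cX/S}=\cK_{\cX/S}$, so
$R^{n-p}f_*\Omega^p_{\cX/S}(\cK_{\cX/S}^{\otimes m})=f_*\big(\cK_{\cX/S}\otimes\cK_{\cX/S}^{\otimes m}\big)=f_*\cK_{\cX/S}^{\otimes(m+1)}$.
Since $\cK_{\cX_s}$ is ample, Kodaira--Nakano vanishing gives $H^q(\cX_s,\cK_{\cX_s}^{\otimes(m+1)})=0$ for $q>0$ and a locally constant $h^0$, hence the sheaf is locally free and the $\psi^k$ of Theorem~\ref{th:curvIV} form a local holomorphic frame built from fiberwise bases of $H^0(\cX_s,\cK_{\cX_s}^{\otimes(m+1)})$, carrying the natural $L^2$-metric. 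It then suffices to show, for each $s$ and each $0\neq\tau=(\tau^{ik})\in T_sS\otimes(f_*\cK_{\cX/S}^{\otimes(m+1)})_s$, that $\sum_{i,\ol\jmath,k,\ol\ell}R_{i\ol\jmath}^{\phantom{i\ol\jmath}\ol\ell k}(s)\,\tau^{ik}\,\ol{\tau^{j\ell}}>0$.

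The first step is the observation, already recorded after Theorem~\ref{th:curvIV}, that the potentially negative third summand vanishes identically: the cup products $A_\ol\jmath\cup\psi^k$ and $A_i\cup\psi^\ol\ell$ take values in $\cA^{\bullet}(\cX_s,\Omega^{p+1}_{\cX_s}(\cK_{\cX_s}^{\otimes m}))$, and for $p=n$ the coefficient bundle $\Omega^{n+1}_{\cX_s}$ is zero. Thus only the first two, \emph{a priori} non-negative, terms of the curvature tensor remain, and what is left is to verify their non-negativity after contraction with $\tau$, together with strictness. For the middle term, put $\Xi:=\sum_{i,k}\tau^{ik}(A_i\cup\psi^k)\in\cA^{0,1}(\cX_s,\Omega^{n-1}_{\cX_s}(\cK_{\cX_s}^{\otimes m}))$; contracting Theorem~\ref{th:curvIV} with $\tau$ turns this term into $m\int_{\cX_s}(\Box+m)^{-1}\Xi\cdot\ol\Xi\;g\,dV$, which is $\ge 0$ because $\Box\ge 0$ and $m>0$ make $(\Box+m)^{-1}$ a positive self-adjoint operator on $L^2$ (no harmonicity of $\Xi$ is needed).

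For the first term I would invoke the \emph{strict} positivity of the resolvent kernel $G(x,y)$ of $(\Box+1)^{-1}$ acting on functions -- the positivity of this integral operator (Yosida \cite{yos}), refined by the Cheeger--Yau heat-kernel estimates \cite{c-y}, i.e.\ the same input that drives the Main Theorem. Writing $A_i\cdot A_\ol\jmath=\sum_a A^a_i\,\ol{A^a_j}$ in a pointwise orthonormal frame and $\psi^k\cdot\psi^\ol\ell=\psi^k\ol{\psi^\ell}$ (the coefficient bundle being a line bundle), and contracting with $\tau$, the first term becomes
\[
m\int_{\cX_s}\!\int_{\cX_s}G(x,y)\,\sum_{a}\Big|\sum_{i,k}\tau^{ik}A^a_i(y)\,\psi^k(x)\Big|^2\,dV(y)\,dV(x)\;\ge\;0 .
\]
This vanishes only if $\sum_i A_i(y)\otimes\big(\sum_k\tau^{ik}\psi^k(x)\big)\equiv 0$ on $\cX_s\times\cX_s$; fixing $x_0$ with $\sum_k\tau^{i_0k}\psi^k(x_0)\neq 0$ for some $i_0$ would then yield a nontrivial constant-coefficient relation among the harmonic \ks forms $A_i$, which is impossible because the \ks map, hence its harmonic lift, is injective under the nowhere-infinitesimal-triviality hypothesis; since the $\psi^k$ form a basis, $\tau=0$ follows. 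Hence the first term alone is $>0$ for $\tau\neq 0$, and adding the non-negative middle term yields strict Nakano positivity. (If $\cK_{\cX/S}$ is assumed only to carry a semi-positive, possibly singular, metric, the same two displays give Nakano semi-positivity, in accordance with \cite[Theorem~1.2]{berndtsson}.)

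I expect the genuine obstacle to lie not in this bookkeeping but in the hypotheses imported from Section~\ref{se:statement}: smoothness of the $L^2$-metric on the direct image and its local freeness, which is precisely what makes the curvature computation of Theorem~\ref{th:curvIV} -- and therefore the argument above -- legitimate. Granted that, the corollary reduces to the dimension count forcing the third term to vanish for $p=n$ and to the positivity of the two resolvent operators $(\Box+1)^{-1}$ and $(\Box+m)^{-1}$.
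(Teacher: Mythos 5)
Your proposal is correct and follows essentially the paper's own route: specializing the curvature formula to $p=n$ so the potentially negative third term disappears, observing that the $(\Box+m)^{-1}$ term is non-negative, and getting strict positivity of the remaining term from the positivity of the resolvent kernel of $(\Box+1)^{-1}$ (Proposition~\ref{pr:resol}, via Yosida and Cheeger--Yau) under the effective-parametrization hypothesis of Section~\ref{se:statement} --- exactly the content of Corollaries~\ref{co:curvmcan} and~\ref{co:curv1}. The only cosmetic difference is that you conclude strictness from pointwise kernel positivity plus linear independence of the harmonic Kodaira--Spencer forms and the basis $\{\psi^k\}$, while the paper packages it as the explicit lower bound $m\,P_n(d(\cX_s))\,G^{WP}_{i\ol\jmath}H^{\ol\ell k}\xi^i_k\ol{\xi^j_\ell}$ with $G^{WP}$ positive definite.
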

An estimate is given in Corollary~\ref{co:curv1}. It contains the
following inequality.
$$
R(A,\ol A, \psi,\ol\psi) \geq P_n(d(\cX_s)) \cdot \|A\|^2\cdot
\|\psi\|^2,
$$
where the coefficient $P_n(d(\cX_s))>0$ is an explicit function depending
on the dimension and the diameter $d(\cX_s)$ of the fibers.

For one dimensional fibers and $m=1$, we are considering the $L^2$-inner
product on the sheaf $f_*\cK_{\cX/S}^{\otimes 2}$ of quadratic holomorphic
differentials for the \tei space of Riemann surfaces of genus larger than
one, which is dual to the classical \wp metric: According to Wolpert
\cite{wo} the sectional curvature is negative, and the holomorphic
sectional curvature is bounded from above by a negative constant. A
stronger curvature property, which is related to strong rigidity, was
shown in \cite{sch:teich}.

The strongest result on curvature by Liu, Sun, and Yau \cite{yau} now
follows from Corollary~\ref{co:curvmcan} (cf.\ also
Section~\ref{se:statement}).

\begin{corollary}[\cite{yau}]\label{co:lsy}
The \wp metric on the \tei space of Riemann surfaces of genus $p>1$ is
dual Nakano negative.
\end{corollary}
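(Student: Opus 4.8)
The plan is to deduce Corollary~\ref{co:lsy} from Theorem~\ref{th:curvIV} by specializing the curvature formula to one–dimensional fibers. Recall that a hermitian holomorphic bundle $(E,h)$ is dual Nakano negative exactly when $(E^\vee,h^{-1})$ is Nakano positive; so the first step is to reduce the assertion to a Nakano positivity. Over the \tei space $S$ of genus $p>1$ with universal family $f\colon\cX\to S$, fiberwise Serre duality gives $\Omega^1_S\cong f_*\cK_{\cX/S}^{\otimes 2}$, the bundle of quadratic differentials, and a short computation with the density of the hyperbolic metric shows that the metric dual to $\omega^{WP}$ on $\Omega^1_S$ is precisely the natural fiberwise $L^2$–metric on $f_*\cK_{\cX/S}^{\otimes 2}$ induced by the \ke metrics on the fibers. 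Hence the corollary is equivalent to: $f_*\cK_{\cX/S}^{\otimes 2}$ with its natural metric is Nakano positive — the case $n=p=1$, $m=1$ of the corollary following Theorem~\ref{th:curvIV}. (Alternatively, this Nakano positivity follows from the Main Theorem together with Berndtsson's positivity theorem for $f_*(\cK_{\cX/S}\otimes\cL)$; here I would reprove it from the explicit formula.)

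Next I would insert $n=p=1$, $m=1$ into Theorem~\ref{th:curvIV}. Then $R^{n-p}f_*=f_*$ and $\Omega^p_{\cX/S}(\cK_{\cX/S}^{\otimes m})=\cK_{\cX/S}^{\otimes 2}$; since $p=n$, the cup product $A_\ol\jmath\cup\psi^k$ takes values in forms of degree $n-p-1=-1$ and so vanishes identically, killing the third (potentially negative) term, and with $m=1$ both surviving operators are $(\Box+1)^{-1}$:
\[
R_{i\ol\jmath}^{\phantom{i\ol\jmath}\ol\ell k}(s)=\int_{\cX_s}(\Box+1)^{-1}(A_i\cdot A_\ol\jmath)\,(\psi^k\cdot\psi^\ol\ell)\,g\,dV+\int_{\cX_s}(\Box+1)^{-1}(A_i\cup\psi^k)\cdot(A_\ol\jmath\cup\psi^\ol\ell)\,g\,dV .
\]
To check Nakano positivity I would contract with an arbitrary $\xi=(\xi^{ik})\in\cT_S\otimes f_*\cK_{\cX/S}^{\otimes 2}$, computed at a point $s\in S$ in a frame normal for the $L^2$–metric. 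For the second term, splitting $(\Box+1)^{-1}=\big((\Box+1)^{-1/2}\big)^2$ and using self-adjointness, the contraction becomes $\big\|\sum_{i,k}\xi^{ik}(\Box+1)^{-1/2}(A_i\cup\psi^k)\big\|_{L^2(\cX_s)}^2\ge 0$. For the first term $(\Box+1)^{-1}$ now acts on \emph{functions}, so I would write it through its resolvent kernel $K(x,y)$ on the fiber, which is \emph{strictly positive} by Yosida~\cite{yos} together with the Cheeger--Yau heat kernel estimates~\cite{c-y}; substituting the kernel, the contraction becomes $\int_{\cX_s}\!\int_{\cX_s}K(x,y)\,g(x)\,\big|\sum_{i,k}\xi^{ik}A_i(y)\otimes\psi^k(x)\big|^2\,dV(y)\,dV(x)\ge 0$, the modulus taken in $T_{\cX_s,y}\otimes(\cK_{\cX_s}^{\otimes 2})_x$. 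Both terms are thus $\ge 0$, and $>0$ for $\xi\ne 0$ since the harmonic \ks forms $A_i$ and the harmonic quadratic differentials $\psi^k$ are linearly independent and $K>0$; the effective lower bound is the content of Corollary~\ref{co:curv1}. This yields Nakano positivity of $f_*\cK_{\cX/S}^{\otimes 2}$, hence dual Nakano negativity of $\omega^{WP}$.

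The substance of the argument is Theorem~\ref{th:curvIV} itself; granting it, the deduction is essentially formal, and the only steps needing a little care are the metric identification — matching $\omega^{WP}$ with the $L^2$–metric on quadratic differentials under Serre duality, which is classical but worth a line — and the use of the strict positivity of the resolvent kernel $K(x,y)$ in the first term. I expect the latter to be the genuine subtlety: positivity of $K$ is exactly what converts the off-diagonal ($\mathrm{rank}>1$) contractions from a mere Griffiths estimate into the full Nakano statement, so dual \emph{Nakano} negativity (rather than only dual Griffiths negativity) really rests on the Cheeger--Yau input.
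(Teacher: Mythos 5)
Your proposal is correct and follows essentially the same route as the paper, which deduces the corollary from Corollary~\ref{co:curvmcan} (the $p=n$ specialization of Theorem~\ref{th:curvgen}, where the potentially negative third term is absent) together with the Nakano-positivity estimate of Corollary~\ref{co:curv1} based on the resolvent-kernel bound $P_n(d(\cX_s))>0$, and the classical duality between $\omega^{WP}$ and the $L^2$-metric on $f_*\cK_{\cX/S}^{\otimes 2}$. Your spelled-out verification of the two positive terms (the $L^2$-square for the cup-product term and the strictly positive kernel for the function term) is exactly the content the paper packages into those two corollaries.
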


\medskip

By Serre duality Theorem~\ref{th:curvIV} yields the following version,
which contains the curvature formula for the generalized \wp metric for
$p=1$. Again a tangent vector of the base is identified with a harmonic
\ks form $A_i$, and $\nu_k$ stands for a section of the relevant sheaf:
\begin{maintheorem}
The curvature for $R^pf_*\Lambda^p\cT_{\cX/S}$ equals
\begin{eqnarray}
R_{i\ol\jmath   k \ol\ell }(s)&=&- \int_{\cX_s}
\left( \Box + 1 \right)^{-1}(A_i\cdot A_\ol\jmath)
\cdot(\nu_k \cdot \nu_\ol\ell) g\/ dV\nonumber\\
&& \quad - \int_{\cX_s} \left( \Box + 1 \right)^{-1} (A_i\wedge\nu_\ol\ell)
\cdot (A_\ol\jmath \wedge \nu_k) g\/ dV \\
&& \quad -  \int_{\cX_s} \left( \Box - 1 \right)^{-1}
(A_i\wedge \nu_k)\cdot (A_\ol\jmath \wedge \nu_\ol\ell) g\/ dV.
\nonumber
\end{eqnarray}
\end{maintheorem}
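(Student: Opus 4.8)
The plan is to deduce this formula from Theorem~\ref{th:curvIV} by setting $m=1$ and applying relative Serre duality. First I would record the duality isomorphism: for a family $f:\cX\to S$ of canonically polarized manifolds with $n$-dimensional fibers (under the running assumptions of Section~\ref{se:statement}, so that the direct images are locally free and fiberwise Hodge theory applies), relative Serre duality gives a natural isomorphism
$$
R^{n-p}f_*\Omega^p_{\cX/S}(\cK_{\cX/S}) \;\cong\; \bigl(R^pf_*\Lambda^p\cT_{\cX/S}\bigr)^\vee ,
$$
which on each fiber is, up to conjugation, the pairing
$$
H^{n-p}(\cX_s,\Omega^p_{\cX_s}(\cK_{\cX_s}))\times H^p(\cX_s,\Lambda^p\cT_{\cX_s})\longrightarrow H^n(\cX_s,\cK_{\cX_s})\cong\C
$$
given by cup product followed by the contraction of $\Lambda^p\cT$ against $\Omega^p$. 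The first point that needs care is that under this isomorphism the natural $L^2$-metric on $R^{n-p}f_*\Omega^p_{\cX/S}(\cK_{\cX/S})$ (coming from harmonic $(0,n-p)$-forms $\psi$) is carried to the metric dual to the $L^2$-metric on $R^pf_*\Lambda^p\cT_{\cX/S}$ (harmonic $(0,p)$-forms $\nu$). This is essentially the assertion that the conjugate-linear Hodge-type isomorphism $\psi\mapsto\nu$ is, up to normalization, an isometry onto the dual, which one checks by a pointwise computation on a \ke fiber, using that $\det(\gab)$ provides the trivialization of $\cK_{\cX_s}$ relating the two descriptions.

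Granting this, the curvature tensor of the dual hermitian bundle is minus the transpose of the curvature tensor, so I would substitute $m=1$ into the three terms of Theorem~\ref{th:curvIV} — which turns $(\Box+m)^{-1}$ and $(\Box-m)^{-1}$ into $(\Box+1)^{-1}$ and $(\Box-1)^{-1}$ — and translate each term through the duality. The intrinsic pointwise product $A_i\cdot A_\ol\jmath$ is unchanged; the pointwise products $\psi^k\cdot\psi^\ol\ell$ become $\nu_k\cdot\nu_\ol\ell$; and — the second point requiring care — the contraction cup products $A_i\cup\psi^k$ become wedge products $A_i\wedge\nu_k$, because contracting the $\cT_{\cX_s}$-leg of $A_i$ into the $\Omega^p$-factor of $\psi$ corresponds, after $\psi\mapsto\nu$, to wedging that $\cT_{\cX_s}$-leg onto the $\Lambda^p\cT$-factor of $\nu$. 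Tracking which $\nu$ pairs with which $A$ — the transpose exchanges the roles of the two sections and introduces the conjugation $k\leftrightarrow\ol\ell$, so the operator with the $(\Box-1)^{-1}$-kernel stays with the ``opposite'' index pattern — the three terms become respectively $-\int_{\cX_s}(\Box+1)^{-1}(A_i\cdot A_\ol\jmath)(\nu_k\cdot\nu_\ol\ell)\,g\,dV$, $-\int_{\cX_s}(\Box+1)^{-1}(A_i\wedge\nu_\ol\ell)\cdot(A_\ol\jmath\wedge\nu_k)\,g\,dV$, and $-\int_{\cX_s}(\Box-1)^{-1}(A_i\wedge\nu_k)\cdot(A_\ol\jmath\wedge\nu_\ol\ell)\,g\,dV$, which is the asserted formula.

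The main obstacle is exactly this interface between the two descriptions: verifying that relative Serre duality is compatible with the $L^2$-metrics up to the expected conjugation, that the fiberwise $\ol\pt$-Laplacians on $\Omega^p_{\cX_s}(\cK_{\cX_s})$ and on $\Lambda^p\cT_{\cX_s}$ intertwine under the isomorphism $\psi\mapsto\nu$ so that the resolvent kernels $(\Box\pm1)^{-1}$ are genuinely preserved, and that the contraction and wedge cup products are interchanged with the correct signs and index placements. None of these steps is deep — each reduces to standard \ka identities or pointwise linear algebra on a \ke fiber — but assembling them without sign errors is where the real work lies. As a consistency check, setting $p=1$ collapses $\Lambda^1\cT_{\cX/S}=\cT_{\cX/S}$ and recovers the known curvature formula for the generalized \wp metric.
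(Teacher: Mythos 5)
Your proposal is correct and follows essentially the same route as the paper: the paper derives this theorem from Theorem~\ref{th:curvgen} precisely by setting $m=1$, passing to the dual basis $\{\nu_k\}$ of $\{\psi^k\}$ via relative Serre duality, replacing the cup/contraction products \eqref{eq:cup1}, \eqref{eq:cup2} by the dual wedge maps \eqref{eq:we1}, \eqref{eq:we2}, and observing that the roles of conjugate and non-conjugate tensors are interchanged, which is exactly your ``minus transpose'' bookkeeping. The consistency checks you mention (the $p=1$ \wp case and the index pattern of the $(\Box-1)^{-1}$ term) match the paper's statement, so nothing further is needed.
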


Now an upper semi-continuous Finsler metric of negative holomorphic
curvature on any relatively compact subspace of the moduli stack of
canonically polarized varieties can be constructed so that any such space
is hyperbolic with respect to the orbifold structure.

We get immediately the following fact related to Shafarevich's
hyperbolicity conjecture for higher dimensions, which was solved by
Migliorini \cite{m}, Kovács \cite{kv1,kv2,kv3}, Bedulev-Viehweg
\cite{b-v}, and Viehweg-Zuo \cite{v-z,v-z2}.

{\bf Application.} {\it Let $\cX \to C$ be a non-isotrivial holomorphic
family of canonically polarized manifolds over a compact curve. Then
$g(C)>1$.}

\medskip

Similar results hold for families of polarized Ricci-flat manifolds. These
will appear elsewhere.\footnote{Dissertation in progress.}
\bigskip

\section{Fiber integration and Lie derivatives}

\subsection{Definition of fiber integrals and basic
properties}\label{sb:fibint} We denote by $\{\cX_s\}_{s\in S}$ a
holomorphic family of compact complex manifolds $\cX_s$ of dimension $n>0$
parameterized by a reduced complex space $S$. By definition, it is given
by a proper holomorphic submersion $f:\cX \to S$, such that the $\cX_s$
are the fibers $f^{-1}(s)$ for $s\in S$. In case of a smooth space $S$, if
$\eta$ is a differential form of class $\cinf$ of degree $2n+r$ the fiber
integral
$$
\int_{\cX/S} \eta
$$
is a differential form of degree $r$ on $S$. It can be defined as follows:
Fix a point $s_0\in S$ and denote by $X=\cX_{s_0}$ the fiber. Let $U
\subset S$ be an open neighborhood of $s_0$  such that there exists a
$\cinf$ trivialization of the family:
$$
\xymatrix{f^{-1}U  \ar[d]_{f|f^{-1}U } & X \ar[l]_{\Phi}^\sim\ar[dl]^{pr}\times U\\ U }
$$
Let $z=(z^1,\ldots,z^n)$ and $s=(s^1,\ldots,s^k)$ be local (holomorphic)
coordinates on $X$ and $S$ resp.

The pull-back $\Phi^*\eta$ possesses a summand $\eta'$, which is of the
form $\sum \eta_k(z,s) dV_z \wedge d\sigma^{k_1}\wedge\ldots\wedge
d\sigma^{k_r}$, where the $\sigma^\kappa$ run through the real and complex
parts of $s^j$, and where $dV_z$ denotes the relative Euclidean volume
element. Now
$$
\int_{\cX/S} \eta := \int_{X\times S/S} \Phi^*\eta := \sum_{k=(k_1,\ldots,k_r)} \left(\int_{\cX_s}
\eta_k(z,s) dV_z \right) d\sigma^{k_1}\wedge\ldots\wedge d\sigma^{k_r}.
$$
The definition is independent of the choice of coordinates and
differentiable trivializations. The fiber integral coincides with the
push-forward of the corresponding current. Hence, if  $\eta$ is a
differentiable form of type $(n+r,n+s)$, then the fiber integral is of
type $(r,s)$.

Singular base spaces are treated as follows: Using deformation theory, we
can assume that $S\subset W$ is a closed subspace of some open set $W
\subset \mathbb C^N$, and that an almost complex structure is defined on
$X\times S$ so that $\cX$ is contained in the integrable locus. Then by
definition, a differential form of class $\cinf$ on $\cX$ will be given on
the whole ambient space $X \times W$ (with a type decomposition defined on
$\cX$).

Fiber integration commutes with taking exterior derivatives:
\begin{equation}\label{eq:fibintallg}
d \int_{\cX/S} \eta = \int_{\cX/S} d\eta,
\end{equation}
and since it preserves the type (or to be seen explicitly in local
holomorphic coordinates), the same equation holds true for $\pt$ and
$\ol\pt$ instead of $d$.

A \ka form $\omega_\cX$ on a singular space, by definition is a form that
possesses locally a $\pt\ol\pt$-potential, which is the restriction of a
$\cinf$ function on a smooth ambient space. It follows from the above
facts that given a \ka form  $\omega_\cX$  on the total space, the fiber
integral
\begin{equation}
\int_{\cX/S} \omega_\cX^{n+1}
\end{equation}
is a \ka form on the base space $S$, which possesses locally a smooth
$\pt\ol\pt$-potential, even if the base space of the smooth family is
singular.

For the actual computation of exterior derivatives of fiber integrals
\eqref{eq:fibintallg}, in particular of functions, given by integrals of
$(n,n)$-forms on the fibers, the above definition seems to be less
suitable. Instead the problem is reduced to derivatives of the form
\begin{equation}\label{eq:derfibint}
\frac{\pt}{\pt s^i} \int_{\cX_s} \eta,
\end{equation}
where only the vertical components of $\eta$ contribute to the integral.
Here and later we will always use the summation convention.
\begin{lemma}\label{le:intLie}
Let
$$
w_i=\left.\left(\frac{\pt}{\pt s^i} + b^\alpha_i(z,s)\frac{\pt}{\pt z^\alpha}
+ c^\ol\beta_i(z,s)\frac{\pt}{\pt z^\ol\beta} \right)\right|_s
$$
be differentiable vector fields, whose projections to $S$ equal
$\frac{\pt}{\pt s^i}$. Then
$$
\frac{\pt}{\pt s^i} \int_{\cX_s} \eta =
\int_{\cX_s} L_{w_i}\left(\eta\right),
$$
where $L_{w_i}$ denotes the Lie derivative.
\end{lemma}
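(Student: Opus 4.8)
The plan is to reduce the statement to a local computation in a differentiable trivialization, where fiber integration becomes integration over a fixed manifold $X$ and differentiation in $s^i$ can be commuted inside the integral.

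First I would fix $s_0 \in S$ and, using the $\cinf$ trivialization $\Phi: X \times U \xrightarrow{\sim} f^{-1}U$ from Section~\ref{sb:fibint}, write $\int_{\cX_s}\eta$ as $\int_X \iota_s^* \Phi^* \eta$, where $\iota_s: X \hookrightarrow X \times U$ is the inclusion at $s$. Only the part of $\Phi^*\eta$ of relative degree $(n,n)$ (i.e.\ the vertical component, a relative volume form times a function of $(z,s)$) survives the restriction, so the integrand is $\eta_0(z,s)\, dV_z$ for a $\cinf$ function $\eta_0$. Since $X$ is compact and $\eta_0$ is smooth in all variables, we may differentiate under the integral sign: $\frac{\pt}{\pt s^i}\int_X \eta_0(z,s)\,dV_z = \int_X \frac{\pt}{\pt s^i}\big(\iota_s^*\Phi^*\eta\big)$. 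The point is then to identify $\frac{\pt}{\pt s^i}(\iota_s^*\Phi^*\eta)$, which is the derivative of a family of forms on the fixed manifold $X$ in the parameter direction, with a Lie derivative.

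The key algebraic step: for the particular trivialization $\Phi$, the coordinate vector field $\pt/\pt s^i$ on $X \times U$ pushes forward to a vector field $\wt w_i = \Phi_* (\pt/\pt s^i)$ on $f^{-1}U$ whose projection to $S$ is $\pt/\pt s^i$; in local holomorphic coordinates this has exactly the shape displayed in the lemma, $\pt/\pt s^i + b_i^\alpha \pt/\pt z^\alpha + c_i^{\ol\beta}\pt/\pt z^{\ol\beta}$. The flow of $\wt w_i$ is conjugate via $\Phi$ to translation in $s^i$ on $X \times U$, so the derivative $\frac{\pt}{\pt s^i}\Phi^*\eta$ along the trivialization equals $\Phi^* L_{\wt w_i}\eta = L_{\pt/\pt s^i}(\Phi^*\eta)$; restricting to $X$ and using that $\iota_s^*$ commutes with $\frac{\pt}{\pt s^i}$ gives the claim for this distinguished $w_i$. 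Finally, I would show the answer is independent of the choice of $w_i$ among vector fields projecting to $\pt/\pt s^i$: two such choices differ by a vertical vector field $v$, and by Cartan's formula $L_{w_i} - L_{w_i'} = L_v = d\,\iota_v + \iota_v\, d$; since fiber integration commutes with $d$ (equation~\eqref{eq:fibintallg}) and $\int_{\cX_s} d(\iota_v\eta) = 0$ by Stokes on the closed fiber, while $\iota_v d\eta$ contributes nothing to the $(n,n)$ vertical part after integration (the vertical degree drops), the extra terms integrate to zero. For singular $S$ one runs the same argument on the smooth ambient $X \times W$, restricting types at the end.

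The main obstacle I anticipate is the bookkeeping in this last independence step: one must be careful that $L_v \eta$, though generally nonzero pointwise and not closed, has vanishing fiber integral — this requires separating the vertical $(n,n)$-part cleanly and checking that both $d\iota_v$ and $\iota_v d$ contributions either are exact along the fiber or have the wrong vertical degree. Once that is handled, the rest is the routine "differentiate under the integral sign plus Cartan's magic formula" computation.
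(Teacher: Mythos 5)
Your argument is correct, but it takes a different route from the paper's. The paper proves the lemma directly for the \emph{given, arbitrary} lift $w_i$: by linearity it splits $\pt/\pt s^i$ and $w_i$ into real and imaginary parts, lets $\Phi_t:X\to\cX_t$ be the one-parameter family of diffeomorphisms generated by $Re(w_i)$ (which maps fibers to fibers because $w_i$ projects onto $\pt/\pt s^i$), and then uses $\frac{d}{dt}\int_{\cX_t}\eta=\int_X\frac{d}{dt}\Phi_t^*\eta=\int_X L_{Re(w_i)}\eta$; the only caveat recorded is that $Re(w_i)$ and $Im(w_i)$ need not commute, which is harmless since each real direction is treated separately. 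You instead work with one distinguished lift, namely $\Phi_*(\pt/\pt s^i)$ coming from the $\cinf$ trivialization, reduce the derivative of the fiber integral to differentiation under the integral sign on the fixed compact $X$, and then prove separately that the answer is independent of the lift: two lifts differ by a vertical field $v$, and $L_v\eta=d\iota_v\eta+\iota_v d\eta$ integrates to zero over the fiber (Stokes, respectively the vertical-degree count showing $\iota_v d\eta$ has no fully vertical component of top fiber degree). Both steps of yours are sound -- in particular your degree argument for $\iota_v d\eta$ and your identification of $\Phi_*(\pt/\pt s^i)$ as a lift of the displayed shape are correct -- and your version has the merit of making the independence of the choice of lift explicit, whereas the paper absorbs it by flowing along the arbitrary lift itself, at the price of the real/imaginary splitting. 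One cosmetic point: $\pt/\pt s^i$ is a complex vector field and has no literal flow, so the phrase ``the flow of $\wt w_i$ is conjugate to translation in $s^i$'' should either be recast via real and imaginary parts, as in the paper, or replaced by the remark that the Lie derivative along a coordinate field, extended complex-linearly, is just differentiation of the coefficients -- which is in effect what your differentiation under the integral sign uses, so this is not a gap.
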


Concerning singular base spaces, it is obviously sufficient that the above
equation is given on the first infinitesimal neighborhood of $s$ in $S$.

\begin{proof}[Proof of {Lemma~\ref{le:intLie}}]
Because of linearity, one may consider the real and imaginary parts of
$\pt/\pt s^i$ and $w_i$ resp.\ separately.

Let $\pt/\pt t$ stand for $Re(\pt/\pt s^i)$, and let $\Phi_t: X \to \cX_t$
be the one parameter family of diffeomorphisms generated by $Re(w_i)$.
Then
$$
\frac{d}{d t} \int_{\cX_s} \eta = \int_X \frac{d}{dt} \Phi^*_t \eta = \int_X L_{Re(w^i)}(\eta).
$$
In general the vector fields $Re(w_i)$ and $Im(w_i)$ need not commute.
\end{proof}

\begin{remark}
{\rm Taking Lie derivatives of differential forms or tensors is in general
not type-preserving.}
\end{remark}

In our applications, the form $\eta$ will typically consist of inner
products of differential forms with values in hermitian vector bundles,
whose factors need to be treated separately. This will be achieved by Lie
derivatives. In this context, we will have to use covariant derivatives
with respect to the given hermitian vector bundle on the total space and
to the \ka metrics on the fibers. k

\subsection{Direct images and differential forms}\label{ss:dolb}
Let $(\cE, h)$ be a hermitian, holomorphic vector bundle on $\cX$, whose
direct image $R^q f_*\cE$ is {\em locally free}. Furthermore we assume
that for all $s\in S$ the cohomology $H^{q+1}(\cX_s, \cE \otimes
\cO_{\cX_s})$ {\em vanishes}. Then the statement of the
Grothendieck-Grauert comparison theorem holds for $R^q f_*\cE$, in
particular $R^q f_*\cE\otimes_{\cO_S} \C(s)$ can be identified with
$H^{q}(\cX_s, \cE \otimes_{\cO_\cX}\cO_{\cX_s})$. Since $\cE$ is $S$-flat,
by Grauert's theorem, for $R^q f_*\cE$ to be locally free, it is
sufficient to assume that $\dim_\C H^{q}(\cX_s, \cE
\otimes_{\cO_\cX}\cO_{\cX_s})$ is constant as long as the resp.\ base
space is reduced. Reducedness will always be assumed unless stated
differently.

For simplicity, we assume at this point that he base space $S$ is smooth.
Locally, after replacing $S$ by a neighborhood of any given point, we can
represent sections of the $q$-th direct image sheaf in terms of Dolbeault
cohomology by $\ol\pt$-closed $(0,q)$-forms with values in $\cE$. On the
other hand, fiberwise, we have harmonic representatives of cohomology
classes with respect to the \ka form and hermitian metric on the fibers.
The following fact will be essential. Under the above assumptions we have:

\begin{lemma}\label{le:dolb}
Let $\wt\psi \in R^q f_*\cE(S)$ be a section. Let $\psi_s \in
\cA^{0,q}(\cX_s,\cE_s)$ be the harmonic representatives of the cohomology
classes $\wt\psi|\cX_s$.

Then locally with respect to $S$ there exists a $\ol\pt$-closed form
$\psi\in \cA^{0,q}(\cX,\cE)$, which represents $\wt \psi$, and whose
restrictions to the fibers $\cX_s$ equal $\psi_s$.
\end{lemma}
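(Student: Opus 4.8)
The plan is to start with an arbitrary $\ol\pt$-closed representative and correct it fiberwise by a controlled coboundary so that its restriction to each fiber becomes harmonic. First I would shrink $S$ so that $R^qf_*\cE$ is trivialized and pick any $\ol\pt$-closed $\cinf$ form $\phi\in\cA^{0,q}(\cX,\cE)$ representing $\wt\psi$; such a $\phi$ exists by the Dolbeault description of sections of the direct image together with the Grothendieck-Grauert comparison hypothesis and a partition-of-unity patching argument over $S$. For each $s$, the restriction $\phi|\cX_s$ is $\ol\pt$-closed and cohomologous to $\psi_s$, so there is a unique $\cinf$ form $\beta_s\in\cA^{0,q-1}(\cX_s,\cE_s)$ orthogonal to the harmonic space with $\ol\pt_s\beta_s=\phi|\cX_s-\psi_s$; explicitly $\beta_s=\ol\pt_s^{*}G_s(\phi|\cX_s)$, where $G_s$ is the fiberwise Green operator for the $\ol\pt$-Laplacian on $(0,q)$-forms of $(\cX_s,\cE_s)$. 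The key regularity point is that $G_s$, and hence $\beta_s$, depends smoothly on $s$: this is the standard fact that the Green operator of an elliptic family with constant-dimensional kernel varies smoothly, which applies here because $H^{q}(\cX_s,\cE_s)$ has constant dimension (by hypothesis) and $H^{q+1}(\cX_s,\cE_s)=0$ (so the relevant Laplacian has no spectral jump).

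Granting this, the forms $\beta_s$ assemble into a global $\cinf$ relative form $\beta$ on $\cX$, i.e.\ a section of the bundle of fiberwise $(0,q-1)$-forms with values in $\cE$. The next step is to lift $\beta$ to an honest $\cinf$ form $\wt\beta\in\cA^{0,q-1}(\cX,\cE)$ on the total space whose restriction to each $\cX_s$ equals $\beta_s$; any $\cinf$ splitting of the surjection $\cA^{0,q-1}(\cX,\cE)\to\big(\text{relative }(0,q-1)\text{-forms}\big)$ does this, e.g.\ by choosing a horizontal distribution for $f$ and declaring $\wt\beta$ to have no $ds^i$-components. Then set $\psi:=\phi-\ol\pt\wt\beta$. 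By construction $\psi$ is $\ol\pt$-closed and represents the same class $\wt\psi$, so it remains only to check $\psi|\cX_s=\psi_s$. Restricting commutes with $\ol\pt$ in the following sense: for a relative form, the fiber restriction of $\ol\pt\wt\beta$ is $\ol\pt_s(\wt\beta|\cX_s)=\ol\pt_s\beta_s$ — the extra terms in $\ol\pt\wt\beta$ involving $d\ol s^{\,j}$ die upon restriction to $\cX_s$. Hence $\psi|\cX_s=\phi|\cX_s-\ol\pt_s\beta_s=\psi_s$, as required.

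The main obstacle is the smooth dependence of the fiberwise Green operator $G_s$ — and thus of $\beta_s=\ol\pt_s^{*}G_s(\phi|\cX_s)$ — on the parameter $s$. This is where the two cohomological hypotheses ($\dim H^q(\cX_s,\cE_s)$ constant and $H^{q+1}(\cX_s,\cE_s)=0$) are genuinely used: they guarantee that, fiber by fiber, the $\ol\pt$-Laplacian $\Box_s$ on $\cA^{0,q}(\cX_s,\cE_s)$ has a spectral gap above $0$ that does not collapse and a kernel of locally constant dimension, so the orthogonal projection onto $\ker\Box_s$ and the partial inverse $G_s$ are smooth in $s$ by the usual Kodaira-Spencer/elliptic-operator-with-parameters argument (contour integration of the resolvent, or the implicit function theorem on the relevant Sobolev completions). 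Once this regularity is in hand, everything else is the bookkeeping of relative versus total-space differential forms sketched above. I would remark that on a singular base $S\subset W$ one argues on the smooth ambient space $W$ as in Section~\ref{sb:fibint}, carrying out the construction on $X\times W$ and restricting to $\cX$ at the end.
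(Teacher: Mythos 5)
Your proposal is correct and follows essentially the same route as the paper: take an arbitrary $\ol\pt$-closed representative, solve the fiberwise equation $\psi_s=\phi|\cX_s-\ol\pt_s\beta_s$ to get a relative $(0,q-1)$-form, lift it to a genuine form on $\cX$, and subtract its $\ol\pt$. The only differences are cosmetic — the paper lifts the relative form by a partition of unity rather than a horizontal splitting, and leaves the smooth $s$-dependence of the fiberwise Green operator (your $\beta_s=\ol\pt_s^{*}G_sig(\phi|\cX_s)$, justified by constancy of $\dim H^q(\cX_s,\cE_s)$) implicit, which you spell out.
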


\begin{proof}
For the sake of completeness we give the simple argument. The harmonic
representatives $\psi_s$ define a relative form, which we denote by
$\psi_{\cX/S}$. Let $\Phi \in \cA^{0,q}(\cX,\cE)$ represent $\wt \psi$.
Denote by $\Phi_{\cX/S}$ the induced relative form. Then there exists a
relative $(0,q-1)$-form $\chi_{\cX/S}$ on $\cX$, whose exterior derivative
in fiber direction $\ol\pt_{\cX/S}(\chi_{\cX/S})$ satisfies
$$
\psi_{\cX/S}= \Phi_{\cX/S} +\ol\pt_{\cX/S}(\chi_{\cX/S}).
$$
Let $\{U_i\}$ be a covering of $\cX$, which possesses a partition of unity
$\{\rho_i\}$ such that all $\chi_{\cX/S}|U_i$ can be extended to
$(0,q-1)$-forms $\chi_i$ on $U_i$. Then with $\chi=\sum\rho_i\chi_i$ we
set $\psi =\Phi + \ol\pt \chi$.
\end{proof}

The relative Serre duality can be treated in terms of such differential
forms. Let $\cE^\vee = \textit{Hom}_\cX(\cE,\cO_\cX)$. Then (under the
above assumptions)
$$
R^pf_*\cE \otimes_{\cO_S} R^{n-p}f_*(\cE^\vee \otimes_{\cO_\cX} \cK_{\cX/S}) \to \cO_S
$$
is given by the fiber integral of the wedge product of $\ol\pt$-closed
differential forms in the sense of Lemma~\ref{le:dolb}. By
\eqref{eq:fibintallg} (for the operator $\ol\pt$), the result is a
$\ol\pt$-closed $0$-form i.e.\ a holomorphic function.

\section{Estimates for resolvent and heat kernel}\label{se:heatker}
Let $(X,\omega_X)$ be a compact \ka manifold.
The Laplace operator for differentiable functions is given by $\Box =
\ol\pt\ol\pt^*+\ol\pt^*\ol\pt$, where the adjoint $\ol\pt^*$ is the formal
adjoint operator. The Laplacian is self-adjoint with non-negative
eigenvalues.

\begin{proposition}\label{pr:resol}
Let $(X,\omega_X)$ be a \ke manifold of constant Ricci curvature $-1$ with
volume element $g\, dV$. Then there exists a strictly positive function
$P_n(d(X))$, depending on the dimension $n$ of $X$ and the diameter $d(X)$
with the following property:

If $\chi$ is a non-negative continuous function and $\phi$ a solution of
\begin{equation}\label{eq:ellip}
(1+\Box)\phi = \chi,
\end{equation}
then
\vspace{-3.5mm}
\begin{equation}\label{eq:est}
\phi(z)\geq P_n(d(X))\cdot \int_X \chi \, g\/ dV
\end{equation}
for all $z\in X$.
\end{proposition}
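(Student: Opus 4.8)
The plan is to represent the solution $\phi$ of the elliptic equation \eqref{eq:ellip} by means of the resolvent kernel of the operator $(1+\Box)^{-1}$ and to derive a pointwise lower bound for this kernel in terms of the heat kernel. First I would write
$$
\phi(z) = \int_X G(z,w)\,\chi(w)\,g(w)\,dV(w),
$$
where $G(z,w)\geq 0$ is the kernel of $(1+\Box)^{-1}$ acting on functions; its positivity is classical, going back to Yosida \cite{yos}. Since $\chi\geq 0$, the lower bound \eqref{eq:est} follows at once provided one shows
$$
\inf_{z,w\in X} G(z,w)\geq P_n(d(X)) > 0
$$
for an explicit constant depending only on $n$ and the diameter $d(X)$. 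So the whole matter is reduced to a uniform positive lower bound for the resolvent kernel.

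Next I would pass from the resolvent to the heat kernel via the standard Laplace-transform identity
$$
(1+\Box)^{-1} = \int_0^\infty e^{-t}\, e^{-t\Box}\, dt,
$$
so that $G(z,w) = \int_0^\infty e^{-t} H(t,z,w)\,dt$, where $H(t,z,w)$ is the heat kernel of $\Box$ on functions. Here the key input is the Cheeger-Yau lower bound \cite{c-y}: on a complete Riemannian manifold whose Ricci curvature is bounded below by $(n-1)\kappa$ (in our setting the underlying real dimension is $2n$ and the Ricci curvature of the \ke metric is $-1$, a fixed negative lower bound), the heat kernel is bounded below by the heat kernel $H_\kappa(t,d(z,w))$ of the model space form of curvature $\kappa$, evaluated at the Riemannian distance $d(z,w)$. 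Since $d(z,w)\leq d(X)$, monotonicity of the model heat kernel in the distance variable gives $H(t,z,w)\geq H_\kappa(t,d(X))>0$ for every $t>0$. Integrating,
$$
G(z,w)\geq \int_0^\infty e^{-t} H_\kappa(t,d(X))\,dt =: P_n(d(X)) > 0,
$$
which is the desired explicit constant; it is manifestly positive since the integrand is positive, and it depends only on $n$ and $d(X)$.

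I expect the main obstacle to be bookkeeping rather than a deep difficulty: one must make sure the normalization conventions match, in particular that the ``$\Box$'' in \eqref{eq:ellip} is the scalar Laplace-Beltrami operator (up to the usual factor relating the complex Laplacian $\ol\pt\ol\pt^*+\ol\pt^*\ol\pt$ on functions to $\tfrac12\Delta$), so that the Cheeger-Yau comparison, which is stated for the Laplace-Beltrami operator, applies with the correct constant; and one must record precisely what lower Ricci bound the \ke condition of constant Ricci curvature $-1$ gives for the real metric of dimension $2n$, since that determines the model curvature $\kappa$ entering $P_n$. A secondary point is to justify the interchange of the $t$-integral with the spatial integral and the convergence of $\int_0^\infty e^{-t}H(t,z,w)\,dt$, which is immediate from the spectral decomposition $H(t,z,w)=\sum_j e^{-t\lambda_j}\varphi_j(z)\varphi_j(w)$ with $\lambda_j\geq 0$ together with uniform convergence of the heat kernel series for $t$ bounded away from $0$ and the integrable small-$t$ Gaussian bound. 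Once these conventions are pinned down, the argument is as above and yields \eqref{eq:est} with the stated explicit $P_n(d(X))$.
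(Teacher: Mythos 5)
Your proposal is correct and follows essentially the same route as the paper: reduce to a uniform lower bound for the resolvent kernel, use the Laplace-transform identity $P(z,w)=\int_0^\infty e^{-t}P(t,z,w)\,dt$, and then invoke the Cheeger--Yau heat-kernel lower bound together with monotonicity in the distance and the diameter bound (the paper uses the explicit Sturm form of that estimate to define $P_n$). The bookkeeping points you flag (normalization of $\Box$ and the Ricci bound in real dimension $2n$) are exactly the minor issues the paper absorbs into its quoted form of the estimate, so no further ideas are needed.
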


The minimum principle immediately shows that $\phi\geq 0$. In
\cite{sch-preprint} we gave an elementary proof that $\chi\geq 0$, implies
that $\phi$ is strictly positive everywhere or identically zero for real
analytic functions $\chi$, which was sufficient for our application. Bo
Berndtsson pointed out to the author the existence of a refined minimum
principle yielding the same fact without the assumption of analyticity.

The explicit estimates claimed in the above Proposition are based on the
theory of resolvent kernels.

We summarize some known facts. Let $\{\psi_\nu\}$ be an orthonormal basis
of the space of square integrable real valued functions consisting of
eigenfunctions of the Laplacian with eigenvalues $\lambda_\nu$. Then the
resolvent kernel for the above equation is given by
$$
P(z,w)= \sum_\nu \frac{1}{1+\lambda_\nu} \psi_\nu(z)\psi_\nu(w),
$$
and the solution $\phi$ of \eqref{eq:ellip} equals
$$
\phi(z):=(\Box + id)^{-1}(\chi)(z)= \int_X P(z,w)\chi(w) g(w)dV_w.
$$
So the integral kernel $P(z,w)$ must be non-negative for all $z$ and $w$.

In a similar way we denote by $P(t,z,w)$ the integral kernel for the heat
operator
$$
\frac{d}{dt} + \Box
$$
so that the solution of the heat equation
$$
(\frac{d}{dt}+\Box )\phi = 0
$$
with initial function $\chi(z)$ for $t=0$ is given by
$$
\int_X P(t,z,w)\chi(w) g(w)dV_w.
$$
Here $ P(t,z,w)= \sum_\nu e^{-t\lambda_\nu} \psi_\nu(z)\psi_\nu(w). $ Now,
since the eigenvalues of the Laplacian are non-negative, the equation $
\int_0^\infty e^{-t(\lambda + 1)} dt = 1/(1+\lambda) $ implies the
following Lemma, (cf.\ also \cite[(3.13)]{c-y}).
\begin{lemma}
\label{le:heat} Let $P(z,w)$ be the integral kernel of the resolvent
operator and denote by $P(t,z,w)$ the heat kernel. Then
\begin{equation}\label{eq:resheatest}
P(z,w)= \int_0^\infty e^{-t}P(t,z,w)\, dt.
\end{equation}
\end{lemma}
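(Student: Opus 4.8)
The plan is to reduce the identity \eqref{eq:resheatest} to the scalar
computation $\int_0^\infty e^{-t(\lambda+1)}\,dt = 1/(1+\lambda)$, valid for
every $\lambda \geq 0$, and then to upgrade this from eigenvalues to integral
kernels by expanding both sides in the orthonormal eigenbasis
$\{\psi_\nu\}$. First I would recall the two spectral expansions already
displayed above the statement,
$$
P(z,w) = \sum_\nu \frac{1}{1+\lambda_\nu}\,\psi_\nu(z)\psi_\nu(w),
\qquad
P(t,z,w) = \sum_\nu e^{-t\lambda_\nu}\,\psi_\nu(z)\psi_\nu(w),
$$
and substitute the second into the right-hand side of
\eqref{eq:resheatest}. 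Formally interchanging the sum over $\nu$ with the
$t$-integral and applying the scalar identity to each term, with
$\lambda = \lambda_\nu \geq 0$, produces
$$
\int_0^\infty e^{-t} P(t,z,w)\,dt
= \sum_\nu \left(\int_0^\infty e^{-t(1+\lambda_\nu)}\,dt\right)\psi_\nu(z)\psi_\nu(w)
= \sum_\nu \frac{1}{1+\lambda_\nu}\,\psi_\nu(z)\psi_\nu(w)
= P(z,w),
$$
which is exactly the claim.

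The one point that needs a word of justification is the interchange of
summation and integration, i.e.\ the use of Fubini/Tonelli. Here I would
invoke the standard facts about the heat kernel on a compact \ka manifold:
the eigenvalues $\lambda_\nu \to \infty$, the heat trace
$\sum_\nu e^{-t\lambda_\nu} = \int_X P(t,z,z)\,g\,dV_z$ is finite for each
$t>0$, and $P(t,z,w)$ is smooth and positive on $(0,\infty)\times X\times X$
with Gaussian-type bounds. Consequently, for $z,w$ fixed the series for
$P(t,z,w)$ converges (after a short time, absolutely and uniformly on
compact $t$-intervals by Cauchy--Schwarz together with the on-diagonal
bound), and the positivity of the summands $e^{-t}e^{-t\lambda_\nu}$ lets
one appeal to Tonelli's theorem to swap the order — or, more simply, one
notes that $\int_0^\infty e^{-t}P(t,z,w)\,dt$ is just the Laplace transform
at $1$ of a function with at most exponential-of-negative-type growth in
$t$, hence converges and equals the resolvent kernel $(1+\Box)^{-1}$ by
the spectral theorem applied to $\Box$.

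I do not expect any real obstacle: this is a soft spectral-calculus
statement, and the substantive analytic input (existence, smoothness and
bounds for $P(t,z,w)$, and the spectral decomposition of $\Box$) is
classical and is precisely what is being cited from \cite{c-y}. The only
care required is to make the eigenfunction expansions legitimate pointwise
rather than merely in $L^2$, which is where the heat-kernel regularity is
used; once that is granted, the proof is the two-line manipulation above.
Accordingly I would keep the write-up short, stating the eigenbasis
expansions, citing the needed properties of the heat kernel, performing the
termwise integration, and remarking that the interchange is justified by
positivity of the summands.
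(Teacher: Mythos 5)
Your argument is exactly the one the paper uses: it derives \eqref{eq:resheatest} by inserting the eigenfunction expansion $P(t,z,w)=\sum_\nu e^{-t\lambda_\nu}\psi_\nu(z)\psi_\nu(w)$ and applying $\int_0^\infty e^{-t(\lambda_\nu+1)}\,dt=1/(1+\lambda_\nu)$ termwise, citing \cite[(3.13)]{c-y}. Your additional remarks on justifying the sum--integral interchange (positivity, heat-kernel bounds, spectral theorem) only make explicit what the paper leaves implicit, so the proposal is correct and follows the same route.
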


We now apply the lower estimates for the heat kernel by Cheeger and Yau
\cite{c-y} to the resolvent kernel. Assuming constant negative Ricci
curvature $-1$, we use the estimates in the form of \cite[(4.3)
Corollary]{st}.
\begin{equation}\label{eq:hker}
P(t,z,w)\geq Q_{n}(t,r(z,w)):=\frac{1}{(2\pi t)^n} e^{- \frac{r^2(z,w)}{t}} e^{-\frac{2n-1}{4}t},
\end{equation}
Where $r=r(z,w)$ denotes the geodesic distance (and $n=\dim X$).

Let
\begin{equation}\label{eq:hker1}
P_{n}(r)= \int_0^\infty e^{-t} Q_n(t,r)\, dt>0.
\end{equation}

Using Lemma~\ref{le:heat}, equation \eqref{eq:hker}, and the monotonicity
of $P_n$ we get
\begin{equation}\label{eq:hker2}
P(z,w) \geq P_n(r(z,w)) \geq P_n(d(X)),
\end{equation}
where $d(X)$ denotes the diameter of $X$. This shows
Proposition~\ref{pr:resol}. \qed

We mention that $\lim_{r\to \infty}P_n(r)=0$.

Conversely, if we require that for all functions $\chi$ the solutions
$\phi$ of \eqref{eq:ellip} satisfy an estimate of the form
$$
\phi(z)\geq P \cdot \int_X \chi \, g\, dV,
$$
then $P\leq\inf P(z,w)$ follows immediately.

\medskip

\begin{tiny}
We mention that symbolic integration of \eqref{eq:hker} with
\eqref{eq:hker1} yields the explicit estimate
$$
P_{n}(r) \geq \frac{1}{(2\pi)^n} \frac{(2n+3)^{\frac{n-1}{2}}}{2^{n-2}} \frac{1}{r^{n-1}}
\text{BesselK}\left( n-1, \sqrt{2n+3} r   \right).
$$
\end{tiny}

\section{Positivity of $K_{\cX/S}$}\label{se:posi}
Let $X$ be a canonically polarized manifold of dimension $n$, equipped
with a \ke metric $\omega_X$. In terms of local holomorphic coordinates
$(z^1,\ldots, z^n)$ we write
$$
\omega_X=\ii g_{\alpha\ol\beta}(z)\; dz^\alpha\wedge dz^\ol\beta
$$
so that the \ke equation reads
\begin{equation}\label{eq:ke}
\omega_X=-{\rm Ric}(\omega_X),  \text{ i.e. }  \omega_X= \ddb \log g(z),
\end{equation}
where $g:=\det g_{\alpha\ol\beta}$. We consider $g$ as a hermitian metric
on the anti-canonical bundle $K_X^{-1}$.

For any holomorphic family of compact, canonically polarized manifolds $f:
\cX \to S$ of dimension $n$ with fibers $\cX_s$ for $s\in S$ the \ke forms
$\omega_{\cX_s}$ depend differentiably on the parameter $s$. The resulting
relative \ka form will be denoted by
$$
\omega_{\cX/S} = \ii g_{\alpha,\ol\beta}(z,s)\;dz^\alpha\wedge dz^\ol\beta.
$$
The corresponding hermitian metric on the relative anti-canonical bundle
is given by $g=\det \gab(z,s)$.  We consider the real $(1,1)$-form
$$
\omega_\cX= \ddb \log g(z,s)
$$
on the total space $\cX$. We will discuss the question, whether
$\omega_\cX$ is a \ka form on the total space.

The \ke equation \eqref{eq:ke} implies that
$$
\omega_\cX|\cX_s = \omega_{\cX_s}
$$
for all $s\in S$. In particular $\omega_\cX$, restricted to any fiber, is
positive definite. Our  result is the following statement (cf.\ Main
Theorem).

\begin{theorem}\label{th:main}
Let $\cX \to S$ be a holomorphic family of canonically polarized, compact,
complex manifolds. Then the curvature form $\omega_\cX$ of the hermitian
metric on $\cK_{\cX/S}$ induced by the \ke metrics on the fibers is
semi-positive and strictly positive on all fibers. It is strictly positive
in horizontal directions, for which the family is not infinitesimally
trivial.

More precisely, in terms of the generalized \wp form $\omega^{WP}$:
\begin{equation}
\omega^{n+1}_\cX \geq P_n(d(\cX_s)) \cdot f^*\omega^{WP}\wedge \omega^n_{\cX},
\end{equation}
where the right-hand side only depends on the relative volume form
$\omega^n_{\cX/S}$.
\end{theorem}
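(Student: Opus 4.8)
The plan is to work in local coordinates and compute $\omega_\cX = \ddb \log g(z,s)$ explicitly, isolating the fiber, mixed, and base components. First I would choose, at a given point $x_0 \in \cX_{s_0}$, local holomorphic coordinates adapted to the family: coordinates $(z^1,\dots,z^n)$ on the fiber and $(s^1,\dots,s^k)$ on the base, together with a horizontal lift of $\pt/\pt s^i$ of the form $v_i = \pt_{s^i} + a^\alpha_i(z,s)\pt_{z^\alpha}$, where the coefficients $a^\alpha_i$ are chosen so that the $(0,1)$-part of $v_i\lrcorner\,\omega_{\cX/S}$ along the fiber is the harmonic \ks representative; this is the standard construction giving the harmonic Kodaira-Spencer forms $A_{i\ol\beta}^\alpha\pt_\alpha dz^\ol\beta$ as the obstruction to horizontality. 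In these coordinates one computes $\pt_{s^i}\pt_{\ol s^\ol\jmath}\log g$ along the fiber and finds that it is governed by a fiberwise elliptic equation: the relevant horizontal component of $\omega_\cX$, call it $\varphi_{i\ol\jmath}(z,s)$, satisfies $(1+\Box)\varphi_{i\ol\jmath} = (A_i \cdot A_\ol\jmath)$ pointwise on the fiber, where $A_i \cdot A_\ol\jmath = A^\alpha_{i\ol\beta}\ol{A^\gamma_{\jmath\ol\delta}}\,g_{\alpha\ol\gamma}g^{\ol\beta\delta}$ is the pointwise inner product of harmonic \ks forms. Deriving this equation — differentiating the \ke equation $\omega_{\cX_s} = \ddb\log g$ twice in the base direction and tracing — is a computation I would cite from \cite{sch-preprint} or \cite{siu:canlift} rather than redo.

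Next I would invoke Proposition~\ref{pr:resol}: since $A_i \cdot A_\ol\jmath \geq 0$ is a non-negative (indeed real-analytic or at least continuous) function on the fiber, the solution $\varphi_{i\ol\jmath}$ of $(1+\Box)\varphi_{i\ol\jmath} = A_i\cdot A_\ol\jmath$ satisfies the pointwise lower bound $\varphi_{i\ol\jmath}(z) \geq P_n(d(\cX_s))\int_{\cX_s}(A_i\cdot A_\ol\jmath)\,g\,dV$, and this last integral is (up to normalization) the $(i,\ol\jmath)$-component of the generalized \wp form $\omega^{WP}$, by the definition of $\omega^{WP}$ as the $L^2$-metric on the \ks classes. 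Then I would assemble $\omega_\cX$ as a $(1,1)$-form on the total space: restricted to any fiber it equals $\omega_{\cX_s} > 0$ (this is the \ke equation, already noted in the excerpt), while its ``horizontal-horizontal'' part, after completing squares to absorb the mixed terms, is the matrix $(\varphi_{i\ol\jmath})$ pulled back to $\cX$. Semi-positivity of $\omega_\cX$ then reduces to semi-positivity of $(\varphi_{i\ol\jmath})$, and strict positivity in a horizontal direction $\sum t^i v_i$ holds exactly when $\sum t^i A_i \not\equiv 0$ on the fiber, i.e.\ when the family is infinitesimally non-trivial in that direction.

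For the final displayed inequality $\omega^{n+1}_\cX \geq P_n(d(\cX_s))\cdot f^*\omega^{WP}\wedge\omega^n_\cX$, the idea is purely linear-algebraic once the block structure of $\omega_\cX$ is in hand: writing $\omega_\cX = \omega_{\cX/S} + \sqrt{-1}\,\varphi_{i\ol\jmath}\,\delta s^i\wedge\delta\ol s^\ol\jmath$ in terms of the ``geodesic'' coframe $\delta s^i$ dual to $v_i$, one gets $\omega^{n+1}_\cX = (n+1)\,\omega^n_{\cX/S}\wedge(\sqrt{-1}\,\varphi_{i\ol\jmath}\,\delta s^i\wedge\delta\ol s^\ol\jmath)$, while $f^*\omega^{WP}\wedge\omega^n_\cX = \omega^n_{\cX/S}\wedge f^*\omega^{WP}$; comparing the coefficient matrices and using the pointwise bound $\varphi_{i\ol\jmath}\gtrsim P_n(d(\cX_s))\,\omega^{WP}_{i\ol\jmath}$ fiberwise (as non-negative Hermitian matrices) gives the claim, and the right-hand side manifestly depends only on $\omega^n_{\cX/S}$. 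I expect the main obstacle to be the bookkeeping in the local computation of the $(1,1)$-form $\omega_\cX$ — correctly identifying which terms are the harmonic \ks forms, verifying that the cross terms complete to a perfect square in the chosen coframe, and pinning down the precise constant so that the elliptic equation comes out as $(1+\Box)\varphi = A_i\cdot A_\ol\jmath$ with coefficient exactly $1$ (this normalization is what makes Proposition~\ref{pr:resol} directly applicable, and is tied to the Ricci curvature being $-1$).
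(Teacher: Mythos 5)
Your plan follows the paper's own proof essentially step for step: the horizontal lift whose obstruction is the harmonic Kodaira--Spencer representative, the function $\varphi$ (the Schur complement of the fiber block, i.e.\ the $\omega_\cX$-norm of the horizontal lift), the fiberwise elliptic equation $(\Box+1)\varphi=|A_s|^2$ (Proposition~\ref{pr:elleq}, which the paper proves rather than cites), the resolvent-kernel bound of Proposition~\ref{pr:resol} identifying $\int_{\cX_s}|A_s|^2\,g\,dV$ with the Weil--Petersson metric, and the determinant identity $\omega^{n+1}_\cX=\varphi\, g\, dV\,\ii\, ds\wedge \ol{ds}$ which yields the displayed inequality. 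The only difference is that the paper reduces at the outset to $\dim S=1$, which makes the wedge-power bookkeeping immediate (your explicit factor $(n+1)$ is absorbed by the convention $\omega^k=\wedge^k\omega/k!$, and the extra higher-order horizontal terms your multi-dimensional formula omits do not arise), so this is a presentational rather than a mathematical divergence.
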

(The form $\omega^{WP}$ will be given in Definition~\ref{de:wpherm}. We
denote by $\omega_\cX^n$ etc.\ the $n$-fold exterior power divided by
$n!$, cf.\ also the remark on notation in Section~\ref{se:compcurv}.)

Both the statement of the theorem and the methods are valid for smooth,
proper families over singular (even non-reduced) complex \break spaces
(for the necessary theory cf.\ \cite[\S 12]{f-s:extremal}).

It is sufficient to prove the theorem for base spaces of dimension one
assuming $S\subset \C$. (In order to treat singular base spaces, the claim
can be reduced to the case where the base is a double point $(0,\mathbb
C[s]/(s^2))$. The arguments below will still be meaningful and can be
applied literally.)

We denote the \ks map for the family $f:\cX \to S$ at a given point
$s_0\in S$ by
$$
\rho_{s_0} :T_{s_0}S \to H^1(X, \cT_X)
$$
where $X=\cX_{s_0}$. The family is called {\it effectively parameterized}
at $s_0$, if $\rho_{s_0}$ is injective. The \ks map is induced by the edge
homomorphism for the short exact sequence
$$
0 \to  \cT_{\cX/S} \to \cT_\cX \to f^*\cT_S \to 0.
$$
If $v \in T_{s_0}S$ is a tangent vector, say $v=\frac{\pt}{\pt s}|_{s_0}$,
and if $\frac{\pt}{\pt s} + b^\alpha \frac{\pt}{\pt z^\alpha}$ is any lift
of class $\cinf$ to $\cX$ along $X$, then
$$
\ol\pt\left(\frac{\pt}{\pt s} + b^\alpha(z) \frac{\pt}{\pt z^\alpha}\right)=
\frac{\partial b^\alpha(z)}{\partial z^\ol\beta}
\frac{\pt}{\pt z^\alpha} dz^\ol\beta
$$
is a $\ol\pt$-closed form on $X$, which represents $\rho_{s_0}(\pt / \pt
s)$.

We will use the semi-colon notation as well as raising and lowering of
indices for covariant derivatives with respect to the {\it \ke metrics on
the fibers}. The $s$-direction will be indicated by the index $s$. In this
sense the coefficients of $\omega_\cX$ will be denoted by $g_{s\ol s}$,
$g_{\alpha\ol s}$, $\gab$ etc.

Next, we define {\it canonical lifts} of tangent vectors of $S$ as
differentiable vector fields on $\cX$ along the fibers of $f$ in the sense
of Siu \cite{siu:canlift}. By definition these satisfy  the property that
the induced representatives of the \ks class are {\it harmonic}.

Since the form $\omega_\cX$ is positive, when restricted to fibers, {\em
horizontal lifts} of tangent vectors with respect to the pointwise
sesquilinear form $\langle-,-\rangle_{\omega_\cX}$ are well-defined  (cf.\
also \cite{sch:curv}).
\begin{lemma}\label{le:canlift}
The horizontal lift of $\pt/\pt s$  equals
$$
v = \pt_s + a_s^\alpha \pt_\alpha,
$$
where
$$
a_s^\alpha = - g^{\ol\beta \alpha} g_{s \ol \beta}.
$$
\end{lemma}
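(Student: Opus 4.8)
The plan is to compute the horizontal lift directly from the defining condition. The horizontal lift $v$ of $\pt/\pt s$ is characterized by two properties: its projection to $S$ is $\pt/\pt s$ (so $v = \pt_s + a_s^\alpha \pt_\alpha$ for some functions $a_s^\alpha$ on $\cX$), and $v$ is orthogonal to the vertical tangent space $\cT_{\cX/S}$ with respect to the sesquilinear form $\langle-,-\rangle_{\omega_\cX}$. First I would write out the second condition: for every $\alpha$ we require
\[
0 = \langle v, \pt_\alpha \rangle_{\omega_\cX} = \langle \pt_s + a_s^\gamma \pt_\gamma, \pt_\alpha \rangle_{\omega_\cX} = g_{s\ol\alpha} + a_s^\gamma g_{\gamma\ol\alpha}.
\]
Here I am using the index conventions fixed just above the lemma, where the coefficients of $\omega_\cX = \ddb\log g(z,s)$ are written $g_{s\ol s}$, $g_{\alpha\ol s}$, $\gab$, and where the fiber metric $g_{\gamma\ol\alpha}$ is exactly the $(1,1)$-part of $\omega_\cX$ in the vertical directions (this is the identity $\omega_\cX|\cX_s = \omega_{\cX_s}$ recorded in Section~\ref{se:posi}).

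Then I would solve the linear system $a_s^\gamma g_{\gamma\ol\alpha} = - g_{s\ol\alpha}$ for the $a_s^\gamma$. Contracting with the inverse metric $g^{\ol\alpha\beta}$ (which exists since $\omega_\cX$ is positive definite on fibers, so $(\gab)$ is invertible) gives
\[
a_s^\beta = - g^{\ol\alpha\beta} g_{s\ol\alpha},
\]
which after relabeling indices is exactly the asserted formula $a_s^\alpha = - g^{\ol\beta\alpha} g_{s\ol\beta}$. I would also note explicitly that these $a_s^\alpha$ are globally well-defined functions on each fiber (indeed on $\cX$ near $X$): although the individual symbols $g_{s\ol\beta}$ and $g^{\ol\beta\alpha}$ are only locally defined, the combination transforms as the components of the $(1,0)$-vector field $v - \pt_s$, because it arose from the intrinsic orthogonality condition.

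There is essentially no serious obstacle here; the only thing requiring a word of care is making sure the pairing $\langle-,-\rangle_{\omega_\cX}$ is the one induced on the full tangent bundle of $\cX$ by the semi-positive form $\omega_\cX$, restricted to the mixed pairing of a horizontal-type vector against a vertical vector, and that this restriction only sees the vertical block $\gab$ and the mixed coefficients $g_{\alpha\ol s}$ — not the (possibly degenerate in horizontal directions) coefficient $g_{s\ol s}$. Once that is clear, the computation is the one-line linear algebra above. If desired, one can double-check consistency by verifying that with this choice $\ol\pt v = \partial_{\ol\beta} a_s^\alpha\, \pt_\alpha\, dz^{\ol\beta}$ is precisely the harmonic representative of the \ks class $\rho_{s_0}(\pt/\pt s)$, which is the sense in which $v$ is the \emph{canonical lift} in Siu's terminology; this is the content linking Lemma~\ref{le:canlift} to the earlier discussion, but it is not needed for the statement as phrased.
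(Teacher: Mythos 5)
Your proof is correct and is exactly the intended argument: the paper states Lemma~\ref{le:canlift} without proof, since it follows immediately from the defining orthogonality condition $\langle v,\pt_\alpha\rangle_{\omega_\cX}=g_{s\ol\alpha}+a_s^\gamma g_{\gamma\ol\alpha}=0$ and inversion of the fiberwise positive definite block $(\gab)$, which is precisely your computation. The remarks on global well-definedness and the link to Proposition~\ref{pr:harmrep} are consistent with the paper's surrounding discussion.
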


\begin{proposition}\label{pr:harmrep}
The horizontal lift  of a tangent vector of the base induces the harmonic
representative of its \ks class.
\end{proposition}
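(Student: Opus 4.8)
The plan is to compute the $\overline\partial$-closure of the horizontal lift $v = \pt_s + a_s^\alpha\pt_\alpha$ from Lemma~\ref{le:canlift} and verify directly that the resulting $\overline\partial$-closed $\cT_X$-valued $(0,1)$-form $A = A^\alpha_{\ol\beta}\,\pt_\alpha\,dz^{\ol\beta}$, with $A^\alpha_{\ol\beta} = \pt_{\ol\beta}a_s^\alpha = -\pt_{\ol\beta}(g^{\ol\gamma\alpha}g_{s\ol\gamma})$, is harmonic. Since it is already $\overline\partial$-closed (being a Kodaira-Spencer representative), harmonicity is equivalent to $\overline\partial^*A = 0$, i.e.\ to the vanishing of the divergence $g^{\ol\beta\delta}\nabla_{\!\ol\beta}A^\alpha_{\ol\delta}$ — equivalently, after lowering the index with the fiber metric, to $A_{\alpha\ol\beta}$ being a symmetric tensor (the standard fact that a $\overline\partial$-closed $T^{1,0}$-valued $(0,1)$-form on a \ka manifold is harmonic iff the associated $(2,0)$-part, after index lowering, is symmetric, which in the \ke case is automatic for representatives coming from the metric variation). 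So the real content is to identify $A_{\alpha\ol\beta} := g_{\alpha\ol\gamma}A^\ol\gamma_{\ol\beta}$ (or rather the conjugate bookkeeping) with the first variation $\pt_s g_{\alpha\ol\beta}$ of the \ke metric tensor, modulo terms that are manifestly symmetric.

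The key computation is: from the definition of the horizontal lift, $g_{s\ol\beta} = \langle v, \pt_{\ol\beta}\rangle_{\omega_\cX}$-type coefficient, and $g_{\alpha\ol\beta}$ depends on $s$, one gets $\pt_s g_{\alpha\ol\beta} = g_{\alpha\ol\beta,s}$ where the comma is now an honest partial derivative. The relation $a_s^\alpha = -g^{\ol\beta\alpha}g_{s\ol\beta}$ gives $g_{\alpha\ol\beta}a_s^\alpha + g_{s\ol\beta} = 0$; differentiating in $z^{\ol\delta}$ and using the \ke normalization — in particular that in \ke (indeed \ka) normal coordinates the first derivatives of $g_{\alpha\ol\beta}$ vanish and, crucially, $g_{\alpha\ol\beta,\ol\delta}$ only enters through its symmetric combination — one should obtain that $g_{\alpha\ol\delta}\,\pt_{\ol\beta}a_s^\alpha$ equals $\pt_{\ol\beta}g_{s\ol\delta} - (\text{derivative-of-}g\text{ terms})$, and that the "derivative-of-$g$" terms are symmetric in $\ol\beta\leftrightarrow\ol\delta$ after using the \ka symmetry $g_{\alpha\ol\delta,\ol\beta}=g_{\alpha\ol\beta,\ol\delta}$. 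The remaining piece, $\pt_{\ol\beta}g_{s\ol\delta}$, should match $\pt_{\ol\delta}g_{s\ol\beta}$ by a similar argument using that $\omega_\cX = \ddb\log g$ is closed, i.e.\ $g_{s\ol\delta,\ol\beta} = g_{s\ol\beta,\ol\delta}$ (this is the $\overline\partial\,\overline\partial = 0$ / closedness of $\omega_\cX$ in mixed indices). Putting these together yields symmetry of $A_{\alpha\ol\beta}$, hence $\overline\partial^*A = 0$, hence $A$ harmonic; and $A$ represents $\rho_{s_0}(\pt/\pt s)$ because $v$ is a \cinf\ lift of $\pt/\pt s$, so $\overline\partial v$ is by construction the \ks representative (as recalled in the paragraph preceding Lemma~\ref{le:canlift}).

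Concretely, I would proceed in the following steps. First, recall that for any \cinf\ lift $\mu = \pt_s + b^\alpha\pt_\alpha$ the form $\overline\partial\mu = \pt_{\ol\beta}b^\alpha\,\pt_\alpha\,dz^{\ol\beta}$ represents the \ks class, so the only issue is harmonicity for the particular choice $b^\alpha = a_s^\alpha$. Second, set $A^\alpha_{\ol\beta} = \pt_{\ol\beta}a_s^\alpha$ and lower to $A_{\ol\gamma\ol\beta} = g_{\alpha\ol\gamma}A^\alpha_{\ol\beta}$; recall the \ka-geometry lemma that $\overline\partial^*A = 0 \iff A_{\ol\gamma\ol\beta}$ is symmetric in its two (barred) indices. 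Third, differentiate the identity $g_{s\ol\gamma} = -g_{\alpha\ol\gamma}a_s^\alpha$ with respect to $z^{\ol\beta}$ to express $A_{\ol\gamma\ol\beta}$ in terms of $\pt_{\ol\beta}g_{s\ol\gamma}$ and $g_{\alpha\ol\gamma,\ol\beta}a_s^\alpha$. Fourth, invoke closedness of $\omega_\cX = \ddb\log g$, which gives $g_{s\ol\gamma,\ol\beta} = g_{s\ol\beta,\ol\gamma}$ and $g_{\alpha\ol\gamma,\ol\beta} = g_{\alpha\ol\beta,\ol\gamma}$, to conclude the symmetry. The main obstacle I anticipate is bookkeeping with the Christoffel/connection terms: one must be careful whether "harmonic" is being taken for $A$ as a $\cT_X$-valued $(0,1)$-form with the Chern connection of $(\cT_X,g)$, and the cleanest route is to fix a point, choose \ka normal coordinates there so that all first derivatives of $g_{\alpha\ol\beta}$ vanish at the point, and then observe that at that point $\overline\partial^*A$ reduces to $-g^{\ol\beta\alpha}\pt_\alpha A^\gamma_{\ol\beta}$ — no connection terms — and compute this is zero using the symmetry relations above; the coordinate-independence of the conclusion then finishes it. This is exactly the computation underlying Siu's canonical lift, and the identification of the harmonic \ks representative with (essentially) $\pt_s g_{\alpha\ol\beta}$ is the reason the horizontal lift is the right object.
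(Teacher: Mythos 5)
There is a genuine gap in your reduction. For a $\ol\pt$-closed form $A=A^\alpha_{\ol\beta}\pt_\alpha dz^{\ol\beta}$ with values in $\cT_X$, the condition $\ol\pt^*A=0$ is a differential condition (the divergence $g^{\ol\beta\gamma}\nabla_\gamma A^\alpha_{\ol\beta}$, with the derivative taken in the \emph{holomorphic} direction, not $\nabla_{\ol\beta}$ as you wrote), whereas symmetry of the lowered tensor $A_{\ol\gamma\ol\beta}=g_{\alpha\ol\gamma}A^\alpha_{\ol\beta}$ is a pointwise algebraic condition, and the implication ``symmetric $\Rightarrow$ harmonic'' that your plan rests on is false. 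Concretely, for any smooth non-constant function $\phi$ on the fiber put $b^\alpha=-g^{\ol\beta\alpha}\phi_{;\ol\beta}$ and $A^\alpha_{\ol\delta}=b^\alpha_{;\ol\delta}=-g^{\ol\beta\alpha}\phi_{;\ol\beta\ol\delta}$: this $A$ is $\ol\pt$-exact, its lowered tensor $-\phi_{;\ol\gamma\ol\delta}$ is symmetric (the connection is torsion free), yet $A$ can only be harmonic if it vanishes, which on a canonically polarized fiber forces $\phi$ to be constant. So steps three and four of your plan establish (essentially) the symmetry $A_{s\ol\beta\ol\delta}=A_{s\ol\delta\ol\beta}$ --- which is Corollary~\ref{co:symm}, deduced in the paper \emph{from} Proposition~\ref{pr:harmrep}, not conversely --- but they do not yield $\ol\pt^*A_s=0$.

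What is missing is exactly the ingredient your outline never uses: the Einstein condition, entering through the Ricci identity. The paper computes the divergence directly: $g^{\ol\beta\gamma}A^\alpha_{s\ol\beta;\gamma}=-g^{\ol\beta\gamma}g^{\ol\delta\alpha}g_{s\ol\delta;\ol\beta\gamma}$; the symmetry $g_{s\ol\delta;\ol\beta}=g_{s\ol\beta;\ol\delta}$ (your closedness of $\omega_\cX=\ddb\log g$) is only the first step; one must then commute the covariant derivatives, which produces the curvature term $g_{s\ol\tau}R^{\ol\tau}_{\;\ol\beta\ol\delta\gamma}$, and after contraction one is left with $-g^{\ol\delta\alpha}\bigl((\pt_s\log g)_{;\ol\delta}+g_{s\ol\tau}R^{\ol\tau}_{\;\ol\delta}\bigr)$, which vanishes precisely because the \ke equation gives $R^{\ol\tau}_{\;\ol\delta}=-\delta^{\ol\tau}_{\ol\delta}$ while $(\pt_s\log g)_{;\ol\delta}=g_{s\ol\delta}$. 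In your normal-coordinate version, $g^{\ol\beta\gamma}\pt_\gamma A^\alpha_{s\ol\beta}$ involves second mixed derivatives of $g_{\alpha\ol\beta}$ (i.e.\ curvature) and second derivatives of $g_{s\ol\delta}$; the first-order \ka symmetries you invoke cannot produce the required cancellation, and indeed the statement fails for the horizontal lift with respect to an arbitrary \ka form on $\cX$, so no argument avoiding the Einstein condition can succeed. Repairing your proof means computing $\ol\pt^*A_s$ directly along the lines just described, which is the paper's own proof.
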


\begin{proof}
The \ks form of the tangent vector $\pt/\pt_{s}$ is $\ol\pt v|\cX_s =
a^\alpha_{s;\ol\beta}\pt_\alpha dz^\ol\beta$. We consider the tensor
$$
A^\alpha_{s\ol\beta}:= a^\alpha_{s;\ol\beta}|{\cX_{s_0}}
$$
on $X$. Then $\ol\pt^*(A^\alpha_{s\ol\beta}\pt_\alpha dz^{\ol\beta})$ is
given by
\begin{gather*}
g^{\ol\beta\gamma} A^\alpha_{s\ol\beta;\gamma}= - g^{\ol\beta\gamma}
g^{\ol\delta \alpha} g_{s\ol\delta;\ol\beta\gamma} = - g^{\ol\beta\gamma}
g^{\ol\delta \alpha} g_{s\ol\beta;\ol\delta\gamma} =
-g^{\ol\beta\gamma}g^{\ol\delta \alpha} \left(
g_{s\ol\beta;\gamma\ol\delta} -
g_{s\ol\tau}R^\ol\tau_{\; \ol\beta\ol\delta\gamma} \right)\\
=-g^{\ol\delta\alpha}\left(\left({\pt\log g}/{\pt s} \right)_{;\ol\delta}
+ g_{s\ol \tau}R^\ol\tau_{\; \ol\delta}\right) = 0
\end{gather*}
because of the \ke property.
\end{proof}
It follows immediately from the proposition that the harmonic \ks forms
induce symmetric tensors. This fact reflects the close relationship
between the \ks tensors and the \ke metrics:
\begin{corollary}\label{co:symm}
Let $A_{s \ol\beta\,\ol\delta}= g_{\alpha\ol\beta}A^\alpha_{s\ol\delta}$.
Then
\begin{equation}
A_{s \ol\beta\,\ol\delta}=A_{s \ol\delta\,\ol\beta}.
\end{equation}
\end{corollary}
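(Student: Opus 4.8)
The plan is to read the statement off from Proposition~\ref{pr:harmrep} together with the Kähler identity that two antiholomorphic covariant derivatives of a function commute; no genuinely new input is needed.

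First I would recall, via Proposition~\ref{pr:harmrep} and Lemma~\ref{le:canlift}, that the harmonic \ks form of $\pt/\pt s$ is $A^\alpha_{s\ol\delta}\,\pt_\alpha\,dz^{\ol\delta}$ with $A^\alpha_{s\ol\delta}=a^\alpha_{s;\ol\delta}$ and $a^\alpha_s=-g^{\ol\tau\alpha}g_{s\ol\tau}$. Lowering the index and using that the fibre \ke metric is parallel for its own Levi--Civita connection (so that $g_{\alpha\ol\beta}$ and $g^{\ol\tau\alpha}$ commute with the semicolon), one obtains
\[
A_{s\ol\beta\,\ol\delta}=g_{\alpha\ol\beta}A^\alpha_{s\ol\delta}
=-g_{\alpha\ol\beta}g^{\ol\tau\alpha}g_{s\ol\tau;\ol\delta}
=-g_{s\ol\beta;\ol\delta}.
\]
Thus the claim $A_{s\ol\beta\,\ol\delta}=A_{s\ol\delta\,\ol\beta}$ reduces to the symmetry $g_{s\ol\beta;\ol\delta}=g_{s\ol\delta;\ol\beta}$.

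That symmetry I would then establish directly. Working in local holomorphic coordinates, $g_{s\ol\beta}=\pt_s\pt_{\ol\beta}\log g=\pt_{\ol\beta}(\pt_s\log g)$, so for fixed $s$ the $(0,1)$-tensor $g_{s\ol\beta}\,dz^{\ol\beta}$ on the fibre is the antiholomorphic differential of a function; equivalently it is the $ds$-component of $\omega_\cX=\ddb\log g(z,s)$, and $d\omega_\cX=0$ forces $\pt_{\ol\delta}g_{s\ol\beta}=\pt_{\ol\beta}g_{s\ol\delta}$. Since on a Kähler manifold the only nonzero Christoffel symbols are $\Gamma^\alpha_{\beta\gamma}$ and their conjugates, and the conjugates are symmetric in their two lower indices,
\[
g_{s\ol\beta;\ol\delta}-g_{s\ol\delta;\ol\beta}
=\bigl(\pt_{\ol\delta}g_{s\ol\beta}-\pt_{\ol\beta}g_{s\ol\delta}\bigr)
-\bigl(\ol\Gamma^{\ol\sigma}_{\ol\beta\ol\delta}-\ol\Gamma^{\ol\sigma}_{\ol\delta\ol\beta}\bigr)g_{s\ol\sigma}=0,
\]
which finishes the argument.

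I do not expect any real obstacle: the content is a two-line index computation. The only thing to be careful about is the bookkeeping with the parameter index $s$, which is a label rather than a fibre tensor index, so that for each fixed $s$ one must treat $g_{s\ol\beta}$ as a section of the cotangent bundle of the fibre and $g_{s\ol\beta;\ol\delta}$ as its fibrewise covariant derivative; once this is kept straight, the symmetry of $A_{s\ol\beta\,\ol\delta}$ is exactly the commutativity of the mixed partial derivatives of $\log g$, upgraded by the Kähler symmetry of the connection coefficients.
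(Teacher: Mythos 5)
Your argument is correct and is essentially the paper's (implicit) one: using Proposition~\ref{pr:harmrep} and Lemma~\ref{le:canlift} to write $A_{s\ol\beta\,\ol\delta}=-g_{s\ol\beta;\ol\delta}$ and observing that this is the second antiholomorphic covariant derivative of the function $\pt_s\log g$, hence symmetric by torsion-freeness of the K\"ahler connection. The paper states the corollary as an immediate consequence of the proposition; you have merely written out the same two-line index computation explicitly.
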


Next, we introduce a {\it global} function $\varphi(z,s)$, which is by
definition the {\em pointwise inner product of the canonical lift $v$ of
$\pt/\pt s$ at $s\in S$} with itself taken with respect to $\omega_\cX$.
\begin{definition}\label{de:varphi}
\begin{equation}\label{eq:varphidef}
\varphi(z,s) := \langle \pt_s + a_s^\alpha \pt_\alpha, \pt_s + a_s^\beta
\pt_\beta  \rangle_{\omega_\cX}
\end{equation}
\end{definition}

Since $\omega_\cX$ is not known to be positive definite in all directions,
$\varphi$ is not known to be non-negative at this point.
\begin{lemma}
\begin{equation}\label{eq:varphi}
\varphi =  g_{s\ol s} - g_{\alpha\ol s}
g_{s\ol\beta} g{^{\ol\beta\alpha}}
\end{equation}
\end{lemma}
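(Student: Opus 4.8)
The plan is to obtain \eqref{eq:varphi} by a direct computation: substitute the explicit coefficients of the canonical lift from Lemma~\ref{le:canlift} into Definition~\ref{de:varphi} and simplify. Recall that by Proposition~\ref{pr:harmrep} the canonical lift coincides with the $\omega_\cX$-horizontal lift, so $v=\pt_s+a_s^\alpha\pt_\alpha$ with $a_s^\alpha=-g^{\ol\beta\alpha}g_{s\ol\beta}$, and consequently $\overline{a_s^\alpha}=-g^{\ol\alpha\beta}g_{\beta\ol s}$ by the hermitian symmetry of $\gab$.

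First I would fix the convention that $\langle\pt_i,\pt_j\rangle_{\omega_\cX}=g_{i\ol\jmath}$ for indices $i,j$ ranging over $\{s,1,\dots,n\}$, with the second slot conjugate-linear, matching the way \eqref{eq:varphidef} is written. Expanding \eqref{eq:varphidef} then gives
$$
\varphi = g_{s\ol s} + a_s^\alpha g_{\alpha\ol s} + \overline{a_s^\beta}\,g_{s\ol\beta} + a_s^\alpha\overline{a_s^\beta}\,g_{\alpha\ol\beta}.
$$
Now I would substitute and use $g^{\ol\beta\alpha}g_{\alpha\ol\gamma}=\delta^{\ol\beta}_{\ol\gamma}$: the two linear cross terms each reduce to $-g_{\alpha\ol s}g^{\ol\beta\alpha}g_{s\ol\beta}$, while contracting $g_{\alpha\ol\beta}$ against one of the inverse metrics collapses the quadratic term to $+g_{\alpha\ol s}g^{\ol\beta\alpha}g_{s\ol\beta}$. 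Hence two of the three correction terms cancel the third, leaving $\varphi=g_{s\ol s}-g_{\alpha\ol s}g_{s\ol\beta}g^{\ol\beta\alpha}$, which is \eqref{eq:varphi}.

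Alternatively, and more conceptually, I would use that $v$ is by definition $\omega_\cX$-orthogonal to the vertical tangent space $\cT_{\cX/S}$, which is spanned by $\pt_1,\dots,\pt_n$. Since $v-\pt_s=a_s^\alpha\pt_\alpha$ is vertical, this at once yields $\varphi=\langle v,v\rangle_{\omega_\cX}=\langle v,\pt_s\rangle_{\omega_\cX}=g_{s\ol s}+a_s^\alpha g_{\alpha\ol s}$, and inserting $a_s^\alpha=-g^{\ol\beta\alpha}g_{s\ol\beta}$ from Lemma~\ref{le:canlift} gives the claim with no cancellation to track. Either way the argument is routine; the only point demanding care is the index and conjugation bookkeeping (the placement of the barred index in $g^{\ol\beta\alpha}$ and the choice of conjugate-linear slot in $\langle-,-\rangle_{\omega_\cX}$), which is why I would pin down that convention at the outset rather than midway through the expansion.
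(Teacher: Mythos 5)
Your proposal is correct and follows essentially the same route as the paper: the paper's proof is exactly your first computation, namely expanding $\varphi = g_{s\ol s} + g_{s\ol\beta}a^{\ol\beta}_{\ol s} + a_s^\alpha g_{\alpha\ol s} + a_s^\alpha a_{\ol s}^{\ol\beta}g_{\alpha\ol\beta}$ and inserting $a_s^\alpha=-g^{\ol\beta\alpha}g_{s\ol\beta}$ from Lemma~\ref{le:canlift}. Your alternative observation that horizontality makes $\langle v,v\rangle=\langle v,\pt_s\rangle$ is a pleasant shortcut but amounts to the same bookkeeping.
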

\begin{proof}
The proof follows from Lemma~\ref{le:canlift} and
$$
\varphi = g_{s\ol s} + g_{s\ol\beta}a^\ol\beta_{\ol s} + a_s^\alpha g_{\alpha\ol s}
+ a_s^\alpha a_{\ol s}^\ol\beta \gab.
$$
\end{proof}

Denote by $\omega^{n+1}_\cX$ the $(n+1)$-fold exterior product, divided by
$(n+1)!$ and by $dV$ the Euclidean volume element in fiber direction. Then
the global real function $\varphi$ satisfies the following property:
\begin{lemma}\label{le:varphi}
$$
\omega^{n+1}_\cX= \varphi \cdot g  dV\ii ds\wedge \ol{ds}.
$$
\end{lemma}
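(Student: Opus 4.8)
The plan is to compute the $(n+1)$-fold exterior power of $\omega_\cX$ directly in the local holomorphic coordinates $(z^1,\dots,z^n,s)$ on $\cX$, using the block decomposition of the coefficient matrix of $\omega_\cX = \ii g_{\alpha\ol\beta}\,dz^\alpha\wedge dz^\ol\beta + \ii g_{\alpha\ol s}\,dz^\alpha\wedge d\ol s + \ii g_{s\ol\beta}\,ds\wedge dz^\ol\beta + \ii g_{s\ol s}\,ds\wedge d\ol s$. First I would recall that for a real $(1,1)$-form with Hermitian coefficient matrix $H$ of size $(n+1)\times(n+1)$, the top exterior power $\omega_\cX^{n+1}$ (divided by $(n+1)!$) equals $\det(H)$ times the Euclidean volume form $\ii^{\,n+1}\,dz^1\wedge d\ol z^1\wedge\cdots\wedge ds\wedge d\ol s$, which in the notation of the statement is $g\,dV\,\ii\,ds\wedge\ol{ds}$ with $g = \det g_{\alpha\ol\beta}$ replaced by $\det H$. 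So the whole content is the linear-algebra identity
\[
\det H = \varphi\cdot \det(g_{\alpha\ol\beta}),
\]
where $H$ has the $n\times n$ block $(g_{\alpha\ol\beta})$, the column $(g_{\alpha\ol s})$, the row $(g_{s\ol\beta})$, and the corner entry $g_{s\ol s}$.

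The key step is the Schur complement formula for the determinant of a block matrix: since the fiber metric $(g_{\alpha\ol\beta})$ is positive definite, hence invertible with inverse $(g^{\ol\beta\alpha})$, one has
\[
\det H = \det(g_{\alpha\ol\beta})\cdot\bigl(g_{s\ol s} - g_{s\ol\beta}\,g^{\ol\beta\alpha}\,g_{\alpha\ol s}\bigr).
\]
The scalar factor on the right is exactly $\varphi$ by the previous Lemma, equation~\eqref{eq:varphi}. Assembling these pieces gives $\omega_\cX^{n+1} = \varphi\cdot g\,dV\,\ii\,ds\wedge\ol{ds}$ as claimed. One should also note that this computation makes the sign/positivity bookkeeping transparent: $\omega_\cX^{n+1}$ is a positive multiple of the relative volume times $\ii\,ds\wedge\ol{ds}$ precisely when $\varphi\ge 0$, which is consistent with the earlier remark that the sign of $\varphi$ is not yet known.

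I do not expect a genuine obstacle here; the only care needed is bookkeeping. The minor points to be careful about are: the factorial normalization (the $n$-fold power of the fiber part contributes $\det(g_{\alpha\ol\beta})\,n!$ and the cross terms with $ds\wedge d\ol s$ must be collected so that the overall combinatorial factor matches $(n+1)!$), and the powers of $\ii$ together with the ordering of the $dz^\alpha, d\ol z^\beta$ factors, so that the orientation conventions embedded in $dV$ and in $\ii\,ds\wedge\ol{ds}$ come out right. Once one fixes these conventions as in Section~\ref{se:posi}, the identity is immediate from the Schur complement expansion, and the substitution via \eqref{eq:varphi} finishes the proof.
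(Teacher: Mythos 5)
Your proposal is correct and matches the paper's argument: the paper's proof is precisely to compute the $(n+1)\times(n+1)$ determinant with blocks $g_{s\ol s}$, $g_{s\ol\beta}$, $g_{\alpha\ol s}$, $g_{\alpha\ol\beta}$, which via the Schur complement expansion equals $\varphi\cdot\det(g_{\alpha\ol\beta})$ by \eqref{eq:varphi}. Your additional remarks on normalization and orientation are just the bookkeeping implicit in the paper's conventions ($\omega_\cX^{n+1}$ denoting the power divided by $(n+1)!$), so there is no substantive difference.
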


\begin{proof}
Compute the following $(n+1)\times(n+1)$-determinant
$$ \det
\left(
\begin{array}{cc}
g_{s\ol s} & g_{s\ol\beta}\\ g_{\alpha\ol s}& \gab
\end{array}
\right),
$$
where $\alpha,\beta=1,\ldots,n$.
\end{proof}

So far we are looking at {\it local} computations, which essentially only
involve derivatives of certain tensors. The only {\it global ingredient}\/
is the fact that we are given global solutions of the \ke equation.

The essential quantity is the differentiable function $\varphi$ on $\cX$.
Restricted to any fiber it associates the yet to be proven positivity of
the hermitian metric on the relative canonical bundle with the canonical
lift of tangent vectors, which is related to the harmonic \ks forms.

We use the Laplacian operators $\Box_{g,s}$ with non-negative eigenvalues
on the fibers $\cX_s$ so that for a real valued function $\chi$ the
Laplacian equals $\Box_{g,s}\chi
 = - g^{\ol\beta\alpha}\chi_{;\alpha\ol\beta}$.
\begin{proposition}\label{pr:elleq}
The following elliptic equation holds fiberwise:
\begin{equation}\label{eq:phiA}
(\Box_{g,s} + {\rm id})\varphi(z,s) = \|A_s(z,s)\|^2,
\end{equation}
where
$$
A_s=A^\alpha_{s\ol\beta} \frac{\pt}{\pt z^\alpha}dz^\ol\beta.
$$
\end{proposition}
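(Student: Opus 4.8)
The plan is to reduce everything to a computation on a single fibre $X=\cX_{s_0}$ (recall that it suffices to take $\dim S=1$, $S\subset\C$) and to split $\varphi$ by means of \eqref{eq:varphi},
$$
\varphi \;=\; g_{s\ol s}-g_{\alpha\ol s}g_{s\ol\beta}g^{\ol\beta\alpha}\;=\;g_{s\ol s}-\|a_s\|^2 ,
$$
where $a_s=a_s^\alpha\pt_\alpha$ is the vertical part of the canonical (horizontal) lift $v$ and $\|\cdot\|$ is the pointwise norm in the fibre metric. I would compute $(\Box+1)$ of each summand and show that the ``$\dot g$-terms'' cancel, with $\dot g_{\alpha\ol\beta}:=\pt_s\gab$, while the remaining contributions assemble to $\|A_s\|^2$. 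Throughout, the only global inputs are the two structural facts that $\log g$ is a local $\pt\ol\pt$-potential for $\omega_\cX$ and that the fibres are \ke with Ricci curvature $-1$, i.e.\ \eqref{eq:ke}.

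For $g_{s\ol s}$ only the potential is needed. From $g=\det\gab$ one has the log-determinant identity $\pt_s\log g=g^{\ol\beta\alpha}\pt_s\gab$; differentiating it again in the $\ol s$-direction and using that mixed partials of $\log g$ commute (so that $\pt_\alpha\pt_{\ol\beta}g_{s\ol s}=\pt_s\pt_{\ol s}\gab$), one obtains, in a fixed coordinate system,
$$
(\Box+1)\,g_{s\ol s}\;=\;-\,g^{\ol\beta\mu}g^{\ol\nu\alpha}\,(\pt_s g_{\mu\ol\nu})\,(\pt_{\ol s}g_{\alpha\ol\beta})\;=:\;-\,\|\dot g\|^2 ,
$$
a non-negative quantity since $\pt_{\ol s}\gab=\ol{\pt_s g_{\beta\ol\alpha}}$.

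For $\|a_s\|^2$ I would run the Bochner computation for the smooth $(1,0)$-vector field $a_s$: expand $\pt_\mu\pt_{\ol\nu}\bigl(\gab a_s^\alpha\ol{a_s^\beta}\bigr)$ in normal coordinates centred at an arbitrary point of $X$. Three kinds of terms occur. (i) The term carrying $\pt_\mu\pt_{\ol\nu}\gab$ is the curvature contraction ${\rm Ric}_{\alpha\ol\beta}\,a_s^\alpha\ol{a_s^\beta}$, which by \eqref{eq:ke} equals $-\|a_s\|^2$; this is exactly what absorbs the ``$+1$''. (ii) The traces of the mixed second covariant derivatives, $g^{\ol\nu\mu}\nabla_\mu\nabla_{\ol\nu}a_s^\alpha$ and its conjugate, are — since $\nabla_{\ol\nu}a_s^\alpha=A_{s\ol\nu}^\alpha$ — up to sign the components of $\ol\pt^{*}A_s$, hence vanish because $A_s$ is harmonic (Proposition~\ref{pr:harmrep}). (iii) The two surviving ``square'' terms are $-\,g^{\ol\nu\mu}\gab A_{s\ol\nu}^\alpha\ol{A_{s\ol\mu}^\beta}=-\|A_s\|^2$ and $-\,g^{\ol\nu\mu}\gab(\nabla_\mu a_s^\alpha)\ol{(\nabla_\nu a_s^\beta)}$; using $\nabla_\mu a_s^\alpha=-g^{\ol\gamma\alpha}\pt_s g_{\mu\ol\gamma}$ (which follows from Lemma~\ref{le:canlift} together with $\pt_\mu g_{s\ol\gamma}=\pt_s g_{\mu\ol\gamma}$) the last term equals $-\|\dot g\|^2$. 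Collecting, $(\Box+1)\|a_s\|^2=-\|A_s\|^2-\|\dot g\|^2$, whence
$$
(\Box+1)\varphi\;=\;-\|\dot g\|^2-\bigl(-\|A_s\|^2-\|\dot g\|^2\bigr)\;=\;\|A_s\|^2 .
$$
(One could instead differentiate the closed formula \eqref{eq:varphi} for $\varphi$ directly, using the same two ingredients; the split merely keeps the bookkeeping transparent.)

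The main obstacle is the Bochner step for $\|a_s\|^2$: the expansion of $\pt_\mu\pt_{\ol\nu}\|a_s\|^2$ produces several terms, and one must (a) isolate the curvature term correctly and use the \ke equation to turn it into $-\|a_s\|^2$, thereby absorbing the zeroth-order part of $\Box+1$, and (b) recognise that the apparently dangerous ``$\nabla\nabla a_s$'' terms are precisely $\ol\pt^{*}A_s$, so that harmonicity of the canonical \ks representative — itself a consequence of \eqref{eq:ke} via Proposition~\ref{pr:harmrep} — makes them disappear. Everything else is the routine interplay between the relative volume form $g=\det\gab$, its logarithm as a Kähler potential on $\cX$, and the commutation of $z$- and $s$-derivatives.
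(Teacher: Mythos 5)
Your proposal is correct and takes essentially the same route as the paper: the paper also splits $\varphi=g_{s\ol s}-a_s^\sigma a_{\ol s\sigma}$, computes the fiberwise trace of second derivatives of each summand, uses the commutation of $s$- and $z$-derivatives of $\log g$ together with the \ke equation, and eliminates the divergence terms by the harmonicity of $A_s$, so that the $\|\dot g\|^2$-type contributions cancel exactly as in your bookkeeping. The only cosmetic difference is that you extract the curvature term from $\pt_\mu\pt_{\ol\nu}g_{\alpha\ol\beta}$ in normal coordinates, while the paper obtains it by commuting covariant derivatives on $a_s^\sigma$ via $a^\sigma_{s;\gamma\ol\delta}=A^\sigma_{s\ol\delta;\gamma}+a_s^\lambda R^\sigma{}_{\lambda\gamma\ol\delta}$.
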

\begin{proof}
In order to prove such an elliptic equation for $\varphi$ on the fibers,
we need to eliminate the second order derivatives with respect to the base
parameter. This is achieved by the left hand side of \eqref{eq:phiA}.
First,
\begin{eqnarray*}
g^{\ol\delta\gamma}g_{s\ol s;\gamma\ol\delta}
&=&g^{\ol\delta\gamma}\partial_s\partial_\ol s g_{\gamma\ol\delta}\\
&=&\partial_s(g^{\ol\delta\gamma}\partial_\ol s g_{\gamma\ol\delta})
 -a_s^{\gamma;\ol\delta}\partial_\ol s g_{\gamma\ol\delta}\\
&=&\partial_s\partial_\ol s \log g
 +a_s^{\gamma;\ol\delta} a_{\ol s\gamma;\ol\delta}\\
&=& g_{s \ol s}
 +a_s^\sigma{}_{;\gamma} a_{\ol s\sigma;\ol\delta} g^{\ol\delta\gamma}.
\end{eqnarray*}
Next
\begin{eqnarray*}
(a_s^\sigma a_{\ol s\sigma})_{;\gamma\ol\delta}g^{\ol\delta\gamma}
&=\left(a_s^\sigma{}_{;\gamma\ol\delta} a_{\ol s\sigma}
        +A_{s\ol\delta}^\sigma A_{\ol s\sigma\gamma}
        +a_{s;\gamma}^\sigma a_{\ol s\sigma;\ol\delta}
        +a_s^\sigma A_{\ol s\sigma\gamma;\ol\delta}
\right)g^{\ol\delta\gamma} .
\end{eqnarray*}
The last term vanishes because of the harmonicity of $A_s$, and
\begin{eqnarray*}
a_{s;\gamma\ol\delta}^\sigma g^{\ol\delta\gamma}
&=&A_{s\ol\delta;\gamma}^\sigma g^{\ol\delta\gamma}
  +a_s^\lambda R^\sigma{}_{\lambda\gamma\ol\delta}g^{\ol\delta\gamma}\\
&=&0-a_s^\lambda R^\sigma{}_\lambda\\
 &=& a_s^\sigma .
\end{eqnarray*}
\end{proof}
\begin{proof}[Proof of Theorem~\ref{th:main}]
It is sufficient to show that the function $\varphi$ from
Definition~\ref{de:varphi} is strictly positive, since
$\omega_\cX|\cX_s=\omega_{\cX_s}$ is known to be positive definite. The
positivity of $\varphi$ follows from \eqref{eq:phiA} in
Proposition~\ref{pr:elleq} together with Proposition~\ref{pr:resol}.
\end{proof}

\begin{definition}\label{de:wpherm}
The \wp hermitian product on $T_sS$ is given by the $L^2$-inner product of
harmonic \ks forms:
\begin{equation}\label{eq:wpherm}
\Big\|\frac{\pt}{\pt s}\Big\|^2_{WP}:= \int_{\cX_s} A^\alpha_{s\ol\beta} A^\ol\delta_{\ol s\gamma }
g_{\alpha\ol\delta}g^{\ol\beta\gamma} g \, dV =
\int_{\cX_s} A^\alpha_{s\ol\beta} A^\ol\beta_{\ol s\alpha } g \, dV
\end{equation}
If the tangent vectors ${\pt}/{\pt s^i}\in T_sS$ are part of a basis, we
denote by $G^{WP}_{i\ol\jmath}(s)$ the inner product $\langle
\pt/\pt_i|_s, \pt/\pt_j|_s \rangle_{WP}$, and set
$$
\omega^{WP}:= \ii G^{WP}_{i\ol\jmath} ds^i\we ds^{\ol\jmath}.
$$
\end{definition}
Observe that the generalized \wp form is equal to a fiber integral (cf.\
{\cite[Theorem 7.8]{f-s:extremal}}). The above approach yields a simple
proof of this fact, which also implies the \ka property of $\omega^{WP}$:
\begin{proposition}\label{pr:fibint}
\begin{equation}\label{eq:wpfib}
\omega^{WP} = \int_{\cX/S} \omega^{n+1}_\cX.
\end{equation}
\end{proposition}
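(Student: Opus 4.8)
The plan is to compute the fiber integral $\int_{\cX/S}\omega_\cX^{n+1}$ directly and identify it with $\omega^{WP}$ using the formula for $\omega_\cX^{n+1}$ already obtained in Lemma~\ref{le:varphi} together with the elliptic equation of Proposition~\ref{pr:elleq}. Since both sides are $(1,1)$-forms on $S$ and the statement is local, it suffices to work over a one-dimensional base $S\subset\C$ with coordinate $s$, and to compare the coefficient functions of $\ii\,ds\wedge d\ol s$.

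First I would invoke Lemma~\ref{le:varphi}, which gives
$$
\omega_\cX^{n+1} = \varphi\cdot g\,dV\,\ii\,ds\wedge d\ol s,
$$
so that by the definition of fiber integration (Section~\ref{sb:fibint}),
$$
\int_{\cX/S}\omega_\cX^{n+1} = \left(\int_{\cX_s}\varphi(z,s)\,g\,dV\right)\ii\,ds\wedge d\ol s.
$$
Next I would evaluate $\int_{\cX_s}\varphi\,g\,dV$ using Proposition~\ref{pr:elleq}: applying $(\Box_{g,s}+\mathrm{id})$ to $\varphi$ yields $\|A_s\|^2$, and since $\int_{\cX_s}\Box_{g,s}\varphi\,g\,dV = 0$ (the integral of a Laplacian over a compact manifold vanishes), we get
$$
\int_{\cX_s}\varphi\,g\,dV = \int_{\cX_s}\|A_s(z,s)\|^2\,g\,dV = \Big\|\tfrac{\pt}{\pt s}\Big\|^2_{WP},
$$
the last equality being exactly Definition~\ref{de:wpherm}. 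Hence the coefficient of $\ii\,ds\wedge d\ol s$ in $\int_{\cX/S}\omega_\cX^{n+1}$ is $G^{WP}_{s\ol s}(s)$, which is the coefficient of $\omega^{WP}$, proving \eqref{eq:wpfib} over a one-dimensional base.

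Finally I would remark that the general case follows by functoriality: restricting to a disc through any point of $S$ in any coordinate direction and applying the one-dimensional result recovers all components $G^{WP}_{i\ol\jmath}$ of $\omega^{WP}$, since the \ke metrics on the fibers, the canonical lifts, and hence $\varphi$ are all compatible with base change; the mixed components are handled the same way after polarization in the tangent vectors. The only genuinely substantive input is the vanishing $\int_{\cX_s}\Box_{g,s}\varphi\,g\,dV=0$ together with Proposition~\ref{pr:elleq}, so there is no real obstacle here — the proposition is essentially a bookkeeping consequence of results already established. The one point requiring a little care is that fiber integration and the type decomposition behave correctly over singular or non-reduced base spaces, but this is covered by the conventions of Section~\ref{sb:fibint} and the reduction to a double point.
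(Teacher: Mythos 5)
Your proposal is correct and follows exactly the paper's intended argument: the paper's proof of Proposition~\ref{pr:fibint} consists precisely of combining Lemma~\ref{le:varphi} with Proposition~\ref{pr:elleq}, integrating the elliptic equation over the compact fiber so that the Laplacian term drops out and $\int_{\cX_s}\varphi\, g\,dV$ equals the \wp norm of Definition~\ref{de:wpherm}. The reduction to a one-dimensional base and polarization for mixed components is the same bookkeeping the paper leaves implicit, so there is nothing further to add.
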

The {\it proof}\/ follows from Lemma~\ref{le:varphi} and
Proposition~\ref{pr:elleq}.

\section{Fiber integrals and Quillen metrics}\label{se:bgs}
In this section we summarize the methods how to produce a positive line
bundle on the base of a holomorphic family from \cite[\S 10]{f-s:extremal}.
Let $f:\cX \to S$ be a proper, smooth holomorphic map of reduced complex
spaces and $\omega_{\cX/S}$ a closed real $(1,1)$-form on $\cX$, whose
restrictions to the fibers are \ka forms. Let $(\cE, h)$ be a hermitian
vector bundle on $\cX$. We denote the determinant line bundle of $\cE$ in
the derived category by
$$
\lambda(\cE)=\det f_!(\cE).
$$
The main result of Bismut, Gillet and Soulé from \cite{bgs} states the
existence of a Quillen metric $h^Q$ on the determinant line bundle such
that the following equality holds for its Chern form on the base $S$ and
the component in degree two of a fiber integral:
\begin{equation}\label{eq:bgs}
c_1(\lambda(\cE),h^Q)= \left[\int_{\cX/S}\textit{td}
(\cX/S,\omega_{\cX/S})\textit{ch}(\cE,h)\right]_2
\end{equation}
Here $\textit{ch}$ and $\textit{td}$ stand for the Chern and Todd
character forms.

The formula is applied to a virtual bundle of degree zero (cf.\
\cite[Remark 10.1]{f-s:extremal} for the notion of virtual bundles). It
follows immediately from the definition that the determinant line bundle
of a virtual vector bundle is well-defined (as a line bundle).

Let $(\cL,h)$ be a hermitian line bundle, and set $\cE=(\cL-
\cL^{-1})^{n+1}$. The difference is taken in the Grothendieck group, and
the product is the tensor product. Since the term of degree zero in the
Chern character $\text{ch}(\cL- \cL^{-1})$ is equal to the (virtual) rank,
which is zero, and the first term is $2c_1(\cL)$, we conclude that
$\text{ch}(\cE)$ is equal to
\begin{equation}\label{eq:fib}
2^{n+1} c_1(\cL)^{n+1}
\end{equation}
plus higher degree terms. Furthermore, the hermitian structure on $\cL$
provides $\cE$ with a natural Chern character form.

Hence the only contribution of the Todd character form in \eqref{eq:bgs}
is the constant $1$ resulting in the following equality
\begin{equation}\label{eq:fib0}
c_1(\lambda(\cE),h^Q) = 2^{n+1} \int_{\cX/S}c_1(\cL,h)^{n+1}.
\end{equation}

Following \cite[Theorem 11.10]{f-s:extremal} we apply this construction to
families of canonically polarized varieties. Let $f:\cX \to S$ be any
(smooth) family of canonically polarized manifolds over a reduced complex
space. The generalized \wp form $\omega^{WP}_S$ on $S$ was proven to be
equal to a certain fiber integral. We will use the notion $\simeq$ for
equality up to a numerical factor.

We set $\cL=\cK_{\cX/S}$ in \eqref{eq:bgs}. Equation \eqref{eq:fib0}
yields
\begin{equation}\label{eq:fibint}
c_1(\lambda(\cE),h^Q) \simeq \int_{\cX/S} \omega_\cX^{n+1},
\end{equation}
where $\omega_\cX= c_1(\cK_{\cX/S},h)$, with $h$ induced by the \ke volume
forms on the fibers.

On the other hand Lemma~\ref{le:varphi} together with
Proposition~\ref{pr:elleq} implied that the fiber integral
\eqref{eq:fibint} is equal to the generalized \wp form:
$$
\omega^{WP}(s) = \int_{\cZ_s} A^\alpha_{i\ol\beta} A^\ol\delta_{\ol\jmath\gamma}
g_{\alpha\ol\delta}g^{\ol\beta\gamma} g \, dV \ii ds^i\wedge ds^\ol\jmath,
$$
where the forms $A^\alpha_{i\ol\beta}\pt_\alpha dz^{\ol\beta} $ are the
harmonic representatives of the \ks classes $\rho_s({\pt/\pt s_i}|_s)$,
i.e.\
\begin{equation}\label{eq:fibint_ke}
\omega^{WP}_S\simeq \int_{\cX/S} c_1({\cK_{\cX/S}}, h)^{n+1}.
\end{equation}
Now
\begin{equation}\label{eq:detbdl} c_1(\det f_!
((\cK_{\cX/S}-\cK_{\cX/S}^{-1})^{n+1}), h^Q) \simeq \omega^{WP}_S.
\footnote{Also the element $(\cK_{\cX/S}-\cO_\cX)^{n+1}$ of the
relative Grothendieck group can be taken instead.}
\end{equation}

We consider the situation of Hilbert schemes of canonically polarized
varieties.

After fixing the Hilbert polynomial and a multiple $m$ of the canonical
bundles in the family that yields very ampleness, we consider the
universal embedded family over the Hilbert scheme
$$
\xymatrix{\cX \ar[r]^i \ar[dr]_f & \mathbb P_N \times \cH \ar[d]^{pr} \\ & \cH.
}
$$
In this sense we modify the determinant line bundle and consider
$$
\lambda=\det f_! ((\cK_{\cX/\cH}^{\otimes m}-(\cK_{\cX/\cH}^{-1})^{\otimes m})^{n+1}),
$$
which only yields an extra factor $m^{n+1}$ in front of the \wp form
$\omega^{WP}$ on $\cH$.

We point out, how singularities of base spaces (and smooth maps) were
treated in \cite[Theorem 10.1 and \S 12]{f-s:extremal}: Given a (local)
deformation of a canonically polarized variety, equipped with a \ke metric
over a reduced singular base, the latter is embedded into a smooth ambient
space. The deformation is computed in terms of certain elliptic operators,
which are meaningful for all neighboring points. The integrability
condition for the respective almost complex structures, i.e.\ the
vanishing of the Nijenhuis torsion tensor, determines the singular base
space. Now the implicit function theorem yields \ke metrics on neighboring
fibers -- the solutions also exist (without being too significant) for
points of the base, where the almost complex structure is not integrable.
This procedure yields a potential for the relative \ke forms that comes
from a differentiable function on the smooth ambient space. By fiber
integration the \wp form is being computed; again, it possesses a
$\pt\ol\pt$-potential, which is the restriction of a $\cinf$-function on
the smooth ambient space.

\section{An extension theorem for hermitian line
bundles\\and positive currents}\label{se:extlinebundles} Let $\cL$ be a
holomorphic line bundle on a reduced, complex space $\cZ$. Then a
semi-positive, singular hermitian metric $h$ on $\cL$ is defined by the
property that the locally defined function $-\log h$ is plurisubharmonic
(and locally integrable), when pulled back to the normalization of the
space. By definition, a {\em positive current} takes non-negative values
on semi-positive differential forms.

\begin{definition} Let $\chi$ be a plurisubharmonic function on an open subset of
$\C^n$. We say that $\chi$ has {\em at most analytic singularities}, if
locally
$$
\chi \geq \gamma\log \sum^{k}_{\nu=1} |f_\nu|^2 + const.
$$
holds, for holomorphic functions $f_\nu$ and some $\gamma>0$. In this
situation, we say that a (locally defined) positive, singular hermitian
metric of the form
$$
h=e^{-\chi}
$$
has at most analytic singularities. This property will be also assigned to
a locally $\pt\ol\pt$-exact, positive current of the form
$$
\omega=\ii \pt\ol\pt \chi.
$$
\end{definition}

For any positive closed $(1,1)$-current $T$ on a complex manifold $Y$ the
Lelong number at a point $x$ is denoted by $\nu(T,x)$, and for any $c>0$
we have the associated sets $E_c(T)=\{ x; \nu(T,x)\geq c\}$. According to
\cite[Main Theorem]{siu:curr} these are closed analytic sets.

We will use in an essential way Siu's decomposition formula for  positive,
closed currents on complex manifolds. We state it for $(1,1)$-currents.
\begin{theorem*}[{\cite{siu:curr}}]
Let $\omega$ be a closed positive $(1,1)$-current. Then $\omega$ can be
written as a series of closed positive currents
\begin{equation}\label{eq:decomp}
\omega = \sum_{k=0}^\infty \mu_k [Z_k] + R,
\end{equation}
where the $[Z_k]$ are currents of integration over irreducible analytic
sets of codimension one, and R is a closed positive current with the
property that $\dim E_c(R) < \dim Y-1$ for every $c > 0$. This
decomposition is locally and globally unique: the sets $Z_k$ are precisely
components of codimension one occurring in the sublevel sets
$E_c(\omega)$, and $\mu_k = \min_{x\in Z_k} \nu(\omega; x)$ is the generic
Lelong number of $\omega$ along $Z_k$.
\end{theorem*}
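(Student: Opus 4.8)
The plan is to read off the hypersurface part of $\omega$ directly from its Lelong sublevel sets and then to verify that what remains carries no mass in codimension one. First I would invoke the Main Theorem of \cite{siu:curr}, quoted above, which guarantees that each $E_c(\omega)=\{x;\nu(\omega,x)\ge c\}$ is a closed analytic subset of $Y$. Let $\{Z_k\}$ be the collection of all irreducible components of codimension one occurring in $E_c(\omega)$ for some $c>0$. Since the irreducible components of a fixed analytic set are locally finite and $c$ may be restricted to the positive rationals (by upper semicontinuity of $\nu$), this collection is at most countable. For each $k$ I set $\mu_k=\min_{x\in Z_k}\nu(\omega,x)$; upper semicontinuity of the Lelong number together with the analyticity of the sublevel sets shows that the minimum is attained and equals the generic value of $\nu(\omega,\cdot)$ along $Z_k$, so $\mu_k>0$. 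These are the generic Lelong numbers asserted in the statement.

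The heart of the argument is the local extraction estimate $\omega\ge\mu_k[Z_k]$. Working in a small chart I would write $Z_k=\{f=0\}$ with $f$ holomorphic and reduced, and use that the closed positive $(1,1)$-current $\omega$ admits a local plurisubharmonic potential $\varphi$ with $\omega=\ddb\,\varphi$. By the Lelong--Poincar\'e formula the current of integration $[Z_k]$ equals $\ddb\log|f|$ up to the standard normalizing constant, so it suffices to show that $\psi:=\varphi-\mu_k\log|f|$ is again plurisubharmonic. Off $Z_k$ this is clear; across the regular part of $Z_k$ I slice by discs transverse to $Z_k$, on which the restriction of $\varphi$ is subharmonic with a logarithmic singularity of coefficient at least $\mu_k$, so that subtracting $\mu_k\log|f|$ leaves a function bounded above near $Z_k$. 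The extension theorem for plurisubharmonic functions across the pluripolar analytic set $Z_k$ (including its singular locus, where the transverse slicing is unavailable) then promotes $\psi$ to a plurisubharmonic function, i.e.\ $\omega-\mu_k[Z_k]\ge0$. As the $Z_k$ are distinct irreducible hypersurfaces with disjoint generic supports, the individual estimates combine to $\omega\ge\sum_k\mu_k[Z_k]$, the sum being locally finite, so that $R:=\omega-\sum_k\mu_k[Z_k]$ is a well-defined closed positive $(1,1)$-current.

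It remains to bound $\dim E_c(R)$ and to prove uniqueness. Lelong numbers are additive along the decomposition, and $\nu([Z_k],x)$ is the multiplicity of $Z_k$ at $x$, hence $1$ at a generic point of $Z_k$ and $0$ off $Z_k$; therefore the generic Lelong number of $R$ along each $Z_k$ is $\mu_k-\mu_k=0$, while $R$ has positive generic Lelong number along no other hypersurface. Applying the Main Theorem of \cite{siu:curr} to $R$ shows that the $E_c(R)$ are analytic, and by the preceding sentence none of them contains a component of codimension one, so $\dim E_c(R)<\dim Y-1$ for every $c>0$. For uniqueness I would observe that the codimension-one components of the sublevel sets $E_c(\omega)$ and the generic Lelong numbers $\mu_k$ are intrinsic invariants of $\omega$: any second decomposition $\omega=\sum_k\mu_k'[Z_k']+R'$ with $\dim E_c(R')<\dim Y-1$ forces $R'$ to have vanishing generic Lelong number along every hypersurface, so both hypersurface parts record the same data and agree, whence $R=R'$. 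The main obstacle is the extraction estimate of the second paragraph — establishing that subtracting $\mu_k\log|f|$ preserves plurisubharmonicity across the singular locus of $Z_k$ — which is resolved by the upper-bound-plus-extension argument, the analyticity of $E_c(\omega)$ furnished by \cite{siu:curr} being the indispensable input that loci of positive Lelong number are genuinely analytic.
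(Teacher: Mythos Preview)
The paper does not prove this theorem at all: it is quoted from \cite{siu:curr} as a known result and used as a black box in Section~\ref{se:extlinebundles}. There is therefore no proof in the paper to compare your argument against.

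That said, your outline is the standard route to the decomposition formula, with Siu's analyticity theorem for the $E_c(\omega)$ as the essential input. Two points deserve tightening. First, the passage ``the individual estimates combine to $\omega\ge\sum_k\mu_k[Z_k]$'' is not a formal consequence of the separate inequalities $\omega\ge\mu_k[Z_k]$; one cannot add lower bounds. The correct step is inductive: having extracted $\mu_1[Z_1]$, the residual current $\omega-\mu_1[Z_1]$ is still closed positive and, since $Z_2\ne Z_1$, retains generic Lelong number $\mu_2$ along $Z_2$, so the extraction lemma applies again; iterating over any finite subfamily gives the finite-sum inequality, and then one passes to the limit. Second, the phrase ``the sum being locally finite'' is inaccurate: the family $\{Z_k\}$ need not be locally finite as a family of hypersurfaces. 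What is true is that the partial sums $\sum_{k\le N}\mu_k[Z_k]$ are dominated in mass by $\omega$ on every compact set, hence converge weakly, and the limit is again bounded above by $\omega$. With these two adjustments your argument is complete.
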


We want to prove an extension theorem for hermitian line bundles, whose
curvature forms $\omega$ extend as positive currents. The idea is to treat
the non-integer part
$$
\sum_{k=0}^\infty (\mu_k - \lfloor \mu_k\rfloor)[Z_k].
$$
of the decompositions in \eqref{eq:decomp}.

\begin{theorem}\label{th:extlinebdl}
Let $Y$ be a normal complex space and $Y' = Y \setminus A$ the complement
of a nowhere dense, closed, analytic subset. Let $L'$ be a holomorphic
line bundle together with a hermitian metric $h'$ of semi-positive
curvature, which also may be singular. Assume that the curvature current
$\omega'$ of $(L',h')$ possesses an extension $\omega$ to $Y$ as a closed,
positive current. Then there exists a holomorphic line bundle $(L,h)$ with
a singular, positive hermitian metric, whose restriction to $Y'$ is
isomorphic to $(L',h')$. If $\omega$ has at most analytic singularities,
then $h$ can be chosen with this property.
\end{theorem}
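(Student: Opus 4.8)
The plan is to reduce the global statement to a local construction glued by Siu's decomposition theorem, following the idea already flagged before the statement: isolate the fractional part of the Lelong multiplicities. First I would work on the normalization and, using that $A$ is nowhere dense analytic, apply Siu's decomposition to $\omega$ on $Y$, writing $\omega = \sum_k \mu_k[Z_k] + R$ with $R$ having small Lelong sublevel sets. Since $(L',h')$ already exists on $Y'$, the components $Z_k$ that meet $Y'$ contribute integral divisors there, so the only genuinely new data sit over $A$; more precisely, the obstruction to extending $L'$ as a line bundle across $A$ is governed by $\sum_k \lfloor \mu_k \rfloor [Z_k]$ restricted to a neighborhood of $A$, which is an honest (Weil, hence Cartier after possibly passing to the normalization since $Y$ is normal and these are codimension-one) divisor. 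I would set $D := \sum_k \lfloor \mu_k\rfloor [Z_k]$ and define $L := L'$ extended by $\cO_Y(D)$ in the obvious way: on $Y'$ the bundle is $L'$, near $A$ it is $\cO_Y(D)$ twisted by the part of $L'$ that already extends.

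Next I would put the metric on $L$. The current $\omega - \sum_k \lfloor \mu_k\rfloor[Z_k]$ is still positive and closed (subtracting integral divisor currents from $\omega$ preserves positivity as long as the multiplicities do not exceed the Lelong numbers, which holds since $\mu_k \geq \lfloor\mu_k\rfloor$), and it represents $c_1$ of the metric $h'$ twisted by the natural singular metric $e^{-\log|s_D|^2}$ on $\cO_Y(D)$ coming from the canonical section $s_D$. Locally $\omega - \lfloor\mu_k\rfloor[Z_k] = \ii\pt\ol\pt\psi$ for a \psh\ function $\psi$ (the local potential of $\omega$ minus $\sum \lfloor\mu_k\rfloor\log|g_k|^2$, where $Z_k = \{g_k=0\}$), and these local \psh\ functions differ on overlaps by $\log$ of the modulus squared of the transition functions of $L$ — this is precisely the compatibility needed to glue them into a global singular hermitian metric $h = e^{-\psi}$ on $L$ with $\ii\pt\ol\pt(-\log h) = \omega - \sum_k\lfloor\mu_k\rfloor[Z_k] \geq 0$. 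On $Y'$ this metric agrees with $h'$ up to the smooth twist, so $(L,h)|_{Y'} \cong (L',h')$ after absorbing the (on $Y'$ trivial, since there $D$ restricts to a principal divisor whose defining section is a unit times the old transition data) identification. The analytic-singularities clause follows because if the local potentials of $\omega$ satisfy $\chi \geq \gamma\log\sum|f_\nu|^2 + \mathrm{const}$, then subtracting $\sum_k\lfloor\mu_k\rfloor\log|g_k|^2$ — itself of this form — preserves the estimate, so $h$ inherits at most analytic singularities.

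The main obstacle I anticipate is the gluing/Cartier issue at singular points of $Y$: a priori the $Z_k$ are only Weil divisors, and on a general normal space Weil divisors need not be Cartier, so $\cO_Y(D)$ need not be a line bundle. Here one must use that $D$ is supported in a neighborhood of $A$ and that, outside $A$, $D$ is the zero/pole divisor of the section of $L'$ with respect to $h'$ realized as a meromorphic-type object; more carefully, one argues that the \emph{difference} between the candidate $L$ near $A$ and $L'$ on $Y' \cap (\text{neighborhood of }A)$ is given by a system of local holomorphic functions defining the $Z_k$ with the correct multiplicities, and these patch to transition functions of an honest line bundle precisely because the ambient current $\omega$ is globally well-defined and closed. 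One should also check local integrability of $-\log h$ on the normalization, but this is automatic: $\psi$ is a difference of \psh\ functions (the potential of $\omega$) and pluriharmonic-plus-$\log|g_k|$ terms, hence locally integrable, so $h$ is a legitimate singular hermitian metric in the sense fixed in Section~\ref{se:extlinebundles}. A secondary technical point is the passage between $Y$ and its normalization throughout, but since $Y$ is assumed normal this is vacuous in the hypothesis; one only needs it implicitly inside Siu's theorem, which is stated on manifolds, so one first pulls everything back to a resolution, performs the construction there, and descends — the descent is harmless because line bundles and singular metrics with \psh\ local weights push forward along proper modifications with connected fibers onto normal targets.
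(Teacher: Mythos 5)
Your reduction of the problem to ``integer part versus fractional part along codimension-one components of $A$'' is in the right spirit, but the specific recipe you use to identify these parts is wrong: you take $D=\sum_k\lfloor\mu_k\rfloor[Z_k]$, where the $\mu_k$ are the Lelong numbers of the \emph{chosen} extension $\omega$, and you build both the bundle twist and the metric from $D$. The twisting data needed to extend $(L',h')$ is not determined by the Lelong numbers of $\omega$ at all; it is determined by the period (monodromy) of the pluriharmonic function $\log(e^{\psi^0}h')$ around each component, i.e.\ the number $\beta$ in \eqref{eq:period}, whose integer part is absorbed into a change of bundle coordinates $z^{-k}e^{f'}$ and whose fractional part $\gamma$ must be \emph{added} to the potential, giving curvature $\omega+\pi\gamma_A[A]$; the nontrivial point, which your argument has no substitute for, is that $\gamma$ is constant along each component (\eqref{eq:beta}), which is what makes the corrected transition functions extend holomorphically. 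A minimal example showing your recipe fails: $Y=\Delta\times\Delta^{n-1}$, $A=\{z_1=0\}$, $L'=\cO_{Y'}$, $h'=|z_1|^{-2c}$ with $c\notin\mathbb Z$, and $\omega=0$ as the extension of $\omega'=0$. All Lelong numbers $\mu_k$ vanish, so your construction returns the trivial bundle with a metric of curvature $\omega-D=0$; but no metric with local weight $\psi$ satisfying $\ii\pt\ol\pt\psi=0$ can be isometric to $h'$ on $Y'$, since that would force a single-valued holomorphic unit with $|u|=|z_1|^{c}$. The fractional singularity $|z_1|^{-\gamma}$ must survive in $h$, and it is invisible to $\omega$.

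Two further steps are asserted rather than proved. First, the Cartier/gluing problem you flag (the $Z_k$ are only Weil divisors on a normal $Y$, and even where $L'$ extends locally the chosen local extensions need not glue) is exactly the crux; saying the transition data ``patch because $\omega$ is globally well-defined and closed'' is not an argument, since closedness of $\omega$ carries no information about the multivaluedness of the discrepancy between $-\log h'$ and local potentials of $\omega$. Second, your descent from a resolution is not ``harmless'': a line bundle on a modification $Z\to Y$ does not push forward to a line bundle on $Y$ (the direct image of an invertible sheaf is generally not invertible). The paper handles precisely these two points by first proving the simple normal crossings case (Proposition~\ref{pr:extlin}, via the period argument sketched above), then passing to an embedded resolution of $(Y,A)$, extending there, and descending with the determinant of the direct image $\det((\tau\circ\mu)_!\wt\cL)$, using normality of $Y$ to get uniqueness of the extension, and of the positive metric, across the remaining set of codimension at least two. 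Without the period/constancy argument and without a concrete descent device of this kind, your proof does not go through.
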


We first note the following special case:

\begin{proposition}\label{pr:extlin}
The theorem holds, if $Y$ is a complex manifold and $A$ is a simple normal
crossings divisor.
\end{proposition}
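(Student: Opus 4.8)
\textbf{Proof proposal for Proposition~\ref{pr:extlin}.}

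The plan is to reduce to the local structure of a simple normal crossings divisor and build the line bundle $L$ by gluing the given $L'$ with suitable line bundles supported along the components of $A$. First I would apply Siu's decomposition theorem to the closed positive current $\omega$ on the manifold $Y$: writing $\omega = \sum_{k} \mu_k [Z_k] + R$, only those $Z_k$ which are components of $A$ can have $\mu_k \neq 0$, since on $Y' = Y\setminus A$ the current $\omega$ is already the (possibly singular) curvature of the genuine line bundle $(L',h')$, hence has locally integrable potentials with no codimension-one Lelong contributions. So write $A = \bigcup_{j=1}^{N} A_j$ for the irreducible components and let $\mu_j = \min_{x\in A_j}\nu(\omega;x)$ be the generic Lelong number along $A_j$; split $\mu_j = \lfloor \mu_j\rfloor + \{\mu_j\}$ with $0 \le \{\mu_j\} < 1$.

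Next I would set $L := L'_{\mathrm{ext}} \otimes \bigotimes_j \cO_Y(-\lfloor \mu_j\rfloor A_j)$, where $L'_{\mathrm{ext}}$ is the unique line bundle on $Y$ extending $L'$ that one obtains as follows. The point is that after subtracting the integer divisorial parts, the residual current $\omega - \sum_j \lfloor\mu_j\rfloor [A_j]$ is still closed and positive, its potentials are locally of the form $\sum_j \{\mu_j\}\log|z_j|^2 + (\text{bounded plurisubharmonic})$ in coordinates $(z_1,\dots,z_n)$ where $A = \{z_1\cdots z_m = 0\}$, and the function $e^{-\chi}$ with $\chi$ such a potential is a \emph{locally bounded}, and locally integrable, positive weight. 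Using the local trivializations of $L'$ over $Y'$ with transition functions $g_{\alpha\beta}$, the metric $h'$ is given locally by positive weights $e^{-\varphi_\alpha}$ with $\varphi_\alpha - \varphi_\beta = \log|g_{\alpha\beta}|^{-2}$; since the extended current $\omega$ has potentials $\psi_\alpha$ on all of $Y$ with $\psi_\alpha - \psi_\beta = \varphi_\alpha - \varphi_\beta$ over $Y'$, the differences $\psi_\alpha - \varphi_\alpha$ are pluriharmonic on $Y'$ and, being locally bounded above near $A$ by the structure of the decomposition, extend pluriharmonically across the codimension-one set $A$ (Riemann-type extension for pluriharmonic functions, using normality of $Y$ and that $A$ has codimension one). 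Therefore $e^{\psi_\alpha - \varphi_\alpha}$ glue to a nowhere-vanishing section of a line bundle on all of $Y$, which exhibits $L'$ as the restriction of a well-defined holomorphic line bundle $L'_{\mathrm{ext}}$, and simultaneously transports $h'$ to a singular hermitian metric $h$ on $L'_{\mathrm{ext}}$ with curvature current exactly $\omega$ — hence semi-positive. Finally, tensoring by $\cO_Y(-\lfloor\mu_j\rfloor A_j)$ with the metric having weight $\lfloor\mu_j\rfloor\log|z_j|^2$ removes the integer divisorial parts but keeps semi-positivity; if one prefers $L$ with curvature equal to $\omega$ itself one simply omits this last twist, and the ``at most analytic singularities'' clause is preserved throughout since all the weights added or removed are of the prescribed logarithmic form and the bounded plurisubharmonic remainder inherits the property from $\omega$.

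The main obstacle is the extension-across-$A$ step: one must argue carefully that the pluriharmonic functions $\psi_\alpha - \varphi_\alpha$ on $Y'$ genuinely extend across the normal crossings divisor rather than merely being bounded. The clean way is to work with the current identity $\ii\pt\ol\pt(\psi_\alpha - \varphi_\alpha) = 0$ on $Y'$ together with the precise singularity type extracted from Siu's decomposition — namely that after removing $\sum_j\mu_j[A_j]$ the remainder $R$ has potentials in $L^1_{\mathrm{loc}}$ with no Lelong numbers in codimension one — so that $\psi_\alpha - \varphi_\alpha$ is (the real part of) a holomorphic function with at worst a removable singularity along each smooth branch of $A$, the normal-crossings hypothesis reducing everything to the one-variable Riemann extension theorem applied branch by branch. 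Once this local picture is in place, the cocycle compatibility of the glued sections is automatic and the proposition follows.
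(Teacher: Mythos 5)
There is a genuine gap, and it sits exactly at the step your whole construction rests on: the claim that the pluriharmonic functions $\psi_\alpha-\varphi_\alpha$ on $U_\alpha\cap Y'$ extend pluriharmonically across $A$, so that $(L',h')$ extends with curvature \emph{exactly} $\omega$. On a punctured polydisk $U\setminus\{z_1=0\}$ a pluriharmonic function has the form $\beta\log|z_1|+f+\ol f$ with $\beta\in\R$ and $f$ holomorphic only on $U\setminus\{z_1=0\}$; the period term $\beta\log|z_1|$ (coming from $\pi_1\cong\mathbb{Z}$) is bounded from above when $\beta>0$, but boundedness from above only gives a plurisubharmonic extension, not a pluriharmonic one, and a Riemann-type removability argument does not apply (and the harmonic part $f$ need not extend either, e.g.\ $f=1/z_1$ can occur in such a difference). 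A concrete obstruction: take $Y=\Delta^2$, $A=\{z_1=0\}$, $L'=\cO_{Y'}$ with weight $\varphi=\gamma\log|z_1|$, $\gamma\notin 2\mathbb{Z}$. Then the curvature on $Y'$ is $0$ and extends by $\omega=0$, but every extension of $(L',h')$ to $Y$ changes the weight by $2k\log|z_1|$ plus terms with vanishing Lelong number along $A$, so its curvature current is a nonzero multiple of $[A]$: no extension with curvature exactly $\omega$ exists. So the object you construct (''$h$ on $L'_{\mathrm{ext}}$ with curvature current exactly $\omega$'') does not exist in general, and the argument offered for the crucial extension step fails. (A smaller slip: the curvature current of a \emph{singular} metric $h'$ on a genuine line bundle over $Y'$ can perfectly well have codimension-one Lelong numbers inside $Y'$ -- think of the canonical singular metric on $\cO(D)$ -- so your restriction of the Siu decomposition to components of $A$ is not justified as stated, though this is incidental.)

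What the paper does instead is designed precisely to absorb this period/fractional-part phenomenon. Working first with a smooth component, one writes $\log(e^{\psi^0_i}h'_i)+\beta_i\log|z_i|=f'_i+\ol{f'_i}$, splits $\beta_i=\gamma_i+2k_i$ with $0\le\gamma_i<2$, and changes the bundle coordinates of $L'$ over $U'_i$ by the unit $z_i^{-k_i}e^{f'_i}$ -- note that $f'_i$ is never extended across $A$, it is only used away from $A$. The computation of $|\wt g'_{ij}|^2$ together with $-2<\gamma_i-\gamma_j<2$ forces $\gamma_i=\gamma_j$ along each component, the new transition functions extend holomorphically, and the extended metric $e^{-\psi^0_i-\gamma_i\log|z_i|}$ has curvature $\omega+\pi\sum_j\gamma_j[A_j]$, i.e.\ $\omega$ plus a correction with the fractional coefficients $\gamma_j\in[0,2)$ -- this extra term is unavoidable, and it is exactly the ``non-integer part'' the paper announces it must treat. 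Finally, the crossing points of the simple normal crossings divisor require a separate compatibility argument (the structure $\kappa=z_1^{m_1}z_2^{m_2}e^\chi$ of units on $\Delta^*\times\Delta^*$) to glue the extensions obtained by removing one component at a time; your proposal has no counterpart to either of these steps. Your preliminary reduction via Siu's decomposition and the twist by $\cO_Y(-\lfloor\mu_j\rfloor A_j)$ are harmless, but they do not address the fractional part, which is where the actual content of the proposition lies.
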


\begin{proof}
We will argue in an elementary way. We first assume that $A\subset Y$ is a
smooth, connected hypersurface. Let $\{U_j \}$ be an open covering of $Y$
such that the set $A\cap U_i$ consists of the zeroes of a holomorphic
function $z_i$ on $U_i$. Since all holomorphic line bundles on the product
of a polydisk and a punctured disk are trivial, we can chose the sets
$\{U_i\}$ such that the line bundle $L'$ extends to such $U_j$ as a
holomorphic line bundle. So $L'$ possesses nowhere vanishing sections over
$U'_j=U_j\backslash A$. Hence $L'$ is given by a cocycle $g_{ij}' \in
\cO_Y^*(U_{ij}')$, where $U_{ij}=U_i\cap U_j$ and $U_{ij}'=U_{ij}\cap Y'$.
If necessary, we will replace $\{U_i\}$ by a finer covering.

We will first show the existence of \psh\ functions $\psi_i$ on $U_i$ and
holomorphic functions $\varphi'_i$ on $U'_i$ such that
$$
h'_i\cdot|\varphi'_i|^2 = e^{-\psi_i}|U'_i
$$
where the $\psi_i-\psi_j$ are pluriharmonic functions on $U_{ij}$:

Let
$$
\omega|U_i= \ddb (\psi^0_i)
$$
for some \psh\ functions $\psi^0_i$ on $U_i$. Now
$$
\log(e^{\psi^0_i} h'_i)
$$
is pluriharmonic on $U'_i$. For a suitable number $\beta_i\in \R$ and some
holomorphic function $f'_i$ on $U'_i$ we have
\begin{equation}\label{eq:period}
\log(e^{\psi^0_i} h'_i) + \beta_i \log|z_i| = f'_i+ \ol{f'_i}.
\end{equation}
We write
$$
\beta_i=\gamma_i + 2k_i
$$
for $0\leq \gamma_i <2$ and some integer $k_i$. We set
\begin{equation}\label{eq:psi}
\psi_i = \psi^0_i + \gamma_i \log |z_i|.
\end{equation}
These functions are clearly \psh, and $\gamma_i \log |z_i|$ contributes as
an analytic singularity to $\psi^0_i$. Set
$$
\varphi'_i= z^{-k_i}_i e^{f'_i} \in \cO^*(U'_i).
$$
We use the functions $\varphi'_i$ to change the bundle coordinates of $L'$
with respect to $U'_i$. In these bundle coordinates the hermitian metric
$h'$ on $L'$ is given by
$$
\wt h'_i = h'_i \cdot |\varphi'_i|^{-2}
$$
and the transformed transition functions are
$$
\wt g'_{ij}= {\varphi'_i}\cdot g'_{ij}\cdot ({\varphi'_j})^{-1}.
$$
Now
\begin{equation}\label{eq:wth}
\wt h'_i = |z_i|^{-\gamma_i} e^{-\psi^0_i}|U'_i= e^{-\psi_i}|U'_i ,
\end{equation}
and
\begin{equation}\label{eq:wtg}
|\wt g'_{ij}|^2 = \wt h'_j (\wt h'_i)^{-1} =
|z_j|^{\gamma_i-\gamma_j} \left|\frac{z_i}{z_j}\right|^{\gamma_i} \cdot
e^{\psi^0_i-\psi^0_j}.
\end{equation}
Since the function $z_i/z_j$ is holomorphic and nowhere vanishing  on
$U_{ij}$, and since the function $\psi^0_i-\psi^0_j$ is pluriharmonic on
$U_{ij}$, the function
$$
\left|\frac{z_i}{z_j}\right|^{\gamma_i} \cdot
e^{\psi^0_i-\psi^0_j}
$$
is of class $\cinf$ on $U_{ij}$ with no zeroes. Now
$-2<\gamma_i-\gamma_j<2$, and $\wt g'_{ij}$ is holomorphic on $U'_{ij}$.
So we have
\begin{equation}\label{eq:beta}
\gamma_i=\gamma_j=:\gamma_A
\end{equation}
(whenever $A\cap U_{ij} \neq \emptyset)$. Accordingly \eqref{eq:wtg} reads
\begin{equation}\label{eq:wtg1}
|\wt g'_{ij}|^2 = \wt h'_j (\wt h'_i)^{-1} =
\left|\frac{z_i}{z_j}\right|^{\gamma_i} \cdot
e^{\psi^0_i-\psi^0_j},
\end{equation}
and the transition functions $\wt g'_{ij}$ can be extended holomorphically
to all of $U_{ij}$. So a line bundle $L$ exists.

The functions $\psi_i$ are \psh, and the quantity
$$
\wh \omega = \ddb \psi_i = \omega + {\pi}\,{\gamma_A}\,[A]
$$
is a well-defined positive current on $Y$ because of \eqref{eq:psi} and
\eqref{eq:beta}. Its restriction to $Y'$ equals $\omega|Y'$.

We define
$$
\wt h_i= e^{-\psi_i} = e^{-\psi^0_i}|z_i|^{-\gamma_i}
$$
on $U_i$. It defines a positive, singular, hermitian metric on $L$. This
shows the theorem in the special case.

Observe that the numbers $\beta_j=\gamma_j+ 2k_j$ only depend on the
smooth hypersurface $A$. We will denote these by $\beta_A$ etc.

For the general case of the proposition, we assume for simplicity first
that the divisor $A$ consists of two components $A_1$ and $A_2$, which
intersect transversally. Let $Y^j=Y\backslash A_j$ for $j=1,2$. We know
from the first part that there exist two line bundle extensions
$(Y^1,h^1)$ and $(Y^2,h^2)$ resp.\ of $(L',h')$ to $Y^1$ and $Y^2$ resp.

\medskip
{\em Claim.} The hermitian line bundles $(L^j,h^j)$ define the extension
of $(L',h')$ as a holomorphic line bundle on $Y$ with a singular hermitian
metric with positive curvature equal to
$$
\omega +  \pi \,\gamma_{A_1}\, [A_1] + \pi\, \gamma_{A_2}\, [A_2],
$$
where $0\leq\gamma_{A_j}<2$ are given by the first part of the proof.

\medskip

We prove the {\em Claim}. Let $p\in A_1\cap A_2$. For simplicity we may
assume that $\dim Y=2$. Let $U(p)= \Delta\times\Delta =\{(z_1,z_2)\}$ be a
neighborhood of $p$, where $\Delta\subset \C$ denotes the unit disk.
Assume that $A_j=V(z_j)$, $j=1,2$. Now the metric $h^1$ is defined on
$\Delta^*\times \Delta$ and $h^2$ is defined on $\Delta\times \Delta^*$,
where $\Delta^*=\Delta\backslash \{0\}$. Since any holomorphic line bundle
on $\Delta^*\times \Delta$ is trivial, we may use the spaces
$\Delta^*\times \Delta$ and $\Delta\times \Delta^*$ as coordinate
neighborhoods for the definition of the line bundles $L^1$ and $L^2$ resp.
Since holomorphic line bundles on $\Delta\times \Delta^*$ and
$\Delta^*\times \Delta$ extend to $\Delta\times\Delta$, we find a nowhere
vanishing function  $\kappa\in \cO^*_{\C^2}(\Delta^*\times \Delta^*)$ such
that
\begin{equation}\label{eq:h1h2}
h_1= |\kappa|^2h_2.
\end{equation}
With the above methods it is easy to show that any such function satisfies
\begin{equation}\label{eq:kappa}
\kappa(z_1,z_2)=z^{m_1}_1 z^{m_2}_2 e^\chi
\end{equation}
for a holomorphic function $\chi\in \cO_{\C^2}(\Delta^*\times\Delta^*)$
and $m_j\in \mathbb Z$. (In order to see this, write $\log|\kappa|^2 =
\sigma_1\log|z_1| + \sigma_2\log|z_2| + \phi + \ol\phi$, with
$\phi\in\cO_{\C^2}(\Delta^*\times\Delta^*)$. Then $|\kappa \, e^{-\phi} \,
z_1^{-q_1}\, z_2^{-q_2}|^2 = |z_1|^{\tau_1}\, |z_2|^{\tau_2}$ with
$\sigma_j=\tau_j + 2 q_j$, $0\leq \tau_j <2$, $q_j \in \mathbb Z$. Now
$\kappa \, e^{-\phi}\, z_1^{-q_1}\, z_2^{-q_2}$ possesses a holomorphic
extension to $\Delta\times \Delta$, and $\tau_1=\tau_2=0$.)

We use the arguments and result of the first part, and again the fact that
the homotopy group $\pi_1(\Delta^n \backslash V(z_1\cdot \ldots \cdot
z_k))$ for any $k\leq n$ is abelian. We find
\begin{eqnarray}
-\log h^1 &=& \psi^0 + \beta_1\log|z_1| + \gamma_{A_2}\log|z_2| +f_1+\ol f_1
\label{eq:h1}\\
-\log h^2 &=& \psi^0 + \gamma_{A_1}\log|z_1| + \beta_2\log|z_2| +   f_2+\ol f_2,
\label{eq:h2}
\end{eqnarray}
where $\beta_j\in \R$, and $f_j \in \cO_{\C^2}(\Delta^*\times\Delta^*)$.
The numbers $0\leq \gamma_{A_j}<2$ are already determined. Let
$$
\beta_j=\gamma_j + 2\ell_j \text{ with } 0\leq \gamma_j <2 \text{ and } \ell_j \in \mathbb Z.
$$
Now \eqref{eq:h1h2}, \eqref{eq:kappa}, \eqref{eq:h1}, and \eqref{eq:h2}
imply that
$$
\gamma_j=\gamma_{A_j}, \, \ell_1=-m_1, \, \ell_2=m_2,
$$
and
$$
e^{f_2-f_1-\chi}
$$
possesses a holomorphic extension to $\Delta\times \Delta$. Now, like in
the first part, the functions $z^{\ell_1-m_1}_1$ and $z^{\ell_2+m_2}_2$,
together with the function $e^{f_2-f_1-\chi}$ can be used as coordinate
transformations for the line bundles $L_1$ and $L_2$ on $\Delta^*\times
\Delta$ and $\Delta\times \Delta^*$ resp. This shows the claim.

The case of a general, simple normal crossings divisor follows in an
analogous way.
\end{proof}

The proof of Proposition~\ref{pr:extlin} implies the following fact
(introduce extra auxiliary local smooth divisors).

\begin{proposition}\label{pr:linext}
Let $n=\dim Y$.  Then the statement of the above proposition still holds,
when $A\subset Y$ is an analytic set with smooth irreducible components
and transverse intersections such that no more than $n$ components meet at
any point. The curvature current of the extended singular hermitian metric
differs from the given current only by an added sum
\begin{equation}\label{eq: sumgamA}
{\pi}\sum_j \gamma_j [A_j], \text{ where }0\leq \gamma_j<2 \,.
\end{equation}
If the given line bundle already possesses an extension into a component
of $A$, together with an extension of the singular hermitian metric such
that $\omega$ is the curvature current, then the above construction
reproduces these without any additional currents of integration.
\end{proposition}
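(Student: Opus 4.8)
The plan is to reduce the general configuration to the simple-normal-crossings situation already handled in Proposition~\ref{pr:extlin}, by the device parenthetically suggested in the statement: introducing extra auxiliary local smooth divisors. Fix a point $p\in A$ and a small polydisk neighborhood $U(p)\cong \Delta^n$. By hypothesis at most $n$ of the components $A_j$ pass through $p$, and each is smooth; after shrinking $U(p)$ and choosing coordinates $(z_1,\dots,z_n)$ we may assume that the components through $p$ are $V(z_1),\dots,V(z_r)$ with $r\le n$, and that they intersect transversally. Thus \emph{locally} $A\cap U(p)$ is already a simple normal crossings divisor inside $U(p)$, so the proof of Proposition~\ref{pr:extlin} applies verbatim on $U(p)$ and produces a local holomorphic line bundle extension $L_{U(p)}$ with a singular hermitian metric $h_{U(p)}$ whose curvature is $\omega|U(p)+\pi\sum_{j:\,p\in A_j}\gamma_j[A_j]$, where the $0\le\gamma_j<2$ are the numbers attached to the smooth hypersurfaces $A_j$ in the first part of the earlier proof. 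The key point to extract from that proof is that $\gamma_j=\gamma_{A_j}$ depends only on the smooth hypersurface $A_j$ (the ``period'' computation culminating in \eqref{eq:beta} and its $n$-dimensional analogue), and in particular is independent of the chosen point $p\in A_j$ and of the local coordinates; this is what makes the local pieces glue.

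Next I would glue. Over $Y'$ all the local extensions restrict to $(L',h')$, so on overlaps $U(p)\cap U(q)$ the transition between $L_{U(p)}$ and $L_{U(q)}$ is a nowhere-vanishing holomorphic function on $(U(p)\cap U(q))\cap Y'$. The content of the \emph{Claim} inside the proof of Proposition~\ref{pr:extlin} — carried out there for two transversally meeting components, but whose argument is purely local and only uses that $\pi_1(\Delta^n\setminus V(z_1\cdots z_k))$ is abelian for $k\le n$ — shows that such a transition function extends holomorphically and invertibly across $A$, provided the two local extensions were built with the \emph{same} weights $\gamma_{A_j}$ on each common component. Since the weights are globally well defined by the previous paragraph, the cocycle $\{g_{U(p)U(q)}\}$ extends to a holomorphic cocycle on $Y$, defining a global holomorphic line bundle $L$ on $Y$ restricting to $L'$ on $Y'$. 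The local metrics $h_{U(p)}=e^{-\psi_{U(p)}}$ then patch (after absorbing the holomorphic transition factors as in \eqref{eq:wth}) to a global singular hermitian metric $h$ on $L$, and by construction its curvature current is $\omega+\pi\sum_j\gamma_j[A_j]$ with $0\le\gamma_j<2$, which is positive and closed; its restriction to $Y'$ is $\omega|Y'=\omega'$. Since each $\gamma_j\log|z_j|$ enters as an analytic singularity added to a local potential of $\omega$, the metric $h$ has at most analytic singularities whenever $\omega$ does. Finally, for the last assertion: if $(L',h')$ already extends across some component $A_{j_0}$ with $\omega$ the curvature of the extension, then in the first part of the earlier proof the relevant period $\beta_{j_0}$ in \eqref{eq:period} vanishes (there is no $\log|z_{j_0}|$ term), forcing $\gamma_{j_0}=0$ and $k_{j_0}=0$; hence the construction on a polydisk meeting $A_{j_0}$ introduces no factor $z_{j_0}^{-k_{j_0}}$ and no current $\pi\gamma_{j_0}[A_{j_0}]$, and by uniqueness of the local extension it reproduces the given one.

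The main obstacle is the gluing/consistency step: one must be sure that the weights $\gamma_j$ produced locally by the first part of the proof of Proposition~\ref{pr:extlin} are genuinely intrinsic to the hypersurface $A_j$ and do not depend on the auxiliary choices (point $p$, coordinates, covering), so that the local extensions are mutually compatible on quadruple and higher overlaps along the strata where several $A_j$ meet. This is exactly where the hypothesis ``at most $n$ components through any point'' together with the commutativity of $\pi_1(\Delta^n\setminus V(z_1\cdots z_k))$ for $k\le n$ is used — it guarantees that the period computation splits as a sum over the individual components with no cross-terms — and it is the reason the argument of the earlier \emph{Claim} goes through verbatim rather than merely by analogy. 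Once this intrinsic nature of the $\gamma_j$ is established, the rest is a routine \v{C}ech patching of line bundles and metrics of the kind already performed in Proposition~\ref{pr:extlin}.
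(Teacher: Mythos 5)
Your argument never engages with the case that gives Proposition~\ref{pr:linext} its content beyond Proposition~\ref{pr:extlin}: components $A_j$ of codimension $\geq 2$. In your local step you assert that after a choice of coordinates the components through $p$ are $V(z_1),\dots,V(z_r)$, i.e.\ you silently assume every component is a smooth hypersurface; but then $A$ is locally already a simple normal crossings divisor and the proposition is literally Proposition~\ref{pr:extlin} (note that under your reading the hypothesis ``no more than $n$ components meet at any point'' is automatic, a sign the reading is too narrow). The proposition is applied later, in the proof of Theorem~\ref{th:extlinebdl}, to the transversal union $B\cup E$ where the proper transform $B$ is smooth but typically of codimension $\geq 2$; that is exactly the situation your proof does not cover. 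The device you name at the outset --- ``introduce extra auxiliary local smooth divisors'' --- is the whole point of the paper's (one-line) argument and is never actually used in your text: near a point of a smooth component of codimension $k\geq 2$ one chooses, using transversality and the bound on the number of components, coordinates in which all components through the point are coordinate subspaces, replaces the higher-codimensional component locally by the $k$ coordinate hyperplanes containing it, and applies the proof of Proposition~\ref{pr:extlin} to the resulting local simple normal crossings divisor. The final ``reproduction'' assertion is then not only a bonus statement but the tool that makes this work: since $(L',h')$ is already defined on the auxiliary hyperplanes away from $A$, with curvature $\omega$, the construction reproduces the given data there with $\gamma=0$, so no currents of integration along the auxiliary divisors are created, the local extensions do not depend on the auxiliary choices, and they glue (across the codimension-$\geq 2$ parts one may also invoke uniqueness of extensions of line bundles and plurisubharmonic potentials there).

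The parts of your text that do address the divisorial picture are essentially a re-exposition of the proof of Proposition~\ref{pr:extlin}: the intrinsic nature of the weights is already recorded there (``the numbers $\beta_A=\gamma_A+2k_A$ only depend on the smooth hypersurface $A$''), and your treatment of the last assertion via the vanishing of the period $\beta_{j_0}$ in \eqref{eq:period} (hence $\gamma_{j_0}=k_{j_0}=0$ and $f'_{j_0}$ extending) is correct and in the spirit of the paper. But as written the proposal proves only the SNC-divisor case and leaves the actual new claim --- extension across configurations containing smooth components of higher codimension, with added currents only along the codimension-one components --- unproved.
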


\begin{proof}[Proof of the Theorem.]
We first mention that because of Proposition~\ref{pr:extlin} and
Proposition~\ref{pr:linext} we may assume that $A\subset Y$ is of
codimension at least two: Namely we extend both the line bundle and the
singular hermitian metric into the locus $Y''$, where $A$ is of
codimension one and $Y$ is smooth. The original current $\omega$ is being
changed in this way by adding currents of integration of the form
\eqref{eq: sumgamA}, where the $A_j$ are components of $A$ of codimension
one. These currents can obviously be extended from $Y''$ to $Y$.

The remaining case should also be seen relating to the extension theorem
of Shiffman \cite{shi} for positive line bundles on normal spaces.

We first take a desingularization $\tau:\wt Y \to Y$ and consider $\wt
A=\tau^{-1}(A)$. Next we choose a modification $\mu : Z \to \wt Y$ that
defines an embedded resolution of singularities of $\wt A \subset \wt Y$.
In particular $\mu^{-1}(\wt A)= B \cup E$ is a transversal union, where
the proper transform $B$ of $\wt A$ is the desingularization and the
normal crossings divisor $E$ is the exceptional locus of $\mu$. (The
divisorial component of $B$ together with $E$ is a simple normal crossings
divisor).

We pull back the line bundle $L'$ to $Z \backslash \mu^{-1}(\wt A)$
together with the given data. We apply Proposition~\ref{pr:extlin} and
Proposition~\ref{pr:linext}, and obtain an extension $\wt\cL$ of the line
bundle and of the singular hermitian metric  $h_{\wt\cL}$ of positive
curvature. At those places, where an extended line bundle with a singular
hermitian metric already exists, the construction yields the pull-back.
The determinant line bundle $\det((\tau\circ \mu)_!\wt\cL)$ defines an
extension of $L'$. Observe that the original line bundle over $Y'$ is
reproduced together with the singular hermitian metric. Because of the
normality of $Y$ an extension into an analytic set of codimension two or
more is unique, if it exists, and the singular metric of positive
curvature can be extended.
\end{proof}

If $Y$ is just a reduced complex space, we still have the following
statement.

\begin{proposition}\label{pr:nonnormal}
Let $Y$ be a reduced complex space, and $A\subset Y$ a closed analytic
subset. Let $\cL$ be an invertible sheaf on $Y \backslash A$, which
possesses a holomorphic extension to the normalization of\/ $Y$ as an
invertible sheaf. Then there exists a reduced complex space $Z$ together
with a finite map $Z \to Y$, which is an isomorphism over $Y\backslash A$
such that $\cL$ possesses an extension as an invertible sheaf to $Z$.
\end{proposition}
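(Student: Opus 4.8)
The strategy is to reduce the general reduced case to the normal case already handled in Theorem~\ref{th:extlinebdl}, by passing through the normalization and then constructing $Z$ as a finite modification of $Y$ that ``remembers'' the gluing data of the normalization along $A$. First I would take the normalization $\nu: \wt Y \to Y$, which is finite and an isomorphism over the normal locus of $Y$, in particular over a dense open subset. By hypothesis $\cL$ (defined on $Y\setminus A$) extends to an invertible sheaf $\wt\cL$ on all of $\wt Y$. The point of the whole construction is that $\wt\cL$ need not descend to $Y$ as an invertible sheaf, because the two (or more) branches of $\wt Y$ lying over a point of $A$ may carry incompatible local trivializations; but over $Y\setminus A$ it does descend, namely to $\cL$ itself.

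\emph{Construction of $Z$.} I would build $Z$ by the standard ``push-out / conductor'' type argument adapted to this invertible-sheaf situation. Consider the coherent $\cO_Y$-algebra $\cB := \nu_*\cO_{\wt Y}$, which contains $\cO_Y$ and equals $\cO_Y$ over $Y\setminus A$. Inside $\cB$ one forms the $\cO_Y$-subalgebra $\cO_Z$ generated by $\cO_Y$ together with the local sections of $\cB$ needed to trivialize $\nu_*\wt\cL$ locally; more precisely, cover $\wt Y$ by opens $\wt U_i$ on which $\wt\cL|_{\wt U_i}$ is trivial with generator $e_i$, and on overlaps with $Y\setminus A$ the transition functions $g_{ij}\in\cO^*_{Y\setminus A}$ of $\cL$ are given. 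Over a point of $A$ the several branches contribute several $e_i$'s, and the ratios of the corresponding local generators are meromorphic functions on $\wt Y$; adjoin to $\cO_Y$ the subsheaf of $\cB$ generated by these ratios (and their inverses) where they are regular, equivalently adjoin $\nu_*(\cO_{\wt Y})$-sections that become units trivializing $\wt\cL$. Set $Z = \mathrm{Specan}_Y(\cO_Z)$. Then $\cO_Z$ is a finite $\cO_Y$-algebra (it sits between $\cO_Y$ and $\cB$, and $\cB$ is finite over $\cO_Y$ by finiteness of normalization), so $Z\to Y$ is finite; it is reduced because $\cO_Z\subset\cB$ and $\cB$ is reduced; and it is an isomorphism over $Y\setminus A$ since there $\cO_Z = \cO_Y$.

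\emph{Extension of $\cL$ to $Z$.} On $Z$ the pulled-back data now glue: by construction the transition functions $g_{ij}$, viewed in $\cO_Z$, are units (that was exactly the purpose of the adjunction), so the cocycle $(g_{ij})$ defines an invertible sheaf $\cL_Z$ on $Z$ whose restriction to $Z\setminus (\text{preimage of }A)=Y\setminus A$ is $\cL$. One must check the cocycle condition on triple overlaps over points of $A$; this holds because it holds after pulling back to $\wt Y$, where $\wt\cL$ is an honest invertible sheaf, and $Z$ lies between $Y$ and $\wt Y$ so the relevant sections inject into those on $\wt Y$. This gives the required extension.

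\textbf{Main obstacle.} The delicate point is showing that the algebra $\cO_Z$ one is forced to adjoin is actually \emph{coherent and finite} over $\cO_Y$, rather than something that keeps growing as one moves along $A$ — in other words, that finitely many ``ratio'' sections suffice locally and that they satisfy integral equations over $\cO_Y$. This is where the finiteness of the normalization is used essentially: since all the sections in question live in $\nu_*\cO_{\wt Y}$, which is a \emph{finite} $\cO_Y$-module, any $\cO_Y$-subalgebra of it is automatically finite, so coherence and the finiteness of $Z\to Y$ come for free once one is careful to stay inside $\nu_*\cO_{\wt Y}$. The remaining bookkeeping — reducedness, the isomorphism over $Y\setminus A$, and the cocycle identity — is then routine, using that everything injects into the corresponding data on the normalization $\wt Y$ where $\wt\cL$ is already a genuine line bundle.
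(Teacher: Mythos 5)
Your overall shape (pass to the normalization, insert a finite $\cO_Y$-algebra $\cO_Y\subset\cO_Z\subset\nu_*\cO_{\wt Y}$, take its analytic spectrum) is the same as the paper's, and the finiteness you worry about at the end is indeed the harmless part: every element of $\nu_*\cO_{\wt Y}$ is integral over $\cO_Y$, so any locally finitely generated subalgebra is automatically a coherent finite module. The genuine gaps are in the two steps that carry the content. First, your ``ratio'' sections are not defined: $\wt Y$ is normal, hence locally irreducible, so the finitely many points of $\wt Y$ over a point $y\in A$ have pairwise disjoint neighborhoods, and there are no overlaps on which to form $e_i/e_j$ for generators attached to different branches; generators on one and the same branch differ by units of $\cO_{\wt Y}$, which give nothing new. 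The comparison that actually matters is between a generator of $\wt\cL$ on a branch and a generator of $\cL$ on a punctured neighborhood $V\setminus A$, and the latter need not exist, since $\cL$ need not be trivial on any such punctured neighborhood. Second, and decisively, the assertion that ``the transition functions $g_{ij}$, viewed in $\cO_Z$, are units'' is exactly what has to be proved, and it cannot be arranged by enlarging the structure sheaf: any reduced $Z$ finite over $Y$ and isomorphic to $Y$ over $Y\setminus A$ (after discarding components lying over $A$) has $\cO_Z$ consisting of sections of $\nu_*\cO_{\wt Y}$ that descend to $\cO_Y$ off $A$, i.e.\ $\cO_Z$ is contained in the gap sheaf $\cO_Y[A]_{\nu_*\cO_{\wt Y}}$, so the units you need would already have to extend there. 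They need not: take $Y=\{xy=0\}\subset\C^3_{(x,y,t)}$, $A$ the origin, and $\cL$ on $Y\setminus A$ glued from the trivial bundles on the two planes along $\{x=y=0\}\setminus\{0\}$ via $e^{1/t}$; its pull-back to the normalization is trivial, the gap sheaf at $0$ is just $\cO_{Y,0}$, and an invertible extension near $0$ would force $e^{1/t}=u_1/u_2$ with $u_i$ units on the punctured planes, which extend across the origin by Hartogs --- impossible. So your construction really only goes through when the non-normal locus of $Y$ is contained in $A$; but in that case $\nu$ is an isomorphism over $Y\setminus A$ and one can simply take $Z=\wt Y$, with nothing left to glue.

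For comparison, the paper chooses no generators at all: it takes the canonical gap sheaf $\cO_Y[A]_{\nu_*\cO_{\wt Y}}$, quotes Siu \cite{siu:gap} for its coherence, Houzel \cite{hou} for finite presentation as an $\cO_Y$-algebra, and Forster \cite{fo} for the analytic spectrum yielding $Z$. Thus the coherence/finiteness issue you single out as the ``main obstacle'' is dealt with by citation and is not the crux; the delicate point is whether the line bundle together with its descent data extends over the preimage of $A$ on the resulting $Z$, and this is the step your proposal asserts rather than proves (it is also the step the paper treats most tersely). To repair your argument you would need either an additional hypothesis of the above kind or a genuinely different mechanism for extending the gluing units, not merely a larger finite algebra.
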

\begin{proof}
Denote by $\nu:\wh Y \to Y$ the normalization of $Y$. The presheaf
$$
U\mapsto \{\sigma \in  (\nu_* \cO_{\wh Y})(U); \sigma|U\backslash A\in \cO_Y(U\backslash A)\}
$$
defines a coherent $\cO_Y$-module, the so-called  {\em gap sheaf}
$$
\cO_Y[A]_{\nu_*\cO_{\wh Y}}
$$
on $Y$ (cf.\ \cite[Proposition 2]{siu:gap}). It carries the structure of
an $\cO_Y$-algebra. According to Houzel \cite[Prop.\ 5 and Prop.\ 2]{hou}
it follows that it is an $\cO_Y$-algebra of finite presentation, and hence
its analytic spectrum provides a complex space $Z$ over $Y$ (cf.\ also
Forster \cite[Satz 1]{fo}).
\end{proof}
Finally, we have to deal with the question of a {\em global} extension of
positive currents. Siu's Thullen-Remmert-Stein type theorem \cite[Theorem
1]{siu:curr} gives an answer for currents on open sets in a complex number
space $\mathbb C^N$. An extension for $(1,1)$-currents exists and is
uniquely determined by one local extension into each irreducible
hypersurface component of the critical set. In a global situation we need
an extra argument, namely the fact that (under some assumption), one can
single out an extension, which is unique and does not depend upon the
choice of some local extension.

\begin{proposition}\label{pr:extcurr}
Let $A$ be a closed analytic subset of a normal complex space $Y$, and let
$Y'=Y\backslash A$.

Let $\omega'$ be a closed, positive current on $Y'$, with vanishing Lelong
numbers. Assume that for any point of $A$ there exists an open
neighborhood $U\subset Y$ such that $\omega'|U\cap Y'$ can be extended to
$U$ as a closed positive current. Then $\omega'$ can be extended to all of
$Y$ as a positive current.

If the local extensions of $\omega'$ have at most analytic singularities,
then also the constructed global extension has at most analytic
singularities.
\end{proposition}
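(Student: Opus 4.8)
The plan is to produce the \emph{canonical} extension of $\omega'$, namely its trivial extension by zero across $A$, to prove that this extension is closed and positive by invoking the Skoda--El Mir extension theorem, and finally to read off its singularities from the given local extensions via Siu's decomposition formula; since the trivial extension is intrinsic, no gluing of the local extensions is needed. First I would reduce to a manifold: by resolution of singularities there is $\tau\colon\wt Y\to Y$, biholomorphic over $Y_{\mathrm{reg}}$, for which $B:=\tau^{-1}(A)$ together with the exceptional divisor is a simple normal crossings divisor, and $Y_{\mathrm{sing}}$ has codimension $\ge2$ by normality. Pulling back local plurisubharmonic potentials, $\tau^*\omega'$ is a closed positive $(1,1)$-current on $\wt Y\setminus B$. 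Each given local extension is a closed positive current on some $U\subset Y$, hence has measure coefficients and locally finite mass; its pull-back is then a closed positive extension of $\tau^*\omega'$ across $\tau^{-1}(U)$ of locally finite mass, and since such $U$ cover $A$, the current $\tau^*\omega'$ has locally finite mass in a neighbourhood of $B$.

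The heart of the matter is the following. As $B$ is locally pluripolar (being analytic) and $\tau^*\omega'$ has locally finite mass near $B$, the Skoda--El Mir theorem guarantees that the trivial extension $\wh\omega_{\wt Y}:=\mathbf 1_{\wt Y\setminus B}\,\tau^*\omega'$ is again a \emph{closed} positive $(1,1)$-current on $\wt Y$; the non-obvious point, the only one where a genuine theorem replaces a formal manipulation, is exactly this closedness. Then $\omega:=\tau_*\wh\omega_{\wt Y}$ is a closed positive current on $Y$, and it restricts to $\omega'$ on $Y'$: over $Y'\cap Y_{\mathrm{reg}}$ the map $\tau$ is an isomorphism, so $\omega$ and $\omega'$ agree there, and since both are closed positive $(1,1)$-currents on the normal space $Y'$ with local plurisubharmonic potentials, agreement outside the analytic set $Y_{\mathrm{sing}}$ of codimension $\ge2$ forces agreement on all of $Y'$, plurisubharmonic potentials being uniquely determined off such a set. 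This yields the desired extension; the remaining work -- tracking the singular locus of $Y$ through $\tau$ and the push-forward -- is routine bookkeeping.

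For the last assertion, fix a point of $B$ and a contractible chart $V$ of $\wt Y$ on which $\tau^*T=\ddb\chi$ for a pulled-back local extension $T$ with $\chi\ge\gamma\log\sum_\nu|f_\nu|^2+c$, and $\wh\omega_{\wt Y}=\ddb\wh\chi$, both $\chi,\wh\chi$ plurisubharmonic. By Siu's decomposition $\tau^*T=\sum_k\mu_k[Z_k]+R$ with $\dim E_c(R)<\dim Y-1$ for all $c>0$. Since $\tau^*\omega'$ has vanishing Lelong numbers and coincides with $\tau^*T$ off $B$, no $Z_k$ meets $V\setminus B$, so every $Z_k\subset B$; moreover $R$ charges no component of the divisor $B$ (otherwise $E_c(R)$ would contain a hypersurface). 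Hence the trivial extension merely deletes the $[Z_k]$, i.e.\ $\wh\omega_{\wt Y}=\tau^*T-\sum_k\mu_k[Z_k]$ on $V$. Writing $[Z_k]=\ddb\log|h_k|$ for reduced equations of the components of $B$, the function $\chi-\wh\chi-\sum_k\mu_k\log|h_k|$ is pluriharmonic, hence equals $\mathrm{Re}\,F$ with $F$ holomorphic on $V$, so on a relatively compact subchart
$$
\wh\chi=\chi-\sum_k\mu_k\log|h_k|-\mathrm{Re}\,F\ \ge\ \gamma\log\sum_\nu|f_\nu|^2+c',
$$
because $-\mu_k\log|h_k|$ (with $\mu_k\ge0$) and $-\mathrm{Re}\,F$ are bounded below there. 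Thus $\wh\omega_{\wt Y}$, and therefore $\omega=\tau_*\wh\omega_{\wt Y}$, has at most analytic singularities near $A$; elsewhere $\omega=\omega'$.
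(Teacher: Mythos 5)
Your argument is correct and produces the same current as the paper -- the null (trivial) extension of $\omega'$ across $A$, constructed on a desingularization and pushed forward -- but the key lemma for the main step is genuinely different. The paper never invokes Skoda--El Mir: in the smooth, simple normal crossings case it Siu-decomposes each \emph{given local extension} $\omega_U=\sum_k\mu_k[Z_k]+R$, uses the vanishing Lelong numbers of $\omega'$ to force $Z_k\subset A$, identifies the residual current $R$ as the null extension, and then glues these local candidates, the difference of two of them being a closed current of order zero supported on $A$ and hence zero by the support theorem \cite[Cor.\ III (2.14)]{demaillybook}; the normal case is reduced to this via desingularization and Siu's Thullen--Remmert--Stein type extension across the codimension-two locus. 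You instead take the trivial extension $\mathbf 1_{\wt Y\setminus B}\,\tau^*\omega'$ as a canonical global object from the outset and get closedness from El Mir's theorem, so the hypothesis of local extendability is used only to certify locally finite mass near $B$, and no gluing or uniqueness discussion is needed (nor, for the bare extension statement, the vanishing of the Lelong numbers, which you use only to compare with the local extensions in the singularity estimate). What the paper's route buys is independence of the Skoda--El Mir machinery; what yours buys is built-in canonicity of the extension -- which matters later, since the paper explicitly exploits that its extension is the null extension -- at the cost of quoting that extra theorem. For the analytic-singularities statement the two arguments essentially coincide: Siu decomposition of the pulled-back local extension, removal of integration currents supported over $A$, and a lower bound on the potential.

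Two small points. First, your parenthetical ``$R$ charges no component of the divisor $B$'' only excludes the divisorial part of $\mathbf 1_BR$; the exact identity $\wh\omega_{\wt Y}=\tau^*T-\sum_k\mu_k[Z_k]$ also needs $\mathbf 1_BR=0$, which does follow from the support theorem, but in fact your estimate only requires the inequality $\wh\omega_{\wt Y}\le\tau^*T-\sum_k\mu_k[Z_k]$, i.e.\ that $\chi-\sum_k\mu_k\log|h_k|-\wh\chi$ is plurisubharmonic and hence bounded above on compact subsets, so nothing is lost. Second, to exclude components $Z_k$ meeting $V\setminus B$ you need that $\tau^*\omega'$ has no positive generic Lelong number along exceptional divisors lying over $Y'\cap Y_{\mathrm{sing}}$; you assert this implicitly, exactly as the paper does (``the Lelong numbers are still equal to zero''), so it is not a defect relative to the paper's own proof, but it deserves a sentence.
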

\begin{proof}
We first assume that $Y$ is smooth and that $A$ is a simple normal
crossings divisor. Let $\omega_U$ be a (positive) extension of
$\omega'|U\cap Y'$.

We apply \eqref{eq:decomp}:
$$
\omega_U = \sum_{k=0}^\infty \mu_k [Z_k] + R\, .
$$
Since the Lelong numbers of $\omega'$ vanish everywhere, the sets $Z_k$
must be contained in $A$ so that the positive residual current $R$ is the
null extension of $\omega'|U\cap Y'$. We now take the currents of the form
$R$ as local extensions. If $W\subset Y$ is open, then the difference of
any two such extensions is a current of order zero, which is supported on
$A\cap W$. By \cite[Corollary III (2.14)]{demaillybook} it has to be a
current of integration supported on $A\cap W$, so it must be equal to
zero.

In the case, where $Y$ is not necessarily smooth but normal, we consider a
desingularization of the pair $(Y,A)$ like in the proof of
Theorem~\ref{th:extlinebdl} and use the local notation. By
\cite[Theorem~1]{siu:curr} we have unique local extensions of the
pull-back of $\omega'$ into the locus of codimension greater or equal to
two. The Lelong numbers are still equal to zero, and the previous argument
applies so that the pull-back of $\omega'$ extends to $Z$ as a positive
current $\wt \omega$. The push-forward of $\wt\omega$ solves the problem.
Since $\wt \omega$ differs from the given local extensions (pulled back to
$Z$) only by (locally defined) currents of integration with support in
$\mu^{-1}\tau^{-1}(A)$ the push-forward of $\wt \omega$ again has at most
analytic singularities, if the local extensions of $\omega'$ have this
property.
\end{proof}

\section{Degenerating families of canonically polarized
varieties}\label{se:degfam} In this section we want to show that in a
degenerating family the curvature of the relative canonical bundle can be
extended as a positive closed current.

Given a canonically polarized manifold $X$ of dimension $n$ together with
an $m$-canonical embedding $\Phi= \Phi_{mK_X}: X \hookrightarrow \mathbb
P_N$, the \fs metric $h_{FS}$ on the hyperplane section bundle
$\cO_{\mathbb P_N}(1)$ defines a volume form
\begin{equation}\label{eq:Omega0}
\Omega^0_X= (\sum_{i=0}^N |\Phi_i(z)|^2)^{1/m}
\end{equation}
on the manifold $X$, such that
$$
\omega^0_X:=-\text{Ric}(\Omega^0_X) = \frac{1}{m} \; \omega^{FS}|X,
$$
where $\omega^{FS}$ denotes the Fubini-Study form on $\mathbb P_N$.
According to Yau's theorem, $\omega^0_X$ can be deformed into a \ke metric
$\omega_X=\omega^0_X +  \ddb u$. It solves the equation \eqref{eq:ke},
which is equivalent to
\begin{equation}\label{eq:ke1}
\omega^n_X=(\omega^0_X+\ddb u)^n= e^u\Omega^0_X.
\end{equation}

We will need the $C^0$-estimates for the (uniquely determined)
$\cinf$-function $u$.

The deviation of $\omega^0_X$ from being \ke is given by the function
\begin{equation}\label{eq:defF}
F=\log\frac{\Omega^0_X}{(\omega^0_X)^n}
\end{equation}
so that \eqref{eq:ke1} is equivalent to
\begin{equation}\label{eq:MA}
\omega^n_X=(\omega^0_X+\ddb u)^n= e^{u+F}(\omega^0_X)^n.
\end{equation}
We will use the $C^0$-estimate for $u$ from \cite[Proposition
4.1]{cheng-yau} (cf.\ \cite{aub,kob}).

\bigskip

\noindent {\bf $C^0$-estimate.} \textit{Let $\Box^0$ denote the complex
Laplacian on functions with respect to $\omega^0_X$ (with non-negative
eigenvalues). Then
\begin{equation}\label{eq:uplusF}
 u+F \leq -\Box^0(u).
\end{equation}
In particular the function $u$ is bounded from above by $\sup(-F)$.}

\bigskip

Now we come to the {\em relative situation}. Let $\ol S \subset
\C^q=\{(s^1,\ldots,s^q)\}$ be a polydisk around the origin, and let $S=\ol
S \backslash V(s^1\cdot\ldots \cdot s^r)$ for some $r \leq q$. Let
\begin{equation}\label{eq:locfam}
\xymatrix{{\ol\cX}\ar[r]^-{\Phi}\ar[dr]_f & \mathbb P_N\times
\ol S\ar[d]^{{\rm{pr_2}}}
\\ & \ol S}
\end{equation}
be a proper, flat family, which is smooth over $S$, and $\Phi$ an
embedding.  We set $\cL=\Phi^{*}\cO_{\mathbb P_N\times \ol S}(1)$.

We assume that $\cL|\cX \simeq \cK_{\cX/S}^{\otimes m}$. Again for $s\in
S$ we equip the fibers $\cX_s$ with \ke forms, and we study the induced
relative \ke volume form. It defines a hermitian metric on the relative
canonical bundle $\cK_{\cX/S}$, whose curvature form was denoted by
$\omega_\cX$ (cf.\ Theorem~\ref{th:main}).

The following Proposition plays a key role.

\begin{proposition}\label{pr:extomloc}
Under the above assumptions, the form $\omega_\cX$ extends to $\ol \cX$ as
a closed, real, positive $(1,1)$-current.
\end{proposition}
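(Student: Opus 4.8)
The plan is to produce, on a neighborhood of each point of the degenerate fiber, a plurisubharmonic potential for $\omega_\cX$ that is locally bounded from above, and then invoke the standard fact that a closed positive current on the smooth locus with a locally bounded-above psh potential extends across an analytic subset. The potential is already at hand: on $\cX$ we have $\omega_\cX = \ddb \log g(z,s)$, where $g\,dV$ is the relative \ke volume form, and by the normalization \eqref{eq:ke1} we may write $\log g = u + \log g^0$ with $g^0\,dV$ the reference volume form $\Omega^0_{\cX/S}$ coming from the \fs metric via \eqref{eq:Omega0}. The reference object $\log g^0$ extends smoothly across the degenerate fiber (it is built from the $\Phi_i$ and the ambient \fs metric on $\mathbb P_N \times \ol S$, which are defined on all of $\ol\cX$), so the whole problem is reduced to controlling the fiberwise Monge-Amp\`ere solution $u = u(\cdot,s)$ as $s$ approaches $V(s^1\cdots s^r)$.

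\textbf{Key steps, in order.} First I would rewrite $\omega_\cX = \ddb(u + \log g^0)$ on the smooth part $\cX$ and observe that $\ddb \log g^0$ extends as a smooth (not necessarily positive) form $\eta$ to $\ol\cX$, and that $u(\cdot,s)$ is $\omega^0_{\cX_s}$-psh on each fiber. Second, and this is the heart of the matter, I would apply the $C^0$-estimate quoted just above: for each $s\in S$ the function $u(\cdot,s)$ satisfies $u + F \le -\Box^0 u$ on $\cX_s$, hence $\sup_{\cX_s} u(\cdot,s) \le \sup_{\cX_s}(-F(\cdot,s))$. Since $F = \log\bigl(\Omega^0_{\cX_s}/(\omega^0_{\cX_s})^n\bigr)$ is the restriction to the fiber of a function that is smooth on the \emph{total} space $\ol\cX$ (again both $\Omega^0$ and $\omega^0 = \tfrac1m \omega^{FS}$ extend), the quantity $\sup(-F)$ is locally bounded in $s$ as $s \to V(s^1\cdots s^r)$. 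Therefore $u$, viewed as a function on $\cX$, is locally bounded from above near every point of the degenerate fiber. Third, I would conclude: $u + \log g^0$ is locally a sum of a psh function (in fiber directions) that is bounded above and a smooth function, and $\ddb(u+\log g^0) = \omega_\cX \ge 0$ on $\cX$ by Theorem~\ref{th:main}; a quasi-psh function bounded above on the complement of the analytic set $\ol\cX \setminus \cX$ extends as a quasi-psh function across it (Grauert-Remmert / the Riemann extension theorem for psh functions, using normality of $\ol\cX$ after passing to a desingularization if needed), and its $\ddb$ gives the desired closed positive current $\omega_{\ol\cX}$ on $\ol\cX$ restricting to $\omega_\cX$ on $\cX$.

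\textbf{Main obstacle.} The delicate point is not the extension mechanism — that is a classical statement about bounded-above (quasi-)psh functions — but the \emph{uniformity} of Yau's $C^0$-estimate as the complex structure of the fiber degenerates: one must be sure that the bound $\sup_{\cX_s} u(\cdot,s) \le \sup_{\cX_s}(-F(\cdot,s))$ is genuinely locally uniform in $s$, i.e. that $F$, the discrepancy between the \fs-induced volume form and $(\omega^0_{\cX_s})^n$, stays bounded above locally in $s$. This is where the hypothesis $\cL|\cX \simeq \cK_{\cX/S}^{\otimes m}$ and the fact that the embedding $\Phi$ and the metric data live on the \emph{total} space $\ol\cX$ over the full polydisk $\ol S$ are essential: they guarantee that $\Omega^0$, $\omega^0$, and hence $F$ are restrictions of objects smooth on $\ol\cX$, so no blow-up of $F$ occurs even along the singular fibers. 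Once that uniformity is in place, the rest is the soft extension argument, and one also records — for the "at most analytic singularities" refinement needed later — that $\log g^0$ and $F$ have only the mild singularities coming from the ambient \fs data, which are of the required type.
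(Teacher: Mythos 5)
Your overall mechanism is the right one and is in fact the paper's: write $\omega_\cX=\ddb\bigl(u-\tfrac1m\log h_{FS}\bigr)$ on $\cX$, use that $\log h_{FS}$ is of class $\cinf$ on all of $\ol\cX$, bound the remaining plurisubharmonic potential from above near the degenerate fibers, and extend it by Grauert--Remmert \cite{g-r}. The gap is in the step that produces the bound. You claim that $F=\log\bigl(\Omega^0_{\cX/S}/(\omega^0_{\cX/S})^n\bigr)$ is the restriction of a function smooth on $\ol\cX$, so that $\sup_{\cX_s}(-F)$, and hence by \eqref{eq:uplusF} also $\sup_{\cX_s}u$, stays locally bounded as $s$ approaches $V(s^1\cdots s^r)$. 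This is not justified and is false in general: what extends smoothly to $\ol\cX$ are the ambient objects $h_{FS}$ and $\omega^{FS}$, not the two \emph{relative} $(n,n)$-forms $\Omega^0_{\cX/S}$ and $(\omega^0_{\cX/S})^n$, whose very definition uses the isomorphism $\cL|\cX\simeq\cK_{\cX/S}^{\otimes m}$ and the submersivity of $f$, both of which break down along the singular fibers over $\ol S\setminus S$. The quotient $e^{-F}=(\omega^0_{\cX/S})^n/\Omega^0_{\cX/S}$ is a function on $\cX$ only, and its supremum over $\cX_s$ may (and typically does) tend to $\infty$ as the fibers degenerate; this unboundedness is precisely the difficulty of the degenerating situation, and with it your conclusion that $u$ is locally bounded above collapses.

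The paper's proof replaces your uniform bound by a weaker, but provable, polynomial one: since both $\Omega^0_{\cX/S}$ and $(\omega^0_{\cX/S})^n$ are given by polynomial data of the embedded family in $\mathbb P_N\times\ol S$, there are $k>0$ and $c>0$ (after shrinking $\ol S$) with $|\sigma(s)|^{2k}\sup\{e^{-F(z)}:f(z)=s\}\le c$, where $\sigma(s)=s^1\cdots s^r$. Combined with the fiberwise $C^0$-estimate \eqref{eq:uplusF} this gives $|\sigma(s)|^{2k}e^{u}\le c$ on $\cX$. One then takes as potential $\log\bigl(|\sigma(s)|^{2k}e^{u}h_{FS}^{-1/m}\bigr)$: it is plurisubharmonic on $\cX$ by Theorem~\ref{th:main} (the added term $2k\log|\sigma\circ f|$ is pluriharmonic there), it is bounded from above by the displayed estimate, hence it extends plurisubharmonically to $\ol\cX$ by \cite{g-r}, and its $\ddb$ is a closed positive current on $\ol\cX$ restricting to $\omega_\cX$ on $\cX$. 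So to repair your argument you must (i) drop the claim that $F$ extends boundedly, (ii) prove the polynomial estimate on $\sup e^{-F}$ from the algebraic nature of the embedded family, and (iii) twist your potential by $2k\log|\sigma|$ before invoking the extension theorem; the rest of your outline then goes through as written.
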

\begin{proof}
Like in the absolute case, the \fs hermitian metric $h_{FS}$ on $\cL$
defines a relative volume form
$$
\Omega^0_{\cX/S} = h_{FS}^{-1/m}|\cX.
$$

In a similar way $\omega^0_{\cX/S}$ is defined:
$$
\omega^0_{\cX/S}:=-\text{Ric}_{\cX/S}(\Omega^0_{\cX/S}).
$$
We have, restricted to any fiber of $s\in S$,
$$
\omega^0_{\cX_s}= \frac{1}{m} \; \omega^{FS}|\cX_s.
$$
We denote by $u$ and $F$ the functions with parameter $s$, which were
defined for the absolute case in \eqref{eq:ke1} and \eqref{eq:defF} above.
Let $\sigma(s)= s^1\cdot\ldots\cdot s^r$. Singular fibers occur only,
where $\sigma$ vanishes.

Now both the initial relative volume form $\Omega^0_{\cX/S}$, and the
relative volume form $(\omega^0_{\cX/S})^n$ that is induced by the
relative Fubini-Study form, are given in terms of polynomials. On $S$ we
consider the function
$$
\sup\left(\left.\frac{(\omega^0_{\cX/S})^n}{\Omega^0_{\cX/S}}\right|{\cX_s}\right).
$$
It follows immediately that for some positive exponent $k$ we have
$$
| \sigma(s) |^{2k}
\sup\left(\left.\frac{(\omega^0_{\cX/S})^n}{\Omega^0_{\cX/S}}\right|{\cX_s}\right)
\leq c
$$
for $s\in S$, and some real constant $c$, i.e.\
$$
| \sigma(s) |^{2k} \sup\{ e^{-F(z)}; f(z)=s\} \leq c.
$$
(Here $\ol S$ may have to be replaced by a smaller neighborhood of $0\in
\ol S$.)

We denote by $\omega_{\cX/S}$ the relative \ke form. Again
$$
\omega^n_{\cX/S}= e^u\Omega^0_{\cX/S},
$$
and the fiberwise $C^0$-estimate for $u$ yields
\begin{equation}\label{eq:estu}
| \sigma(s) |^{2k} \cdot e^u \leq c
\end{equation}
on $\cX$.

Now the global form on $\cX$ constructed in Theorem~\ref{th:main} is
\begin{eqnarray*}
\omega_\cX &=& \ii \pt\ol\pt \log(\omega^n_{\cX/S})\\
&=&\ii \pt\ol\pt \log(e^u \cdot \Omega^0_{\cX/S})\\
&=& \ii \pt\ol\pt \log\left(e^u\cdot h_{FS}^{-1/m}|\cX\right)\\
&=& \ii \pt\ol\pt \log\left(| \sigma(s) |^{2k}\cdot e^u\cdot h_{FS}^{-1/m}|\cX\right).
\end{eqnarray*}
Observe that $\log h_{FS}$ is of class $\cinf$ on $\ol\cX$.

We know from Theorem~\ref{th:main} that $\log(| \sigma(s) |^{2k}\cdot
e^u\cdot h_{FS}^{-1/m})$ is plurisubharmonic on $\cX$, and by
\eqref{eq:estu} it is bounded from above, hence it possesses a
plurisubharmonic extension to $\ol\cX$ \cite{g-r}.
\end{proof}

\medskip

Now we apply Proposition~\ref{pr:extomloc} to families over Hilbert
schemes. We fix the Hilbert polynomial and consider the Hilbert scheme of
embedded flat proper morphisms. Denote by $\lsc{n}{\ol\cH}$ the
normalization and by $\lsc{n}\cH \subset \lsc{n}{\ol\cH}$ the locus of
smooth fibers.
\begin{equation}\label{eq:embfam}
\xymatrix{{\ol\cX}\ar[r]^-{\Phi}\ar[dr]_f & \mathbb P_N\times
\lsc{n}{\ol\cH}\ar[d]^{{\rm{pr_2}}}
\\ & \lsc{n}{\ol\cH}}
\end{equation}
Over $\lsc{n}{\cH}$ the fibers are canonically polarized, i.e.\
$\Phi^{*}\cO_{\mathbb P_N\times \lsc{n}{\ol\cH}}(1)|\cX \simeq
\cK_{\cX/\lsc{n}{\cH}}^{\otimes m}$.

\begin{theorem}\label{th:singext}
Let $(\cK_{\cX/\cH},h)$ be the relative canonical bundle on the total
space over the Hilbert scheme, where the hermitian metric $h$ is induced
by the \ke metrics on the fibers. Then the curvature form extends to the
total space $\ol\cX$ over the compact Hilbert scheme $\ol\cH$ as a
positive, closed current $\omega^{KE}_{\ol \cX}$ with at most analytic
singularities.
\end{theorem}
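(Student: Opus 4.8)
The plan is to deduce Theorem~\ref{th:singext} from the local statement of Proposition~\ref{pr:extomloc} by patching the local extensions together over the normalized Hilbert scheme, and then pushing forward to the (possibly non-normal) compact Hilbert scheme $\ol\cH$. First I would work over the normalization $\lsc{n}{\ol\cH}$ with the universal embedded family \eqref{eq:embfam}. Around any point of the compactifying locus $A = \lsc{n}{\ol\cH}\setminus \lsc{n}\cH$, after possibly shrinking and choosing suitable coordinates, the family restricted to a neighborhood $U$ has the shape \eqref{eq:locfam}: a proper flat family, smooth over the complement of a normal-crossings divisor $V(s^1\cdots s^r)$, together with an $m$-canonical embedding into $\mathbb P_N\times \ol S$. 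Proposition~\ref{pr:extomloc} then provides, on each such $U$, an extension of $\omega_\cX$ across the singular fibers as a closed positive $(1,1)$-current with at most analytic singularities — indeed the explicit potential $\log(|\sigma(s)|^{2k}\, e^u\, h_{FS}^{-1/m})$ is plurisubharmonic and locally bounded above, hence extends by \cite{g-r}, and the $|\sigma(s)|^{2k}$ factor contributes an analytic singularity.

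Next I would globalize these local extensions. The total space $\ol\cX$ over $\lsc{n}{\ol\cH}$ may itself be singular, but it is reduced, and one can take a desingularization; alternatively, since the construction in Proposition~\ref{pr:extomloc} is via a plurisubharmonic potential on the smooth ambient space $\mathbb P_N\times\ol S$, the local extensions are canonically determined (the null/minimal plurisubharmonic extension across the degeneracy divisor, up to the explicit analytic-singularity term) and therefore agree on overlaps of the covering. This is exactly the situation handled by Proposition~\ref{pr:extcurr}: on the complement of the preimage of $A$ in $\ol\cX$ the current $\omega_\cX$ is closed and positive, and it admits local extensions with at most analytic singularities; hence it extends to all of $\ol\cX$ over $\lsc{n}{\ol\cH}$ as a closed positive current $\omega^{KE}$ with at most analytic singularities. (One must check the Lelong-number hypothesis of Proposition~\ref{pr:extcurr} along components of the preimage of $A$; this is where the explicit potential is useful, since the singular part is transparently $k\sum \gamma_j[\cdot]$-type and can be absorbed into the statement's ``analytic singularities'' clause, or handled componentwise as in Proposition~\ref{pr:linext}.)

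Finally I would descend from $\lsc{n}{\ol\cH}$ to $\ol\cH$. The Hilbert scheme $\ol\cH$ is compact but possibly non-normal; its normalization map $\lsc{n}{\ol\cH}\to\ol\cH$ is finite and an isomorphism over the smooth-fiber locus $\cH$ in an appropriate sense. Pushing the closed positive current $\omega^{KE}$ forward along this finite map yields a closed positive current on $\ol\cX$ over $\ol\cH$; finiteness guarantees the push-forward preserves closedness, positivity, and the at-most-analytic-singularities property, and it restricts to $\omega_\cX$ over $\cH$ since the map is biholomorphic there. This produces the desired current $\omega^{KE}_{\ol\cX}$.

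The main obstacle I anticipate is the globalization/compatibility step: verifying that the local extensions furnished by Proposition~\ref{pr:extomloc} really do glue — i.e.\ that the choice of $k$, of $\sigma$, and of the ambient embedding does not matter — and handling the fact that $\ol\cX$ need not be smooth or even normal, so that Siu's decomposition and the uniqueness of extensions into codimension-two loci must be invoked on a desingularization and then carefully transported back. The hypotheses of Proposition~\ref{pr:extcurr} (vanishing Lelong numbers of the given current on the open part, existence of local positive extensions) must be checked against the concrete $\omega_\cX$, and one should make sure the generalized normal-crossings situation of Proposition~\ref{pr:linext} covers the actual stratification of the degeneracy locus in $\ol\cH$; this bookkeeping, rather than any deep new estimate, is the crux.
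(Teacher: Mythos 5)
Your proposal follows essentially the same route as the paper's proof: local extension of $\omega_\cX$ across the degenerate fibers via Proposition~\ref{pr:extomloc}, globalization of these local extensions via Proposition~\ref{pr:extcurr} (null extension, vanishing Lelong numbers on the open part), and push-forward of the resulting closed positive current. The one minor divergence is that the paper passes directly to a desingularization of $\ol\cH$ on which the preimage of $\ol\cH\setminus\cH$ is a simple normal crossings divisor, so that the smooth polydisk model \eqref{eq:locfam} literally applies, whereas you start from the normalization and treat the desingularization only as an alternative; it should be the primary step, since over the (possibly singular) normalized base the local shape \eqref{eq:locfam} is not automatic.
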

\begin{proof}
We observe that  $\ol\cH$ may be desingularized (such that the preimage of
$\ol\cH\backslash \cH$ is a simple normal crossings divisor): Since
$\omega_\cX$ is already defined on the (possibly singular) space $\cX$, it
will not be affected by the process of taking a pull-back and push-forward
again. After a desingularization we apply Proposition~\ref{pr:extomloc}
together with Proposition~\ref{pr:extcurr} and take the push-forward of
the resulting current.
\end{proof}

\section{Extension of the \wp form for canonically polarized varieties
to the compactified Hilbert scheme}\label{se:compHilb}

Now we consider extensions of the \wp form to the  Hilbert scheme
$\ol\cH$. We have the situation of \eqref{eq:embfam}.

We want to apply Theorem~\ref{th:singext} and consider the fiber integral
analogous to \eqref{eq:fibint_ke} for the map $ \ol\cX \to \ol \cH$.
\begin{equation}\label{eq:extwp}
\omega_{\cH}^{WP}=\int_{\cX/\cH} (\omega_\cX)^{n+1}.
\end{equation}

\begin{theorem}\label{pr:wp_pot}
The \wp form $\omega_{\cH}^{WP}$ on $\cH$ given by the fiber integral
\eqref{eq:extwp} can be extended to $\ol \cH$ as a positive $d$-closed
$(1,1)$-current.
\end{theorem}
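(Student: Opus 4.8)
The plan is to reduce the assertion to the local extension result Proposition~\ref{pr:extomloc} together with the curvature extension Theorem~\ref{th:singext}, and then to show that fiber integration commutes with the passage to the extended current. First I would work on the desingularized situation: as in the proof of Theorem~\ref{th:singext}, replace $\ol\cH$ by a desingularization in which $\ol\cH\setminus\cH$ is a simple normal crossings divisor $A$, and correspondingly pull back the family $\ol\cX\to\ol\cH$; since $\omega_\cX$ is already a current on the (possibly singular) total space $\cX$, this pullback/pushforward does not affect it. By Theorem~\ref{th:singext} the curvature form $\omega_\cX=c_1(\cK_{\cX/\cH},h)$ extends to $\ol\cX$ as a positive closed $(1,1)$-current $\omega^{KE}_{\ol\cX}$ with at most analytic singularities.

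Next I would form the fiber integral of the $(n+1)$-st power of the extended current. The key point is that $(\omega^{KE}_{\ol\cX})^{n+1}$ is a well-defined positive closed current of bidegree $(n+1,n+1)$ on $\ol\cX$ (here one uses that the current has at most analytic singularities, so that its powers are defined via the local $\pt\ol\pt$-potentials with analytic singularities, and the self-intersection is classical away from the polar locus and extends by boundedness of the local potentials from above). Since $f:\ol\cX\to\ol\cH$ is proper, the fiber integral $\int_{\cX/\cH}(\omega^{KE}_{\ol\cX})^{n+1}$, i.e.\ the pushforward $f_*\bigl((\omega^{KE}_{\ol\cX})^{n+1}\bigr)$, is a positive $(1,1)$-current on $\ol\cH$, and it is $d$-closed because pushforward commutes with $d$ (the current analog of \eqref{eq:fibintallg}). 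Over the open locus $\cH$ this pushforward restricts to the smooth form $\int_{\cX/\cH}\omega_\cX^{n+1}=\omega^{WP}_\cH$ by Proposition~\ref{pr:fibint} (equivalently \eqref{eq:extwp}), so we have produced the required extension. Finally, to return from the desingularization to $\ol\cH$ itself one pushes forward once more along the modification; the resulting current is still positive and $d$-closed, and agrees with $\omega^{WP}_\cH$ over $\cH$.

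I expect the main obstacle to be the step asserting that the fiber integral of the extended current is itself a \emph{closed, positive} current that restricts correctly over $\cH$ — in particular, justifying that $(\omega^{KE}_{\ol\cX})^{n+1}$ makes sense as a positive closed current on the singular total space $\ol\cX$ and that no mass is lost or spurious mass created on the polar/singular locus when pushing forward. This is where one must invoke that $\omega^{KE}_{\ol\cX}$ has at most analytic singularities (so one has local psh potentials bounded above with analytic singularities, on which Bedford--Taylor / Demailly intersection theory applies) and that the exceptional fibers of $f$ over $A$ have the expected dimension so that no component of $\ol\cX$ over $A$ contributes a lower-dimensional piece to the fiber integral in bidegree $(1,1)$. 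Alternatively, and more in the spirit of the rest of the paper, one can avoid forming self-intersections of currents altogether: write $\omega_\cX^{n+1}=\varphi\cdot g\,dV\wedge\ii ds\wedge\ol{ds}$ fiberwise as in Lemma~\ref{le:varphi}, use the $C^0$-estimate \eqref{eq:estu} to control $\varphi\cdot g$ up to a factor $|\sigma(s)|^{2k}$, and conclude that the locally $\pt\ol\pt$-exact potential of the fiber-integrated form is psh, bounded above, hence extends by \cite{g-r}; this route mirrors the proof of Proposition~\ref{pr:extomloc} and is the cleaner one to carry out in detail.
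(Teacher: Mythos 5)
Your primary route has a genuine gap at exactly the point the paper is at pains to avoid: forming $(\omega^{KE}_{\ol\cX})^{n+1}$ as a positive closed current on $\ol\cX$ and pushing it forward. The local potentials of $\omega^{KE}_{\ol\cX}$ are unbounded along all of $f^{-1}(\ol\cH\setminus\cH)$, i.e.\ along a divisor of $\ol\cX$ containing entire $n$-dimensional fibers. Bedford--Taylor theory requires locally bounded potentials, and the Demailly-type extensions of Monge--Amp\`ere products to unbounded potentials require the polar set to be small compared with the number of factors (for an $(n+1)$-fold product one needs codimension conditions that a divisor cannot satisfy); ``analytic singularities plus boundedness from above'' does not define the product, and boundedness from above alone is in any case far weaker than local boundedness. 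This is why the paper's proof begins by saying it wants to avoid wedge products of currents altogether; your first construction cannot be repaired by the remarks you attach to it, and the worry you flag about ``mass on the polar locus'' is not the only problem --- the product itself is not defined.

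Your fallback sketch is in the right spirit but misidentifies the estimate that makes it work. The $C^0$-estimate \eqref{eq:estu} bounds $|\sigma(s)|^{2k}e^{u}$, i.e.\ the fiberwise potential of $\omega_\cX$ (equivalently the relative volume density against $\Omega^0_{\cX/S}$); it gives no control of $\varphi\cdot g$, since by \eqref{eq:varphi} the function $\varphi$ involves the mixed second derivatives $g_{s\ol s}$, $g_{s\ol\beta}$ in base directions, which are not touched by a fiberwise bound on $u$. The paper instead controls the $\pt\ol\pt$-\emph{potential of the fiber integral}: with $\wt u=u+2k\log|\sigma(s)|$ one writes
\begin{equation*}
\omega_\cX^{n+1}=(\omega^0_\cX)^{n+1}+\sum_{j=0}^{n}\ddb\Bigl(\wt u\,(\omega^0_\cX)^j(\omega^0_\cX+\ddb\wt u)^{n-j}\Bigr),
\end{equation*}
pushes forward, treats the first term by Varouchas' theorem (the fiber integral of the smooth positive form $(\omega^0_{\ol\cX})^{n+1}$ over $\ol\cX\to\ol S$ is a closed positive current with continuous local potential), and observes that the remaining terms have local potentials $\int_{\cX_s}\wt u\,(\omega^0_\cX)^j(\omega^0_\cX+\ddb\wt u)^{n-j}$, which are bounded from above because $\wt u$ is uniformly bounded above while the integrals $\int_{\cX_s}(\omega^0_\cX)^j(\omega^0_\cX+\ddb\wt u)^{n-j}$ are cohomological constants in $s$; Proposition~\ref{pr:extcurr} then globalizes the local extensions over a desingularization and one pushes down. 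So the ``cleaner route'' you propose needs this decomposition (which again never forms a self-intersection of currents and needs the Varouchas ingredient for the Fubini--Study term), not a pointwise bound on the Weil--Petersson density $\varphi\cdot g$.
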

\begin{proof}
The statement is about the pull-back of the \wp form  to a normalization,
so again we assume normality of the base space. We desingularize the pair
$(\lsc{n}{\ol\cH}, \lsc{n}{\ol\cH}\backslash \lsc{n}{\cH})$. We can apply
Proposition~\ref{pr:extcurr} and restrict ourselves to the local situation
\eqref{eq:locfam}. As we want to avoid wedge products of currents, we use
the proof of  Proposition~\ref{pr:extomloc}. On $\cX$ we have
\begin{gather*}
\omega_\cX:=2\pi c_1(\cK_{\cX/S},h) =
   \ddb\log (e^u\Omega^0_{\cX/S})= \omega^0_{\cX} + \ddb u,
\end{gather*}
where $\omega^0_\cX$ comes from the \fs form pulled back to $\cX$.

We observe that
$$
(\omega^0_\cX +\ddb u)^{n+1} = (\omega^0_\cX)^{n+1} + \sum_{j=0}^n\ddb u
(\omega^0_\cX)^j(\omega^0_\cX + \ddb u)^{n-j}.
$$
The first term $(\omega^0_\cX)^{n+1}$ can be treated directly. The form
\hfill \\ $\omega^0_{\ol\cX}= (1/m)\omega^{FS}|\ol\cX$ is the restriction
of a positive $\cinf$ form on the total space -- it was shown in
\cite[Lemme 3.4]{va1} that the fiber integral
$$
\int_{\ol\cX/\ol S}(\omega^0_{\ol\cX})^{n+1}
$$
exists and defines a $d$-closed real $(1,1)$-current (positive in the
sense of currents) on $\ol S$, which possesses a {\em continuous
$\pt\ol\pt$-potential} on $\ol S$. (This fact also follows from the main
theorem of \cite{yoshi}.)

For the second term we use the results of Section~\ref{se:degfam}: Again
we may replace $u$ by $\wt u= u + 2k \log|\sigma(s)|$, which does not
change the value of the fiber integral on the interior.

Near any point of $\ol S\backslash S$ the potentials $\wt u$ are bounded
from above uniformly with respect to the parameter of the base. We avoid
taking wedge products of currents and consider
\begin{gather*}
\int_{\cX/S} \ddb \Big( \wt u  \cdot(\omega^0_\cX)^j(\omega^0_\cX + \ddb \wt u )^{n-j}\Big)= \\
\qquad \ddb \left(\int_{\cX/S} \wt u \cdot (\omega^0_\cX)^j(\omega^0_\cX +
\ddb \wt u )^{n-j}\right).
\end{gather*}
Now the $\pt\ol\pt$-potential is given by integrals over the fibers:
$$
\int_{\cX_s} \wt u \cdot (\omega^0_\cX)^j(\omega^0_\cX +
\ddb \wt u )^{n-j}
$$
The functions $\wt u $ are known to be uniformly bounded from above,
whereas the integrals
$$
\int_{\cX_s} (\omega^0_\cX)^j(\omega^0_\cX +
\ddb \wt u )^{n-j}
$$
are constant as functions of $s$. This shows the boundedness from above of
the potential for the singular \wp metrics for families of \ke manifolds
of negative curvature. Finally, we invoke again
Proposition~\ref{pr:extcurr}.
\end{proof}

\section{Moduli of canonically polarized manifolds}\label{se:comp}
In this section we give a short analytic/differential geometric proof of
the quasi-projectivity of moduli spaces of canonically polarized manifolds
depending upon the variation of the \ke metrics on such manifolds.

\begin{theorem}\label{th:posbdlmod}
Let $\cM$ be a component of the moduli space of canonically polarized
manifolds. Then there exists a compactification $\ol\cM$ together with a
holomorphic line bundle $\ol\lambda$ equipped with a singular hermitian
metric $\ol h^Q$ with the following properties:
\begin{itemize}
  \item[(i)] The restriction of $\ol h^Q$ to $\cM$ is a hermitian
      metric of class $\cinf$ in the orbifold sense, whose curvature
      is strictly positive.
  \item[(ii)] The curvature form of $\ol h^Q$ on $\ol\cM$ is a
      (semi-)positive current.
  \item[(iii)] The metric $\ol h^Q$ has at most analytic
      singularities.
\end{itemize}
\end{theorem}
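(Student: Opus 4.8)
The plan is to assemble Theorem~\ref{th:posbdlmod} from the machinery already developed: the determinant line bundle with Quillen metric from Section~\ref{se:bgs}, the extension Theorem~\ref{th:extlinebdl} for hermitian line bundles, the current extension Theorem~\ref{th:singext}, and the fiber-integral extension Theorem~\ref{pr:wp_pot}. First I would work on the normalization $\lsc{n}{\ol\cH}$ of the compactified Hilbert scheme with its universal family as in \eqref{eq:embfam}. Over the smooth locus $\lsc{n}{\cH}$ we have the line bundle $\lambda = \det f_!((\cK_{\cX/\cH}^{\otimes m} - (\cK_{\cX/\cH}^{-1})^{\otimes m})^{n+1})$ carrying its Quillen metric $h^Q$, whose Chern form is (up to the numerical factor $m^{n+1}$) the generalized \wp form, by \eqref{eq:detbdl} and the discussion following it. By Theorem~\ref{pr:wp_pot} the curvature current $\omega^{WP}_\cH$ extends to all of $\lsc{n}{\ol\cH}$ as a positive closed $(1,1)$-current; moreover, tracing through the proof of Theorem~\ref{pr:wp_pot} via Proposition~\ref{pr:extomloc} and Theorem~\ref{th:singext}, the extension has at most analytic singularities. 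Thus the hypotheses of Theorem~\ref{th:extlinebdl} are met with $Y = \lsc{n}{\ol\cH}$, $Y' = \lsc{n}{\cH}$, $L' = \lambda$, $h' = h^Q$: we obtain a holomorphic line bundle $(\ol\lambda, \ol h^Q)$ on $\lsc{n}{\ol\cH}$ with singular semi-positive hermitian metric, restricting to $(\lambda, h^Q)$ over $\lsc{n}{\cH}$, and with at most analytic singularities.

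Next I would descend from the Hilbert scheme to the moduli space. The group $PGL_{N+1}$ acts on $\lsc{n}{\ol\cH}$, and the moduli space $\cM$ (respectively its compactification $\ol\cM$, which exists as a Moishezon/Artin space) is the quotient of the relevant locus. Here I would invoke the functoriality of the determinant-line-bundle-plus-Quillen-metric construction — precisely the point emphasized in the introduction and established in \cite[\S 12]{f-s:extremal} for smooth families over singular bases — to see that $(\lambda, h^Q)$ descends to the moduli stack, at least after passing to a suitable power that trivializes the finite stabilizer actions, giving an object on $\cM$ that is $\cinf$ in the orbifold sense. The generalized \wp form descends likewise, and its strict positivity on $\cM$ is exactly the content of the Main Theorem together with Proposition~\ref{pr:fibint}: $\omega^{WP} = \int_{\cX/S}\omega_\cX^{n+1}$ and $\omega_\cX$ is strictly positive in all directions when the family is nowhere infinitesimally trivial, which holds generically/on the moduli space by effectivity of parametrization. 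This gives property (i). For (ii) and (iii), the extended current $\ol\omega^{WP}$ on $\ol\cM$ is the push-down of the positive closed current with analytic singularities produced above, and positivity of currents is preserved under such push-forwards (cf.\ the argument in Proposition~\ref{pr:extcurr}), as is the property of having at most analytic singularities.

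The final technical point — flagged in the introduction with the remark "Finally the complex structure at points of the compactifying divisor has to be changed" and in Proposition~\ref{pr:nonnormal} — is that working on the normalization $\lsc{n}{\ol\cH}$ rather than $\ol\cH$ itself means the extended line bundle a priori lives on the normalization of $\ol\cM$, not on $\ol\cM$. To land a genuine line bundle on a compactification I would apply Proposition~\ref{pr:nonnormal}: replacing $\ol\cM$ by a finite modification $Z \to \ol\cM$ that is an isomorphism over $\cM$, the invertible sheaf extends. One takes $\ol\cM$ to be this modified space; it is still a compactification of $\cM$ in the required sense, and $\ol\lambda$, $\ol h^Q$ are defined on it with all three stated properties.

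I expect the main obstacle to be the descent step: carefully verifying that the Quillen-metrized determinant line bundle and the extended current are equivariant for the $PGL_{N+1}$-action and hence descend to the moduli orbifold and its compactification, including the bookkeeping of which power of $\ol\lambda$ is needed to kill automorphism obstructions and to make the metric genuinely $\cinf$ in the orbifold sense. The extension steps themselves are essentially citations of Theorem~\ref{th:extlinebdl}, Theorem~\ref{pr:wp_pot}, and Proposition~\ref{pr:nonnormal}; the real care goes into the functoriality/equivariance argument and into checking that "at most analytic singularities" survives both the extension across the boundary divisor and the subsequent push-forward to $\ol\cM$.
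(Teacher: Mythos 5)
Your overall toolkit is the right one, but the order in which you deploy it creates a genuine gap at the descent step. You apply Theorem~\ref{th:extlinebdl} on the normalized compactified Hilbert scheme $\lsc{n}{\ol\cH}$ and then propose to push the extended bundle $(\ol\lambda,\ol h^Q)$ down to a compactification of $\cM$ by $PGL_{N+1}$-equivariance. Over the interior this descent is exactly the functoriality argument (plus the power trick for automorphisms), and it works; but at the boundary there is no quotient structure available: the compactification $\ol\cM$ furnished by Artin's theorem is \emph{not} a quotient of $\ol\cH$, the classifying map $\nu:\cH\to\cM$ does not even extend to a morphism on $\ol\cH$ (one must first pass to a modification $\wt{\ol\cH}\to\ol\cH$ to resolve the indeterminacy), and the fibers of the resulting map $\mu:\wt{\ol\cH}\to\ol\cM$ over boundary points are not group orbits. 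While a closed positive current can be pushed forward under the proper map $\mu$, a line bundle on $\wt{\ol\cH}$ (or on $\lsc{n}{\ol\cH}$) cannot be ``descended'' along $\mu$ by any equivariance argument, so your extended $\ol\lambda$ never reaches $\ol\cM$. Worse, presupposing a usable quotient of the compactified Hilbert scheme would come close to assuming the quasi-projectivity one is trying to prove.

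The paper's proof reverses the two operations precisely to avoid this. It first descends only the \emph{interior} data $(\lambda,h^Q,\omega^{WP})$ to $\cM$ (functoriality, bounded automorphism groups). It then extends the \emph{current}, not the bundle: the extension $\omega^{WP}_{\wt{\ol\cH}}$ from Theorem~\ref{pr:wp_pot} is pulled up to $\wt{\ol\cH}$, identified over $\mu^{-1}(\cM)$ with $\mu^*\omega^{WP}_\cM$ via the uniqueness of the null-extension in Proposition~\ref{pr:extcurr}, and pushed forward under $\mu$ to give a closed positive extension of $\omega^{WP}_\cM$ to $\ol\cM$. Only after reducing to the normalization of $\ol\cM$ (Proposition~\ref{pr:nonnormal}, which you invoke correctly at the end) is Theorem~\ref{th:extlinebdl} applied, with $Y$ the normalized compactified \emph{moduli} space and $L'=\lambda_\cM$, to extend the line bundle and its singular metric across the boundary. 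If you reorganize your argument so that the current is what crosses from the Hilbert scheme to $\ol\cM$ and the line-bundle extension theorem is applied on $\ol\cM$ itself, the rest of your outline (strict positivity on $\cM$ from the Main Theorem and Proposition~\ref{pr:fibint}, analytic singularities surviving the extension and push-forward) goes through as you describe.
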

\begin{proof}
We know from Artin's theorem \cite{artin} that $\cM$ possesses a
compactification, which is a complex Moishezon space.

{\em Construction of an hermitian line bundle $\lambda_\cM$ on $\cM$.} We
fix a number $m$ so that for all polarized varieties belonging to $\cM$
the $m$-th power of the canonical line bundle determines an embedding into
a projective space (Matsusaka's Big Theorem).

For any smooth family $f:\cX \to S$, where $S$ denotes a reduced complex
space, in Section~\ref{se:bgs} we constructed the determinant line bundle
$$
\lambda_S:=\det f_! ((\cK^{\otimes m}_{\cX/S}-(\cK^{\otimes m}_{\cX/S})^{-1})^{n+1})
$$
together with a Quillen metric $h^Q_S$, whose curvature form is equal to
the generalized \wp form $\omega^{WP}_S$ up to a numerical factor. The
construction is functorial so that the line bundle, together with the
Quillen metric and the \wp form descend to a line bundle $\lambda_\cM$ and
a current $\omega^{WP}_\cM$ on the moduli space $\cM$ in the orbifold
sense. We also know that the order of the automorphism groups of the
fibers is bounded on $\cM$ so that a finite power of the above determinant
line bundle actually descends as a line bundle to $\cM$. The form
$\omega^{WP}_\cM$ can be interpreted as of class $\cinf$ in the orbifold
sense or as a positive current with vanishing Lelong numbers on $\cM$,
since it has local, continuous $\pt\ol\pt$-potentials.

\medskip

{\em Compactifications.} We denote the canonical map by $\nu:\cH \to \cM$
and chose a compactification $\ol \cM$. Since $\ol \cM$ is Moishezon, we
can eliminate the indeterminacy set of $\nu$: Let $\tau: \wt{\ol\cH} \to
\ol\cH$ be a modification such that $\nu$ extends to a map $\mu:
\wt{\ol\cH} \to \ol \cM:$
$$
\xymatrix{\wt{\ol\cH}  \ar[d]_{\tau }  \ar[dr]^\mu  &\\
\ol\cH  \ar @{} [d] |-\cup \ar @{-->} [r]   & \ol\cM  \ar @{} [d]|-\cup \\
\cH   \ar[r]^\nu & \cM  \; .}
$$
\medskip

{\em Extension of the \wp current to \funo{\ol\cM}.} The construction of
the \wp form on the Hilbert scheme \funo{\cH} is known to be functorial,
it descends to the (open) moduli space \funo{\cM} in the orbifold sense.
In Theorem~\ref{pr:wp_pot} we constructed an extension to \funo{\ol\cH} as
a positive, closed current. Here we use the theorem to construct the
extension \funo{\omega^{WP}_{\wt{\ol\cH}}} of \funo{\omega^{WP}} to
\funo{\wt{\ol\cH}}. We claim that restricted to \funo{\mu^{-1}(\cM)} this
current is equal to the pull-back of the (smooth) \wp form
\funo{\mu^*\omega^{WP}_\cM} as a current. This follows from the
construction in Proposition~\ref{pr:extcurr}, where the extension was
defined as a null-extension, and as such it is uniquely determined. Now
the push forward of \funo{\omega^{WP}_{\wt{\ol\cH}}} to \funo{\ol\cM}
under $\mu$ yields an extension of \funo{\omega^{WP}_\cM}. Over points of
\funo{\ol\cM\backslash \cM} the extension of the \wp form cannot be
controlled.

\medskip

{\em Reduction to the normalization.} By Proposition~\ref{pr:nonnormal} it
is sufficient consider normalizations.

\medskip

{\em Extension of the determinant line bundle.} We apply
Theorem~\ref{th:extlinebdl}.
\end{proof}
Now the criterion \cite[Theorem 6]{s-t} is applicable, which yields the
quasi-projectivity of the moduli space \cite{v,viebuch,v2,ko}.

\medskip

Finally, we address the finiteness of the volume of the moduli space
stated in Corollary~\ref{co:finwp}.

We know from Theorem~\ref{pr:wp_pot} together with
Proposition~\ref{pr:extcurr} that we have extensions of $\omega^{WP}$
resp.\ to the compactified space $\ol\cM$ as a closed, positive current.
This current defines the Chern class of a certain line bundle on the
compactified moduli space. Although the maximum exterior product of this
current need not exist as a current, the theory of non-pluripolar products
of globally defined currents of Boucksom, Eyssidieux, Guedj, and Zeriahi
from \cite{begz} applies. (Observe that both the orbifold space structure
and the singularities do not present any problem.) As a result we get that
the integral of the differentiable volume form over the interior $\cM$ is
finite.

\section{Curvature of $R^{n-p}f_*\Omega^p_{\cX/S}(\cK_{\cX/S}^{\otimes m})$
-- Statement of the theorem and Applications}\label{se:statement}

\subsection{Statement of the theorem}\label{ss:curv} We consider an effectively
parameterized family $\cX \to S$ of
canonically polarized manifolds, equipped with \ke metrics of constant
Ricci curvature $-1$. For any $m>0$ the direct image sheaves
$f_*\cK_{\cX/S}^{\otimes(m+1)}=f_*\Omega^n_{\cX/S}(\cK_{\cX/S}^{\otimes
m})$ are locally free. For values of $p$ other than $n$ we assume local
freeness of
$$
R^{n-p}f_*\Omega^p_{\cX/S}(\cK_{\cX/S}^{\otimes m}),
$$
i.e. $\dim_\C H^{n-p}(\cX_s , \Omega^p_{\cX_s}(\cK^{\otimes m}_{\cX_s}) )=
{\it const.}$ In particular the base change property (cohomological
flatness) holds. The assumptions of Section~\ref{ss:dolb} are satisfied so
that we can apply Lemma~\ref{le:dolb}. If necessary, we replace $S$ by a
(Stein) open subset, such that the direct image is actually free, and
denote by $\{\psi^1,\ldots,\psi^r\}\subset
R^{n-p}f_*\Omega^p_{\cX/S}(\cK_{\cX/S}^{\otimes m})(S)$ a basis of the
corresponding free $\cO_S$-module. At a given point $s\in S$ we denote by
$\{(\pt/\pt s_i)|_s; i=1,\ldots,M\}$ a basis of the complex tangent space
$T_sS$ of $S$ over $\C$, where the $s_i$ are holomorphic coordinate
functions of a minimal smooth ambient space $U\subset \C^M$.

Let $A_{i \ol\beta}^\alpha(z,s) \pt_\alpha dz^{\ol\beta}$ be a harmonic
\ks form. Then for  $s \in S$ the cup product together with the
contraction defines
\begin{footnotesize}
\begin{eqnarray}
A_{i \ol\beta}^\alpha \pt_\alpha dz^{\ol\beta}\cup \textvisiblespace :
\cA^{0,n-p}(\cX_s,\Omega^p_{\cX_s}(\cK_{\cX_s}^{\otimes m})) &\to&
\cA^{0,n-p+1}(\cX_s,\Omega^{p-1}_{\cX_s}(\cK_{\cX_s}^{\otimes m}))\label{eq:cup1}\\ A_{\ol \jmath
\alpha}^\ol\beta \pt_\ol\beta dz^{\alpha}\cup \textvisiblespace :
\cA^{0,n-p}(\cX_s,\Omega^p_{\cX_s}(\cK_{\cX_s}^{\otimes m})) &\to&
\cA^{0,n-p-1}(\cX_s,\Omega^{p+1}_{\cX_s}(\cK_{\cX_s}^{\otimes m})),\label{eq:cup2}
\end{eqnarray}
\end{footnotesize}
where $p>0$ in \eqref{eq:cup1} and $p<n$ in \eqref{eq:cup2}.

We will apply the above products to harmonic $(0,n-p)$-forms. In general
the results are not harmonic. We use the notation $\psi^{\ol\ell}:=
\ol{\psi^\ell}$ for sections $\psi^k$ (and a notation of similar type for
tensors on the fibers):

\begin{theorem}\label{th:curvgen}
The curvature tensor for $R^{n-p}f_*\Omega^p_{\cX/S}(\cK_{\cX/S}^{\otimes
m})$ is given by
\begin{eqnarray}\label{eq:curvgen}
R_{i\ol\jmath}^{\phantom{{i\ol\jmath}}\ol\ell k}(s)&=& m \int_{\cX_s}
\left( \Box + 1 \right)^{-1}(A_i\cdot A_\ol\jmath) \cdot(\psi^k \cdot
\psi^\ol\ell) g\/ dV\nonumber\\
&& \quad + m \int_{\cX_s} \left( \Box + m \right)^{-1} (A_i\cup\psi^k)
\cdot (A_\ol\jmath \cup \psi^\ol\ell) g\/ dV \\
&& \quad + m \int_{\cX_s} \left( \Box - m \right)^{-1}
(A_i\cup\psi^\ol\ell)\cdot (A_\ol\jmath \cup \psi^k) g\/ dV.
\nonumber
\end{eqnarray}
The only contribution in \eqref{eq:curvgen}, which may be negative,
originates from the harmonic parts in the third term. It equals
$$
- \int_{\cX_s} H(A_i\cup\psi^\ol\ell) \ol{H(A_j\cup\psi^\ol k)} g dV.
$$
\end{theorem}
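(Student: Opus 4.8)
\emph{Strategy.} The plan is to compute the Chern curvature of the $L^2$-metric on $R^{n-p}f_*\Omega^p_{\cX/S}(\cK_{\cX/S}^{\otimes m})$ directly in a holomorphic frame, reducing every base derivative to a fiber integral by Lemma~\ref{le:intLie} and every fiberwise obstruction to a $\ol\pt$-equation on $\cX_s$ whose Green's operator, after invoking the Bochner--Kodaira--Nakano identities on the \ke fibers, is one of the resolvents $(\Box+1)^{-1}$, $(\Box+m)^{-1}$, $(\Box-m)^{-1}$. First I would shrink $S$ so that the direct image is free, fix a basis $\psi^1,\dots,\psi^r$ of its sections, and represent each $\psi^k$ by a $\ol\pt$-closed $\cE$-valued $(0,n-p)$-form on $\cX$ whose restriction to every fiber is harmonic (Lemma~\ref{le:dolb}); here $\cE=\Omega^p_{\cX/S}(\cK_{\cX/S}^{\otimes m})$ carries the metric induced by the relative \ke form. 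The metric coefficients are $h^{k\ol\ell}(s)=\int_{\cX_s}(\psi^k\cdot\psi^{\ol\ell})\,g\,dV$, and the curvature $R_{i\ol\jmath}^{\phantom{i\ol\jmath}\ol\ell k}$ is then read off from the first and second $s$-derivatives of $h^{k\ol\ell}$ by the usual formula for the curvature of a Hermitian holomorphic bundle in a holomorphic frame, the quadratic-in-first-derivatives term being cancelled in the computation below against the harmonic parts of those derivatives.

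Next I would differentiate under the fiber integral along the horizontal lifts $v_i=\pt_s+a_i^\alpha\pt_\alpha$ of $\pt/\pt s^i$ from Lemma~\ref{le:canlift}, whose $\ol\pt$ yields the harmonic \ks forms $A_i$ (Proposition~\ref{pr:harmrep}). The covariant derivative $D_{v_i}\psi^k$ of the relative form is in general not fiberwise $\ol\pt$-closed: its obstruction is carried by the cup-contraction $A_i\cup\psi^k\in\cA^{0,n-p+1}(\cX_s,\Omega^{p-1}_{\cX_s}(\cK_{\cX_s}^{\otimes m}))$, while the variation of the metric on the $\cK^{\otimes m}$-factor contributes the pointwise pairing $A_i\cdot A_{\ol\jmath}$, a function on the fiber. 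Splitting $D_{v_i}\psi^k$ into its harmonic part --- which records $\partial_i$ of the section in the holomorphic structure and produces precisely the Christoffel term --- and a $\ol\pt$-exact part, I would solve the fiberwise $\ol\pt$-equation by Green's operator. Here the \ke condition ${\rm Ric}=-\omega$ enters: the Bochner--Kodaira--Nakano identity shifts the Laplacian on $\cE$-valued $(0,q)$-forms relative to the scalar one by a constant depending on $m$ and the form degree, converting Green's operator into $(\Box+1)^{-1}$ on the function $A_i\cdot A_{\ol\jmath}$ (this is the scalar equation of Proposition~\ref{pr:elleq}), into $(\Box+m)^{-1}$ on $A_i\cup\psi^k$, and into $(\Box-m)^{-1}$ on the adjoint-type term $A_i\cup\psi^{\ol\ell}$, which arises from the $\ol\pt^*$-half of the variation. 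Assembling the three pieces and integrating by parts on the fibers yields \eqref{eq:curvgen}.

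For the last assertion, in the third term $(\Box-m)^{-1}$ is applied to $A_i\cup\psi^{\ol\ell}$, which --- unlike the $A_i\cup\psi^k$ of the second term --- need not be $\ol\pt$- or $\ol\pt^*$-exact, so its harmonic projection $H(A_i\cup\psi^{\ol\ell})$ survives. Writing $(\Box-m)^{-1}=-\tfrac1m H+(\Box-m)^{-1}({\rm id}-H)$ and verifying, by a further Bochner--Kodaira--Nakano estimate on the bundle in which this term lives, that $(\Box-m)^{-1}$ is non-negative on the orthogonal complement of the harmonics, I conclude that the only possibly negative contribution is $m\cdot(-\tfrac1m)\int_{\cX_s}H(A_i\cup\psi^{\ol\ell})\cdot\overline{H(A_j\cup\psi^{\ol k})}\,g\,dV$, as stated; it vanishes for $p=n$, since then $\Omega^{n+1}_{\cX_s}=0$ so $A_i\cup\psi^{\ol\ell}\equiv0$.

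The hard part will be the simultaneous bookkeeping of the two decompositions (harmonic versus $\ol\pt$-exact) of the covariant derivatives of the $\psi^k$, together with the exact constants in the Bochner--Kodaira--Nakano identities on $\Omega^p(\cK^{\otimes m})$-valued forms over a \ke manifold with ${\rm Ric}=-\omega$: one must pin down that the spectral shifts are precisely $+1$, $+m$ and $-m$, and that $\Box-m$ is invertible and non-negative off the finite-dimensional harmonic space. A secondary difficulty is that Lie derivatives do not preserve the $(p,q)$-bidegree, so the relative forms acquire spurious lower-degree components upon differentiation in the $s$-directions that have to be carried along and then eliminated by fiber integration and integration by parts; matching the outcome against the earlier formula of \cite{sch:curv} for $p=1$ and against Berndtsson's Nakano-positivity \cite{berndtsson} for $p=n$ would serve as a useful consistency check.
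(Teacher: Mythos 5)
Your outline does follow the same architecture as the paper's proof (fiberwise harmonic representatives via Lemma~\ref{le:dolb}, differentiation along the horizontal lifts, fiberwise Hodge theory with the commutation rule of Lemma~\ref{le:boxdboxdb}, and the elliptic equation of Proposition~\ref{pr:elleq} producing $(\Box+1)^{-1}$), but as written it is a plan rather than a proof: every point where the specific geometry of the family enters is asserted, not verified. Concretely, the computation stands or falls with identities you never establish: that $(L_{\ol v}\psi^k)'$ is $\ol\pt$-exact (Lemma~\ref{le:lvpsi1}), whence $\langle L_{\ol v}\psi^k,\psi^\ell\rangle\equiv 0$ and the first derivative of the metric reduces to $\langle (L_v\psi^k)',\psi^\ell\rangle$; the intertwining relations of Proposition~\ref{pr:baseq}, namely $\ol\pt(L_v\psi^k)'=\pt(A_s\cup\psi^k)$, $\ol\pt^*(L_v\psi^k)'=0$, $\pt^*(A_s\cup\psi^k)=0$ and their conjugate analogues, which are exactly what turns the Green's operators into $(\Box+m)^{-1}$ and $(\Box-m)^{-1}$ applied to the cup products; the sign bookkeeping $\langle L_v\psi^k,L_v\psi^\ell\rangle=\langle (L_v\psi^k)',(L_v\psi^\ell)'\rangle-\langle (L_v\psi^k)'',(L_v\psi^\ell)''\rangle$ with $(L_v\psi^k)''=A_s\cup\psi^k$, which is where the cup-product terms actually enter; and the bracket identity of Lemma~\ref{le:vvb}, $L_{[\ol v,v]}=\big[-\varphi^{;\alpha}\pt_\alpha+\varphi^{;\ol\beta}\pt_{\ol\beta},\;\cdot\;\big]-m\varphi\cdot id$, together with the fact (Lemma~\ref{le:lvvpsi}) that the vector-field part pairs to zero against the harmonic $\psi^\ell$ --- this, rather than a generic ``variation of the metric on the $\cK^{\otimes m}$-factor,'' is how the first term with $(\Box+1)^{-1}(A_i\cdot A_{\ol\jmath})$ and its factor $m$ arise. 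You explicitly defer all of this as ``the hard part,'' so the announced cancellations and the precise spectral shifts $+1,+m,-m$ are not actually obtained.

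Two further points. The invertibility of $\Box-m$ on the non-harmonic part of $A_i\cup\psi^{\ol\ell}$ is not a routine positivity statement: the paper's Claim applies $\pt^*$ to the eigencomponents, uses $\Box_\pt=\Box_{\ol\pt}+m$ on $(p,n-p-1)$-forms and the orthogonality of $\ol\pt^*(L_{\ol v}\psi^k)'$ to the $\ol\pt$-harmonic space to exclude the eigenvalue $m$; your appeal to ``a further Bochner--Kodaira--Nakano estimate'' must reproduce precisely this strict inequality, since otherwise $(\Box-m)^{-1}$ is not even defined on that component. Finally, your aside that $A_i\cup\psi^k$ in the second term is $\ol\pt$- or $\ol\pt^*$-exact is false: its harmonic part survives and contributes the positive term $\|H(A\cup\psi)\|^2$ appearing in Proposition~\ref{pr:est1}; the second term is non-negative simply because $(\Box+m)^{-1}$ is a positive operator, so your conclusion about the only possibly negative contribution is unaffected, but the justification needs to be corrected.
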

Concerning the third term, the theorem contains the fact that the positive
eigenvalues of the Laplacian are larger than $m$. (For $p=0$ the second
term in \eqref{eq:curvgen} is equal to zero, and for $p=n$ the third one
does not occur.) One can verify that in case $p=0$ and $m=1$ the
right-hand side of \eqref{eq:curvgen} is identically zero as expected.

Theorem~\ref{th:curvgen} will be proved in Section~\ref{se:compcurv}.

The pointwise estimate \eqref{eq:hker2} of the resolvent kernel (cf.\ also
Proposition~\ref{pr:resol}) translates into an estimate of the curvature.
\begin{proposition}\label{pr:est1}
Let $f:\cX \to S$ be a family of canonically polarized manifolds, and $s
\in S$. Let a tangent vector of $S$ at $s$ be given by a harmonic \ks form
$A$ and let $\psi$ be a harmonic $(p,n-p)$-form on $\cX_s$ with values in
the $m$-canonical bundle. Then
\begin{equation}\label{eq:est1a}
R(A,\ol A, \psi,\ol\psi) \geq P_n(d(\cX_s)) \cdot \|A\|^2\cdot
\|\psi\|^2 + \|H(A\cup \psi)\|^2 - \|H(A\cup \ol\psi)\|^2.
\end{equation}
\end{proposition}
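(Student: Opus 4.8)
The plan is to obtain the estimate directly from the curvature formula of Theorem~\ref{th:curvgen} by evaluating it on the diagonal. Specializing \eqref{eq:curvgen} to $i=\ol\jmath$ and $k=\ol\ell$ --- that is, replacing $A_i,A_\ol\jmath$ by $A,\ol A$ and $\psi^k,\psi^\ol\ell$ by $\psi,\ol\psi$ --- expresses $R(A,\ol A,\psi,\ol\psi)$ as a sum of three fibre integrals over $\cX_s$, and I will bound each of the three summands from below separately.

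For the first summand, observe that $A\cdot\ol A$ is the pointwise norm $|A|^2$ of the harmonic \ks form, hence a non-negative function of class $\cinf$ on $\cX_s$. Proposition~\ref{pr:resol}, applied with $\chi=A\cdot\ol A$, gives $(\Box+1)^{-1}(A\cdot\ol A)(z)\ge P_n(d(\cX_s))\int_{\cX_s}A\cdot\ol A\,g\,dV=P_n(d(\cX_s))\,\|A\|^2$ at every point $z\in\cX_s$. Multiplying this inequality by $\psi\cdot\ol\psi=|\psi|^2\ge 0$ and integrating over $\cX_s$ shows that the first summand is $\ge m\,P_n(d(\cX_s))\,\|A\|^2\,\|\psi\|^2\ge P_n(d(\cX_s))\,\|A\|^2\,\|\psi\|^2$, the last step using $m\ge 1$.

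For the second and third summands I would use the fibrewise spectral decomposition of the complex Laplacian $\Box$ on the relevant bundle-valued forms, splitting off the harmonic parts of $A\cup\psi$ and $A\cup\ol\psi$. Since $\Box$ is self-adjoint and non-negative, $(\Box+m)^{-1}$ is a positive operator which reproduces the harmonic part with the factor $1/m$; hence the second summand is $\ge m\cdot\frac1m\,\|H(A\cup\psi)\|^2=\|H(A\cup\psi)\|^2$. For the third summand, $(\Box-m)^{-1}$ sends $H(A\cup\ol\psi)$ to $-\frac1m H(A\cup\ol\psi)$, while on the orthogonal complement --- the span of eigenforms with eigenvalue $\lambda>0$ --- it is again a positive operator, because the nonzero eigenvalues of $\Box$ on these forms exceed $m$. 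Thus the third summand is $\ge -m\cdot\frac1m\,\|H(A\cup\ol\psi)\|^2=-\|H(A\cup\ol\psi)\|^2$. Adding the three lower bounds yields \eqref{eq:est1a}.

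The only delicate point is the treatment of the potentially negative third term: everything hinges on the strict inequality $\lambda>m$ for the positive eigenvalues of $\Box$ acting on $\cA^{0,n-p+1}(\cX_s,\Omega^{p-1}_{\cX_s}(\cK_{\cX_s}^{\otimes m}))$, which guarantees that $(\Box-m)^{-1}$ is well defined and positive away from the harmonic forms. This spectral gap is part of the content of Theorem~\ref{th:curvgen}; once it is granted, the remainder is a routine positivity estimate combining Theorem~\ref{th:curvgen} with Proposition~\ref{pr:resol}.
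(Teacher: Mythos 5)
Your proposal is correct and essentially reproduces the paper's proof: evaluate the formula of Theorem~\ref{th:curvgen} on the diagonal, bound the first term pointwise via Proposition~\ref{pr:resol}, and treat the second and third terms by the spectral decomposition of $\Box$, invoking the Claim in the proof of Theorem~\ref{th:curvgen} that the non-harmonic eigenvalues occurring there exceed $m$, so that the harmonic parts contribute exactly $\|H(A\cup\psi)\|^2$ and $-\|H(A\cup\ol\psi)\|^2$. Only a minor slip: the space relevant to the third term is $\cA^{0,n-p-1}(\cX_s,\Omega^{p+1}_{\cX_s}(\cK_{\cX_s}^{\otimes m}))$ (the target of the contraction \eqref{eq:cup2}, i.e.\ $(p+1,n-p-1)$-forms), not $\cA^{0,n-p+1}(\cX_s,\Omega^{p-1}_{\cX_s}(\cK_{\cX_s}^{\otimes m}))$, though the same argument applies there.
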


\begin{proof}[Proof of the Proposition] We apply \eqref{eq:curvgen}. The first term involves the
Laplacian on functions. It has already been treated. Concerning the second
and third term, we use the fact that on $\cA^{0,n-p \pm 1}(\cX_s,\Omega^{p
\mp 1}_{\cX_s}(\cK_{\cX_s}^{\otimes m}))$ according to
Lemma~\ref{le:boxdboxdb} of Section~\ref{se:compcurv} the equation
$\Box_\pt=\Box_\ol\pt$ holds and that moreover all non-zero eigenvalues
are larger than $m$ by the Claim in the proof of Theorem~\ref{th:curvgen}.
\end{proof}

For $p=n$ we obtain the following result.
\begin{corollary}\label{co:curvmcan}
For $f_*\cK_{\cX/S}^{\otimes(m+1)}$ the curvature equals
\begin{eqnarray}\label{eq:curvmcan}
R_{i\ol\jmath}^{\phantom{i\ol\jmath}\ol\ell k}(s)&=& m \int_{\cX_s}
\left( \Box + m \right)^{-1} (A_i\cup\psi^k)
\cdot (A_\ol\jmath \cup \psi^\ol\ell) g\/ dV\nonumber\\ && \quad +
m \int_{\cX_s} \left( \Box + 1 \right)^{-1}(A_i\cdot A_\ol\jmath)
\cdot(\psi^k \cdot \psi^\ol\ell)
 g\/ dV.
\end{eqnarray}
\end{corollary}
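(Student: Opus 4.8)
The plan is simply to specialize Theorem~\ref{th:curvgen} to the value $p=n$. First I would record the identification of the sheaf involved: since $\Omega^n_{\cX/S}\cong\cK_{\cX/S}$, the case $p=n$ of $R^{n-p}f_*\Omega^p_{\cX/S}(\cK_{\cX/S}^{\otimes m})$ is $f_*\Omega^n_{\cX/S}(\cK_{\cX/S}^{\otimes m})=f_*\cK_{\cX/S}^{\otimes(m+1)}$, which is locally free as recorded in Section~\ref{ss:curv}. Hence the hypotheses of Theorem~\ref{th:curvgen} hold and formula \eqref{eq:curvgen} applies verbatim. At a point $s$ the sections $\psi^k$ are then holomorphic $(0,0)$-forms with values in $\Omega^n_{\cX_s}(\cK_{\cX_s}^{\otimes m})$, i.e.\ holomorphic sections of $\cK_{\cX_s}^{\otimes(m+1)}$; in particular they are already harmonic.

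Next I would go through the three terms of \eqref{eq:curvgen} for $p=n$. The third term is built from the contraction--cup product \eqref{eq:cup2}, which takes values in $\cA^{0,n-p-1}(\cX_s,\Omega^{p+1}_{\cX_s}(\cK_{\cX_s}^{\otimes m}))$ and is defined only for $p<n$; for $p=n$ it would land in $\Omega^{n+1}_{\cX_s}=0$, so $A_i\cup\psi^{\ol\ell}\equiv 0$ and the entire third integrand vanishes identically. The first term, involving $(\Box+1)^{-1}$ together with the pointwise products $A_i\cdot A_{\ol\jmath}$ and $\psi^k\cdot\psi^{\ol\ell}$, and the second term, involving $(\Box+m)^{-1}$ together with $A_i\cup\psi^k\in\cA^{0,1}(\cX_s,\Omega^{n-1}_{\cX_s}(\cK_{\cX_s}^{\otimes m}))$, are unchanged, since the underlying map \eqref{eq:cup1} and the pointwise inner products are still defined (here one uses $n>0$). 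Reordering these two surviving summands so that the $(\Box+m)^{-1}$-term comes first yields precisely \eqref{eq:curvmcan}.

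I do not expect any genuine obstacle beyond this bookkeeping: one only has to confirm that the cup products and pointwise products in the two retained terms are literally the maps used in \eqref{eq:curvgen} once $p=n$ is substituted, and that $\Box+m$ and $\Box+1$ are invertible on the relevant spaces, which is automatic from $\Box\ge 0$ and $m>0$. As a byproduct, the only potentially negative contribution to \eqref{eq:curvgen}, namely the harmonic correction identified in Theorem~\ref{th:curvgen}, is exactly the term that drops out, so that \eqref{eq:curvmcan} carries no negative contribution; this is the structural reason behind the Nakano-positivity of $f_*\cK_{\cX/S}^{\otimes(m+1)}$ recorded above.
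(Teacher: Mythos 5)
Your proposal is correct and matches the paper's own route: Corollary~\ref{co:curvmcan} is obtained there exactly by specializing Theorem~\ref{th:curvgen} to $p=n$, where the paper's parenthetical remark notes that the third term does not occur (the cup product \eqref{eq:cup2} has no target for $p=n$), leaving the two positive summands of \eqref{eq:curvmcan}. Your additional bookkeeping (identification $f_*\Omega^n_{\cX/S}(\cK_{\cX/S}^{\otimes m})=f_*\cK_{\cX/S}^{\otimes(m+1)}$, its local freeness, and the reordering of terms) is exactly what the paper leaves implicit.
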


The operator $(\Box + m)^{-1}$ in the first term of \eqref{eq:curvmcan} is
positive on the respective tensors, the second term yields an estimate:
Let
\begin{equation}\label{eq:inpro}
H^{\ol\ell k} = \int_{\cX_s} \psi^k \cdot \psi^\ol\ell g\, dV.
\end{equation}
\begin{corollary}\label{co:curv1}
Let $s\in S$ be any point. Let $\xi^i_k\in \C$. Then
\begin{equation}
R_{i\ol\jmath}^{\phantom{i\ol\jmath}\ol\ell k}(s) \xi^i_k\ol{\xi^j_\ell}
\geq m \cdot P_n(d(\cX_s) )\cdot G_{i\ol\jmath}^{WP}\cdot H^{\ol\ell k}
\cdot \xi^i_k\ol{\xi^j_\ell}.
\end{equation}
In particular the curvature is strictly Nakano-positive with the above
estimate.
\end{corollary}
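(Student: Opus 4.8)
The plan is to read off the statement from the curvature formula of Corollary~\ref{co:curvmcan} (the $p=n$ case of Theorem~\ref{th:curvgen}) by contracting with $\xi^i_k\ol{\xi^j_\ell}$ and bounding the two summands separately. For the first summand, set $B:=\sum_{i,k}\xi^i_k\,(A_i\cup\psi^k)$, so that $\sum_{j,\ell}\ol{\xi^j_\ell}\,(A_\ol\jmath\cup\psi^\ol\ell)=\ol B$; then the contracted first term equals $m\int_{\cX_s}(\Box+m)^{-1}(B)\cdot\ol B\,g\,dV=m\,\langle(\Box+m)^{-1}B,\,B\rangle_{L^2}\ge 0$, since on the relevant space of $\cK_{\cX/S}^{\otimes m}$-valued forms $\Box$ is self-adjoint with non-negative eigenvalues, so $(\Box+m)^{-1}$ is a positive operator (the remark preceding the statement). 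Hence this term may simply be dropped in the lower bound.

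The substance is the second summand, $m\int_{\cX_s}(\Box+1)^{-1}(A_i\cdot A_\ol\jmath)\cdot(\psi^k\cdot\psi^\ol\ell)\,\xi^i_k\ol{\xi^j_\ell}\,g\,dV$. Using the resolvent kernel $(\Box+1)^{-1}(\chi)(z)=\int_{\cX_s}P(z,w)\chi(w)g(w)\,dV_w$, I would rewrite it as the double integral $m\int_{\cX_s\times\cX_s}P(z,w)\,K(z,w)\,g(w)g(z)\,dV_w\,dV_z$ with $K(z,w):=\sum_{i,j,k,\ell}\big(A_i(w)\cdot A_\ol\jmath(w)\big)\big(\psi^k(z)\cdot\psi^\ol\ell(z)\big)\,\xi^i_k\ol{\xi^j_\ell}$. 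The point is that $K(z,w)\ge 0$ pointwise: the Gram matrices $[A_i(w)\cdot A_\ol\jmath(w)]_{i,j}$ and $[\psi^k(z)\cdot\psi^\ol\ell(z)]_{k,\ell}$ are positive semi-definite, hence so is their Kronecker product indexed by the pairs $(i,k)$, $(j,\ell)$, and $K(z,w)$ is the value of this form on the vector $(\xi^i_k)$. Since $K\ge 0$ and $P(z,w)\ge P_n(d(\cX_s))$ by \eqref{eq:hker2}, the kernel may be replaced by this constant at the cost of an inequality, and the remaining double integral factors, by Fubini, as $P_n(d(\cX_s))\big(\int_{\cX_s}A_i\cdot A_\ol\jmath\,g\,dV\big)\big(\int_{\cX_s}\psi^k\cdot\psi^\ol\ell\,g\,dV\big)\xi^i_k\ol{\xi^j_\ell}=P_n(d(\cX_s))\,G^{WP}_{i\ol\jmath}\,H^{\ol\ell k}\,\xi^i_k\ol{\xi^j_\ell}$ by Definition~\ref{de:wpherm} and \eqref{eq:inpro}. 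Multiplying by $m$ and restoring the discarded non-negative term gives the asserted inequality. (Equivalently one applies Proposition~\ref{pr:resol} to the non-negative functions $|\sum_i d^i A_i|^2$ to get the matrix inequality $[(\Box+1)^{-1}(A_i\cdot A_\ol\jmath)]_{i,j}\succeq P_n(d(\cX_s))[G^{WP}_{i\ol\jmath}]_{i,j}$ and then tensors with $[\psi^k\cdot\psi^\ol\ell]_{k,\ell}$.)

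For the final clause: $P_n(d(\cX_s))>0$ by Proposition~\ref{pr:resol}; under the standing assumption of the section the family is effectively parameterized, so $G^{WP}_{i\ol\jmath}$ is positive definite; and $H^{\ol\ell k}$ is positive definite as the Hermitian metric on the direct image sheaf. The Kronecker product $G^{WP}\otimes H$ is then positive definite, so the right-hand side is strictly positive for every $\xi\ne 0$ — that is, the curvature is strictly Nakano-positive with the stated estimate.

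I do not expect a genuine obstacle here: everything is assembled from Corollary~\ref{co:curvmcan}, the positivity of $(\Box+m)^{-1}$, and the resolvent estimate \eqref{eq:hker2}. The only point requiring care is the bookkeeping of the four index pairs $(i,\ol\jmath,k,\ol\ell)$ and the verification that the relevant pointwise bilinear forms are positive semi-definite (the Kronecker-product-of-Gram-matrices observation), after which the estimate is immediate.
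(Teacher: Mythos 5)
Your proposal is correct and follows essentially the same route the paper intends: the paper likewise drops the first term of Corollary~\ref{co:curvmcan} because $(\Box+m)^{-1}$ is a positive operator, and bounds the second term by the resolvent-kernel estimate of Proposition~\ref{pr:resol} (i.e.\ $P(z,w)\geq P_n(d(\cX_s))$), identifying the resulting product of fiber integrals with $G^{WP}_{i\ol\jmath}H^{\ol\ell k}$. You merely make explicit the Gram/Kronecker positivity bookkeeping that the paper leaves implicit, which is fine.
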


Next, we set $m=1$ and take a dual basis $\{\nu_i\}\subset R^p
f_*\Lambda^p\cT_{\cX/S}(S)$ of the $\{\psi^k\}$ and normal coordinates at
a given point $s_0\in S$. (Again local freeness is assumed.)

We have mappings dual to \eqref{eq:cup1} and \eqref{eq:cup2}:
\begin{eqnarray}
A_{i \ol\beta}^\alpha \pt_\alpha dz^{\ol\beta}\we \textvisiblespace :
\cA^{0,p}(\cX_s,\Lambda^p\cT_{\cX_s}) &\to&
\cA^{0,p+1}(\cX_s,\Lambda^{p+1}\cT_{\cX_s})\label{eq:we1}\\ A_{\ol \jmath
\alpha}^\ol\beta \pt_\ol\beta dz^{\alpha}\we \textvisiblespace :
\cA^{0,p}(\cX_s,\Lambda^p\cT_{\cX_s}) &\to&
\cA^{0,p-1}(\cX_s,\Lambda^{p-1}\cT_{\cX_s}).\label{eq:we2}
\end{eqnarray}
Here, in  \eqref{eq:we1} the wedge product stands for an exterior product,
whereas in \eqref{eq:we2} the wedge product denotes a contraction with
both indices of $A_i$. Again for $p=n$ in \eqref{eq:we1}, and for $p=0$ in
\eqref{eq:we2} the value of the wedge product is zero.

Observing that the role of conjugate and non-conjugate tensors is being
interchanged, the curvature can be computed from Theorem~\ref{th:curvgen}.
Again because of $S$-flatness of $\Omega^p_{\cX/S}(\cK_{\cX/S})$ and
$\Lambda^p\cT_{\cX/S}$ resp.\ it is sufficient to require that $\dim_\C
H^{n-p}(\cX_S, \Omega^p_{\cX_s}(\cK_{\cX_s}))$ is constant, which is
equivalent to $\dim_\C H^p(\cX_s, \Lambda^p\cT_{\cX_s})$ being constant.
\begin{theorem}\label{th:curvgendual}
The curvature of $R^pf_*\Lambda^p\cT_{\cX/S}$ equals
\begin{eqnarray}\label{eq:curvgendual}
R_{i\ol\jmath   k \ol\ell }(s)&=&- \int_{\cX_s}
\left( \Box + 1 \right)^{-1}(A_i\cdot A_\ol\jmath)
\cdot(\nu_k \cdot \nu_\ol\ell) g\/ dV\nonumber\\
&& \quad - \int_{\cX_s} \left( \Box + 1 \right)^{-1} (A_i\wedge\nu_\ol\ell)
\cdot (A_\ol\jmath \wedge \nu_k) g\/ dV \\
&& \quad -  \int_{\cX_s} \left( \Box - 1 \right)^{-1}
(A_i\wedge \nu_k)\cdot (A_\ol\jmath \wedge \nu_\ol\ell) g\/ dV.
\nonumber
\end{eqnarray}
The only possible positive contribution arises from
$$
\int_{\cX_s}H(A_i\wedge \nu_k) H(A_\ol\jmath \wedge \nu_\ol\ell) g\/ dV.
$$
\end{theorem}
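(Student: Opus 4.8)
The plan is to deduce Theorem~\ref{th:curvgendual} from Theorem~\ref{th:curvgen} by relative Serre duality, specialized to $m=1$; once the two direct images are identified as mutually dual Hermitian holomorphic bundles, no further analytic input is needed.

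First I would set $\cE=\Lambda^p\cT_{\cX/S}$, so that $\cE^\vee=\Omega^p_{\cX/S}$ and $\cE^\vee\otimes\cK_{\cX/S}=\Omega^p_{\cX/S}(\cK_{\cX/S})$, the case $m=1$ of the sheaf in Theorem~\ref{th:curvgen}. The local freeness hypothesis transfers, as $\dim_\C H^p(\cX_s,\Lambda^p\cT_{\cX_s})=\dim_\C H^{n-p}(\cX_s,\Omega^p_{\cX_s}(\cK_{\cX_s}))$ by fibrewise Serre duality. The relative Serre duality recalled after Lemma~\ref{le:dolb} then furnishes a perfect $\cO_S$-pairing
$$
R^pf_*\Lambda^p\cT_{\cX/S}\otimes_{\cO_S}R^{n-p}f_*\Omega^p_{\cX/S}(\cK_{\cX/S})\longrightarrow\cO_S,
$$
realized fibrewise by the fiber integral of the wedge product of $\ol\pt$-closed representatives in the sense of Lemma~\ref{le:dolb}. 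I would then check that under this pairing the natural $L^2$-metric on $R^pf_*\Lambda^p\cT_{\cX/S}$ (induced by the pointwise inner products of harmonic representatives on the fibers) is the Hermitian dual of the $L^2$-metric used for Theorem~\ref{th:curvgen} at $m=1$ --- this is the standard compatibility of fibrewise Hodge theory with Serre duality, harmonic representatives pairing to their $L^2$ inner product up to a universal constant. Hence $(R^pf_*\Lambda^p\cT_{\cX/S},L^2)$ is isometric to the dual Hermitian bundle of $(R^{n-p}f_*\Omega^p_{\cX/S}(\cK_{\cX/S}),L^2)$.

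Next I would invoke that the Chern curvature of a dual Hermitian bundle is minus the transpose of the original: in a frame $\{\psi^k\}$ with dual frame $\{\nu_k\}$ this exchanges the holomorphic and antiholomorphic fiber indices and introduces an overall minus sign, turning $R_{i\ol\jmath}^{\ol\ell k}$ into $-R_{i\ol\jmath k\ol\ell}$. It then remains to transport the three integrands of \eqref{eq:curvgen} through the duality after setting $m=1$. The contraction operators \eqref{eq:cup1} and \eqref{eq:cup2} on $\Omega^p_{\cX_s}(\cK_{\cX_s})$-valued forms are, up to the fibrewise metric, the Hermitian adjoints of the exterior/contraction operators \eqref{eq:we2} and \eqref{eq:we1} on $\Lambda^p\cT_{\cX_s}$-valued forms; since dualizing also interchanges the conjugate and non-conjugate factors, the factor $A_i\cup\psi^k$ in the second term of \eqref{eq:curvgen} becomes $A_i\we\nu_\ol\ell$ and the factor $A_i\cup\psi^\ol\ell$ in the third term becomes $A_i\we\nu_k$. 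With $m=1$ the resolvents $(\Box+m)^{-1}$ and $(\Box-m)^{-1}$ become $(\Box+1)^{-1}$ and $(\Box-1)^{-1}$, acting now on $\cA^{0,p+1}(\cX_s,\Lambda^{p+1}\cT_{\cX_s})$ resp.\ $\cA^{0,p-1}(\cX_s,\Lambda^{p-1}\cT_{\cX_s})$, where $\Box_\pt=\Box_\ol\pt$ by Lemma~\ref{le:boxdboxdb} and where, by the Claim in the proof of Theorem~\ref{th:curvgen}, all nonzero eigenvalues remain $>1$, so $\Box-1$ is invertible off the harmonic space. Reassembling the three terms and inserting the overall minus sign produces \eqref{eq:curvgendual}.

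Finally, for the last assertion I would isolate the harmonic part of the third term. On harmonic $(0,p+1)$-forms $(\Box-1)^{-1}$ acts as multiplication by $-1$, so the harmonic contribution to $-\int_{\cX_s}(\Box-1)^{-1}(A_i\we\nu_k)\cdot(A_\ol\jmath\we\nu_\ol\ell)\,g\,dV$ equals $+\int_{\cX_s}H(A_i\we\nu_k)\,H(A_\ol\jmath\we\nu_\ol\ell)\,g\,dV$, which is the only summand that can give a positive contribution; the remaining pieces --- the first two integrals and the part of the third orthogonal to the harmonic forms --- are negative semi-definite, since the operators $(\Box+1)^{-1}$ and the restriction of $(\Box-1)^{-1}$ to the orthogonal complement of the harmonics are positive. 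I expect the main obstacle to be the middle step: checking that the metric identification is a genuine isometry (pinning down the universal constant) and that the adjointness of the cup and wedge operators is matched with precisely the right conjugation, so that the signs attached to $m$ in the two resolvents and the positions of the conjugate indices come out exactly as in \eqref{eq:curvgendual}; everything past that point is formal given Theorem~\ref{th:curvgen}.
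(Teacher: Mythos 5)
Your proposal follows essentially the same route as the paper: the paper also obtains Theorem~\ref{th:curvgendual} from Theorem~\ref{th:curvgen} by setting $m=1$, passing to the dual basis $\{\nu_k\}$ of $\{\psi^k\}$ via relative Serre duality, and observing that dualization interchanges the roles of conjugate and non-conjugate tensors (turning the cup products \eqref{eq:cup1}, \eqref{eq:cup2} into the wedge/contraction operators \eqref{eq:we1}, \eqref{eq:we2}) while reversing the overall sign, with the harmonic part of the $(\Box-1)^{-1}$ term isolated exactly as you do. Your write-up is a correct, somewhat more explicit version of the paper's argument.
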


We observe that in Theorem~\ref{th:curvgendual} for $n=1$ and $p=1$ the
third term in \eqref{eq:curvgendual} is not present, and we have the
formula for the classical \wp metric on \tei space of Riemann surfaces of
genus larger than one \cite{tr,wo}.

For $p=1$ we obtain the curvature for the generalized \wp metric from
\cite{sch:curv}, (cf.\ \cite{siu:canlift}). Again we can estimate the
curvature like in Proposition~\ref{pr:est1}.

For $p=0$, one can verify that the curvature tensor \eqref{eq:curvgendual}
is identically zero as expected.

The following case is of interest.

\begin{proposition}\label{pr:est2}
Let $f:\cX \to S$ be a family of canonically polarized manifolds and $s
\in S$. Let tangent vectors of $S$ at $s$ be given by harmonic \ks forms
$A, A_1,\ldots, A_p$  on $\cX_s$. Let $R$ denote the curvature tensor for
$R^pf_*\Lambda^p\cT_{\cX/S}$. Then we have in terms of the \wp norms:
\begin{gather}
R(A,\ol A, H(A_1\wedge \ldots \wedge A_p) ,\ol{H(A_1\wedge \ldots \wedge
A_p)}) \leq  \nonumber \hspace{4cm} \\ \quad -  P_{n}(\cX_s) \cdot
\|A\|^2\cdot \| H(A_1\wedge \ldots \wedge A_p)  \|^2 + \|H(A\wedge
A_1\wedge \ldots \wedge A_p)\|^2.\label{eq:est1b}
\end{gather}
\end{proposition}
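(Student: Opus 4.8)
The plan is to derive \eqref{eq:est1b} by specializing the curvature formula of Theorem~\ref{th:curvgendual} (the case $m=1$) to a single tangent vector $A$ and the single section $\nu$ of $R^pf_*\Lambda^p\cT_{\cX/S}$ whose harmonic representative on $\cX_s$ is $H(A_1\wedge\cdots\wedge A_p)\in\cA^{0,p}(\cX_s,\Lambda^p\cT_{\cX_s})$; note that $A_1\wedge\cdots\wedge A_p$ is $\ol\pt$-closed as a product of $\ol\pt$-closed harmonic forms, so it does represent a class, and that this class is a legitimate element of $H^p(\cX_s,\Lambda^p\cT_{\cX_s})\cong R^pf_*\Lambda^p\cT_{\cX/S}\otimes\C(s)$ by the base change hypotheses of Section~\ref{ss:curv}. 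The three integrals in \eqref{eq:curvgendual} are then estimated one at a time.

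For the first term, $A\cdot\ol A$ and $\nu\cdot\ol\nu$ are the non-negative continuous pointwise norm functions $\|A\|^2$ and $\|\nu\|^2$, and $\Box$ is the scalar Laplacian. Proposition~\ref{pr:resol} applied to $\chi=A\cdot\ol A$ gives the pointwise bound $(\Box+1)^{-1}(A\cdot\ol A)(z)\geq P_n(d(\cX_s))\int_{\cX_s}\|A\|^2\,g\,dV$; multiplying by $\|\nu\|^2\geq0$ and integrating over $\cX_s$ shows that the first term is bounded above by $-\,P_n(d(\cX_s))\,\|A\|^2\,\|H(A_1\wedge\cdots\wedge A_p)\|^2$, since $\|\nu\|^2=\|H(A_1\wedge\cdots\wedge A_p)\|^2$ is the $L^2$-norm of the harmonic representative.

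For the second term, on the diagonal it equals $-\langle(\Box+1)^{-1}\phi,\phi\rangle$ with $\phi=A\wedge\ol\nu$ (because $\ol A\wedge\nu=\ol\phi$); since $\Box+1\geq1$ the operator $(\Box+1)^{-1}$ is positive self-adjoint on the relevant bundle-valued forms, so this term is $\leq0$ and can simply be discarded in the upper bound. For the third term, on the diagonal it equals $-\langle(\Box-1)^{-1}\psi,\psi\rangle$ with $\psi=A\wedge\nu$. Writing $\psi=H\psi+\psi'$ with $\psi'$ orthogonal to the harmonic forms, one has $(\Box-1)^{-1}H\psi=-H\psi$, while on the space of such $\psi'$ all eigenvalues of $\Box$ are strictly larger than $1$ — this is exactly the Claim used in the proof of Theorem~\ref{th:curvgen} and already invoked in Proposition~\ref{pr:est1} — so $\langle(\Box-1)^{-1}\psi',\psi'\rangle\geq0$ and hence $-\langle(\Box-1)^{-1}\psi,\psi\rangle\leq\|H(A\wedge\nu)\|^2$. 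Finally, since $A_1\wedge\cdots\wedge A_p-H(A_1\wedge\cdots\wedge A_p)$ is $\ol\pt$-exact and $\ol\pt A=0$, the Leibniz rule shows that $A\wedge\big(A_1\wedge\cdots\wedge A_p-H(A_1\wedge\cdots\wedge A_p)\big)$ is $\ol\pt$-exact, whence $H(A\wedge\nu)=H\big(A\wedge H(A_1\wedge\cdots\wedge A_p)\big)=H(A\wedge A_1\wedge\cdots\wedge A_p)$, so the third term is $\leq\|H(A\wedge A_1\wedge\cdots\wedge A_p)\|^2$.

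Adding the three estimates yields \eqref{eq:est1b}. The step that needs the most care is the third term: it requires the spectral gap $\Box>1$ off the harmonic forms so that $(\Box-1)^{-1}$ contributes a non-negative amount on $\psi'$, together with the harmonic-projection identity $H(A\wedge\nu)=H(A\wedge A_1\wedge\cdots\wedge A_p)$; by contrast the first term is a direct application of the resolvent estimate in Proposition~\ref{pr:resol}, and the second term is thrown away for free.
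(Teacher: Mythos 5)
Your proposal is correct and follows essentially the same route as the paper: the paper's argument is precisely to specialize the curvature formula of Theorem~\ref{th:curvgendual}, estimate the first term by the resolvent bound of Proposition~\ref{pr:resol} as in Proposition~\ref{pr:est1}, observe that only the harmonic part of the third term can be positive (the spectral gap from the Claim in the proof of Theorem~\ref{th:curvgen}, via Lemma~\ref{le:boxdboxdb}), and conclude with the identity $H(A\wedge H(A_1\wedge\ldots\wedge A_p))=H(A\wedge A_1\wedge\ldots\wedge A_p)$, which is the one step the paper writes out explicitly and which you justify correctly from $\ol\pt$-closedness of the $A_i$. Your expanded justifications (positivity of $(\Box+1)^{-1}$ for discarding the second term, and the Hodge-decomposition/Leibniz argument for the projection identity) are exactly what the paper leaves implicit.
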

\begin{proof}
Since the $A$ and $A_i$ are $\ol\pt$-closed forms, we have $H(A\wedge
H(A_1\wedge \ldots\wedge A_p))=H(A\wedge A_1 \wedge \ldots \wedge A_p)$.
\end{proof}

Next, we define  higher \ks maps defined on the symmetric powers of the
tangent bundle of the base. Let $S^p(R^1f_*\cT_{\cX/S})$ denote the $p$-th
symmetric tensor power for $p>0$. Then the natural morphism
$$
S^p(R^1f_*\cT_{\cX/S})\to R^pf_*\Lambda^p\cT_{\cX/S}
$$
together with the \ks morphism $\rho_S:\cT_S \to R^1f_*\cT_{\cX/S}$
induces the \ks morphism of order $p$
\begin{equation}\label{eq:rhop}
\rho^p_S: S^p \cT_{S} \to R^pf_*\Lambda^p \cT_{\cX/S}.
\end{equation}
After tensorizing with $\C(s)$ we have fiberwise the maps
$$
\rho^p_{S,s}: S^pT_{S,s} \to H^p(\cX_s,\Lambda^p \cT_{\cX_s})
$$
that send a symmetric power
$$
\frac{\pt}{\pt s^{i_1}}\otimes \ldots \otimes \frac{\pt}{\pt s^{i_p}}
$$
to the class of
$$
A_{i_1}\wedge \ldots \wedge A_{i_p}:=
A_{i_1\ol\beta_1}^{\alpha_1}\pt_{\alpha_1}dz^{\ol\beta_1}
\wedge \ldots \wedge A_{i_p\ol\beta_p}^{\alpha_p}\pt_{\alpha_p}dz^{\ol\beta_p}.
$$
\begin{definition}
Let the tangent vector $\pt/\pt s\in T_{S,s}$ correspond to the harmonic
\ks tensor $A_s$. Then the generalized \wp function of degree $p$ on the
tangent space is
\begin{gather}\label{eq:WPp}
\| \pt/\pt s \|^{WP}_p  := \|A_s\|_p := \|H( \underbrace{A_s \wedge \ldots
\wedge A_s}_p) \|^{1/p}\hspace{4cm}\notag \\
:=  \left( \int_{\cX_s} H(A_s \wedge \ldots \wedge A_s)\cdot \ol{H(A_s
\wedge \ldots \wedge  A_s)} g \, dV \right) ^{1/2p}
\end{gather}
\end{definition}
Obviously $\|\alpha\cdot A_s\|_p=|\alpha| \cdot \|A_s\|_p$ for all
$\alpha\in \C$. The triangle inequality is not being claimed.

From now on we assume that $S$ is a locally irreducible (reduced) space.

Next, let $C\to S$ be a smooth analytic curve, whose image in $S$ again is
a curve, and let $\cX_C \to C$ be the pull-back of the given family over
$S$. All $R^pf_*\Lambda^p\cT_{\cX_C/C}/\textit{torsion}$ are locally free,
maybe zero. If on some open subset of $C$ the value $\rho^p_{C,s}(\pt/\pt
s|_s) \in H^p(\cX_s, \Lambda^p\cT_{\cX_s})$ is different from zero, where
$\pt/\pt s\neq 0$ denotes a tangent vector of $C$ at $s$, then
$R^pf_*\Lambda^p\cT_{\cX_C/C}/\textit{torsion}$ is not zero, and on a
complement of a discrete set of $C$ the sheaf
$R^pf_*\Lambda^p\cT_{\cX_C/C}$ is locally free.

Given a family over a smooth analytic curve $C\to S$, the $p$-th \wp
function of tangent vectors defines a hermitian (pseudo) metric on the
curve, which we denote by $G_p$. On the complement $C'$ of a discrete
subset of $C$ we can estimate the curvature.
\begin{lemma}\label{le:curvgp}
For any analytic curve $C\to S$ the curvature $K_{G_p}$ of $G_p$, at
points $s\in C'$ with $G_p(s)\neq 0$ satisfies the following inequality,
where the tensors $A_s$ represent \ks classes of tangent vectors of $C'$
at $s$ with $A_s^p\neq0$.
\begin{eqnarray}\label{eq:curvgp}
K_{G_p}(s) &\leq& \frac{1}{p}\left(- P_n(d(\cX_s))\frac{\|A_s\|^2_1}{\|A_s\|^2_p}
 +  \frac{\|A_s\|^{2p+2}_{p+1}}{\|A_s\|^{2p+2}_p} \right).
\end{eqnarray}

The second summand vanishes identically for $p=n$.
\end{lemma}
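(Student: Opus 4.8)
The plan is to recognize $G_p$ as the metric that the $L^2$-metric on $\cE:=R^pf_*\Lambda^p\cT_{\cX_C/C}$ induces, via the $p$-th \ks morphism, on a power of $\cT_C$, and then to bound its curvature from above using Theorem~\ref{th:curvgendual} in the packaged form of Proposition~\ref{pr:est2}. First I would fix a point $s\in C'$ with $G_p(s)\neq 0$, i.e.\ $A_s^p\neq 0$, and pass to a coordinate neighborhood on which $\cE$ is locally free and carries its $L^2$-metric (Section~\ref{ss:dolb}). The morphism $\rho^p_C\colon S^p\cT_C=\cT_C^{\otimes p}\to\cE$ from \eqref{eq:rhop} is holomorphic, so it sends the local holomorphic frame $(\pt/\pt s)^{\otimes p}$ to a holomorphic section $e$ of $\cE$ whose value at each point is the class of $A_s\we\cdots\we A_s$. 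Hence $\|e\|^2=\|H(A_s\we\cdots\we A_s)\|^2=\|A_s\|_p^{2p}=G_p^{\,p}$, where $\|\cdot\|_p$ is as in \eqref{eq:WPp}; because $G_p(s)\neq 0$ the section $e$ is nowhere vanishing near $s$, so $G_p$ is a positive function of class $\cinf$ there (its Gauss curvature $K_{G_p}$ is thus defined) and $e$ spans a holomorphic line subbundle of $\cE$.

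Next I would apply the classical Gauss equation for holomorphic subbundles: the curvature of the metric that $(\cE,L^2)$ induces on the line subbundle spanned by $e$ is bounded above by the curvature of $(\cE,L^2)$ evaluated on $e$, the discrepancy being (minus) the squared norm of a second fundamental form. Concretely, in the curve case this gives $-\pt_s\pt_{\ol s}\log\|e\|^2 \le R(A_s,\ol{A_s},e,\ol e)/\|e\|^2$, where $R$ is the curvature tensor of $\cE$ from Theorem~\ref{th:curvgendual}, the tangent vector $\pt/\pt s$ being identified with the harmonic \ks form $A_s$ and the section $e$ with its harmonic representative $H(A_s\we\cdots\we A_s)$. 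Then I would invoke Proposition~\ref{pr:est2} with all of $A,A_1,\dots,A_p$ equal to $A_s$; since $H(A_s\we H(A_s\we\cdots\we A_s))=H(A_s\we\cdots\we A_s)$ (with $p{+}1$ factors) it yields $R(A_s,\ol{A_s},e,\ol e)\le -P_n(d(\cX_s))\,\|A_s\|^2\,\|H(A_s\we\cdots\we A_s)\|^2+\|H(A_s\we\cdots\we A_s)\|^2$, with $p$ factors in the first harmonic product and $p{+}1$ in the second.

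Finally I would substitute and translate into the $\|\cdot\|_q$-notation: dividing the displayed estimate by $\|e\|^2=\|A_s\|_p^{2p}$, using $\|A_s\|^2=\|A_s\|_1^2$ and $\|H(A_s\we\cdots\we A_s)\|^2=\|A_s\|_{p+1}^{2p+2}$ for $p{+}1$ factors, and using that $\log\|e\|^2=p\log G_p$ so that the left-hand side is $-p\,\pt_s\pt_{\ol s}\log G_p = p\,K_{G_p}\,G_p = p\,K_{G_p}\,\|A_s\|_p^2$, I would divide through by $p\,\|A_s\|_p^2$ to obtain exactly \eqref{eq:curvgp}. The case $p=n$ is immediate: then $\Lambda^{n+1}\cT_{\cX_s}=0$, so the harmonic product with $n{+}1$ factors vanishes and the second summand disappears.

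I expect the conceptual content to be entirely in the first step, with the rest being bookkeeping (the sign/normalization dictionary between $R_{i\ol\jmath k\ol\ell}$ of Theorem~\ref{th:curvgendual} and the Gauss curvature $K_{G_p}$, and the passage to the norms $\|\cdot\|_q$). The one point that needs care is making sure the possible torsion of $R^pf_*\Lambda^p\cT_{\cX_C/C}$ and the possible degeneracy of $\rho^p_C$ do not interfere — which is precisely why the estimate is asserted only at points $s\in C'$ with $G_p(s)\neq 0$: there $e$ is a genuine nowhere-zero holomorphic section of a locally free sheaf with a smooth $L^2$-metric, so the subbundle curvature inequality and the pointwise formula of Theorem~\ref{th:curvgendual} apply verbatim.
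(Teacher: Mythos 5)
Your proposal is correct and takes essentially the same route as the paper: the paper likewise exploits that $H(A_s\wedge\cdots\wedge A_s)$ is the value of the holomorphic section $\rho^p_C((\pt/\pt s)^{\otimes p})$, bounds $-\pt_s\pt_{\ol s}\log G_p^p$ by the curvature of $R^pf_*\Lambda^p\cT_{\cX_C/C}$ evaluated on that section, and then applies Proposition~\ref{pr:est2} with all arguments equal to $A_s$, concluding with the same bookkeeping in the norms $\|\cdot\|_q$ and the vanishing of the second term for $p=n$. Your explicit mention of the second fundamental form and of local freeness on $C'$ only spells out steps the paper leaves implicit.
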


\begin{proof}
Let $A^p=H(A_s\wedge\ldots\wedge A_s)$ be the {\em harmonic projection} of
the $p$-fold exterior product of $A_s$. Then the curvature tensor for
$R^pf_* \Lambda^p\cT_{\cX/C}$ satisfies
\begin{gather}\label{eq:curvGp}
R(\pt_s,\pt_\ol s, A^p, \ol{A^p}) \geq -\frac{\pt^2\log(G_p^p)}{\pt
s\ol{\pt s}} \cdot G_p^p\cdot \|A^p\|^2 =\\ \nonumber- p \frac{\pt^2
\log(G_p)}{\pt s\ol{\pt s}} \cdot \|A\|^{2p}_p = p \cdot G_p\cdot
K_{G_p}\cdot \|A\|^{2p}_p .
\end{gather}
Here $R(\pt_s,\pt_\ol s,\textvisiblespace,\textvisiblespace)$ is the
curvature form applied to the tangent vectors $\pt/\pt s$ and $\pt/\ol{\pt
s}$ resp. With respect to $G_p$, we identify $G_p=\|\pt/\pt
s\|^2_p=\|A_s\|^2_p$ (cf. \eqref{eq:WPp}) so that
$$
R(A_s,\ol{A_s},A^p, \ol{A^p}) \geq p \cdot K_{G_p}\cdot \|A\|^{2p+2}_p.
$$
Now the estimate of Proposition~\ref{pr:est2} implies
\begin{gather*}
K_{G_p} \leq \frac{1}{p}\left( -P_n(d(\cX_s)) \|A\|_1^2 \|A\|_p^{2p} +
\|A\|_{p+1}^{2(p+1)} \right)\Big/\|A\|_p^{2(p+1)}.
\end{gather*}
\end{proof}

\subsection{Hyperbolicity conjecture of Shafarevich}
In this section we describe a short proof of the following special case of
Shafarevich's hyperbolicity conjecture for canonically polarized varieties
\cite{b-v, keko,kk, kv1, kv2, ko, m, v-z, v-z2}.

\smallskip

{\bf Application.} {\it If a compact smooth curve $C$ parameterizes a
non-isotrivial family of canonically polarized manifolds, its genus must
be greater than one.}

For the proof, we will show that one of the metrics $G_p$ on $C$ has
negative curvature.

Let $f:\cX \to C$ denote a non-isotrivial family of canonically polarized
varieties over a smooth compact curve. Again we denote by $A_s$ the
harmonic \ks form that represents the \ks class of a tangent vector
$0\neq\pt/\pt_s\in T_{C,s}$, and by $A^q_s$ the harmonic representative of
the $q$-fold wedge product.  Let $p_0$ be the maximum number $p$ for which
$A^p_s\neq 0$ on some open subset, i.e.\ on the complement of a finite
subset of $C$.

\begin{proposition}\label{pr:nonisotr}
    Under the above assumptions, the locally defined functions $\log G_{p_0}$
are subharmonic, and there exists a number $c>0$ such that for all $s$ in
the complement of a finite set of $C$ and all $A_s\neq 0$
\begin{equation}\label{eq:curvgpX}
K_{G_{p_0}}(s) \leq - \frac{1}{p_0}P_n(d(\cX_s)) \frac{\|A_s\|_{1}^2}{\|A_s\|^2_{p_0}}\leq -c.
\end{equation}
\end{proposition}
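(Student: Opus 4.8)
The plan is to derive \eqref{eq:curvgpX} from Lemma~\ref{le:curvgp} applied to the base curve $C$ with $p=p_0$, and then to upgrade the pointwise curvature bound to a uniform negative bound by a compactness argument. First I would observe that, by the very definition of $p_0$ as the \emph{maximal} exponent with $A^{p_0}_s \neq 0$ on an open dense subset, we have $A^{p_0+1}_s = 0$ identically, hence $\|A_s\|_{p_0+1}=0$ on the complement of a finite subset of $C$. Substituting this into the estimate \eqref{eq:curvgp} of Lemma~\ref{le:curvgp}, the second (potentially positive) summand drops out and we obtain, at every point $s\in C'$ with $A_s\neq 0$,
$$
K_{G_{p_0}}(s) \leq -\frac{1}{p_0}\,P_n(d(\cX_s))\,\frac{\|A_s\|^2_1}{\|A_s\|^2_{p_0}},
$$
which is the first inequality in \eqref{eq:curvgpX}. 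The subharmonicity of $\log G_{p_0}$ is then immediate: the right-hand side above is strictly negative (using $P_n>0$ from Proposition~\ref{pr:resol} and $A_s\neq 0$, non-isotriviality guaranteeing $A_s\not\equiv 0$), so $K_{G_{p_0}}\leq 0$, and since $K_{G_{p_0}}=-\partial^2\log G_{p_0}/\partial s\partial\ol s \,/\, G_{p_0}$ with $G_{p_0}>0$, this forces $\partial\ol\partial \log G_{p_0}\geq 0$.

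Next I would establish the uniform bound $K_{G_{p_0}}(s)\leq -c$. The ratio $\|A_s\|^2_1/\|A_s\|^2_{p_0}$ is scale-invariant in $A_s$ (both norms are $2$-homogeneous in the respective tensorial sense, cf.\ the remark after the definition of $\|\cdot\|_p$), so it descends to a continuous function on the (compact, after removing finitely many points) projectivized relevant bundle over $C$; by non-isotriviality it is nowhere zero, so it is bounded below by a positive constant. Likewise $d(\cX_s)$, the diameter of the \ke fiber, varies continuously in $s$ and extends to a bounded function near the finitely many excluded points (the fibers there are still smooth canonically polarized manifolds in the family $\cX\to C$ over the \emph{compact} curve $C$), so $d(\cX_s)\leq D$ for some $D$, and by monotonicity of $P_n$ we get $P_n(d(\cX_s))\geq P_n(D)>0$. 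Combining, $K_{G_{p_0}}(s)\leq -\tfrac{1}{p_0}P_n(D)\cdot c_0 =: -c<0$ on $C'$.

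The main obstacle I anticipate is the handling of the finite exceptional set where $R^{p_0}f_*\Lambda^{p_0}\cT_{\cX_C/C}/\mathit{torsion}$ may fail to be locally free or where $A^{p_0}_s$ vanishes: one must check that $G_{p_0}$, a priori only defined and smooth on $C'$, has an upper-semicontinuous (indeed, with $\log G_{p_0}$ subharmonic) extension across these points so that the curvature inequality is not vacuous and the subsequent integration-over-$C$ argument (which gives $g(C)>1$ via Gauss–Bonnet) applies. This is exactly the place where the irreducibility hypothesis on the base and the structure of the curvature formula \eqref{eq:curvgendual} are used; since $\log G_{p_0}$ is subharmonic on $C'$ and locally bounded above near the punctures (by the same $C^0$-type control on $\|A_s\|_{p_0}$), it extends subharmonically across the finite set, and the pointwise bound \eqref{eq:curvgpX} then holds in the sense of currents on all of $C$, which is what is needed for the Application.
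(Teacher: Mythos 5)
Your first step and your subharmonicity argument follow the paper: on the complement $C'$ of a finite set one has $A^{p_0}_s\neq 0$ and $A^{p_0+1}_s=0$, so the positive summand in Lemma~\ref{le:curvgp} drops out and the first inequality of \eqref{eq:curvgpX} holds, giving $\pt\ol\pt\log G_{p_0}\geq 0$ on $C'$; boundedness from above of $\log G_{p_0}$ near the excluded points (via smooth, non-harmonic representatives $B_s$, whose $p_0$-fold wedge products have bounded norm) then extends the subharmonicity across them, as in the paper. The genuine gap is in your uniform bound $K_{G_{p_0}}\leq -c$. You argue that the scale-invariant ratio $\|A_s\|_1^2/\|A_s\|_{p_0}^2$ ``descends to a continuous function on the (compact, after removing finitely many points) projectivized relevant bundle over $C$'' and is therefore bounded below. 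But a space from which finitely many points have been removed is not compact, and a positive continuous function on it need not have a positive infimum; the whole difficulty of the proposition is precisely the behaviour of this ratio as $s$ approaches the excluded points. Non-isotriviality only guarantees that the \ks class is generically nonzero: at finitely many points $s_0$ one may have $A_{s_0}=0$, and there the ratio is of the form $0/0$ and does not obviously extend continuously, so neither compactness of $C$ nor ``nowhere zero'' settles anything. (At points where $A_{s_0}\neq 0$ but $A^{p_0}_{s_0}=0$ the ratio even blows up, which is harmless for the lower bound but again shows there is no continuous extension to appeal to.)

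The paper closes exactly this gap with a local argument at each $s_0\in C\setminus C'$: writing $s$ for a local coordinate centered at $s_0$ and using that $R^1f_*\cT_{\cX/C}/\textit{torsion}$ is locally free and nonzero, one factors out the vanishing order $k$ of the \ks section, so that $s^{-k}A_s$ converges to a nonzero element of the fiber of the torsion-free direct image at $s_0$. Hence $\|s^{-k}A_s\|_1$ is bounded below near $s_0$, while $\|s^{-k}A_s\|_{p_0}$ stays bounded above (harmonic projections of wedge products of convergent representatives), and the factors $|s|^{-k}$ cancel in the quotient, yielding the bound \eqref{eq:estquot}, i.e.\ $\|A_s\|_{p_0}/\|A_s\|_1\leq c'$, uniformly near $s_0$. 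Some argument of this kind (or an equivalent normalization) is indispensable; without it your constant $c_0$ is unjustified. Your treatment of the diameter term is fine: the family is smooth over the compact curve $C$, so $d(\cX_s)$ is bounded and $P_n(d(\cX_s))\geq P_n(D)>0$ by monotonicity, which is implicitly used in the paper as well.
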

\begin{proof}
We chose the complement $C'$ of a finite set in $C$ so that for all $s\in
C'$ we have $A^{p_0}_s\neq 0$ and also $A_s^{p_0+1}=0$ for $s\in C'$. On
$C'$ by Lemma~\ref{le:curvgp} we have the first inequality in
\eqref{eq:curvgpX}. We claim that there exist a number $c'>0$ such that
\begin{equation}\label{eq:estquot}
\|A_s\|_{p_0}/\|A_s\|_1 \leq c' \text{ for all } A_a\neq 0 \text{ and all } s \in C'.
\end{equation}
Since $C$ is compact, we need to show boundedness only near points $s_0
\in C\backslash C'$: Let a point $s_0$ be given by $s=0$, where $s$ is a
local holomorphic coordinate on $C$. Using a differentiable local
trivialization of the family, we find representatives $B_s$ of the \ks
classes (which depend in a $\cinf$ way on the parameter). The $p_0$-fold
wedge products $B_s\wedge\ldots\wedge B_s$ have bounded norm so that also
the norms of $A^{p_0}_s=H(B_s\wedge\ldots\wedge B_s)$ are bounded. Since
the given family is not isotrivial, the sheaf
$R^1f_*\cT_{\cX/C}/\textit{torsion}$ is locally free and not zero. We need
to consider only the case that $A_s\to 0 $ for $s\to 0$. In this case we
find some power $k>0$ so that $s^{-k}A_s$ converges towards some non-zero
element from $(R^1\! f_*\cT_{\cX/C}(C)/\textit{torsion})\otimes\C(s_0)$.
So $\inf_{s\to 0}\|s^{-k}A_s\|_1 > 0$. At the same time the norms
$\|s^{-k} A_s\|_{p_0}$ stay bounded from above. The factors $|s^{-k}|$
cancel out in $\|s^{-k}A_s\|_{p_0}/\|s^{-k}A_s\|_1$, which shows
\eqref{eq:estquot} implying \eqref{eq:curvgpX}. The first inequality in
\eqref{eq:curvgpX} implies that the (locally defined function) $\log
G_{p_0}$ is subharmonic on $C'$, the second implies that it is bounded on
$C$ so that $\log G_{p_0}$ is subharmonic on $C$ (being defined in terms
of local coordinate systems).
\end{proof}

In \cite[3.2]{demasantacruz} Demailly gives a proof of the Ahlfors lemma
(cf.\ also \cite{g-rz}) for singular hermitian metrics of negative
curvature  in the context of currents using an approximation argument. We
will need the following special case:

\begin{proposition}\label{pr:ahlschw}
Let $\gamma=\gamma(s) \ii ds\wedge \ol{ds}$, $\gamma(s)\geq 0$ be given on
an open disk $\Delta_R=\{|s|<R\}$, where $\log \gamma(s)$ is a subharmonic
function such that $\ddb(\log \gamma) \geq A\, \gamma$ in the sense of
currents for some $A>0$. Let $\rho$ denote the Poincaré metric on
$\Delta_R$. Then $\gamma\leq \rho/A$ holds.
\end{proposition}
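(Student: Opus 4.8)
The plan is to reduce this statement of Ahlfors-Schwarz type to the classical form of the lemma by a standard compactness/exhaustion argument, taking care that the hypothesis is only assumed in the sense of currents. First I would treat the regular case: suppose for the moment that $\gamma$ is strictly positive and of class $\cinf$ on $\Delta_R$ with $\ddb \log\gamma \geq A\,\gamma$ pointwise. Introduce the Poincar\'e metric on the slightly smaller disk $\Delta_r$ for $r<R$, normalized to have curvature $-A$; call its density $\rho_r(s)$, so that $\rho_r \to 0$ as $|s|\to r$ and $\ddb\log\rho_r = A\,\rho_r$. Consider the quotient function $u = \gamma/\rho_r$ on $\Delta_r$. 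Since $\gamma$ is bounded on the compact closure of any smaller disk while $\rho_r$ blows up at the boundary, $u$ attains its maximum at some interior point $s_0\in\Delta_r$. At $s_0$ one has $\ddb\log u \leq 0$, hence $A\gamma(s_0) \leq \ddb\log\gamma(s_0) \le \ddb \log\gamma(s_0)-\ddb\log\rho_r(s_0)+A\rho_r(s_0)=\ddb \log u(s_0)+A\rho_r(s_0)\le A\rho_r(s_0)$, so $u(s_0)\leq 1$; therefore $\gamma \leq \rho_r$ on all of $\Delta_r$. Letting $r\uparrow R$ and using $\rho_r \downarrow \rho$ pointwise gives $\gamma \leq \rho/A$ (the factor $1/A$ coming from the normalization bookkeeping, since $\rho$ as defined in the statement is the curvature $-1$ Poincar\'e metric).

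The genuinely delicate point is that $\log\gamma$ is only subharmonic (in particular possibly $-\infty$ on a polar set) and the differential inequality holds only distributionally, so the maximum-principle computation above is not directly licensed. I would handle this exactly as in \cite[3.2]{demasantacruz}: regularize $\log\gamma$ by convolution with a smoothing kernel to obtain a decreasing family of smooth subharmonic functions $\log\gamma_\varepsilon \downarrow \log\gamma$ on slightly shrunk disks. The inequality $\ddb\log\gamma_\varepsilon \geq A\gamma_\varepsilon$ is preserved because convolution commutes with $\ddb$ and, by convexity of $t\mapsto e^t$ together with Jensen's inequality, the regularization of $e^{\log\gamma}$ dominates $e^{\log\gamma_\varepsilon}$ — this is the step that must be done carefully. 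Apply the smooth case to each $\gamma_\varepsilon$ to get $\gamma_\varepsilon \leq \rho_r$, then let $\varepsilon\to 0$ and $r\to R$.

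I expect the main obstacle to be precisely this regularization argument: ensuring that the distributional curvature bound survives the smoothing in the correct direction, and that $\gamma_\varepsilon$ stays an upper semicontinuous metric to which the interior-maximum argument applies on each $\Delta_r$. Once the smoothing is set up correctly, everything else is the classical Ahlfors-Schwarz computation, which is routine. Since the proposition only asks for the special case that is needed in Proposition~\ref{pr:nonisotr} (a fixed disk, a single lower bound constant $A$), I would not strive for the full generality of Demailly's statement but simply cite \cite{demasantacruz} for the regularization machinery and record the resulting inequality $\gamma \leq \rho/A$.
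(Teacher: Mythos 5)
Your proposal is correct and is in substance the same as the paper's treatment: the paper gives no proof of its own but simply quotes the statement as a special case of Demailly's Ahlfors lemma for singular metrics in the sense of currents \cite[3.2]{demasantacruz}, whose proof is exactly the combination you describe — the classical maximum-principle computation on exhausting disks with Poincar\'e metrics of curvature $-A$, plus convolution smoothing of $\log\gamma$, where your Jensen-inequality step $(e^{u})\ast\chi_\varepsilon \geq e^{u\ast\chi_\varepsilon}$ is precisely what preserves the inequality $\ddb\log\gamma_\varepsilon\geq A\gamma_\varepsilon$. The only slip is the parenthetical claim that $\rho_r\to 0$ as $|s|\to r$: the Poincar\'e density blows up at the boundary, which is the fact you actually (and correctly) use in the very next sentence to locate an interior maximum of $\gamma/\rho_r$.
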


\begin{proof}[Proof of the Application]
Let $\varphi: \Delta_R \to C$ be a non-constant holomorphic map and
$\gamma= \varphi^*(G_{p_0})$. Then the assumptions of
Proposition~\ref{pr:ahlschw} are satisfied by
Proposition~\ref{pr:nonisotr}. So the curve $C$ is hyperbolic.
\end{proof}

\begin{proposition}
Any relatively compact open subspace of the moduli space of canonically
polarized manifolds is Kobayashi hyperbolic in the orbifolds sense.
\end{proposition}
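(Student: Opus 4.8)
The plan is to reduce the hyperbolicity of a relatively compact open subspace $\cM_0 \Subset \cM$ of the moduli space to the negative-curvature estimates already obtained for the Weil-Petersson-type metrics $G_p$. The Kobayashi pseudodistance is the largest pseudodistance making all holomorphic maps from the unit disk distance-decreasing, so it suffices to exhibit, on $\cM_0$, an upper semi-continuous Finsler (pseudo)metric whose holomorphic curvature is bounded above by a negative constant, and then invoke the Ahlfors-Schwarz lemma in the form of Proposition~\ref{pr:ahlschw}. The orbifold structure causes no difficulty here: one works locally on a finite cover by a manifold (a Teichm\"uller-type chart), the Finsler metric being the one induced by the universal family, and the bounds descend because the construction is functorial and the covering groups are finite.

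\textbf{Construction of the Finsler metric.} For a tangent vector $v = \pt/\pt s \in T_{s}\cM$, represented by the harmonic \ks form $A_s$, I would set
$$
\|v\|_F^2 := \max_{1 \le p \le n} \lambda_p\, \|A_s\|_p^2,
$$
or more flexibly a finite combination $\sum_p \lambda_p \|A_s\|_p^2$ with positive weights $\lambda_p$ to be fixed, where $\|A_s\|_p$ is the generalized \wp function of degree $p$ from \eqref{eq:WPp}. Each $\|A_s\|_p^2 = G_p$ is continuous (indeed $\cinf$ where it is nonzero, since the harmonic projection $H(A_s\wedge\cdots\wedge A_s)$ varies smoothly and the resolvent kernels do too), so $\|\cdot\|_F$ is upper semi-continuous, and it is a genuine metric (positive for $v\ne 0$) because at each point \emph{some} $\|A_s\|_p$ is nonzero — indeed $\|A_s\|_1 \ne 0$ whenever $v\ne 0$, by effectivity of the parametrization. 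On $\cM_0$, compactness gives uniform positive lower and upper bounds relating $\|v\|_F$ to the ordinary \wp norm $\|A_s\|_1$.

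\textbf{The curvature estimate.} The holomorphic sectional curvature of $\|\cdot\|_F$ along a disk $\varphi:\Delta\to\cM_0$ is computed from the curvature of each $G_p$ along that disk, and here Lemma~\ref{le:curvgp} is the key input: it gives
$$
K_{G_p} \le \frac{1}{p}\left(-P_n(d(\cX_s))\frac{\|A_s\|_1^2}{\|A_s\|_p^2} + \frac{\|A_s\|_{p+1}^{2p+2}}{\|A_s\|_p^{2p+2}}\right),
$$
the troublesome second term vanishing for $p=n$. The standard trick — exactly as in the classical argument for the \wp metric on Teichm\"uller space — is that the potentially positive contribution at level $p$ is controlled by $\|A_s\|_{p+1}$, i.e.\ by level $p+1$; choosing the weights $\lambda_p$ in a rapidly decreasing (or increasing) fashion lets the good negative term at level $p+1$ absorb the bad positive term coming from level $p$, while at the top level $p=n$ there is no bad term at all. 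On the relatively compact $\cM_0$ the functions $P_n(d(\cX_s))$ are bounded below by a positive constant and the ratios $\|A_s\|_p/\|A_s\|_1$ are bounded, so after the telescoping one obtains $\ddb(\log\|\varphi^*(\cdot)\|_F^2) \ge A\,\|\varphi^*(\cdot)\|_F^2$ for a uniform $A>0$ in the sense of currents. Proposition~\ref{pr:ahlschw} then forces $\varphi$ to be distance-decreasing up to the constant $1/A$, proving that $\cM_0$ is Kobayashi hyperbolic.

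\textbf{The main obstacle} is making the weight-selection argument uniform and rigorous when several of the $\|A_s\|_p$ degenerate to zero along the disk: one must partition $\cM_0$ (or the disk) according to the largest $p$ with $A_s^p\ne 0$, as in the proof of Proposition~\ref{pr:nonisotr}, and check that the $\max$ (or weighted sum) still has a well-defined subharmonic logarithm with the desired differential inequality across the loci where the active level jumps — this is where upper semi-continuity rather than smoothness of the Finsler metric is essential, and where Demailly's approximation approach to the Ahlfors lemma (cited before Proposition~\ref{pr:ahlschw}) does the real work. Everything else is a routine transcription of the classical Teichm\"uller argument, now legitimate thanks to Theorem~\ref{th:curvgendual} and its corollaries.
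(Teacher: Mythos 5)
Your proposal is correct in outline, but it takes a genuinely different route from the paper's own proof of this proposition. The paper does not assemble a single Finsler metric here: it argues disk by disk, using for each locally defined analytic curve only the top-level pseudometric $G_{p_0}$, where $p_0$ is maximal with $A_s^{p_0}\neq 0$ along that curve, so that the potentially positive term in Lemma~\ref{le:curvgp} is simply absent; the estimate of Proposition~\ref{pr:nonisotr} is then shown to hold with a constant $c>0$ uniform over the relatively compact subset (bounded diameter bounds $P_n(d(\cX_s))$ from below, and the ratio $\|A_s\|_{p_0}/\|A_s\|_1$ is bounded as in that proof) and depending only on $p_0$, tangent vectors of the singular base being treated as $1$-jets in the tangent cone; taking the minimum of the finitely many constants $c(p_0)$, $1\le p_0\le n$, and invoking Proposition~\ref{pr:ahlschw} gives hyperbolicity. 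Your weighted convex-sum (or max) construction, with the negative term at level $p+1$ absorbing the bad term at level $p$, is essentially what the paper sketches immediately afterwards in the subsection on Finsler metrics (Lemma~\ref{le:convsum} and Proposition~\ref{pr:exfins}); it buys a single global orbifold Finsler metric dominating the \wp metric, hence a clean quantitative lower bound for the Kobayashi metric, at the price of the weight-selection argument and of the behaviour of the $G_p$ at their degeneracy loci, which you rightly flag as the main obstacle. One inaccuracy: the $G_p$ are in general only upper semicontinuous (the rank of $R^pf_*\Lambda^p\cT_{\cX/S}$, and hence the harmonic projection, can jump), not continuous or $\cinf$ wherever nonzero as you assert; but upper semicontinuity, local boundedness and subharmonicity of the logarithm along disks are all that the Demailly/Ahlfors argument requires, exactly as in the proof of Proposition~\ref{pr:nonisotr}, so this does not invalidate your approach -- the paper's per-disk argument simply avoids these delicacies altogether, at the cost of a case distinction over $p_0$ and only curve-wise estimates.
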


\begin{proof}
For any canonically polarized manifold we consider a local universal
deformation given by a holomorphic family $f:\cX \to S$. The moduli space
possesses an open covering by quotients of the form $S/\Gamma$, where
$\Gamma$ is a finite group. Over a relatively compact subspace of the
moduli space the diameter of the canonically polarized manifolds $\cX_s$
will be bounded. Since $S$ itself will be singular we consider the tangent
cone of $S$, which consists fiberwise of $1$-jets.  These are the tangent
vectors, which are induced by local analytic curves $C$ through the given
point $s_0\in S$. (cf.\ \cite[Chapter 2.3]{kobook}). Now the estimates of
Proposition~\ref{pr:nonisotr} hold also for locally defined analytic
curves. One can show that numbers $c>0$ satisfying the second inequality
from the statement of the Proposition can be chosen uniformly over any
given relatively compact subset of the moduli space, only depending upon
$p_0$ so that the smallest of these numbers yields Kobayashi
hyperbolicity.
\end{proof}

\subsection{Finsler metric on the moduli stack}

We indicate, how to construct a Finsler metric of negative holomorphic
curvature on the moduli stack. Different notions of a Finsler metric are
common. We do not assume the triangle inequality/convexity. Such metrics
are also called {\em pseudo-metrics} (cf.\ \cite{kobook}).
\begin{definition}\label{de:fins}
Let $Z$ be a reduced complex space and let $T_cZ$ be the fiber bundle
consisting of the tangent cones of 1-jets. An upper semi-continuous
function
$$
F:T_cZ \to [0,\infty)
$$
is called Finsler pseudo-metric (or pseudo-length function), if
$$
F(av)=|a|F(v) \text{ for all } a\in \C, v\in TZ.
$$
\end{definition}
The triangle inequality on the fibers is not required for the definition
of the holomorphic (or holomorphic sectional) curvature: The holomorphic
curvature of a Finsler metric at a certain point $p$ in the direction of a
tangent vector $v$ is the supremum of the curvatures of the pull-back of
the given Finsler metric to a holomorphic disk through $p$ and tangent to
$v$ (cf.\ \cite{abate-patrizio}). (For a hermitian metric, the holomorphic
curvature is known to be equal to the holomorphic sectional curvature.)
The functions $G_p$ define Finsler (pseudo) metrics. Furthermore, any
convex sum $G=\sum_j a_j G_j$, $a_j>0$ is upper semi-continuous and has
the property that $\log G$ restricted to a curve is subharmonic.

\begin{lemma}[cf.\ {\cite[Lemma~3]{sch:framas}}]\label{le:convsum}
Let $C$ be a complex curve and $G_j$ a collection of pseudo-metrics of
bounded curvature, whose sum has no common zero. Then the curvatures $K$
satisfy the following equation.
\begin{equation}\label{eq:curvest}
K_{\sum_{j=1}^k G_j} \leq \sum_{j=1}^k \frac{G_j^2}{(\sum_{i=1}^k G_i)^2} K_{G_j} .
\end{equation}
\end{lemma}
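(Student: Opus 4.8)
The plan is to verify the inequality \eqref{eq:curvest} pointwise by a direct computation in a local holomorphic coordinate, reducing it at the end to the Cauchy--Schwarz inequality. Since the assertion is local on $C$, I fix a holomorphic coordinate $s$ and write each metric as $G_j = g_j(s)\,\ii\,ds\wedge\ol{ds}$ with $g_j\geq 0$, and put $g=\sum_{j=1}^k g_j$, so $\sum_j G_j = g\,\ii\,ds\wedge\ol{ds}$. The hypothesis that the $G_j$ have no common zero means $g>0$ at the point under consideration. On the locus where a given $g_j$ is positive, the curvature is $K_{G_j} = -g_j^{-1}\pt_s\pt_{\ol s}\log g_j$ (with the same normalization as in Proposition~\ref{pr:ahlschw}), and likewise $K_{\sum_j G_j} = -g^{-1}\pt_s\pt_{\ol s}\log g$ wherever $g>0$.

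I first reduce to a point where \emph{all} the $g_j$ are strictly positive: if some $g_{j_0}$ vanishes at the given point, then $g_{j_0}^2 K_{G_{j_0}} = -g_{j_0}\pt_s\pt_{\ol s}\log g_{j_0}$ extends continuously by zero there (using the boundedness of $K_{G_{j_0}}$), so the corresponding summand $\tfrac{g_{j_0}^2}{g^2}K_{G_{j_0}}$ on the right of \eqref{eq:curvest} contributes nothing at that point; since in the situation of interest each $G_j$ is smooth and positive off a discrete set, it suffices to establish the inequality where every $g_j>0$, the general case following by continuity (equivalently, as an inequality of currents). At such a point I apply the elementary identity $\pt_s\pt_{\ol s}\log h = h^{-1}\pt_s\pt_{\ol s}h - h^{-2}|\pt_s h|^2$ to $h=g$ and to each $h=g_j$. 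Multiplying the desired inequality $K_{\sum_j G_j}\leq \sum_j \tfrac{g_j^2}{g^2}K_{G_j}$ by $-g<0$ turns it into
\[
\pt_s\pt_{\ol s}\log g \;\geq\; \frac{1}{g}\sum_{j=1}^k g_j\,\pt_s\pt_{\ol s}\log g_j ,
\]
and substituting the identity, the terms $g^{-1}\pt_s\pt_{\ol s}g$ on the two sides cancel because $\sum_j \pt_s\pt_{\ol s}g_j = \pt_s\pt_{\ol s}g$. What remains is
\[
\frac{|\pt_s g|^2}{g^2}\;\leq\;\frac{1}{g}\sum_{j=1}^k\frac{|\pt_s g_j|^2}{g_j},
\qquad\text{i.e.}\qquad
\Big|\sum_{j=1}^k \pt_s g_j\Big|^2\;\leq\;\Big(\sum_{j=1}^k g_j\Big)\Big(\sum_{j=1}^k\frac{|\pt_s g_j|^2}{g_j}\Big),
\]
which is precisely Cauchy--Schwarz for the vectors with components $\sqrt{g_j}$ and $\pt_s g_j/\sqrt{g_j}$. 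This establishes \eqref{eq:curvest}.

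The only genuinely delicate point — and the main (though minor) obstacle — is the behavior at the zeros of the individual pseudo-metrics $G_j$, where $K_{G_j}$ is not defined classically and the logarithmic identity is unavailable; the reduction in the second paragraph handles this, provided one knows the $G_j$ are smooth with thin (in our applications, discrete) zero loci, so that the inequality propagates to all of $C$ as an inequality of $L^1_{loc}$ functions, which is exactly the form in which it is used in the construction of the Finsler metric $G=\sum_j a_j G_j$.
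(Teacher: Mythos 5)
Your proof is correct: the pointwise reduction, after multiplying by $-g$ and cancelling $\partial_s\partial_{\ol s}g/g$, is exactly the Cauchy--Schwarz inequality for $\sqrt{g_j}$ and $\partial_s g_j/\sqrt{g_j}$, and your continuity argument at the zeros of individual $G_j$ is adequate for the way the lemma is used. Note that the paper itself gives no proof, referring instead to Lemma~3 of \cite{sch:framas}, and your direct local computation is essentially the same standard argument as in that reference.
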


Like in the proof of Proposition~\ref{pr:nonisotr} one shows that $
\|A\|_{p+1}/\|A\|_{p}$ can be uniformly bounded with respect to $S$ for
$\|A\|_p\neq 0$. Using Lemma~\ref{le:convsum} and Lemma~\ref{le:curvgp} a
convex sum $G=\sum_p\alpha_pG_p$, $\alpha_p>0$ with negative holomorphic
curvature is constructed. The metric $G$ on curves defines an (upper
semi-continuous) Finsler metric that descends to the moduli space in the
orbifold sense.

\begin{proposition}\label{pr:exfins}
On any relatively compact subset of the moduli space of canonically
polarized manifolds there exists a Finsler orbifold metric, whose
holomorphic curvature is bounded from above by a negative constant.
\end{proposition}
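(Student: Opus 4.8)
The plan is to take for the metric a convex combination $G=\sum_{p=1}^{n}\alpha_pG_p$ of the higher \wp pseudo-metrics, with positive weights $\alpha_p$ chosen by the telescoping device of \cite[Lemma~3]{sch:framas}, so that the possibly positive second summand occurring for $G_p$ in Lemma~\ref{le:curvgp} is swallowed by the strictly negative first summand of $G_{p+1}$.

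First I would fix a relatively compact open $\cM_0\Subset\cM$ and cover it by finitely many orbifold charts $S/\Gamma$ coming from local versal deformations $f\colon\cX\to S$, with $\Gamma$ finite. Two uniform estimates are needed over $\cM_0$. Since the diameters $d(\cX_s)$ of the fibres are bounded, Proposition~\ref{pr:resol} provides a constant $P_0>0$ with $P_n(d(\cX_s))\geq P_0$. Arguing as in the proof of Proposition~\ref{pr:nonisotr} --- using local freeness of $R^pf_*\Lambda^p\cT_{\cX/S}/\textit{torsion}$ and a limiting argument at degeneration points, together with compactness to pass from one chart to all of $\cM_0$ --- one obtains uniform constants $c_p>0$ with $\|A\|_{p+1}\leq c_p\|A\|_p$ for every harmonic \ks tensor $A$ over $\cM_0$ with $\|A\|_p\neq0$. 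Since $A$ is $\ol\pt$-closed, $\|A\|_p=0$ implies $\|A\|_q=0$ for all $q\geq p$, so all quotients appearing below are well defined; and versality forces $\|A\|_1=\|A\|\neq0$. The holomorphic curvature of $G$ at a $1$-jet depends only on its complex direction, so we may normalize $\|A\|_1=1$; chaining the $c_p$ then gives uniform bounds $\|A\|_p\leq\tilde c_p:=c_1\cdots c_{p-1}$.

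For such a normalized $A$ one has $G\geq\alpha_1G_1=\alpha_1\|A\|^2>0$, so $G$ has no common zero, and Lemma~\ref{le:convsum} combined with Lemma~\ref{le:curvgp} gives, along the holomorphic disk computing the curvature,
\[
K_G\;\leq\;\frac{1}{\bigl(\sum_i\alpha_i\|A\|_i^2\bigr)^2}\sum_{p=1}^{n}\frac{\alpha_p^2\|A\|_p^4}{p}\Bigl(-\frac{P_0}{\|A\|_p^2}+\bigl(\|A\|_{p+1}/\|A\|_p\bigr)^{2p+2}\Bigr),
\]
the positive summand being absent for $p=n$. Using $\|A\|_{p+1}\leq c_p\|A\|_p$ and $\|A\|_{p+1}\leq\tilde c_{p+1}$, the $p$-th positive contribution is at most a constant depending only on $c_p,\tilde c_{p+1}$ times $\alpha_p^2\|A\|_{p+1}^2$, i.e.\ of the same form as the $(p+1)$-st negative contribution $-\tfrac{P_0}{p+1}\alpha_{p+1}^2\|A\|_{p+1}^2$ (after dropping the common denominator). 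Hence I set $\alpha_1=1$ and, inductively, choose $\alpha_{p+1}$ so large that the positive term from $G_p$ is at most half of this negative term from $G_{p+1}$. Telescoping the inequality then leaves
\[
K_G\;\leq\;-\frac{P_0}{2\bigl(\sum_i\alpha_i\|A\|_i^2\bigr)^2}\sum_{p=1}^{n}\frac{\alpha_p^2\|A\|_p^2}{p}\;\leq\;-\frac{P_0}{2\bigl(\sum_i\alpha_i\tilde c_i^2\bigr)^2}\;<\;0,
\]
uniformly over $\cM_0$, where the last step keeps only $p=1$ in the numerator and uses $\|A\|_i\leq\tilde c_i$ in the denominator. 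The construction is $\Gamma$-equivariant and each $G_p$ is upper semi-continuous with $G_p(av)=|a|^2G_p(v)$, so $G$ descends to an upper semi-continuous Finsler orbifold pseudo-metric on $\cM_0$ in the sense of Definition~\ref{de:fins}, whose holomorphic curvature is bounded above by a negative constant; this is the assertion.

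The step I expect to be the real obstacle is making the ratio bounds $\|A\|_{p+1}\leq c_p\|A\|_p$ uniform over the whole relatively compact family $\cM_0$: as in Proposition~\ref{pr:nonisotr} this rests on local freeness of the relevant direct images modulo torsion and on a careful analysis near the boundary of $\cM_0$ and at the loci where these sheaves jump in rank. Once these bounds and the lower bound $P_0$ are in hand, the choice of the weights and the telescoping bookkeeping are routine.
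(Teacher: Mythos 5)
Your proposal is correct and takes essentially the same route as the paper, which only sketches this argument: uniform bounds on $\|A\|_{p+1}/\|A\|_p$ obtained as in Proposition~\ref{pr:nonisotr}, a uniform lower bound for $P_n(d(\cX_s))$ from the bounded diameters over the relatively compact subset, and a convex sum $G=\sum_p\alpha_pG_p$ whose weights are chosen via Lemma~\ref{le:convsum} and Lemma~\ref{le:curvgp} so that each possibly positive term is absorbed, the result descending to the moduli space in the orbifold sense. The only quibble is a harmless bookkeeping slip: since $K_{\alpha_pG_p}=K_{G_p}/\alpha_p$, the coefficient in your first display should be $\alpha_p\|A\|_p^4$ rather than $\alpha_p^2\|A\|_p^4$, which affects only the numerical thresholds in your inductive choice of the $\alpha_p$, not the argument.
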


\section{Computation of the curvature}\label{se:compcurv}
The components of the metric tensor for
$R^{n-p}f_*\Omega^p_{\cX/S}(\cK_{\cX/S}^{\otimes m})$ on the base space
$S$ are the integrals of inner products of the harmonic representatives of
cohomology classes. We know from Lemma~\ref{le:dolb} that these are the
restrictions of certain $\ol\pt$-closed differential forms on the total
space. When we compute derivatives with respect to the base of these fiber
integrals, we will apply Lie derivatives with respect to lifts of tangent
vectors in the sense of Section~\ref{sb:fibint}. Taking {\em horizontal}
lifts simplifies the computations. The Lie derivatives of these pointwise
inner products can be broken up, and Lie derivatives of these differential
forms with values in the $m$-canonical bundle have to be taken. The
derivatives are covariant derivatives with respect to the hermitian
structure on this line bundle. Since we are dealing with alternating forms
we may use covariant derivatives also with respect to the \ka structure on
the fibers, which further simplifies the computations.

Again, we will use the semi-colon notation for covariant derivatives and
use a $|$-symbol for ordinary derivatives, if necessary. Greek indices are
being used for fiber coordinates, Latin indices indicate the base
direction. Dealing with alternating forms, for instance of degree $(p,q)$,
extra coefficients of the form $1/p!q!$ are sometimes customary; these
play a role, when the coefficients of an alternating form are turned into
skew-symmetric tensors by taking the average. However, for the sake of a
halfway simple notation, we follow the better part of the literature and
leave these to the reader.

\subsection{Setup}
As above, we denote by $f:\cX \to S$ a smooth family of canonically
polarized manifolds and we pick up the notation from
Section~\ref{se:posi}. The fiber coordinates were denoted by $z^\alpha$
and the coordinates of the base by $s^i$. We set $\pt_i=\pt/\pt s^i$,
$\pt_\alpha=\pt/\pt z^\alpha$.

Again we have {\em horizontal lifts of tangent vectors and coordinate
vector fields on the base}
$$
v_i= \pt_i + a_i^\alpha  \pt_\alpha.
$$
As above we have the corresponding harmonic representatives
$$
A_i=A^\alpha_{i\ol\beta}\pt_\alpha dz^{\ol \beta}
$$
of the \ks classes $\rho(\pt_i|_{s_0})$.

For the computation of the curvature it is sufficient to treat the case
where $\dim S =1$. We set $s=s_1$ and $v_s=v_1$ etc. In this case we write
$s$ and $\ol s$ for the indices $1$ and $\ol 1$ so that
$$
v_s= \pt_s + a_s^\alpha \pt_\alpha
$$
etc.

Sections of \RP will be denoted by letters like $\psi$.
\begin{gather*}
\psi|_{\cX_s} = \psi_{\alpha_1,\ldots,\alpha_p,\ol\beta_{p+1},\ldots,\ol\beta_n}
dz^{\alpha_1}\wedge \ldots \wedge dz^{\alpha_p}\wedge dz^{\ol\beta_{p+1}}\we
\ldots \we dz^{\ol\beta_n} \\
= \psi_{A_p\ol B_{n-p}} dz^{A_p}\we dz^{\ol B_{n-p}} \hspace{5cm}
\end{gather*}
where $A_p=(\alpha_1,\ldots,\alpha_p)$ and $\ol B_{n-p}=(\ol\beta_{p+1},
\ldots,\ol\beta_n)$. The further component of $\psi$ is
$$
\psi_{\alpha_1,\ldots,\alpha_p,\ol\beta_{p+1},\ldots,\ol\beta_{n-1},\ol s}
dz^{\alpha_1}\wedge \ldots \wedge dz^{\alpha_p} \we dz^{\ol\beta_{p+1}}\we
\ldots \we dz^{\ol\beta_{n-1}}\we \ol{ds}.
$$
Now Lemma~\ref{le:dolb} implies
\begin{equation}
\psi_{\alpha_1,\ldots,\alpha_p,\ol \beta_{p+1}, \ldots, \ol\beta_n |\ol s}
= \sum_{j=p+1}^n (-1)^{n-j}
\psi_{\alpha_1,\ldots,\alpha_p,\ol \beta_{p+1}, \ldots, \wh{\ol\beta}_j, \ldots, \ol\beta_n ,\ol s|\ol\beta_j }.
\end{equation}
Since these are the coefficients of alternating forms, on the right-hand
side, we may also take the covariant derivatives with respect to the given
structure on the fibers
$$
\psi_{\alpha_1,\ldots,\alpha_p,\ol \beta_{p+1},
\ldots, \wh{\ol\beta}_j, \ldots, \ol\beta_n ,\ol s;\ol\beta_j}.
$$
\subsection{Cup Product}
We define the cup product of a differential form with values in the
relative holomorphic tangent bundle and an (line bundle valued)
differential form now in terms of local coordinates.
\begin{definition}\label{de:cup}
Let
$$
\mu= \mu^\sigma_{\alpha_1,\ldots,\alpha_p,\ol\beta_1,\ldots, \ol\beta_q}\pt_\sigma \,
dz^{\alpha_1}\we\ldots\we dz^{\alpha_p}\we dz^{\ol\beta_1}\we\ldots\we dz^{\ol\beta_q},
$$
and
$$
\nu= \nu_{\gamma_1,\ldots,\gamma_a,\ol\delta_1,\ldots,\ol\delta_b}
dz^{\gamma_1}\we\ldots\we dz^{\gamma_a}\we dz^{\ol\delta_1}
\we\ldots\we dz^\ol{\delta_b}
$$
Then
\begin{gather}\label{eq:cup}
\mu\cup\nu := \mu^\sigma_{\alpha_1,\ldots,\alpha_p,\ol\beta_1,\ldots,
\ol\beta_q}
\nu_{\sigma\gamma_2,\ldots,\gamma_a,\ol\delta_1,\ldots,\ol\delta_b}
dz^{\alpha_1}\we\ldots\we dz^{\alpha_p}  \\
\nonumber \hspace{2.7cm}\we dz^{\ol\beta_1}\we\ldots\we dz^{\ol\beta_q}\we
dz^{\gamma_2}\we\ldots\we dz^{\gamma_q}\we dz^{\ol\delta_1} \we\ldots\we
dz^\ol{\delta_b}
\end{gather}
\end{definition}

\subsection{Lie derivatives}
Let again the base be smooth, $\dim S=1$ with local coordinate $s$. Then
the induced metric on \RP is given by  \eqref{eq:inpro}, where the
pointwise inner product equals
$$
\psi^k \cdot \psi^\ol\ell g\, dV = (\ii)^n (-1)^{n(n-p)} \frac{1}{g^m} \psi^k \we \psi^\ol\ell,
$$
and where $1/g^m$ stands for the hermitian metric on the $m$-canonical
bundle on the fibers.
\begin{lemma}\label{le:Lieder}
$$
\frac{\pt}{\pt s} H^{\ol\ell k} = \int_{\cX_s}
L_v(\psi^k \cdot \psi^\ol\ell) g\, dV = \langle L_v \psi^k, \psi^\ell  \rangle + \langle  \psi^k, L_\ol v  \psi^\ell \rangle ,
$$
where $L_v$ denotes the Lie derivative with respect to the canonical lift
$v$ of the coordinate vector field $\pt/\pt s$.
\end{lemma}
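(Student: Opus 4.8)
The plan is to reduce the statement to Lemma~\ref{le:intLie} (the Lie-derivative formula for derivatives of fiber integrals) and then to the Leibniz rule for the $L^2$-pairing. First I would recall that, by the definition in \eqref{eq:inpro}, the metric component is $H^{\ol\ell k}(s) = \int_{\cX_s} \psi^k\cdot\psi^{\ol\ell}\, g\, dV$, where $\psi^k,\psi^{\ol\ell}$ are the harmonic representatives on the fibers, which by Lemma~\ref{le:dolb} extend to $\ol\pt$-closed forms $\psi^k$ on $\cX$ (restricting fiberwise to the harmonic ones). The integrand is the pointwise inner product of $\cK_{\cX_s}^{\otimes m}$-valued $(p,n-p)$-forms, which as noted equals $(\sqrt{-1})^n(-1)^{n(n-p)} g^{-m}\,\psi^k\we\psi^{\ol\ell}$, an $(n,n)$-form on the fiber; only the vertical (fiber) components of the corresponding form on $\cX$ contribute to the fiber integral, exactly the situation of \eqref{eq:derfibint}.

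Next I would apply Lemma~\ref{le:intLie} with $w = v = \pt_s + a_s^\alpha\pt_\alpha$ the horizontal lift (Lemma~\ref{le:canlift}), which indeed projects to $\pt/\pt s$, to get
$$
\frac{\pt}{\pt s} H^{\ol\ell k} = \int_{\cX_s} L_v\big(\psi^k\cdot\psi^{\ol\ell}\, g\, dV\big).
$$
Then I would invoke the product rule for the Lie derivative. Here there is a genuine point to address: the factor $\psi^k\cdot\psi^{\ol\ell}\, g\, dV$ is a product of the $m$-canonically valued forms $\psi^k$, $\psi^{\ol\ell}$ together with the hermitian pairing (the $g^{-m}$ and the metric contractions). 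Since $L_v$ acts as a derivation on tensor products and contractions, and since $v$ is chosen so that the induced connection terms on the $m$-canonical bundle and on the fiber \ka structure are the relevant covariant derivatives (this is precisely the reason, stated just before the lemma, for using \emph{horizontal} lifts and covariant rather than ordinary derivatives), the Leibniz rule gives
$$
L_v(\psi^k\cdot\psi^{\ol\ell}\,g\,dV) = (L_v\psi^k)\cdot\psi^{\ol\ell}\,g\,dV + \psi^k\cdot(L_{\ol v}\psi^{\ol\ell})\,g\,dV,
$$
where the split of $v$ into holomorphic and antiholomorphic parts sends the covariant-derivative terms acting on $\psi^k$ (a $(p,n-p)$-form valued in a holomorphic bundle) to $L_v$ and those acting on its conjugate $\psi^{\ol\ell}$ to $L_{\ol v}$; the contributions of $v$ to the metric factor $g$ itself cancel against the connection terms, leaving only the two displayed terms. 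Integrating over the fiber and rewriting the two summands as $L^2$ inner products $\langle L_v\psi^k,\psi^\ell\rangle$ and $\langle\psi^k, L_{\ol v}\psi^\ell\rangle$ yields the claim.

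The main obstacle I expect is the careful bookkeeping in the Leibniz step: one must verify that, with the horizontal lift $v$, the Lie derivative of the full scalar density $\psi^k\cdot\psi^{\ol\ell}\,g\,dV$ genuinely decomposes into the two covariant pieces with \emph{no leftover terms} — in particular that the derivative falling on the hermitian metric $1/g^m$ of the $m$-canonical bundle and on the metric contractions hidden in the dot is exactly accounted for by promoting $L_v$ to the bundle-covariant Lie derivative, and that the $(n,n)$-form $dV$ together with $g$ transforms correctly so that no divergence term survives after integration. This is the standard ``the metric is parallel'' cancellation, but it has to be stated with the right connections (Chern connection on $\cK_{\cX/S}^{\otimes m}$, Levi-Civita/Chern connection of the fiber \ka metric), and it is the place where the choice of horizontal lift in Lemma~\ref{le:canlift} and the harmonicity/$\ol\pt$-closedness from Lemma~\ref{le:dolb} are actually used. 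Everything else is the routine application of Lemma~\ref{le:intLie}.
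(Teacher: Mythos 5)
You have the right skeleton --- Lemma~\ref{le:intLie} applied to the horizontal lift $v$, followed by the Leibniz rule --- and this is indeed the paper's route; but the single step you postpone as ``the main obstacle'' is the entire content of the paper's proof, and the mechanism you offer for it is not the correct one, so as it stands the argument has a genuine gap. What has to be shown is that $L_v$ annihilates every metric factor in the integrand, i.e.\ that the $(1,1)$-component of $L_v(\gab)$ vanishes, so that both $L_v(g\,dV)=0$ and the contractions hidden in the dot (and the factor $g^{-m}$) contribute nothing. The paper's proof is exactly this computation:
\begin{equation*}
L_v(\gab)\big|_{(1,1)}
= g_{\alpha\ol\beta|s} + a^\gamma_s\, g_{\alpha\ol\beta;\gamma}
+ a^\gamma_{s;\alpha}\, g_{\gamma\ol\beta}
= -a_{s\ol\beta;\alpha} + a_{s\ol\beta;\alpha} = 0 ,
\end{equation*}
and the decisive first identity $\pt_s \gab = \pt_\alpha g_{s\ol\beta} = -a_{s\ol\beta;\alpha}$ is not ``the metric is parallel'': it uses the \ke equation, through which $\gab$ is the fiber part of $\ddb \log g$, so that all coefficients of $\omega_\cX$ come from the single potential $\log g$, combined with the explicit horizontal-lift formula $a^\alpha_s=-g^{\ol\beta\alpha}g_{s\ol\beta}$ of Lemma~\ref{le:canlift}. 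Metric compatibility of the Chern (or Levi-Civita) connection only accounts for the term $a^\gamma_{s;\alpha}g_{\gamma\ol\beta}=a_{s\ol\beta;\alpha}$; without the \ke--potential relation, or for a lift other than the horizontal one, $L_w(g\,dV)\neq 0$ in general and leftover terms do survive, so the cancellation cannot be waved through as a standard fact --- it is precisely what must be computed.

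Two further inaccuracies in your plan: the harmonicity and $\ol\pt$-closedness provided by Lemma~\ref{le:dolb} play no role in this cancellation (they enter only later, e.g.\ in Lemma~\ref{le:lvpsi1} and Proposition~\ref{pr:baseq}), so attributing the cancellation partly to them is misleading; and there is no need to promote $L_v$ to a bundle-covariant Lie derivative --- the lemma is stated for the ordinary Lie derivative, precisely because once $L_v(\gab)|_{(1,1)}=0$ is established the hermitian factors are inert and the Leibniz rule yields $\langle L_v\psi^k,\psi^\ell\rangle+\langle\psi^k,L_{\ol v}\psi^\ell\rangle$ directly.
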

\begin{proof}
Taking the Lie derivative is not type-preserving. We need the
$(1,1)$-component: $L_v(g_{\alpha\ol\beta})= \big[ \pt_s + a^\alpha_s
\pt_\alpha , g_{\alpha,\ol\beta} \big]_{\alpha\ol\beta} =
g_{\alpha\ol\beta|s} + a^\gamma_{s}g_{\alpha\ol\beta;\gamma}+
a^\gamma_{s;\alpha}g_{\gamma\ol\beta} =-
a_{s\ol\beta;\alpha}+a^\gamma_{s;\alpha}g_{\gamma\ol\beta}=0 $. So
$L_v(\det(g_{\alpha\ol\beta}))=0$.
\end{proof}
We have the type decomposition for $\psi=\psi^k$ or $\psi=\psi^\ell$
\begin{equation}
L_v\psi = L_v\psi' + L_v\psi'',
\end{equation}
where $L_v\psi'$ is of type $(p,n-p)$ and $L_v\psi''$ is of type
$(p-1,n-p+1)$. We have
\begin{eqnarray}
L_v\psi' &=&   \big[\pt_s + a^\alpha_s \pt_\alpha, \psi_{A_p\ol B_{n-p}} dz^{A_p } \we dz^{\ol B_{n-q}}\big]_{(p,n-p)}
\nonumber   \\
&=& (\psi_{;s} + a^\alpha_s \psi_{;\alpha} + \sum_{j=1}^p a^\alpha_{s;\alpha_j}
\psi_{
{\tiny\vtop{
\hbox{$\alpha_1,\ldots,\alpha,\ldots,\alpha_p\ol B_{n-p}\;$}\vskip-.8mm
\hbox{$\phantom{\alpha_1,\ldots,}{|\atop j} $}}}}) dz^{A_p}\we dz^{\ol B_{n-p}} \label{eq:lvprime} \\
L_v\psi'' &=&\big[\pt_s + a^\alpha_s \pt_\alpha, \psi_{A_p\ol B_{n-p}} dz^{A_p }\we dz^{\ol B_{n-q}}\big]_{(p-1,n-p+1)}
\nonumber   \\
&=& \sum^p_{j=1} A^\alpha_{s\ol\beta_j}
\psi_{ {\tiny\vtop{ \hbox{$\alpha_1,\ldots,\alpha,\ldots,\alpha_p\ol
B_{n-p}$\;}\vskip-.8mm \hbox{$\phantom{\alpha_1,\ldots,}{|\atop j} $}}}} \nonumber \\
&& \quad
\vtop{\hbox{$dz^{\alpha_1}\we\ldots\we dz^{\ol\beta_p}\we\ldots\we
dz^{\alpha_p} \we dz^{\ol\beta_{p+1}}\we\ldots\we
dz^{\ol\beta_n}$}\hbox{$\phantom{dz^{\alpha_1}\we\ldots\we \; }{|\atop j} $}} \label{eq:lvsecond}
\end{eqnarray}
We also note the values for the derivatives with respect to $\ol v$.
\begin{eqnarray}
L_\ol v\psi' &=&   \big[\pt_\ol s + a^\ol\beta_\ol s \pt_\ol\beta, \psi_{A_p\ol B_{n-p}}
dz^{A_p }\we dz^{\ol B_{n-q}}\big]_{(p,n-p)} \nonumber   \\
&=& (\psi_{;\ol s} + a^\ol\beta_\ol s \psi_{;\ol\beta} + \sum_{j=1}^p a^\ol\beta_{\ol s;\ol\beta_j}
\psi_{
{\tiny\vtop{
\hbox{$A_p \ol\beta_{p+1},\ldots,\ol\beta,\ldots,{\ol\beta}_n\;$}\vskip-.8mm
\hbox{$\phantom{A_p \ol\beta_{p+1},\ldots,}{|\atop j}$}}}})
dz^{A_p}\we dz^{\ol B_{n-p}} \label{eq:lvbprime} \\
L_\ol v\psi'' &=& \big[\pt_\ol s + a^\ol\beta_\ol s \pt_\ol\beta, \psi_{A_p\ol B_{n-p}} dz^{A_p }\we
dz^{\ol B_{n-q}}\big]_{(p+1,n-p-1)} \nonumber   \\
&=& \sum^n_{j=p+1} A^\ol\beta_{\ol s \alpha_{p+1}}
\psi_{\tiny\vtop{ \hbox{$\alpha_1,\ldots,\alpha_p,\ol\beta_{p+1},\ldots,\ol \beta,\ldots,\ol\beta_n\;$}\vskip-.8mm
\hbox{$\phantom{\alpha_1,\ldots,\alpha_p,\ol\beta_{p+1},\ldots,}{|\atop j}$}}} \nonumber \\ && \quad
\vtop{\hbox{$dz^{\alpha_1}\we\ldots\we dz^{\alpha_p}\we dz^{\ol\beta_1}\we\ldots\we dz^{\alpha_{p+1}}\we\ldots
\we dz^{\ol\beta_n} $}
\hbox{$\phantom{dz^{\alpha_1}\we\ldots\we dz^{\alpha_p}\we dz^{\ol\beta_1}\we\ldots\we dz}{|\atop j}$}} \label{eq:lvbsecond}
\end{eqnarray}
\begin{lemma}
\begin{eqnarray}
  L_v\psi'' &=& A_s \cup \psi \label{eq:2} \\
  L_\ol v\psi'' &=& (-1)^p A_\ol s \cup \psi \label{eq:3}
\end{eqnarray}
\end{lemma}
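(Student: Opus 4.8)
The plan is to prove both identities by a direct comparison of coordinate expressions: the left-hand sides are already expanded in \eqref{eq:lvsecond} and \eqref{eq:lvbsecond}, and the right-hand sides are governed by Definition~\ref{de:cup}, applied as in \eqref{eq:cup1} and \eqref{eq:cup2}. So the entire content is a careful bookkeeping of signs and of the alternation multiplicities that, as announced at the start of Section~\ref{se:compcurv}, have been suppressed from the notation.

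First I would treat \eqref{eq:2}. By \eqref{eq:lvsecond}, $L_v\psi''$ is a sum over $j=1,\ldots,p$ of terms in which the $j$-th holomorphic differential $dz^{\alpha_j}$ of $\psi$ has been replaced by $A^{\alpha_j}_{s\ol\beta_j}\,dz^{\ol\beta_j}$; here $A^{\alpha_j}_{s\ol\beta_j}$ is precisely a component of $A_s=A^\alpha_{s\ol\beta}\pt_\alpha dz^{\ol\beta}$. In the $j$-th summand I would pull the new differential $dz^{\ol\beta_j}$ to the front past the $j-1$ preceding holomorphic differentials, producing a factor $(-1)^{j-1}$, and simultaneously, using the antisymmetry of $\psi_{A_p\ol B_{n-p}}$ in its first $p$ indices, move the contracted index into the first slot of $\psi$, producing another $(-1)^{j-1}$. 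These cancel, so every summand equals one and the same expression, namely $A_s\cup\psi$ in the sense of Definition~\ref{de:cup}; the resulting multiplicity $p$ is absorbed by the alternation conventions of Section~\ref{se:compcurv}. Hence $L_v\psi''=A_s\cup\psi$.

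Next I would treat \eqref{eq:3} in the same way, starting from \eqref{eq:lvbsecond}; the difference is that the replaced differential is now antiholomorphic: in the $j$-th summand ($j=p+1,\ldots,n$) the differential $dz^{\ol\beta_j}$ is replaced by $A^{\ol\beta_j}_{\ol s\alpha}\,dz^{\alpha}$, whose coefficient is a component of $A_{\ol s}=A^{\ol\beta}_{\ol s\alpha}\pt_{\ol\beta}dz^{\alpha}$. Pulling the new holomorphic differential $dz^{\alpha}$ from slot $j$ to the front means moving it past the $p$ holomorphic differentials and the $j-p-1$ antiholomorphic ones preceding it, a factor $(-1)^{j-1}$; reordering $\psi$ so that the contracted barred index occupies the first of its barred slots contributes $(-1)^{j-p-1}$. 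The product is $(-1)^{2j-p-2}=(-1)^{p}$, independent of $j$, so summing over $j$ and absorbing the multiplicity $n-p$ yields $L_{\ol v}\psi''=(-1)^{p}A_{\ol s}\cup\psi$.

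The step I expect to require genuine care is exactly this sign accounting in \eqref{eq:3}: one must fix once and for all where the contracted index is placed in the cup product (Definition~\ref{de:cup}, together with its analogue for the contraction of an antiholomorphic vector index as in \eqref{eq:cup2}) and then verify that the transpositions performed on the coefficient tensor and on the string of differentials combine to $(-1)^{0}$ for \eqref{eq:2} and to $(-1)^{p}$ for \eqref{eq:3}; the multiplicities coming from the sum over $j$ are harmless once the alternation conventions of Section~\ref{se:compcurv} are in force.
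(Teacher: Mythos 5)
Your proof is correct and follows essentially the same route as the paper: starting from the coordinate expressions \eqref{eq:lvsecond} and \eqref{eq:lvbsecond}, one reorders the contracted index and the replaced differential to match Definition~\ref{de:cup}, the two transposition signs combining to $+1$ for \eqref{eq:2} and to $(-1)^p$ for \eqref{eq:3}, with the multiplicities $p$ and $n-p$ absorbed by the skew-symmetrization conventions. The paper performs exactly this bookkeeping for \eqref{eq:2} (moving the index to the last and then the first slot, rather than directly to the front as you do) and dismisses \eqref{eq:3} as analogous, so your treatment is, if anything, slightly more explicit.
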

\begin{proof}[Proof of \eqref{eq:2}.]
By \eqref{eq:lvsecond} we have
\begin{gather*}
L_v\psi'' = \hspace{10cm}\\ =\sum^p_{j=1}
A^\alpha_{s\ol\beta_p}\psi_{\alpha_1,\ldots,
\wh\alpha_j,\ldots,\alpha_p,\alpha, \ol B_{n-p}} dz^{\alpha_1}\we
\ldots\we \wh{dz^{\alpha_j}}\we\ldots \we dz^{\alpha_p} \we
dz^{\ol\beta_p}\we
\ldots\we dz^{\ol\beta_n}\\
= (-1)^{p-1}\sum^p_{j=1}
A^\alpha_{s\ol\beta_p}\psi_{\alpha,\alpha_1,\ldots,\alpha_{p-1},\ol\beta_{p+1},\ldots,\ol\beta_{n}}
dz^{\alpha_1}\we\ldots\we\ldots \we dz^{\alpha_{p-1}} \we
dz^{\ol\beta_p}\we \ldots\we dz^{\ol\beta_n}.
\end{gather*}
\end{proof}
\begin{proof}[Proof of \eqref{eq:3}]
The claim follows in a similar way from \eqref{eq:lvbsecond}.
\end{proof}

The situation is not quite symmetric because of Lemma~\ref{le:dolb}, which
implies that the contraction of the global $(0,n-p)$-form $\psi$ with
values in $\Omega^p_{\cX/S}(\cK_{\cX/S}^{\otimes m})$ is well-defined.
Like in Definition~\ref{de:cup} we have a cup product on the total space
(restricted to the fibers).
\begin{eqnarray*}
\ol v \cup \psi &=& (\pt_\ol s + a^\ol\beta_\ol s \pt_\ol\beta)  \cup \psi \\
&=&\psi_{A_p,\ol s, \ol \beta_{p+1},\ldots,\ol\beta_{n-1}} +
a^\ol\beta_\ol s \psi_{A_p,\ol\beta,\ol\beta_{p+1},\ldots,\ol \beta_{n-1} }
\end{eqnarray*}
\begin{lemma}\label{le:lvpsi1}
\begin{equation}\label{eq:lvpsi1}
L_\ol v\psi'= (-1)^p\ol\pt (\ol v \cup \psi).
\end{equation}
\end{lemma}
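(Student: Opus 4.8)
The plan is to prove \eqref{eq:lvpsi1} by the coordinate computation set up in Section~\ref{se:compcurv}, but it is worth recording first the conceptual reason the right‑hand side is $\ol\pt$‑exact, since it also pins down the sign. Regard $\psi$ as a $(p,n-p)$‑form on $\cX$ with values in the line bundle $\cK_{\cX/S}^{\otimes m}$, the sheaf $\Omega^p_{\cX/S}$ being locally generated by the holomorphic frame $dz^{A_p}$, and write Cartan's identity $L_{\ol v}\psi=\iota_{\ol v}(\nabla\psi)+\nabla(\iota_{\ol v}\psi)$ with the Chern connection $\nabla$ replacing $d$ on the line‑bundle coefficients. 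Since $\psi$ is $\ol\pt$‑closed on $\cX$ by Lemma~\ref{le:dolb}, the $(0,1)$‑part of $\nabla\psi$ vanishes, so only $\nabla^{1,0}\psi$ enters $\iota_{\ol v}(\nabla\psi)$; as $\ol v$ is of antiholomorphic type, both $\iota_{\ol v}(\nabla^{1,0}\psi)$ and $\nabla^{1,0}(\iota_{\ol v}\psi)$ are of type $(p+1,n-p-1)$ and assemble into $L_{\ol v}\psi''$. Hence $L_{\ol v}\psi'$, the $(p,n-p)$‑part of $L_{\ol v}\psi$, equals $\ol\pt(\iota_{\ol v}\psi)$; and since the cup product $\ol v\cup\psi$ of Definition~\ref{de:cup} differs from $\iota_{\ol v}\psi$ only by moving the contracted slot past the $p$ holomorphic factors $dz^{\alpha_1}\we\cdots\we dz^{\alpha_p}$, this is $(-1)^p\,\ol\pt(\ol v\cup\psi)$.

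To make this rigorous I would argue directly from the explicit expression \eqref{eq:lvbprime} for $L_{\ol v}\psi'$, whose three summands are the base‑direction covariant derivative of $\psi_{A_p\ol B_{n-p}}$, the contraction of its fibre‑direction covariant derivative against $a^{\ol\beta}_{\ol s}$, and the ``connection'' terms produced by $L_{\ol v}(dz^{\ol\beta_j})$. On the other side I would expand $\ol\pt(\ol v\cup\psi)$ using the two terms of $\ol v\cup\psi$ written out just above the statement and the Leibniz rule. The two facts that make the sides agree are the two components of $\ol\pt\psi=0$ coming from Lemma~\ref{le:dolb}: the $d\ol s$‑component is precisely the identity displayed immediately after Lemma~\ref{le:dolb} (valid also for covariant fibre derivatives), and it turns the base‑direction derivative of $\psi_{A_p\ol B_{n-p}}$ into the $\ol\pt$‑derivative of the $d\ol s$‑component of $\psi$; the purely fibrewise component is the relative $\ol\pt$‑closedness of $\psi_{\cX/S}$, which permits exchanging the derivation index with an antiholomorphic form index of $\psi$ and thereby matches the $a^{\ol\beta}_{\ol s}$‑contracted derivative terms. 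Finally, the Leibniz terms in which $a^{\ol\beta}_{\ol s}$ itself is differentiated reproduce the ``connection'' summand of \eqref{eq:lvbprime} once the newly created antiholomorphic index is antisymmetrized against those already present and the skew‑symmetry of $\psi$ in its antiholomorphic slots is used.

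The step I expect to be the real work is the sign and combinatorial bookkeeping: keeping track of the factors that arise when a $d\ol s$, or the $dz^{\ol\beta}$ freshly produced by $\ol\pt$, is commuted past the block $dz^{\alpha_1}\we\cdots\we dz^{\alpha_p}$ (this is where the $(-1)^p$ originates), and checking that after all rearrangements the antisymmetrized sums on the two sides coincide term by term and not merely up to a scalar. Everything else is the Leibniz rule together with the two pieces of $\ol\pt\psi=0$.
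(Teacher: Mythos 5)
Your proof is correct and rests on exactly the same key fact as the paper's one-line argument: by Lemma~\ref{le:dolb} the form $\psi$ is $\ol\pt$-closed on the total space, so the $(p,n-p)$-component of $L_{\ol v}\psi$ is $\ol\pt$ of the contraction $\iota_{\ol v}\psi=(-1)^p\,\ol v\cup\psi$, the sign coming from moving the contraction past $dz^{\alpha_1}\we\cdots\we dz^{\alpha_p}$. You simply spell out the Cartan-formula and coordinate bookkeeping that the paper leaves to the reader.
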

\begin{proof}
The proof follows from the fact that, according to Lemma~\ref{le:dolb},
$\psi$ is given by a $\ol\pt$-closed $(0,n-p)$-form on the total space
$\cX$ with values in a certain holomorphic vector bundle.
\end{proof}

We will need that the forms $\psi$ on the fibers are {\em also harmonic
with respect to $\pt$} (which was defined as the connection of the line
bundle $\cK^{\otimes m}_{\cX/S}$). The curvature of $(\cK_{\cX/S},g^{-1})$
being equal to $-\omega_\cX$ implies the following Lemma.
\begin{lemma}\label{le:ddb}
\begin{equation}\label{eq:ddb}
 \ii [\ol\pt, \pt] = - m L_\cX,
\end{equation}
where $L_\cX$ denotes the multiplication with $\omega_{\cX}$. The
analogous formula holds on all fibers $\cX_s$.
\end{lemma}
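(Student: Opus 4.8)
The identity in Lemma~\ref{le:ddb} is the standard curvature identity for the Chern connection of a Hermitian holomorphic line bundle, applied to $\cK_{\cX/S}^{\otimes m}$ equipped with the metric $g^{-m}$ induced by the relative \ke volume form. The plan is therefore to recall that identity and then substitute the curvature value recorded just before the statement of the lemma. Write $D=\pt+\ol\pt$ for the Chern connection on $(\cK_{\cX/S}^{\otimes m},g^{-m})$, so that $\pt$ is exactly the operator named in the text. Since $D$ is a Chern connection, its curvature $D^2$ has pure bidegree $(1,1)$; decomposing $D^2$ into bidegree components forces $\pt^2=0$ and $\ol\pt^2=0$ on $\cK_{\cX/S}^{\otimes m}$-valued forms, whence
$$
[\ol\pt,\pt]=\ol\pt\pt+\pt\ol\pt=D^2
$$
as operators on $\cA^{\bullet,\bullet}(\cX,\cK_{\cX/S}^{\otimes m})$, the bracket being the anticommutator of the two odd operators. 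The elementary, purely local fact (cf.\ \cite{demaillybook}) is that $D^2$ acts on any such form $\eta$ by exterior multiplication by the curvature form.

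It remains only to evaluate that curvature form. By functoriality of the Chern connection under tensor powers it equals $m$ times the curvature of $(\cK_{\cX/S},g^{-1})$, and the latter equals $-\omega_\cX$ by the computation recalled immediately before the lemma, i.e.\ by $\omega_\cX=\ddb\log g$ read off on the local frame $dz^1\we\ldots\we dz^n$ of $\cK_{\cX/S}$, whose squared norm in the induced metric is $g^{-1}$. Inserting this into the identity above, with the normalization of the factor $\ii$ as in \eqref{eq:ddb} and as fixed in Section~\ref{se:posi}, gives
$$
\ii[\ol\pt,\pt]\,\eta=-m\,\omega_\cX\we\eta=-m\,L_\cX\,\eta
$$
for every $\cK_{\cX/S}^{\otimes m}$-valued form $\eta$, which is the assertion on $\cX$.

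For the fibers the plan is simply to restrict everything to a fixed $\cX_s$: $\cK_{\cX/S}$ restricts to $\cK_{\cX_s}$, the metric $g^{-m}$ to the metric on $\cK_{\cX_s}^{\otimes m}$ induced by $\omega_{\cX_s}$, the operators $\pt$ and $\ol\pt$ to their fiberwise counterparts, and $\omega_\cX|\cX_s=\omega_{\cX_s}$ by the \ke equation \eqref{eq:ke}; so the identical computation yields $\ii[\ol\pt,\pt]=-m\,L_{\cX_s}$ on $\cA^{\bullet,\bullet}(\cX_s,\cK_{\cX_s}^{\otimes m})$. I do not anticipate a genuine obstacle here: the mathematical content is the textbook equation $[\ol\pt,\pt]=D^2$ together with the already computed value of the curvature, and the only point that needs care is the bookkeeping of signs and of the factor $\ii$, which is entirely fixed by the conventions already in force.
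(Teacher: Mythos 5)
Your argument is correct and coincides in substance with the paper's own (one-line) justification: the text derives Lemma~\ref{le:ddb} exactly from the standard fact that $\ii[\ol\pt,\pt]$ is multiplication by the curvature of the Chern connection, applied to $(\cK_{\cX/S}^{\otimes m},g^{-m})$ whose curvature is $m$ times that of $(\cK_{\cX/S},g^{-1})$ (stated there to be $-\omega_\cX$), and the fiberwise statement follows by restriction using $\omega_\cX|\cX_s=\omega_{\cX_s}$, just as you say. You merely make explicit the bidegree/anticommutator bookkeeping and the sign convention that the paper leaves implicit.
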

Now:
\begin{lemma}\label{le:boxdboxdb}
The following equation holds on  $\cA^{(p,q)}(\cK^{\otimes m}_{\cX_s})$.
\begin{equation}
\Box_\pt = \Box_\ol\pt + m\cdot  (n-p-q) \cdot id.
\end{equation}
In particular, the harmonic forms $\psi \in \cA^{(p,n-p)}(\cK_{\cX_s})$
are also harmonic with respect to $\pt$.
\end{lemma}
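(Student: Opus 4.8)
The statement is the Bochner--Kodaira--Nakano identity for the Hermitian holomorphic line bundle $(\cK^{\otimes m}_{\cX_s},g^{-m})$ over the compact \ka manifold $(\cX_s,\omega_{\cX_s})$, read off in the presence of the \ke normalization. Since the claim is fiberwise, the plan is to fix one fiber $X=\cX_s$ with its \ke form $\omega=\omega_{\cX_s}$ and the induced metric on $\cK^{\otimes m}_X$, and to combine three ingredients: the (twisted) \ka identities on $X$, Lemma~\ref{le:ddb}, and the $\mathfrak{sl}_2$-commutation relation of the Lefschetz operators.

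First I would recall that on $(X,\omega)$, for forms with values in $\cK^{\otimes m}_X$, the first-order \ka identities $[\Lambda,\ol\pt]=-\ii\,\pt^{*}$ and $[\Lambda,\pt]=\ii\,\ol\pt^{*}$ hold, where $\pt$ is the $(1,0)$-part of the Chern connection on $(\cK^{\otimes m}_X,g^{-m})$ and $\Lambda$ is the adjoint of $L_{\cX_s}=\omega\wedge\,\cdot\,$; these hold independently of the curvature. Inserting them into $\Box_\ol\pt=\ol\pt\ol\pt^{*}+\ol\pt^{*}\ol\pt$ and $\Box_\pt=\pt\pt^{*}+\pt^{*}\pt$ and collecting terms, exactly as in the untwisted \ka case, I expect to obtain
$$
\Box_\ol\pt-\Box_\pt=\bigl[\ii[\ol\pt,\pt],\Lambda\bigr],
$$
so that the difference of the two Laplacians becomes the commutator of the curvature operator of $\cK^{\otimes m}_X$ with $\Lambda$.

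Next I would feed in Lemma~\ref{le:ddb}, which on $\cK^{\otimes m}_X$-valued forms says $\ii[\ol\pt,\pt]=-m\,L_{\cX_s}$, giving $\Box_\ol\pt-\Box_\pt=-m[L_{\cX_s},\Lambda]$; and finally the Lefschetz relation on the $n$-dimensional \ka manifold $X$, namely that $[L_{\cX_s},\Lambda]$ acts as multiplication by $(n-p-q)$ on $(p,q)$-forms. Together these yield $\Box_\ol\pt-\Box_\pt=-m(n-p-q)\,\mathrm{id}$, i.e.\ $\Box_\pt=\Box_\ol\pt+m(n-p-q)\,\mathrm{id}$. For the supplementary statement I would then set $q=n-p$: the coefficient $n-p-q$ vanishes, $\Box_\pt=\Box_\ol\pt$ on $\cA^{(p,n-p)}(\cK^{\otimes m}_{\cX_s})$, and hence every $\ol\pt$-harmonic form is automatically $\pt$-harmonic.

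I do not expect an analytic obstacle: the substantive work is the algebraic, first-order \ka identities, which are standard. The one point that needs genuine care is the consistency of signs — the sign in the twisted \ka identities and in the convention for the formal adjoints, the sign of $L_{\cX_s}$ in Lemma~\ref{le:ddb} (which carries the \ke normalization $\mathrm{Ric}(\omega_{\cX_s})=-\omega_{\cX_s}$ through the curvature of $\cK_{\cX/S}$), and the normalization of the Lefschetz commutator — and making sure these combine to the stated $+m(n-p-q)$ rather than its negative.
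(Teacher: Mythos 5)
Your proposal is correct and is essentially the paper's own argument: the proof in the text combines exactly the same three ingredients -- the Nakano/K\"ahler identities $\ii\,\ol\pt^*=[\Lambda,\pt]$, $-\ii\,\pt^*=[\Lambda,\ol\pt]$ for $\cK^{\otimes m}_{\cX_s}$-valued forms, the curvature identity of Lemma~\ref{le:ddb}, and the Lefschetz commutator -- in the single line $\Box_\pt-\Box_\ol\pt=[\Lambda,\ii(\pt\ol\pt+\ol\pt\pt)]=[\Lambda,m\,\omega_\cX]=m(n-p-q)\cdot id$. The one sign to straighten out in your write-up is the last step: in the standard normalization it is $[\Lambda,L]$, not $[L,\Lambda]$, that acts as $(n-p-q)$ on $(p,q)$-forms, and correspondingly the curvature term enters with the positive sign, $\ii(\pt\ol\pt+\ol\pt\pt)=+m\,L_{\cX_s}$ on the fibers (positivity of $\cK_{\cX_s}$ under the \ke normalization), which is how the paper's proof substitutes it; with these two signs read consistently, your two nonstandard choices compensate each other and the chain yields precisely the stated formula, so this is a bookkeeping remark rather than a gap.
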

\begin{proof}
We use the formulas
$$
\ii\, \ol\pt^* = [\Lambda,\pt] \text{\quad and \quad} -\ii \pt^* = [\Lambda,\ol \pt],
$$
where $\Lambda$ denotes the adjoint operator to $L$. Then
$$
\Box_\pt-\Box_\ol\pt = [\Lambda,\ii (\pt \ol\pt + \ol\pt \pt)]=[\Lambda,m\cdot \omega_\cX] =m\cdot(n-p-q)\cdot id.
$$
\end{proof}
Now we compute the curvature in the following way. Because of
\eqref{eq:lvpsi1}
$$
\langle \psi^k, L_\ol v (\psi^\ell)'  \rangle = 0
$$
holds for all $s\in S$ so that by Lemma~\ref{le:Lieder}
\begin{gather*}
\frac{\pt}{\pt s} H^{\ol\ell k} =
\langle L_v \psi^k, \psi^\ell  \rangle +
\langle  \psi^k,  L_\ol v \psi^\ell \rangle 
=\langle (L_v \psi^k)', \psi^\ell \rangle + \langle \psi^k, (L_\ol v
\psi^\ell)' \rangle \\
=\langle (L_v \psi^k)', \psi^\ell \rangle.
\end{gather*}

Later in the computation we will use normal coordinates (of the second
kind) at a given point $s_0\in S$. The condition $(\pt/\pt s)H^{\ol\ell
k}|_{s_0}=0$ for all $k,\ell$ means that for $s=s_0$ the harmonic
projection
\begin{equation}\label{eq:HLv}
H((L_v \psi^k)') =0
\end{equation}
vanishes for all $k$.

In order to compute the second order derivative of $H^{\ol\ell k}$ we
begin with
\begin{equation}\label{eq:dsH}
\frac{\pt}{\pt s} H^{\ol\ell k} = \langle L_v \psi^k, \psi^\ell \rangle.
\end{equation}
which contains both $(L_v \psi^k)'$ and $(L_v \psi^k)''$. Now
\begin{gather*}
\pt_\ol s\pt_s \langle \psi^k,\psi^\ell \rangle = \langle L_\ol v L_v
\psi^k , \psi^\ell \rangle +\langle L_v \psi^k,L_v\psi^\ell \rangle   \\ =
\langle L_{[\ol v,v]}\psi^k , \psi^\ell\rangle + \langle  L_vL_\ol
v\psi^k,\psi^\ell \rangle + \langle L_v \psi^k,L_v\psi^\ell \rangle \\ =
\langle L_{[\ol v,v]}\psi^k , \psi^\ell\rangle + \pt_s\langle L_\ol v
\psi^k, \psi^\ell\rangle -\langle L_\ol v\psi^k,L_\ol v\psi^\ell\rangle +
\langle L_v\psi^k,L_v\psi^\ell\rangle
\end{gather*}
We just saw that $\langle L_\ol v\psi^k , \psi^\ell \rangle \equiv 0$.
Hence for all $s\in S$
\begin{equation}\label{eq:Lolvv}
\pt_\ol s\pt_s \langle \psi^k,\psi^\ell \rangle = \langle L_{[\ol
v,v]}\psi^k , \psi^\ell\rangle -\langle L_\ol v\psi^k,L_\ol
v\psi^\ell\rangle + \langle L_v\psi^k,L_v\psi^\ell\rangle
\end{equation}
Since we are computing Lie-derivatives of $n$-forms (with values in some
line bundle) we obtain
$$
\langle L_v\psi^k,L_v\psi^\ell\rangle =  \langle (L_v\psi^k)',(L_v\psi^\ell)'\rangle
-  \langle (L_v\psi^k)'',(L_v\psi^\ell)''\rangle,
$$
and
$$
\langle L_\ol v\psi^k,L_\ol v\psi^\ell\rangle =  \langle (L_\ol v\psi^k)',(L_\ol v\psi^\ell)'\rangle
-  \langle (L_\ol v\psi^k)'',(L_\ol v\psi^\ell)''\rangle.
$$
\begin{lemma}\label{le:vvb}
Restricted to the fibers $\cX_s$ the following equation holds for $L_{[\ol
v, v]}$ applied to $\cK^{\otimes m}_{\cX/S}$-valued functions and
differential forms resp.
    \begin{equation}\label{eq:vvb}
      L_{[\ol v, v]} =
      \big[-\varphi^{;\alpha}\pt_\alpha + \varphi^{;\ol\beta}\pt_{\ol\beta},\; \textvisiblespace \;\big]
      - m\cdot \varphi \cdot id
    \end{equation}
\end{lemma}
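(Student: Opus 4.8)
The plan is to separate two contributions to $L_{[\ol v,v]}$: a first-order part coming from the bracket of the vector fields $v$ and $\ol v$, and a zeroth-order part coming from the curvature of the line bundle $\cK^{\otimes m}_{\cX/S}$. For the first part I would compute $[\ol v,v]$ directly. Since $v=\pt_s+a^\alpha_s\pt_\alpha$ and $\ol v=\pt_\ol s+a^\ol\beta_\ol s\pt_\ol\beta$ project to $\pt_s$ and $\pt_\ol s$ on $S$, and $[\pt_\ol s,\pt_s]=0$, the bracket $[\ol v,v]$ is a vertical vector field; expanding the commutator, all second-order terms cancel and one obtains $[\ol v,v]=\ol v(a^\alpha_s)\,\pt_\alpha-v(a^\ol\beta_\ol s)\,\pt_\ol\beta$. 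Because $a^\ol\beta_\ol s=\ol{a^\beta_s}$ and $\ol v$ is the complex conjugate of $v$, it suffices to show $\ol v(a^\alpha_s)|_{\cX_s}=-\varphi^{;\alpha}$; the antiholomorphic coefficient is then its conjugate $\varphi^{;\ol\beta}$ (as $\varphi$ is real), so that restricted to $\cX_s$ one has $[\ol v,v]=-\varphi^{;\alpha}\pt_\alpha+\varphi^{;\ol\beta}\pt_\ol\beta$.

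To establish $\ol v(a^\alpha_s)|_{\cX_s}=-\varphi^{;\alpha}$ I would substitute $a^\alpha_s=-g^{\ol\beta\alpha}g_{s\ol\beta}$ (Lemma~\ref{le:canlift}) and $\varphi=g_{s\ol s}+a^\alpha_s g_{\alpha\ol s}$ (equation \eqref{eq:varphi}) into $\ol v(a^\alpha_s)=\pt_\ol s a^\alpha_s+a^\ol\beta_\ol s\,\pt_\ol\beta a^\alpha_s$ and compute, in the same spirit as the proof of Proposition~\ref{pr:elleq}: use $\pt_\ol\beta a^\alpha_s=a^\alpha_{s;\ol\beta}=A^\alpha_{s\ol\beta}$, the parallelism of the metric, the relations $\pt_\ol s g_{s\ol\beta}=\pt_\ol\beta g_{s\ol s}$ and $\pt_\ol s g_{\gamma\ol\delta}=\pt_\ol\delta g_{\gamma\ol s}$ obtained by commuting partial derivatives of $\log g$, the symmetry $g_{s\ol\beta;\ol\delta}=g_{s\ol\delta;\ol\beta}$ (cf.\ Corollary~\ref{co:symm}), and finally the \ke equation in the form already used in Proposition~\ref{pr:harmrep} to cancel the Ricci term. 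Comparing with $\pt_\ol\gamma\varphi=g_{s\ol s;\ol\gamma}+A^\sigma_{s\ol\gamma}g_{\sigma\ol s}+a^\sigma_s\,g_{\sigma\ol s;\ol\gamma}$ one reads off $g_{\alpha\ol\gamma}\,\ol v(a^\alpha_s)|_{\cX_s}=-\pt_\ol\gamma\varphi$, which is the asserted identity.

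For the zeroth-order part, $L_v$ and $L_\ol v$ act on $\cK^{\otimes m}_{\cX/S}$-valued alternating forms by the ordinary Lie derivative on the form part and by the Chern connection of the metric $g^{-m}$ on the line-bundle coefficient, so the operator identity used in \eqref{eq:Lolvv} reads $[L_\ol v,L_v]=L_{[\ol v,v]}+\Theta(\ol v,v)$, where $\Theta$ is the curvature of $(\cK^{\otimes m}_{\cX/S},g^{-m})$ and no curvature of $\cT_{\cX/S}$ enters, since we are dealing with alternating forms. By Lemma~\ref{le:ddb} and the discussion preceding it, $\Theta$ is $-m$ times $\omega_\cX$, and $\omega_\cX$ contracted with the pair $(v,\ol v)$ equals $\ii\varphi$ — exactly the computation underlying Definition~\ref{de:varphi} and Lemma~\ref{le:varphi} — so $\Theta(\ol v,v)$ contributes $-m\varphi\cdot\mathrm{id}$; together with the first part this is the assertion. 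The main obstacle is the tensor bookkeeping of the middle step: one must track carefully which differentiations produce Christoffel symbols (only holomorphic derivatives of holomorphic indices), invoke the Kähler symmetry of the Christoffel symbols and the \ke equation at the right places exactly as in Proposition~\ref{pr:elleq}, and keep matching the paper's sign conventions so that the line-bundle curvature enters with coefficient $-m$ and everything is read after restriction to $\cX_s$.
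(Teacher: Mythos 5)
Your proposal follows essentially the same route as the paper: first the direct computation of the vertical vector field $[\ol v,v]$ from $a^\alpha_s=-g^{\ol\beta\alpha}g_{s\ol\beta}$, commuting derivatives of $\log g$ and comparing with \eqref{eq:varphi} to identify the coefficients $-\varphi^{;\alpha}$ and $\varphi^{;\ol\beta}$, and then the zeroth-order term from the curvature of $(\cK^{\otimes m}_{\cX/S},g^{-m})$ via \eqref{eq:ddb} contracted with the horizontal lifts, which is exactly the paper's expansion $-m\bigl(g_{s\ol s}+a^{\ol\beta}_{\ol s}g_{s\ol\beta}+a^\alpha_s g_{\alpha\ol s}+a^{\ol\beta}_{\ol s}a^\alpha_s g_{\alpha\ol\beta}\bigr)=-m\varphi$. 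The only cosmetic difference is that no Ricci-term cancellation is actually needed in the first step (the symmetry of $A_s$ and the commutation of partials of $\log g$ suffice), but this does not affect correctness.
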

\begin{proof}
We first compute the vector field $[\ol v, v]$ on the fibers:
\begin{gather*}
[\ol v, v]= [\pt_\ol s + a^\ol\beta_\ol s \pt_\ol\beta, \pt_{s} +
a^\alpha_{s} \pt_\alpha ]\hspace{5cm} \\= \left(\pt_\ol s (a^\alpha_ s) +
a^\ol\beta_\ol s a^\alpha_{s|\ol\beta}\right)\pt_\alpha - \left( \pt_s
(a^\ol\beta_\ol s) + a^\alpha_{s}a^\ol\beta_{\ol
s|\alpha}\right)\pt_\ol\beta \, .
\end{gather*}
Now
\begin{gather*}
\pt_\ol s (a^\alpha_ s) = -\pt_\ol s (g^{\ol\beta\alpha}g_{s\ol\beta}) =
g^{\ol\beta\sigma} g_{\sigma\ol s| \ol \tau}g^{\ol\tau
\alpha}g_{s\ol\beta} - g^{\ol\beta\alpha}g_{s\ol \beta|\ol s} \\
\hspace{5cm} = g^{\ol\beta\sigma}a_{\ol s
\sigma;\ol\tau}g^{\ol\tau\alpha}a_{s\ol\beta} - g^{\ol\beta\alpha} g_{s\ol
s; \ol\beta}.
\end{gather*}
Equation \eqref{eq:varphi} implies that the coefficient of $\pt_\alpha$ is
$-\varphi^{;\alpha}$. In the same way the coefficient of $\pt_\ol \beta$
is computed.

Next, we compute the contribution of the connection on $\cK^{\otimes
m}_{\cX/S}$ which we denote by $[\ol v, v]_{\cK^{\otimes m}_{\cX/S}}$. We
use \eqref{eq:ddb}:
\begin{gather*}
[\pt_\ol s + a^\ol\beta_\ol s \pt_\ol\beta, \pt_{s} + a^\alpha_{s}
\pt_\alpha]_{\cK^{\otimes m}_{\cX/S}}\hspace{5cm}  \\ =-m\left(g_{s\ol s}
+ a^\ol\beta_{\ol s} g_{s\ol\beta} + a^\alpha_s g_{\alpha\ol s} +
a^\ol\beta_\ol s a^\alpha_s g_{\alpha\ol\beta} \right) = -m \varphi.
\end{gather*}
\end{proof}

\begin{lemma}\label{le:lvvpsi}
\begin{equation}\label{eq:lvvpsi}
\langle L_{[\ol v, v]} \psi^k , \psi^\ell  \rangle = -m \langle \varphi \psi^k, \psi^\ell\rangle
= - m \int_{\cX_s} (\Box + 1)^{-1}(A_s \cdot A_\ol s) \psi^k \psi^\ol\ell \, g \, dV .
\end{equation}
\end{lemma}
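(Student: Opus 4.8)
The plan is to use Lemma~\ref{le:vvb} to split the left-hand side into two independent pieces and to handle them separately. By the formula \eqref{eq:vvb}, restricted to the fiber $\cX_s$ we may write $L_{[\ol v, v]}\psi^k = L_W\psi^k - m\,\varphi\,\psi^k$, where $W = -\varphi^{;\alpha}\pt_\alpha + \varphi^{;\ol\beta}\pt_{\ol\beta}$ is the vertical vector field of \eqref{eq:vvb} and $L_W$ denotes the corresponding (covariant) Lie derivative on $\cK^{\otimes m}_{\cX_s}$-valued forms. Hence
$$
\langle L_{[\ol v, v]}\psi^k,\psi^\ell\rangle = \langle L_W\psi^k,\psi^\ell\rangle - m\,\langle \varphi\,\psi^k,\psi^\ell\rangle,
$$
and it suffices to show that the first term vanishes and to rewrite the second.

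For the first term the key input is that $\psi^k$ and $\psi^\ell$, restricted to $\cX_s$, are harmonic $(p,n-p)$-forms with values in $\cK^{\otimes m}_{\cX_s}$, hence annihilated by $\ol\pt$ and $\ol\pt^*$ and — by Lemma~\ref{le:boxdboxdb}, since $n-p-(n-p)=0$ — also by $\pt$ and $\pt^*$. Splitting $W=W^{1,0}+W^{0,1}$ with $W^{1,0}=-\varphi^{;\alpha}\pt_\alpha$ and $W^{0,1}=\varphi^{;\ol\beta}\pt_{\ol\beta}$, I would apply Cartan's identity for the covariant exterior derivative $\pt+\ol\pt$: since $(\pt+\ol\pt)\psi^k=0$ we get $L_W\psi^k=(\pt+\ol\pt)(\iota_W\psi^k)$. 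As the Lie derivative does not preserve bidegree, I extract the $(p,n-p)$-component — the only one that can pair nontrivially with $\psi^\ell$ — which is $\pt(\iota_{W^{1,0}}\psi^k)+\ol\pt(\iota_{W^{0,1}}\psi^k)$. Integrating over $\cX_s$ and passing to adjoints,
$$
\langle L_W\psi^k,\psi^\ell\rangle = \langle \iota_{W^{1,0}}\psi^k,\pt^*\psi^\ell\rangle + \langle \iota_{W^{0,1}}\psi^k,\ol\pt^*\psi^\ell\rangle = 0,
$$
because $\psi^\ell$ is harmonic with respect to both $\pt$ and $\ol\pt$.

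It remains to identify the second piece. By the definition of the $L^2$-metric on $R^{n-p}f_*\Omega^p_{\cX/S}(\cK^{\otimes m}_{\cX/S})$ through the pointwise inner product, $\langle \varphi\,\psi^k,\psi^\ell\rangle = \int_{\cX_s}\varphi\,(\psi^k\cdot\psi^\ol\ell)\,g\,dV$. Proposition~\ref{pr:elleq} states $(\Box+1)\varphi = A_s\cdot A_\ol s$ on $\cX_s$, and since $\Box+1$ is invertible (its spectrum lies in $[1,\infty)$) this gives $\varphi=(\Box+1)^{-1}(A_s\cdot A_\ol s)$; substituting produces exactly $-m\int_{\cX_s}(\Box+1)^{-1}(A_s\cdot A_\ol s)\,\psi^k\psi^\ol\ell\,g\,dV$, as claimed. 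I expect the main obstacle to be the middle paragraph: one must make sure that the operator denoted $[\,{-}\varphi^{;\alpha}\pt_\alpha + \varphi^{;\ol\beta}\pt_{\ol\beta},\ \cdot\,]$ in \eqref{eq:vvb} is precisely the covariant Lie derivative for which Cartan's identity holds with the Chern connection of $\cK^{\otimes m}_{\cX_s}$ — so that no stray curvature term sneaks in beyond the $-m\varphi$ already split off — and one must keep scrupulous track of bidegrees, since $L_W$ mixes types and only its $(p,n-p)$-part survives the pairing with the harmonic form $\psi^\ell$.
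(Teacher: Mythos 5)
Your proposal is correct and takes essentially the same route as the paper: the paper also splits off the $-m\varphi$ term via Lemma~\ref{le:vvb}, shows the vector-field part pairs to zero against $\psi^\ell$ by writing (in coordinates) the type-$(p,n-p)$ component of the bracket as $\pt\big(\varphi^{;\alpha}\pt_\alpha\cup\psi^k\big)$ — precisely your Cartan identity, justified by the $\pt$-closedness of $\psi^k$ — and then integrating by parts against $\pt^*\psi^\ell=0$ (resp.\ $\ol\pt^*\psi^\ell=0$), both consequences of Lemma~\ref{le:boxdboxdb}, before identifying $\varphi=(\Box+1)^{-1}(A_s\cdot A_{\ol s})$ from Proposition~\ref{pr:elleq}. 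The only difference is presentational: you invoke the invariant Cartan formula for the covariant Lie derivative where the paper verifies the same identity by an explicit semicolon computation, so the concern you flag is resolved exactly by that coordinate check.
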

\begin{proof}
The $\pt$-closedness of the $\psi^k$ can be read as
$$
\psi^k_{;\alpha} = \sum_{j=1}^p
\psi_{
{\tiny\vtop{
\hbox{$\alpha_1,\ldots,\alpha,\ldots,\alpha_p\ol B_{n-p};\alpha_j\;$}\vskip-.8mm
\hbox{$\phantom{\alpha_1,\ldots,}{|\atop j} $}}}}.
$$
Hence
\begin{eqnarray*}
[\varphi^{;\alpha}\pt_\alpha, \psi^k_{A_p\ol B_{n-p}}]'& = &
\varphi^{;\alpha}\psi_{;\alpha} +  \sum_{j=1}^p \varphi^{;\alpha}_{\;
;\alpha_j} \psi^k_{ {\tiny\vtop{
\hbox{$\alpha_1,\ldots,\alpha,\ldots,\alpha_p\ol B_{n-p}\;$}\vskip-.8mm
\hbox{$\phantom{\alpha_1,\ldots,}{|\atop j} $}}}}\\
& = & \sum_{j=1}^p \big( \varphi^{;\alpha} \psi^k_{ {\tiny\vtop{
\hbox{$\alpha_1,\ldots,\alpha,\ldots,\alpha_p\ol B_{n-p}\;$}\vskip-.8mm
\hbox{$\phantom{\alpha_1,\ldots,}{|\atop j} $}}}}\big)_{;\alpha_j}\\
&=& \pt\big( \varphi^{;\alpha}\pt _\alpha\cup \psi^k\big).
\end{eqnarray*}
Now
\begin{gather*}
\langle [\varphi^{;\alpha}\pt_\alpha, \psi^k_{A_p\ol
B_{n-p}}],\psi^\ell\rangle = \langle [\varphi^{;\alpha}\pt_\alpha,
\psi^k_{A_p\ol B_{n-p}}]',\psi^\ell\rangle \qquad \\ \qquad=\langle
\pt\big( \varphi^{;\alpha}\pt _\alpha\cup \psi^k\big),\psi^\ell \rangle =
\langle \varphi^{;\alpha}\pt _\alpha\cup \psi^k , \pt^* \psi^\ell \rangle
=0.
\end{gather*}
In the same way we get
$$
\langle [\varphi^{;\ol\beta}\pt_\ol\beta, \psi^k_{A_p\ol
B_{n-p}}],\psi^\ell\rangle =0,
$$
and, according to Lemma~\ref{le:vvb}, we are left with the desired term.
\end{proof}

\begin{proposition}\label{pr:baseq}
In view of \eqref{eq:cup1} and \eqref{eq:cup2} we have
\begin{eqnarray}
\ol\pt(L_v\psi^k)'&=&  \pt(A_s\cup \psi^k)\label{eq:0} \\
  \ol\pt^*(L_v\psi^k)'&=& 0 \label{eq:4} \\
  \pt^*(A_s\cup\psi^k) &=&0 \label{eq:5} \\
 \ol\pt^* (L_\ol v\psi^k)'&=&  \pt^* (A_\ol s \cup \psi^k) \label{eq:6}\\
  \ol\pt(L_\ol v\psi^k)'&=& 0 \label{eq:1} \\
  \ol\pt^*(A_\ol s \cup \psi^k) &=&0 \label{eq:7}
\end{eqnarray}
\end{proposition}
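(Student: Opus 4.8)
\emph{Approach.} All six identities are pointwise statements on a fixed fibre $\cX_s$, and the plan is to verify them by a direct computation in local holomorphic coordinates from the explicit component formulas \eqref{eq:lvprime}--\eqref{eq:lvbsecond} for the type--components of $L_v\psi^k$ and $L_\ol v\psi^k$, together with the clean forms $(L_v\psi^k)''=A_s\cup\psi^k$, $(L_\ol v\psi^k)''=(-1)^p A_\ol s\cup\psi^k$ of \eqref{eq:2}--\eqref{eq:3} and $(L_\ol v\psi^k)'=(-1)^p\ol\pt(\ol v\cup\psi^k)$ of Lemma~\ref{le:lvpsi1}. The substantive input consists of: Lemma~\ref{le:dolb}, which gives that $\psi^k$ extends to a $\ol\pt$--closed form on $\cX$; Lemma~\ref{le:boxdboxdb}, which gives that $\psi^k|_{\cX_s}$ is harmonic with respect to \emph{both} $\ol\pt$ and $\pt$, so $\ol\pt\psi^k=\ol\pt^*\psi^k=\pt\psi^k=\pt^*\psi^k=0$ on the fibre; the harmonicity of the Kodaira--Spencer tensor (Proposition~\ref{pr:harmrep}) and its symmetry $A_{s\ol\beta\ol\delta}=A_{s\ol\delta\ol\beta}$ (Corollary~\ref{co:symm}); the identities $a^\alpha_s=-g^{\ol\beta\alpha}g_{s\ol\beta}$ and $L_v g_{\alpha\ol\beta}=0$ for the horizontal lift (Lemma~\ref{le:canlift} and the proof of Lemma~\ref{le:Lieder}); and the K\"ahler--Einstein equation $R_{\alpha\ol\beta}=-g_{\alpha\ol\beta}$ together with the curvature $\ii[\ol\pt,\pt]=-mL_\cX$ of $\cK^{\otimes m}_{\cX/S}$ (Lemma~\ref{le:ddb}), which control all the commutators of covariant derivatives that arise.

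\emph{The immediate and the symmetry identities.} Equation \eqref{eq:1} is immediate: $\ol\pt(L_\ol v\psi^k)'=(-1)^p\ol\pt^2(\ol v\cup\psi^k)=0$ by Lemma~\ref{le:lvpsi1}. For \eqref{eq:0} I would Lie--differentiate the relation $\ol\pt\psi^k=0$ on $\cX$ along $v$: from $L_v(\ol\pt\psi^k)=0$, the commutator $[L_v,\ol\pt]$ along the $(1,0)$--field $v$ is governed by its non--holomorphicity, whose restriction to $\cX_s$ is exactly the Kodaira--Spencer form $\ol\pt v|_{\cX_s}=A_s$ (Proposition~\ref{pr:harmrep}); decomposing the identity by type and using $(L_v\psi^k)''=A_s\cup\psi^k$ isolates the $(p,n-p+1)$--component as $\ol\pt(L_v\psi^k)'=\pt(A_s\cup\psi^k)$. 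For \eqref{eq:5} I would expand $\pt^*(A_s\cup\psi^k)$ from the component formula: the term in which the antiholomorphic covariant derivative falls on $\psi^k$ is a contraction of $\pt^*\psi^k=0$ and vanishes (here only antiholomorphic derivatives occur, so no curvature is produced and metric compatibility is all one needs), while the remaining term is a contraction of $\nabla_{\ol\mu}A_s$ with $\psi^k$, and since $A_s$ is $\ol\pt$--closed (so $A^\gamma_{s\ol\delta;\ol\mu}$ is symmetric in $\ol\delta,\ol\mu$) and symmetric as a two--tensor by Corollary~\ref{co:symm}, the raised tensor $A^{\gamma\nu}_s$ is symmetric in $\gamma,\nu$ and therefore pairs to zero against the skew--symmetric $\psi^k_{\gamma\nu\cdots}$. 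Equation \eqref{eq:7} is the same computation with holomorphic and antiholomorphic roles exchanged, using $\ol\pt^*\psi^k=0$ in place of $\pt^*\psi^k=0$.

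\emph{The co--closedness identities and the main obstacle.} What remains, and where I expect the real work to lie, is \eqref{eq:4} and \eqref{eq:6}. Writing $\ol\pt^*(L_v\psi^k)'$ out from \eqref{eq:lvprime} and commuting the contracted holomorphic derivative past the base derivative $\nabla_s$ and the fibre derivatives $\nabla_\alpha$, the leading pieces collapse to $\nabla_v(\ol\pt^*\psi^k)=0$ precisely because $L_v g_{\alpha\ol\beta}=0$ makes differentiation along the horizontal lift compatible with the fibrewise codifferential; one is then left with the terms coming from $L_v$ not preserving type (the $a^\alpha_{s;\alpha_j}$--corrections) and from the curvature commutators, and the task is to show these cancel after substituting $a^\alpha_s=-g^{\ol\beta\alpha}g_{s\ol\beta}$, the $\pt$--closedness of $\psi^k$ (read as $\psi^k_{;\alpha}=\sum_j\psi^k_{\ldots\alpha\ldots;\alpha_j}$), and the Einstein equation. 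Equation \eqref{eq:6} is handled the same way, applied to $(L_\ol v\psi^k)'=(-1)^p\ol\pt(\ol v\cup\psi^k)$ and keeping track of the terms surviving into $\pt^*(A_\ol s\cup\psi^k)=(-1)^p\pt^*(L_\ol v\psi^k)''$; alternatively one can obtain it from \eqref{eq:0} by adjunction. The bookkeeping of the curvature and Kodaira--Spencer terms in \eqref{eq:4} and \eqref{eq:6} is the main obstacle; everything else follows formally from Lemmas~\ref{le:dolb}, \ref{le:lvpsi1}, \ref{le:boxdboxdb} and equations \eqref{eq:2}--\eqref{eq:3}.
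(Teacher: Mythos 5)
Your plan follows the same route as the paper: a direct verification in local coordinates on the fibre, using Lemma~\ref{le:dolb}, the $\pt$- and $\ol\pt$-harmonicity of $\psi^k$ from Lemma~\ref{le:boxdboxdb}, the harmonicity and symmetry of $A_s$, and $L_vg_{\alpha\ol\beta}=0$. Your treatments of \eqref{eq:1}, \eqref{eq:5} and \eqref{eq:7} are exactly the paper's: \eqref{eq:1} from Lemma~\ref{le:lvpsi1}, and \eqref{eq:5}, \eqref{eq:7} by letting the codifferential fall either on $\psi^k$ (killed by $\pt^*\psi^k=0$, resp.\ $\ol\pt^*\psi^k=0$) or on the Kodaira--Spencer tensor, whose raised version $A_s^{\alpha\alpha_p}$ is symmetric and pairs to zero against the skew form.

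The gap is that for \eqref{eq:0}, \eqref{eq:4} and \eqref{eq:6} you describe the strategy but defer precisely the computations that constitute the paper's proof. For \eqref{eq:0} you attribute the commutator $[L_v,\ol\pt]$ solely to the non-holomorphicity of $v$ (the Kodaira--Spencer form); but on $\cK^{\otimes m}_{\cX/S}$-valued forms this commutator also produces the curvature term $-m\,v\lrcorner\omega_\cX$ from Lemma~\ref{le:ddb}. In the paper this enters through \eqref{eq:aux1} and \eqref{eq:aux2} as the terms $m\,a_{s\ol\beta_{n+1}}\psi$ and $-m\,a^\alpha_s g_{\alpha\ol\beta_{n+1}}\psi$, and it is only the horizontality of the lift, $a_{s\ol\beta}=-g_{s\ol\beta}$ (Lemma~\ref{le:canlift}), that makes these cancel; without noticing this your type decomposition does not close up to $\ol\pt(L_v\psi^k)'=\pt(A_s\cup\psi^k)$. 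Likewise, for \eqref{eq:4} you state that after commuting derivatives ``the task is to show these cancel'': that cancellation is the content of the proof, carried out in the paper via $\pt_s\Gamma^\sigma_{\alpha\gamma}=-a^\sigma_{s;\alpha\gamma}$ and the identities \eqref{eq:aux01}--\eqref{eq:aux03}, using the fibrewise $\ol\pt^*$-closedness of $\psi^k$ and $\pt_s g^{\ol\beta\gamma}=g^{\ol\beta\sigma}a^\gamma_{s;\sigma}$. Finally, \eqref{eq:6} does not follow from \eqref{eq:0} ``by adjunction'': these are identities about specific forms, not operator identities, and $L_{\ol v}\psi^k$ is not the conjugate of $L_v\psi^k$; it must be checked by the analogous computation starting from \eqref{eq:lvbprime}, as the paper indicates. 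So the architecture is right, but the three identities carrying the analytic content are asserted rather than proved.
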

The proof of the above proposition is the technical part of this article
and will be given at the end.

{\it Proof of Theorem~\ref{th:curvgen}.} Again, we may set $i=j=s$ and use
normal coordinates at a given point $s_0\in S$.

We continue with \eqref{eq:Lolvv} and apply \eqref{eq:lvvpsi}. Let $G_\pt$
and $G_\ol\pt$ denote the Green's operators on the spaces of
differentiable $\cK_{\cX_s}$-valued $(p,q)$-forms on the fibers with
respect to $\Box_\pt$ and $\Box_\ol\pt$ resp. We know from
Lemma~\ref{le:boxdboxdb} that for $p+q=n$ the Green's operators $G_\pt$
and $G_\ol\pt$ coincide.

We compute $\langle (L_v\psi^k)', L_v\psi^\ell)'\rangle$: Since the
harmonic projection\\ $H ((L_v\psi^k)')=0$ vanishes for $s=s_0$, we have
\begin{gather*}
(L_v\psi^k)'= G_\ol\pt \Box_\ol\pt (L_v\psi^k)'= G_\ol\pt \ol\pt^*\ol\pt
(L_v\psi^k)' = \ol\pt^*G_\ol\pt \pt(A_s\cup\psi^k)
\end{gather*}
by \eqref{eq:4} and \eqref{eq:0}. The form $\ol\pt(L_v\psi^k)'=
\pt(A_s\cup \psi^k)$ is of type $(p,n-p+1)$ so that by
Lemma~\ref{le:boxdboxdb} on the space of such forms $G_\ol\pt=(\Box_\pt
+m)^{-1} $ holds.

Now
\begin{gather*}
\langle (L_v\psi^k)', (L_v\psi^\ell)'\rangle =\langle \ol\pt^*G_\ol\pt
\pt(A_s\cup\psi^k), (L_v\psi^\ell)' \rangle\\ = \langle G_\ol\pt
\pt(A_s\cup\psi^k), \pt (A_s \cup \psi^\ell) \rangle = \langle (\Box_\pt
+m)^{-1} \pt (A_s\cup \psi^k), \pt (A_s\cup \psi^\ell)\rangle\\
= \langle \pt^* (\Box_\pt +m)^{-1} \pt (A_s\cup \psi^k), A_s\cup
\psi^\ell\rangle.
\end{gather*}

Because of \eqref{eq:5}
\begin{gather*}
\langle (L_v\psi^k)', (L_v\psi^\ell)'\rangle = \langle (\Box_\pt
+m)^{-1}\Box_\pt (A_s \cup \psi^k) , A_s\cup \psi^\ell\rangle\\
= \langle A_s\cup \psi^k, A_s \cup\psi^\ell\rangle -m \langle (\Box+m
)^{-1}(A_s \cup \psi_k), A_s \cup \psi^\ell\rangle.
\end{gather*}
(For $(p-1, n-p+1)$-forms, we write $\Box=\Box_\pt=\Box_\ol\pt$.)
Altogether we have
\begin{equation}\label{eq:part2}
\langle L_v \psi^k , L_v \psi^\ell \rangle|_{s_0} = - m
\int_{\cX_s} (\Box +m)^{-1}(A_s\cup\psi^k)\cdot (A_\ol s\cup \psi^\ol\ell)\, g\, dV.
\end{equation}
Finally we need to compute $\langle L_\ol v\psi^k, L_\ol v \psi^\ell
\rangle$.

By equation \eqref{eq:3} we have  $(\langle L_\ol v\psi^k)'', (L_\ol v
\psi^\ell)'' \rangle = \langle A_\ol s \cup \psi^k   , A_\ol s \cup
\psi^\ell \rangle$. Now Lemma~\ref{le:lvpsi1} implies that the harmonic
projections of the $(L_\ol v \psi^k)'$ vanish for all parameters $s$. So
\begin{gather*}
\langle (L_\ol v \psi^k)',  (L_\ol v \psi^\ell)'\rangle = \langle G_\ol\pt
\Box_\ol\pt (L_\ol v \psi^k)',  (L_\ol v \psi^\ell)'\rangle \\
\vtop{\hbox{$=$}\vskip-4mm\hbox{\tiny$\!\! \eqref{eq:1}$} } \langle
(G_\ol\pt \ol\pt {\ol\pt}^* L_\ol v \psi^k)', (L_\ol v \psi^\ell)'\rangle
= \langle
(G_\ol\pt \ol\pt^* L_\ol v \psi^k)', \ol\pt^* (L_\ol v \psi^\ell)'\rangle\\
\vtop{\hbox{$=$}\vskip-4mm\hbox{\tiny$\!\! \eqref{eq:6}$} } \langle
G_\ol\pt \pt^*(A_\ol s \cup \psi^k), \pt^*(A_\ol s\cup \psi^\ell) \rangle.
\hspace{3cm}
\end{gather*}
Now the $(p, n-p-1)$-form $\ol\pt^*(L_\ol v\psi^k)'=  \pt^* (A_\ol s \cup
\psi^k)$ is orthogonal to both the spaces of $\ol\pt$- and $\pt$-harmonic
forms. On these, by Lemma~\ref{le:boxdboxdb} we have
$$
\Box_\ol\pt=\Box_\pt - m\cdot id.
$$
We see that all eigenvalues of $\Box_\pt$ are larger or equal to $m$ for
$(p,n-p-1)$-forms.

{\bf Claim.} {\it Let $\sum_\nu \lambda_\nu \rho_\nu$ be the eigenfunction
decomposition of $A_\ol s\cup \psi^k$. Then all $\lambda_\nu > m$ or
$\lambda_0=0$. In particular $(\Box - m)^{-1}(A_\ol s\cup \psi^k)$
exists.}

In order to verify the claim, we consider $\pt^*(A_\ol s\cup \psi^k)=
\sum_\nu \pt^*(\rho_\nu)$ with
$$
\Box_\pt \pt^*(\rho_\nu) = \lambda_\nu \pt^*(\rho_\nu) = \Box_\ol\pt
\pt^*(\rho_\nu) + m \cdot\pt^*(\rho_\nu).
$$
This fact implies that $\sum_\nu \pt^*(\rho_\nu)$ is the eigenfunction
expansion with respect to $\Box_\ol\pt$ and eigenvalues $\lambda_\nu
-m\geq 0$ of $\pt^*(A_\ol s\cup \psi^k)=\ol\pt^*(L_\ol v \psi^k)$. The
latter is orthogonal to the space of $\ol\pt$-harmonic functions so that
$\lambda_\nu-m=0$ does not occur. (The harmonic part of $A_\ol v\cup
\psi^k$ may be present though.) This shows the claim.

Now
$$
G_\ol\pt  \pt^*(A_\ol s \cup \psi^k) = (\Box_\pt -m)^{-1} \pt^*(A_\ol s \cup \psi^k)
$$
so that \eqref{eq:7} implies
\begin{gather*}
\langle (L_\ol v \psi^k)',  (L_\ol v \psi^\ell)'\rangle = \langle
(\Box_\pt -m)^{-1} \Box_\pt (A_\ol s \cup \psi^k) ,A_\ol s \cup \psi^\ell
\rangle \\
=  \langle A_\ol s \cup \psi^k ,A_\ol s \cup \psi^\ell \rangle + m\cdot
\langle (\Box_\pt -m)^{-1} (A_\ol s \cup \psi^k) ,A_\ol s \cup \psi^\ell
\rangle.
\end{gather*}
Now \eqref{eq:3} yields the final equation (again with $\Box_\ol\pt =
\Box_\pt = \Box$ for $(p+1,n-p-1)$-forms)
\begin{equation}\label{eq:part3}
\langle L_\ol v \psi^k,  L_\ol v \psi^\ell\rangle = m \int_{\cX_s} (\Box - m)^{-1}( A_\ol s \cup \psi^k)
\cdot (A_s \cup \psi^\ol\ell) \, g\, dV.
\end{equation}
The main theorem follows from \eqref{eq:lvvpsi}, \eqref{eq:part2},
\eqref{eq:Lolvv}, and \eqref{eq:part3}. \qed

\begin{proof}[Proof of Proposition~\ref{pr:baseq}]
We verify \eqref{eq:0}: We will need various identities. For simplicity,
we drop the superscript $k$. The tensors below are meant to be
coefficients of alternating forms on the fibers, i.e.\ skew-symmetrized.
\begin{equation}\label{eq:aux1}
\psi_{;s\ol\beta_{n+1}}= \psi_{;\ol\beta_{n+1}s} - m \cdot g_{s\ol\beta_{n+1}} \psi=
 m\cdot a_{s\ol\beta_{n+1}}\psi
\end{equation}
\begin{gather} \label{eq:aux2}
\psi_{;\alpha\ol\beta_{n+1}}= \psi_{;\ol\beta_{n+1}\alpha}- m\cdot
g_{\alpha\ol\beta_{n+1}}\psi \hspace{5cm} \\ \nonumber - \sum^p_{j=1}
\psi_{\tiny \vtop{ \hbox{$\alpha_1,\ldots,\sigma,\ldots,\alpha_p, \ol
B_{n-p}$}\vskip-1.5mm\hbox{$ \phantom{\alpha_1,\ldots,}{|\atop j}$}}
}R^\sigma_{\; \alpha_j\alpha\ol\beta_{n+1}}-\sum^n_{j=p+1}
\psi_{\tiny\vtop{\hbox{$A_p \ol\beta_{p+1},
\ldots,\ol\tau,\ldots,\ol\beta_{n}  $}\vskip-1.5mm\hbox{$\phantom{A_p
\ol\beta_{p+1}, \ldots,}{|\atop j}  $}}}
R^\ol\tau_{\:\ol\beta_j\alpha\ol\beta_{n+1} }
\end{gather}
\begin{gather}\label{eq:aux3}
a^\alpha_{s;\alpha_j\ol \beta_{n+1}} = A^\alpha_{s\ol\beta_{n+1};\alpha_j}
+ a^\sigma_s R^\alpha_{\; \sigma\alpha_j\ol\beta_{n+1}}
\end{gather}
Now, starting from \eqref{eq:lvprime} we get, using \eqref{eq:aux1},
\eqref{eq:aux2}, and \eqref{eq:aux3},
\begin{gather*}
\ol\pt L_v\psi' = \Big( \psi_{;s\ol\beta_{n+1}} +
A^\alpha_{s\ol\beta_{n+1}}\psi_{;\alpha} + a^\alpha_s
\psi_{;\alpha\ol\beta_{n+1}}  +
\sum^p_{j=1}a^\alpha_{s;\alpha_j\ol\beta_{n+1}}\psi_{\alpha_1,\ldots,\alpha,\ldots,\alpha_p,\ol
B_{n-p}}\\ \nonumber + \sum^p_{j=1} a^\alpha_{s;\alpha_j}
\psi_{\alpha_1,\ldots,\alpha,\ldots,\alpha_p,\ol B_{n-p};\ol\beta_{n+1}}
\Big) dz^{\ol\beta_{n+1}}\we dz^{A_p}\we dz^{\ol B_{n-p}}
\\ \nonumber 
= \Big(A^\alpha_{s\ol\beta_{n+1}}\psi_{;\alpha} + \sum^p_{j=1}
A^\alpha_{s\ol\beta_{n+1};\alpha_j}
\psi_{\alpha_1,\ldots,\alpha,\ldots,\alpha_p,\ol
B_{n-p}}\Big)dz^{\ol\beta_{n+1}}\we dz^{A_p}\we dz^{\ol B_{n-p}}.
\end{gather*}
Because of the fiberwise $\pt$-closedness of $\psi$ this equals
\begin{gather*}
\sum^p_{j=1} \big(A^\alpha_{s\ol\beta_{n+1}} \psi_{\tiny \vtop{
\hbox{$\alpha_1,\ldots,\alpha,\ldots,\alpha_p, \ol
B_{n-p}$}\vskip-1.5mm\hbox{$ \phantom{\alpha_1,\ldots,}{|\atop j}$}}  }
\big)_{;\alpha_j}
dz^{\ol\beta_{n+1}}\we dz^{A_p}\we dz^{\ol B_{n-p}} \\
= (-1)^n \sum^p_{j=1} \big(A^\alpha_{s\ol\beta_{n+1}}
\psi_{\alpha,\alpha_2,\ldots,\alpha_p, \ol B_{n-p}} \big)_{;\alpha_1}
dz^{\alpha_1}\we dz^{A_{p-1}}\we dz^{\ol\beta_1}\we\ldots\we
dz^{\ol\beta_{n+1}}\\ \nonumber
=\pt \Big((-1)^n A^\alpha_{s\ol\beta_{n+1}}
\psi_{\alpha,\alpha_2,\ldots,\alpha_p,\ol\beta_{p+1},\ldots,\ol\beta_n}
dz^{A_{p-1}}\we dz^{\ol B_{n+1}}\Big) = \pt\big( A_s \cup \psi\big).
\end{gather*}
This shows \eqref{eq:0}.

Next, we prove \eqref{eq:4}. We begin with \eqref{eq:lvbprime}. We first
note
\begin{equation*}\label{eq:dsGamma}
\pt_s(\Gamma^\sigma_{\alpha\gamma})= -a^\sigma_{s; \alpha\gamma}
\end{equation*}
which follows in a straightforward way. Now this equation implies
\begin{gather*}
\pt_s(\psi_{;\gamma}) = \psi_{;s\gamma} - \sum^p_{j=1}
a^\sigma_{s;\alpha_j\gamma}\psi_{\tiny\vtop{\hbox{$\alpha_1,
\ldots,\sigma,\ldots,\alpha_p \ol B_{n-p}
$}\vskip-1.5mm\hbox{$\phantom{\alpha_1, \ldots,}{|\atop j} $} }}
\end{gather*}
so that (with $g^{\ol\beta_n\gamma}\psi_{;\gamma}=0$ and $\pt_s
g^{\ol\beta_n\gamma}= g^{\ol\beta_n\sigma} a^\gamma_{s;\sigma} $)
\begin{gather}\label{eq:aux01}
g^{\ol\beta_n\gamma}\psi_{;s\gamma} =
-\psi_{;\gamma}  g^{\ol\beta_n\sigma} a^\gamma_{s;\sigma} +
\sum^p_{j=1}g^{\ol\beta_n\gamma}
a^\sigma_{s;\alpha_j\gamma}\psi_{\tiny\vtop{\hbox{$\alpha_1,
\ldots,\sigma,\ldots,\alpha_p \ol B_{n-p}
$}\vskip-1.5mm\hbox{$\phantom{\alpha_1, \ldots,}{|\atop j} $} }}
\end{gather}
follows. Next, since fiberwise $\psi$ is $\ol\pt^*$-closed,
\begin{gather}\label{eq:aux02}
g^{\ol\beta_n\gamma} (a^\alpha_s\psi_{;\alpha})_{;\gamma}=
g^{\ol\beta_n\gamma} a^\alpha_{s;\gamma}\psi_{;\alpha},
\end{gather}
and with the same argument
\begin{gather}\label{eq:aux03}
g^{\ol\beta_n\gamma} \big(\sum^p_{j=1}
a^\sigma_{s;\alpha_j}\psi_{\tiny\vtop{\hbox{$\alpha_1,
\ldots,\sigma,\ldots,\alpha_p \ol B_{n-p}
$}\vskip-1.5mm\hbox{$\phantom{\alpha_1, \ldots,}{|\atop j} $} }}
\big)_{;\gamma} = g^{\ol\beta_n\gamma} \sum^p_{j=1}
a^\sigma_{s;\alpha_j\gamma}\psi_{\tiny\vtop{\hbox{$\alpha_1,
\ldots,\sigma,\ldots,\alpha_p \ol B_{n-p}
$}\vskip-1.5mm\hbox{$\phantom{\alpha_1, \ldots,}{|\atop j} $} }}.
\end{gather}
Now $\ol\pt^*(L_v\psi')=0$ follows from \eqref{eq:aux01},
\eqref{eq:aux02}, and \eqref{eq:aux03}.

We come to the $\pt^*$-closedness \eqref{eq:5}  of $A_s\cup \psi$. We need
to show that
$$
\big(A^\alpha_{s\ol\beta_{n+1}}\psi_{\alpha,\alpha_2,\ldots,\alpha_p,\ol\beta_{p+1},\ldots,\beta_n}
\big)_{;\ol\delta}g^{\ol\delta\alpha_p}
$$
vanishes. Since $\pt^*\psi=0$ the above quantity equals
$$
A^\alpha_{s\ol\beta_{n+1};\ol\delta} \psi_{\alpha,\alpha_2,\ldots,\alpha_p,\ol B_{n-p}}g^{\ol\delta\alpha_p}.
$$
Because of the $\ol\pt$-closedness of $A_s$ this equals
$$
(A^{\alpha\alpha_p}_{s})_{;\ol \beta_{n+1}}\psi_{\alpha,\alpha_2,\ldots,\alpha_p,\ol B_{n-p}}.
$$
However,
$$
A^{\alpha\alpha_p}_s=A^{\alpha_p\alpha}_s
$$
whereas $\psi$ is skew-symmetric so that also this contribution vanishes.

The proof of \eqref{eq:6}, \eqref{eq:1}, and \eqref{eq:7} is similar, we
remark that \eqref{eq:1} also follows from Lemma~\ref{le:lvpsi1}.
\end{proof}

{\bf Acknowledgements.} This work was begun during a visit to Harvard
University. The author would like to thank Professor Yum-Tong Siu for his
cordial hospitality and many discussions about the hyperbolicity and
curvature. His thanks also go to Robert Berman, Bo Berndtsson, Indranil
Biswas, Jeff Cheeger, Jean-Pierre Demailly, Gordon Heier, Stefan Kebekus,
Janos Kollár, Sándor Kovács, Mihai Paun, Thomas Peternell, and Stefano
Trapani for discussions and remarks. Also, he would like to thank the
referees for comments and remarks.

He would  have liked to thank Eckart Viehweg also in writing for
discussing details of the analytic proof of the quasi-projectivity of
$\cM_{can}$ -- it is too late now.

\end{document}